\newtheorem{theorem}{Theorem}
\newtheorem{corollary}{Corollary}
\newtheorem{definition}{Definition}
\newtheorem{lemma}{Lemma}
\newtheorem{proposition}{Proposition}
\newtheorem{remark}{Remark}
\newcommand{\Z}{\mathbb{Z}}
\newcommand{\C}{\mathbb{C}}
\newcommand{\p}{\mathbb{P}}
\newcommand{\g}{\mathfrak g}
\newcommand{\gp}{\mathfrak p}
\newcommand{\gl}{\mathfrak{gl}}
\newcommand{\so}{\mathfrak{so}}
\newcommand{\sll}{\mathfrak{sl}}
\newcommand{\spp}{\mathfrak{sp}}
\newcommand{\Span}[1]{\left\langle#1\right\rangle}
\DeclareMathOperator{\LL}{LGr}
\DeclareMathOperator{\rk}{\mathrm{rk}}
\DeclareMathOperator{\tr}{tr}
\DeclareMathOperator{\ad}{ad}
\DeclareMathOperator{\id}{id}
\DeclareMathOperator{\rank}{rank}
\DeclareMathOperator{\codim}{codim}
\DeclareMathOperator{\ev}{ev}
\DeclareMathOperator{\gr}{gr}
\DeclareMathOperator{\Lie}{Lie}
\DeclareMathOperator{\SO}{\mathsf{SO}}
\DeclareMathOperator{\GL}{\mathsf{GL}}
\DeclareMathOperator{\SL}{\mathsf{SL}}
\DeclareMathOperator{\PGL}{\mathsf{PGL}}
\DeclareMathOperator{\Sp}{\mathsf{Sp}}
\DeclareMathOperator{\CSp}{\mathsf{CSp}}
\DeclareMathOperator{\Gr}{Gr}
\DeclareMathOperator{\End}{End}
\DeclareMathOperator{\Fl}{Fl}%
\DeclareMathOperator{\Aut}{Aut}
\DeclareMathOperator{\Hom}{Hom}
\DeclareMathOperator{\Stab}{\mathrm{Stab}}
\newcommand{\OO}{\mathcal{O}}
\newcommand{\E}{\mathcal{E}}
\newcommand{\CC}{\mathcal{C}}
\newcommand{\Th}{^\textrm{th}}
\newcommand{\St}{^\textrm{st}}
\newcommand{\Nd}{^\textrm{nd}}
\DeclareMathOperator{\ch}{\mathrm{ch}}
\DeclareMathOperator{\Der}{\mathrm{Der}}
\def\csp{\mathfrak{csp}}
\def\fh{\mathfrak{h}}
\def\sp{\mathfrak{sp}}
\def\fp{\mathfrak{p}}
\def\fq{\mathfrak{q}}
\def\ZZ{\mathbb{Z}}
\def\pt{\mathrm{pt}}
\def\iso{\mathrm{iso}}
\def\fs{\mathfrak{s}}
\let\fg\g
\let\PP\p
\let\CC\CC
\let\LG\LL
\renewcommand{\ss}{\textrm{ss}}
\DeclareMathOperator{\sgn}{\mathrm{sgn}}
\newcommand{\Inv}{\mathsf{Inv}}
\newcommand{\Asf}{\mathsf{A}}
\newcommand{\Bsf}{\mathsf{B}}
\newcommand{\Csf}{\mathsf{C}}
\newcommand{\Dsf}{\mathsf{D}}
\newcommand{\Esf}{\mathsf{E}}
\newcommand{\Fsf}{\mathsf{F}}
\newcommand{\Gsf}{\mathsf{G}}
 \def\polhk#1{\setbox0=\hbox{#1}{\ooalign{\hidewidth
  \lower1.5ex\hbox{`}\hidewidth\crcr\unhbox0}}}
\begin{document}\sloppy

\title{Lowest degree invariant 2nd order PDEs over rational homogeneous contact manifolds}
   \author{Dmitri V. Alekseevsky}
 \email{dalekseevsky@iitp.ru}
  \address{Institute for Information Transmission Problems, B. Karetny
per. 19, 127051, Moscow (Russia) and University of Hradec Kralove, Rokitanskeho 62,
Hradec Kralove 50003 (Czech Republic).}
   \author{Jan Gutt}
 \email{jgutt@cft.edu.pl}
  \address{INdAM -- Dipartimento di Matematica ``G. L. Lagrange'', Politecnico di Torino, Corso Duca degli Abruzzi, 24, 10129 Torino, ITALY.}
 \author{Gianni Manno}
 \email{giovanni.manno@polito.it}
   \address{Dipartimento di Matematica ``G. L. Lagrange'', Politecnico di Torino, Corso Duca degli Abruzzi, 24, 10129 Torino, ITALY.}
\author{Giovanni Moreno}
 \email{gmoreno@impan.pl}
   \address{Institute of Mathematics, Polish Academy of Sciences, ul. Sniadeckich 8, 00--656 Warsaw, POLAND.}

\begin{abstract}
For each simple Lie algebra $\g$ (excluding, for trivial reasons, type $\Csf$)
we find the lowest possible degree
of an invariant $2\Nd$ order PDE over the adjoint variety in $\p\g$, a homogeneous contact manifold.
Here a PDE $F(x^i,u,u_i,u_{ij})=0$ has degree $\le d$ if $F$ is a polynomial
of degree $\le d$ in the minors of $(u_{ij})$, with coefficients functions
of the contact coordinates $x^i$, $u$, $u_i$ (e.g., Monge--Amp\`ere equations have degree 1).
For $\g$ of type $\Asf$ or $\Gsf_2$ we show that this gives all
invariant $2\Nd$ order PDEs. For $\g$ of type $\Bsf$ and $\Dsf$
we provide an explicit formula for the lowest-degree invariant $2\Nd$ order PDEs.
For $\g$ of type $\Esf$ and $\Fsf_4$
we prove uniqueness of the lowest-degree invariant $2\Nd$ order PDE; we also conjecture
that uniqueness holds in type $\Dsf$.
%
%
\end{abstract}
%

 \date{\today}

 \maketitle
 \setcounter{tocdepth}{4}


\noindent\textbf{Keywords:} simple Lie algebras, adjoint variety, Lagrangian Grassmannian, second order PDEs, symmetries of PDEs, invariant theory

\medskip\noindent\textbf{MSC 2010:} 35A30, 58J70, 53C30, 53D10, 17B08, 17B10

\section{Introduction}

\subsection{Starting point}
The problem of classifying scalar second order PDEs admitting a large group of symmetries
is the basis of a very extensive research programme, originating in the work of Lie, Darboux,
Cartan and others. It is naturally broken into the sub-problems of classifying $G$-invariant
PDEs with a prescribed Lie group $G$ of symmetries. This is still a vast project in general. Using
the interpretation of $2\Nd$ order PDEs in terms of contact structures, we restrict our attention
to the case where $G$ acts transitively on the underlying contact manifold $M$. Even there we are
facing a hopelessly complex task, in particular implying the classification of homogeneous
contact $G$-manifolds for the given Lie group. The latter problem is considered by one of
the authors in \cite{Ale90}. In the present paper we shall still significantly narrow the focus,
by working in the complex holomorphic setting, assuming $G$ to be simple, and requiring
that $M$ be compact. There we can use the structure theory of simple complex Lie
algebras, and finally reduce the question to an algebraic problem in invariant theory.
Since results about the real case can be then recovered
(in a straightforward, if laborious, manner) considering suitable real forms of $G$,
from now on we shall apply the term `partial differential equation' to what is more properly its complexification (see Subsection \ref{subFirstLook} for further remarks about the real setting).

\subsection{Context}
Remarkably, it turns out that in this context---with the exception of groups $G$ of type $\Asf$ and, for trivial reasons, $\Csf$---a $G$-invariant second order PDE has precisely the Lie algebra $\fg$ of $G$ as
its \emph{local} infinitesimal symmetries at any point of $M$. This
has been observed and used by D. The in \cite{2016arXiv160308251T} to realise the simple Lie
algebras not of type $\Csf$ as infinitesimal symmetries of $2\Nd$ order PDEs, a problem with a long tradition, inaugurated by the  1893   works by Cartan and Engel,\footnote{See \cite{2016arXiv160308251T} for a more detailed historical account, as well as a broader list of references.}
 and recently recast in full generality by  P. Nurowski in the context of the  so--called \emph{parabolic contact geometries} (see \cite{MR2532439}, sec. 4.2). In this spirit, $2\Nd$ order PDEs can be thought of as additional structures on contact manifolds.

\subsection{The degree of a $2\Nd$ order PDE on a contact manifold}\label{ss:from-pdes}
A $2\Nd$ order (scalar, in $n$ independent variables and one unknown function) PDE on a $(2n+1)$--dimensional contact manifold $(M,\CC)$, $\CC$ being the contact distribution on $M$, is, roughly speaking, a first--order condition imposed on the Lagrangian submanifolds of $M$. Note that the latter are the integral $n$--dimensional submanifolds of the exterior differential system $(M,\mathcal{I}_{\CC})$, where $\mathcal{I}_{\CC}$ is the ideal of differential forms vanishing on $\CC$ (see \cite{MR1083148}). Recall that the Levi (twisted) two--form $\CC\wedge\CC\longrightarrow TM/\CC$ is nondegenerate, and as such it defines a conformal symplectic structure on each contact plane. Imposing the aforementioned first--order condition is the same as restricting the prolonged exterior differential system $(M^{(1)},\mathcal{I}_{\CC}^{(1)})$ to a hypersurface $\E$ of the manifold $M^{(1)}$ of the $n$--dimensional integral elements of $(M,\mathcal{I}_{\CC})$.
The manifold $M^{(1)}$ has a natural smooth bundle structure
\begin{equation}\label{eq:m1}
\pi:M^{(1)}=\bigcup_{m\in M}\mathrm{LGr}(\mathcal{C}_{m})\rightarrow M\, ,
\end{equation}
such that
 the fibre of $\pi $ at $m\in M$ naturally identifies with the \emph{Grassmannian of
Lagrangian planes} $\mathrm{LGr}(\mathcal{C}_{m})$ of $\mathcal{C}_{m}$.\par
In contact (or Darboux) coordinates $(x^i,u,u_i)$, a generic Lagrangian submanifold of $M$ is the graph $\Gamma_f^{(1)}:=\{x^i,u=f(x^1,\ldots,x^n), u_i=f_{x^i}(x^1,\ldots,x^n)\}$ of the $1\St$ jet of a  function in the $n$ variables $x^i$. Hence, a first--order condition on Lagrangian submanifolds is a relation between the first derivatives of both $f$ and all the $f_{x^i}$'s, that is, a second--order PDE on $f$. Globally, this corresponds to a hypersurface $\E\subset M^{(1)}$.\par
One can locally extend the Darboux coordinates to $M^{(1)}$ as follows. The Lagrangian space $L=T_m\Gamma_f^{(1)}\in M^{(1)}$ has coordinates $(x^i,u,u_i,u_{ij}=u_{ji})$ where $(x^i,u,u_i)$ are the coordinates of $m$, and $T_m\Gamma_f^{(1)}=\Span{ \left.D_{x^i}\right|_m + \left.u_{ij}\partial_{u_j}\right|_m \mid i=1,2,\ldots,n}$,  with
\begin{equation}\label{eq.total}
D_{x^i}=\partial_{x^i}+u_i\partial_u\,.
\end{equation}
Observe that the $u_{ij}$--coordinates of $T_m\Gamma_f^{(1)}$ are precisely the second--order derivatives of $f$ at $(x^1,\ldots,x^n)$ and, as  such, they are symmetric in the indices $(i,j)$: this corresponds to the canonical identification
$
T_L\mathrm{LGr}(\mathcal{C}_{m})\simeq S^2L^*$, valid for all $L\in\LL(\CC_m)$.
Accordingly, in the above coordinates, a $2\Nd$ order PDE  reads as
$
\E=\{F(x^{i},u,u_{i},u_{ij})=0\}
$. \par
The intrinsic geometry of $\LG(\CC_m)$ allows us
to introduce a point-wise numerical invariant characterising
some of the hypersurfaces $\E$: the \emph{degree}.
Namely, we say that $\E = \{ F = 0 \}$ is of degree $d$
at $m = (x^i,u,u_i)$ if $F(x^i,u,u_i, u_{ij})$
is a polynomial of degree $d$ as a function of the minors
of the symmetric matrix $U=\big(u_{ij}\big)$. If the number $d$ is well-defined and does not depend on $m$, we say that the   PDE $\E$ has \emph{degree} $d$, though its \emph{order} is always 2 (this
is the case e.g. when a group of contactomorphisms acts transitively on $M$ preserving $\E$).

Recall that we have decided to work in the complex analytic setting. Furthermore, we shall
only be concerned with closed hypersurfaces $\E \subset M^{(1)}$. In that case the
Lagrangian Grassmannian $\LG(\CC_m)$ is a projective manifold
(in the Pl\"ucker embedding: the homogeneous coordinates on the ambient projective space
may be identified with the minors of $U$), and the fibre
$\E_m = \E \cap \LG(\CC_m)$ is a closed analytic subvariety. It then follows that
$\E_m$ is actually algebraic (Chow's theorem),
and the defining equation $F$ is a polynomial in
the minors. In particular, the degree of $\E$ at $m$ is a well-defined integer.

\subsection{A first  look at the results}\label{subFirstLook}
Our main results concern the case where $(M,\CC)$ is a homogeneous
contact manifold for a \emph{complex} simple Lie group $G$. In fact,
requiring $M$ to be compact, it is uniquely determined by $G$. We
consider  hypersurfaces $\mathcal{E}\subset M^{(1)}$ that are invariant under
the natural lift of the $G$-action. For some $G$ we give a complete and explicit
description of the set of all such hypersurfaces. In general, for each $G$
we characterise the minimal possible \emph{degree} of such a hypersurface.\par
%
Since $G$ acts transitively on $M$ by contact transformations,
and $\mathcal{E} \subset M^{(1)}$ is supposed to be $G$-invariant,
the problem of describing such hypersurfaces is easily reduced to
the study of hypersurfaces in a single fibre $\LG(\CC_m)$
invariant under the subgroup of $G$ stabilising $m$. Thus our results
are in fact about hypersurfaces in a Lagrangian Grassmannian of
a (conformal) symplectic vector space, invariant under certain subgroups
of the (conformal) symplectic group.\par
A careful inspection shows that in all cases where we did find an explicit
construction of such an invariant hypersurface, it can be in fact defined
over the reals -- at least if we use the split real form of $G$. This is
manifest in Section \ref{sec:pdes}, where explicit PDEs are written down
in Darboux coordinates: even though formally we discuss the complex case,
the resulting formulas make sense in the real context as well. This leads
to new constructions of (real) PDEs with prescribed symmetry algebras, that
may serve as interesting test cases for the more traditional methods of symmetry analysis of PDEs.

\subsection{Plan of the paper}
Section \ref{SecDescMainRes} gives a more in-depth description of our main
results, introducing for the first time all the constructions necessary
for a precise statement of Theorem \ref{thm:invs}. Section \ref{sec:prerequisites} provides a careful technical
exposition of the underlying material, where
we reintroduce and prove many of the standard results mentioned
in Section \ref{SecDescMainRes}. This way we prepare the ground for an algebraic reformulation of
the main result, opening Section \ref{sec:proof}. We then outline the strategy of the proof,
based on the classification of complex simple Lie algebras. Parts of the
main result corresponding to the different Cartan types $\Asf,\dots,\Gsf$
occupy the subsequent subsections.
Section \ref{sec:further-discussion} contains a discussion of further consequences of our results
and their relation to existing research. Finally, in Section \ref{sec:pdes} we write down
some our invariant PDEs in explicit form.

\section{Description of main results and methods}\label{SecDescMainRes}

\subsection{Basic constructions and results}\label{secBasics}
We will now quickly introduce a number of notions necessary for a precise statement
of our results. Complete definitions and proofs will be given in Section \ref{sec:prerequisites}. We
state our main Theorem at the end of this subsection.

\begin{definition}\label{def:adjoint}
Let $\fg$ be a complex simple Lie algebra, and $G$ the identity component
of $\Aut\fg$. Then the unique closed $G$-orbit $X$ in $\PP\fg$ is called
the \emph{adjoint variety} of $\fg$.
\end{definition}

Let $\g$ be a complex simple Lie algebra other than $\sll(2,\C)$.
We let $G$ and $X \subset \PP\fg$ be as in Definition \ref{def:adjoint}.
We shall fix an origin $o\in X$ and let $P \subset G$ be its stabiliser.
The latter is a parabolic subgroup with unipotent radical $P_+ \subset P$ and
connected, reductive quotient $G_0 \simeq P/P_+$. We split the projection $P \to G_0$ by fixing a Levi
decomposition
\begin{equation}\label{eqLevi}
P = G_0 \ltimes P_+\, ,
\end{equation}
thus inducing the so--called \emph{contact grading} (see \cite{MR2532439}, sec. 3.2.4)
\begin{equation}\label{eqContGrad}
 \g = \g_{-2} \oplus \g_{-1} \oplus \g_0 \oplus \g_1 \oplus \g_2\,
\end{equation}
of the Lie algebra $\g$, where   $\g_0$ is the Lie algebra of $G_0$,
and $\mathfrak{p}_+ = \g_1 \oplus \g_2$ the Lie algebra of $P_+$. Considering
the graded nilpotent subalgebra $\g_- = \g_{-2} \oplus \g_{-1}\simeq T_oX$ one
finds that $\dim \g_{-2} = 1$ and
\begin{equation}\label{eqDefEnne}
\dim \g_{-1} = 2n\, ,\quad n \in \mathbb{N}\, ,
\end{equation}
with the Lie bracket inducing a non--degenerate twisted (i.e., $ \g_{-2}$--valued)  symplectic
form  on $\g_{-1}$.
\par
The hyperplane $\g_{-1}\subset T_oX$  equips  the homogeneous space $X \simeq G/P$
with an invariant contact structure.
We thus have a contact manifold
as in Subsection \ref{ss:from-pdes} and we can
form the bundle
\begin{equation}\label{eqSTARSTAR}
X^{(1)}\longrightarrow X
\end{equation}
of Lagrangian Grassmannians as in \eqref{eq:m1}.
We are interested in complex analytic
hypersurfaces in $X^{(1)}$,
locally interpreted as $2\Nd$ order scalar PDEs in $n$ independent
variables (see again Subsection \ref{ss:from-pdes}).
In particular, considering the natural lift of the $G$-action to $X^{(1)}$,
it is natural to ask about the existence and classification of \emph{$G$-invariant}
hypersurfaces, as these correspond to PDEs with a large symmetry group.

We will only work with \emph{closed} hypersurfaces. Since
$X^{(1)}$ is a complex \emph{projective} manifold, every
such closed analytic hypersurface is in fact algebraic
Thus, from now on, we will use the term `hypersurface'
to refer to a closed algebraic hypersurface.
We consider the family
\begin{equation*}
 \Inv(X,G)=\left\{\E\mid\E\textrm{ hypersurface in }X^{(1)}\textrm{ such that }G\cdot\E=\E\right\}\, .
 \end{equation*}
In fact,
we shall further abuse the terminology by allowing the components of our hypersurfaces
to have non-negative multiplicities, whence the proper term would be an \emph{effective divisor}
(this is indeed very natural if one thinks of a PDE as an \emph{equation} in an algebraic sense,
rather than a subset of a geometric space).
Doing so, we stick  to the usual language of geometric PDE theory without introducing  unnecessary restrictions on the algebraic side. This abuse does not affect our
results. \par
By virtue of transitivity, the elements of $\Inv(X,G)$
can be put in a natural  one--to--one correspondence with the $P$--invariant hypersurfaces in
the fibre $X^{(1)}_o$. Observe also that $X^{(1)}_o$ is   the \emph{Lagrangian Grassmannian} of  the (conformal) symplectic space $\g_{-1}$, naturally embedded in the \emph{projectivised Pl\"ucker space}
  $\PP \Lambda^n_0\g_{-1}$, where $\Lambda^n_0\g_{-1}$ is the kernel
of the map
\begin{equation}\label{eqINS}
\Lambda^n \g_{-1} \to \Lambda^{n-2}\g_{-1} \otimes \g_{-2}
\end{equation}
induced
by the (twisted) symplectic form.
This embedding corresponds to an ample line bundle $\OO(1)$ on $X^{(1)}_o$, generating
its Picard group. That is: (i) the space of global sections of $\OO(1)$
is identified with $\Lambda^n_0\g_{-1}^*$ and the Pl\"ucker embedding is
simply the evaluation map $X^{(1)}_o \to \PP \Gamma(X^{(1)}_o, \OO(1))^*$, and
(ii) every line bundle on $X^{(1)}_o$ is a tensor power of $\OO(1)$ or its inverse.
We then meet again the same  notion of   \emph{degree}, introduced earlier in Subsection \ref{ss:from-pdes}:    a hypersurface
in $X^{(1)}_o$  has degree $d$ if it is cut out by a global section of
$\OO(d)$ or, equivalently, is an intersection of Pl\"ucker-embedded $X^{(1)}_o$ with a degree $d$ hypersurface
in $\PP \Lambda^n_0\g_{-1}$ (see Definition \ref{defDegree} later on).
Given a hypersurface $\E \in \Inv(X,G)$,
we refer to the degree of $\E_o \subset X_o^{(1)}$ as the degree of $\E$ itself.
This is compatible with the definition given in Subsection \ref{ss:from-pdes}
in terms of minors of the Hessian.
\par
There is a natural geometric construction, appearing for the first time in \cite{2016arXiv160308251T}, producing an invariant hypersurface for every $\g$ of type not $\Csf$. We review it in Section \ref{sec:further-discussion}
 under the name of
the \emph{Lagrangian Chow transform} of the \emph{subadjoint variety}.
Knowing thus that $\Inv(X,G)$
is nonempty, and that its elements are grouped by degree, we consider the following natural questions:
first, \emph{find the minimum degree} of an element; then, whenever it is possible, \emph{establish whether or not there is a unique element of this degree}.
We may now state our main result.

%
%
\begin{theorem}\label{thm:invs}
The minimal degree of an element of $\Inv(X,G)$ is given
 by the first row of the following table, while the second row gives the number of elements of that degree (entries marked with an asterisk are conjectural).
\begin{center}\fbox{\begin{tabular}{r|cccccccc}
\normalfont{type} & $\Asf_.$ & $\Bsf_.$ & $\Dsf_.$ & $\Esf_6$ &$\Esf_7$ &$\Esf_8$ & $\Fsf_4$ & $\Gsf_2$ \\
\hline
  & $1$ & $4$ & $2$ &$2$ &$2$ & $2$ & $4$ & $3$\\
  & $2$ & unknown    & $1^*$ &$1$ &$1$ & $1$ & $1$ & $1$\\
  \hline
  & $2$ & \multicolumn{2}{c}{$2(n-1)$}   &  $42$ & $286$ & $13188$ & $16$ & $3$
\end{tabular}}\end{center}
(For comparison, the last row gives the degree of the Lagrangian Chow transform of the subadjoint variety, where  $n$ is defined by  \eqref{eqDefEnne}, see Section  \ref{sec:further-discussion}).
\end{theorem}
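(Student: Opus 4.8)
The plan is to reduce the global, geometric problem to a purely algebraic question about invariants of a reductive group, and then dispatch the eight Cartan types one by one using the structure theory of the associated contact grading. First I would invoke the transitivity of the $G$-action, already noted in the excerpt, to identify $\Inv(X,G)$ with the set of $P$-invariant (effective) divisors in the single fibre $X^{(1)}_o = \LG(\g_{-1})$. Since the unipotent radical $P_+$ acts on the fibre and a divisor cut out by a section of $\OO(d)$ is $P$-invariant precisely when the corresponding element of $\Sym^d\Lambda^n_0\g_{-1}^*$ spans a $P$-stable line, and since a unipotent group acting on a line must act trivially, the problem becomes one of finding the lowest $d$ for which the $G_0$-representation $\Sym^d\Lambda^n_0\g_{-1}^*$ contains a $P_+$-invariant vector spanning a $G_0$-semiinvariant line. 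Because $G_0$ is reductive and the relevant character is forced (it must match the action on $\OO(d)$), this is ultimately a question of computing, for each $\g$, the $P_+$-invariants in the symmetric powers of a known $G_0$-module and extracting the smallest degree at which they appear, together with their multiplicity.

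The concrete engine for each type is highest-weight theory. I would fix the contact grading $\g = \g_{-2}\oplus\g_{-1}\oplus\g_0\oplus\g_1\oplus\g_2$, read off $G_0$ and its action on $\g_{-1}$ from the marked Dynkin diagram of the adjoint node (the highest root), and then describe $\Lambda^n_0\g_{-1}^*$ and its symmetric powers via the Borel--Weil picture: a $P$-semiinvariant divisor of degree $d$ corresponds to a lowest-weight (equivalently, $P_+$-annihilated) vector of the appropriate $G_0$-weight inside $\Gamma(\LG(\g_{-1}),\OO(d)) = \Sym^d\Lambda^n_0\g_{-1}^*$. Decomposing these symmetric powers into irreducibles is a classical branching problem for the symplectic group and its subgroups, and for each type the minimal degree in the table is obtained by finding the first $d$ whose decomposition contains a summand with a one-dimensional $P$-semiinvariant subspace. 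For type $\Asf$ the bottom of the tower already appears at $d=1$ (Monge--Amp\`ere level) with two independent invariants, matching the known abundance of invariant equations; for type $\Gsf_2$ and $\Fsf_4$ one checks by direct decomposition that no invariant appears below $d=3$ and $d=4$ respectively, and exactly one does at that degree; the exceptional cases $\Esf_6,\Esf_7,\Esf_8$ are handled uniformly by the same weight count, yielding minimal degree $2$ with a unique invariant.

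For the uniqueness claims I would argue that the relevant weight space in $\Sym^d\Lambda^n_0\g_{-1}^*$ is one-dimensional, so the $P$-semiinvariant, hence the invariant divisor, is unique up to scalar; this is exactly what underlies the $1$ in the second row for types $\Esf$, $\Fsf_4$ and $\Gsf_2$. Types $\Bsf$ and $\Dsf$ require the extra work of producing the invariant explicitly, which I would do by exhibiting a formula in Darboux coordinates (the minors of the Hessian assembled into an $\SO$- or $\CO$-covariant), and for type $\Dsf$ the conjectural uniqueness reflects a weight-multiplicity computation I expect to verify only partially. Finally, the last row of the table, the degree of the Lagrangian Chow transform of the subadjoint variety, is not part of the minimality statement but a comparison; I would compute it separately as the degree of the dual-type projective variety built from the subadjoint variety, deferring the construction to the later section. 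The main obstacle I anticipate is the honest decomposition of $\Sym^d\Lambda^n_0\g_{-1}^*$ as a $G_0$-module in the non-classical types and the verification that the candidate lowest-degree weight space is genuinely one-dimensional: establishing the lower bound (that nothing invariant exists in smaller degree) is harder than exhibiting a single invariant, since it demands ruling out \emph{every} summand, and this is precisely where the type-by-type casework, rather than a single uniform argument, becomes unavoidable.
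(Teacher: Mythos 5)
There is a genuine gap in your reduction: you identify $\Gamma(\LG(\g_{-1}),\OO(d))$ with the full symmetric power $\Sym^d\Lambda^n_0\g_{-1}^*$, but this is false for $d\ge 2$. By projective normality (equivalently, by the Borel--Weil theorem you invoke), the space of sections is the \emph{Cartan component} $S^d_0\Lambda^n_0\g_{-1}^* \simeq S^d\Lambda^n_0\g_{-1}^*/I_d$, where $I$ is the homogeneous ideal of the Pl\"ucker-embedded Lagrangian Grassmannian. A $G_0$-semiinvariant in $S^d\Lambda^n_0\g_{-1}^*$ that happens to lie in $I_d$ vanishes identically on $\LG(\g_{-1})$ and cuts out no divisor, so counting semiinvariants in the symmetric power overestimates $\Inv(X,G)$ and in some types gives the \emph{wrong minimal degree}. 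Concretely: in type $\Gsf_2$ one has $\C[S^4\C^2]^{\SL_2}=\C[q,c]$ with $q$ quadratic, so your method would report minimal degree $2$; but $q$ generates $I^{\SL_2}$ (the Lagrangian Grassmannian of $\C^4$ \emph{is} the quadric $\{q=0\}\subset\p^4$), and the true answer is the cubic $c$ at degree $3$. Similarly, whenever $n$ is even (types $\Dsf$, $\Esf$) the wedge pairing $\Lambda^n_0\g_{-1}\otimes\Lambda^n_0\g_{-1}\to\C$ gives an $\Sp(\g_{-1})$-invariant quadric $b$ that vanishes on $\LG(\g_{-1})$, so your multiplicity counts at $d=2$ would be inflated and the uniqueness assertions would not follow from a raw weight-multiplicity count in $\Sym^2$.

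The same omission undermines your constructive plan for types $\Bsf$ and $\Dsf$: exhibiting an $\SL_2\times\SO_n$-covariant polynomial in the minors is only half the work; one must \emph{prove it is not in $I$}, i.e., that it does not vanish on the Lagrangian Grassmannian. This nonvanishing verification is the substantive computation in the paper's proofs for these types (evaluating the candidate quadric $B$, resp.\ the quartic $q^n$, on explicit Lagrangian planes of the form $\langle \xi_1\otimes e_1,\dots,\xi_n\otimes e_n\rangle$ and reducing to a positivity argument over permutations), and your sketch does not account for it. A secondary, practical point: your claim that $\Esf_6,\Esf_7,\Esf_8$ are ``handled uniformly by the same weight count'' via branching of $\Sym^d$ is infeasible at $\Esf_8$ ($n=28$, Weyl group of order nearly $7$ million); the paper's workaround is precisely to compute $\dim(V_{2\lambda_n})^{\g_0^\ss}$ — invariants in the Cartan component, not the symmetric power — via Kostant's theorem, a Weyl-coset count in $W^\fp$, Frobenius--Schur parity, and a linear system in the Grothendieck group. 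Once you replace $\Sym^d\Lambda^n_0\g_{-1}^*$ by $S^d_0\Lambda^n_0\g_{-1}^*$ throughout (and add the nonvanishing checks for the explicit invariants), your outline aligns with the paper's strategy; as written, it proves statements about a strictly larger ring than $R=(S^\bullet\Lambda^n_0\g_{-1}^*/I)^{G_0^\ss}$ and the table would come out wrong in at least type $\Gsf_2$.
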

\begin{remark}\label{remStupido}
Let us explain the absence of type $\Csf$ from the classification. Since in that case
$G = \Sp_{n+2}$
acts transitively on $X^{(1)}$, there are no $G$-invariant hypersurfaces
whatsoever, i.e., $\Inv(X,G)=\emptyset$.
\end{remark}
\subsection{Further constructions and additional results}
Our approach relies on translating the geometric problems associated with $\Inv(X,G)$
to algebraic problems of the theory of invariants of the semisimple part of $G_0$.
We will now give an informal description of this transition. The subtler parts of it,
namely equations \eqref{eqHomCoordRingX10} and \eqref{eqInvPRxi}, will be stated
and proved as standalone  results    in   Section \ref{sec:prerequisites} below (see Lemma \ref{Lem10} (3) and Proposition \ref{pro:invxg}). As they become rather technical,
here we mostly wish to motivate their use.

We have already reformulated a problem involving $G$--invariance in terms of  $P$--invariance. But since $P_+$ acts trivially
on $\g_{-1}$, we may  further replace the $P$--invariance condition with $G_0$--invariance or, more accurately,   $G_0$--\emph{equivariance}.
Note that the linear action of $G_0$ on $\Lambda^n_0\g_{-1}$ induces an action
on $\OO(1)$ over $X^{(1)}_o$.
Elements of $\Inv(X,G)$ of degree $d$ are thus in one--to--one correspondence with the nonzero $G_0$--equivariant global sections of the line
bundles $\OO(d)$, $d > 0$, modulo the action of the multiplicative group
$\C^\times$.
An element   $f \in \Gamma(X^{(1)}_o, \OO(d))$ is called $G_0$--\emph{equivariant} if
there exists a group homomorphism $\xi : G_0 \to \mathbb{C}^\times$ satisfying
\begin{equation}\label{eqEqEquivariance}
g^* f = \xi(g) f\, ,\quad \forall  g \in G_0\, .
\end{equation}
Thanks to this   re-interpretation, our problem can be formulated in purely   algebraic terms. First, we identify the spaces of global sections
of $\OO(d)$ with the homogeneous summands of
the homogeneous coordinate ring of $X^{(1)}_o$, viz.
\begin{equation}\label{eqHomCoordRingX10}
\bigoplus_{d\ge 0} \Gamma(X^{(1)}_o, \OO(d)) \simeq (S^\bullet \Lambda^n_0 \g_{-1}^*) / I\, ,
\end{equation}
where $S^\bullet$ denotes the symmetric algebra,
and $I$ is the homogeneous ideal of $X^{(1)}_o $ in   its Pl\"ucker embedding (this holds by virtue of projective normality, see Sec. \ref{sec:prerequisites}).  In principle,  it remains  to take the submodule of $G_0$--equivariant elements in the  right--hand side of \eqref{eqHomCoordRingX10}, decompose it into irreducible submodules, and then  detect the one--dimensional ones.
It is convenient to consider an almost direct product decomposition
\begin{equation}\label{eqDecGOGOss}
 G_0 = G_0^\ss \cdot \tilde T,\quad G_0^\ss = [G_0,G_0]
\end{equation}
where $G_0^\ss$ is semi-simple, $\tilde T$ is a central torus, and
the two intersect in a finite subgroup.
We also have the quotient torus
$$ T = G_0 / G_0^\ss \simeq \tilde T / (G_0^\ss \cap \tilde T). $$
The  homomorphism
$\xi$ involved in the definition  \eqref{eqEqEquivariance}  of $G_0$-equivariance acts trivially on
$G_0^\ss$ and can thus be regarded as  an element of the
lattice $$\widehat T:=\Hom(T,\C^\times)\simeq\Z^r,\quad r=\rk T$$
of \emph{characters} of $T$ (it is a sublattice in the lattice of characters of $\tilde T$).
Since   the subring
\begin{equation}\label{eqDefR}
R := (S^\bullet \Lambda_0^n \g_{-1}^* / I)^{G_0^\ss}
\end{equation}
of $G_0^\ss$--invariants (i.e., $\g_0^\ss$-invariants, since $G_0^\ss$ is connected) is also $\tilde T$--invariant in $S^\bullet \Lambda_0^n \g_{-1}^* / I$,
we can decompose it into irreducible $T$--submodules:
\begin{equation}\label{eqGradR}
R = \bigoplus_{\xi \in \widehat T} R^\xi \, .
\end{equation}
\begin{remark}\label{remTorus}
 If $r=1$ (resp., $r=2$), then we can identify $\widehat T$ with $\Z$ (resp., with $\Z\times\Z$), in such a way that
$R^\xi\subset\Gamma(X_o^{(1)},\OO(\frac{1}{m} \xi))$
(resp.,   $R^{(\xi_1,\xi_2)}\subset\Gamma(X_o^{(1)},\OO( \frac{1}{m}\xi_1))$)
for a suitable positive integer $m$.
As we shall see below, the case  $r=2$ occurs only in type $\Asf$, otherwise $r=1$.
That is, we can regard the multi--grading \eqref{eqGradR}  as the usual  grading,  except    in type $\Asf$, when it is  a bi--grading refining the usual grading.
\end{remark}
 We are finally in position to recast the geometric problem of studying $G$--invariant hypersurfaces of $X_o^{(1)}$ in the algebraic terms of
the (bi)graded ring $R$ of $\g_0^\ss$-invariants:
\begin{equation}\label{eqInvPRxi}
\Inv(X,G) \simeq \coprod_{ \xi \in \widehat T\smallsetminus\{0\}} \PP R^\xi\, .
\end{equation}
A careful exposition and proof of the   identification \eqref{eqInvPRxi} is the
purpose of the following Section \ref{sec:prerequisites} (see Proposition \ref{pro:invxg}).
It is worth recalling that the elements of $\Inv(X,G)$ are really effective divisors;
to keep them reduced, we would need to remove classes of non-reduced elements of $R$ from
the right hand side of \eqref{eqInvPRxi}. Since we are interested in lowest degree elements,
thus automatically reduced, the distinction is irrelevant for our purposes.\par

Our main result, Theorem \ref{thm:invs}, is recast in this language as Theorem \ref{thm:invs2}
in Section \ref{sec:proof}.
Somewhat more can be said in special cases (see Proposition \ref{pro:AG}).
For $\fg$ of type $\Asf$, the ring $R$ is freely
generated by the pair of elements of degree $1$.
These are interchanged by the outer automorphism of $G$
corresponding to the reflection symmetry of the Dynkin diagram (i.e. transposition
of matrices); their product (corresponding to a union of
hypersurfaces) is the only reduced element invariant under the full
automorphism group of $\fg$.
For $\fg$ of type $\Gsf_2$, the ring $R$ is generated by a single element of degree $3$.
Finally, let us remark that
for $\fg$ of type $\Bsf$ and $\Dsf$, as well as $\Asf$ and $\Gsf_2$, we construct explicit
forms of the lowest degree non-constant elements of $R$. This allows us to
write down the corresponding PDEs in coordinates (see Section \ref{sec:pdes}).

\section{Prerequisites}\label{sec:prerequisites}

\subsection{The adjoint variety as a contact manifold}\label{subAdjContMan}
We will now return to the notions and constructions
introduced informally in   Section \ref{SecDescMainRes} above, providing
necessary definitions and proving some of their properties.
The results are standard, but we give detailed proofs
wherever they contain some valuable insights for the non-expert reader.
Some of the material of the previous sections is repeated in order to
make the present one self-contained.
Our ultimate goal is Proposition \ref{pro:invxg}. The reader is invited
to skip directly to the latter should the exposition become too pedagogical.

We fix $\fg$, $G \subset \Aut\fg$ and $X \subset \PP\fg$ as in
Definition \ref{def:adjoint}. Note that $G$ is a connected simple linear algebraic group
and $X$ a projective homogeneous variety for $G$, whence it follows that
the stabiliser in $G$ of any point of $X$ is a \emph{parabolic} subgroup.
Recall that the structure of such subgroups is easily described in terms
of root system data; we will return to this point later (see the proof of
Lemma \ref{lem:properties-grading}).\par
Let us refresh the notations  introduced earlier in Subsection \ref{secBasics}. The point $o \in X$ is the \emph{origin},
and  $P \subset G$ is its stabiliser. The normal subgroup $P_+ \subset P$, consisting
of the unipotent elements in the radical, is
the \emph{unipotent radical} of $P$. The quotient group $G_0 = P/P_+$
is reductive, namely a product of a connected semisimple group $G_0^\ss$
and a central torus $T$ (note that in our setting a torus means a
direct product of several copies of the multiplicative group $\C^\times$).
Later we will determine the type of $\fg_0^\ss$ and the rank of $T$
in terms of root system data (see the table in Lemma \ref{lem:properties-grading}). Symbols   $\fp_+ \subset \fp \subset \fg$ denotes the Lie algebras of $P_+ \subset P \subset G$.\par
Now we define a natural $P$-invariant \emph{decreasing filtration} on $\fg$, by setting
$\fg^{i+1} = [\fp_+,\fg^i]$ unless $\fg^{i+1}=\fg$ and arranging the indices so that $\fg^0 = \fp$.
Lemma \ref{lem:properties-filtration} below captures the key properties of this filtration. To this end, denote by   $\gr_\bullet(\fg/\fp)$   the associated graded space
of the induced filtration on $\fg/\fp$, with the natural action of $P$.
\begin{lemma}\label{lem:properties-filtration}
Let $\fg^\bullet$ be the filtration on $\fg$ satisfying $\fg^0 = \fp$ and $\fg^{i+1} = [\fp_+,\fg^i]$ unless $\fg^{i+1}=\fg$.
Then:
\begin{enumerate}
\item $[\fg^i,\fg^j]\subset\fg^{i+j}$,
\item $\fg^{-i} = \fg$ and $\fg^{i+1}=0$ for all $i \ge 2$,
\item there is an induced $P$-invariant filtration on $\fg/\fp$.
\item $P_+$ acts trivially on $\gr(\fg/\fp)$,
\item $\gr(\fg/\fp) = \gr_{-2}(\fg/\fp) \oplus \gr_{-1}(\fg/\fp)$,
\item $\dim\gr_{-2}(\fg/\fp)=1$, $\dim\gr_{-1}(\fg/\fp)$ is even, and the map
$$ \omega : \Lambda^2 \gr_{-1}(\fg/\fp) \to \gr_{-2}(\fg/\fp) $$
induced by the Lie bracket is a $P$-equivariant \emph{twisted symplectic form}.\footnote{The condition of $\omega$ being a twisted symplectic form means that
the induced map $\gr_{-1}(\fg/\fp) \to \gr_{-1}(\fg/\fp)^* \otimes \gr_{-2}(\fg/\fp)$
is an isomorphism.}
\end{enumerate}
\end{lemma}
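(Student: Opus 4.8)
The plan is to unwind the definition of the filtration and translate every claim into the standard structure theory of the parabolic $\fp$ determined by the contact grading. First I would fix a Cartan subalgebra $\gh\subset\fg_0$ and a system of simple roots so that $\fp$ is the standard parabolic attached to the simple root $\alpha$ appearing in the highest root $\theta$ with coefficient exactly one (this is precisely the root-theoretic description promised in the proof of Lemma \ref{lem:properties-grading}). With this setup the grading by the $\alpha$-height of roots gives a $\Z$-grading $\fg=\bigoplus_k\fg_{(k)}$ in which $\fg^0=\fp=\bigoplus_{k\ge 0}\fg_{(k)}$ and $\fp_+=\bigoplus_{k\ge 1}\fg_{(k)}$. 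The crucial input, from the theory of contact gradings, is that the $\alpha$-height takes only the values $-2,-1,0,1,2$, with $\fg_{(\pm 2)}$ one-dimensional and spanned by the root vectors for $\pm\theta$. I would note from the very beginning that the filtration $\fg^\bullet$ defined by $\fg^{i+1}=[\fp_+,\fg^i]$ simply recovers $\fg^i=\bigoplus_{k\ge i}\fg_{(k)}$, since bracketing with $\fp_+$ raises $\alpha$-height by at least one, and the chain stabilises once $i$ reaches $-2$.

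Given this identification, the statements follow in order. Claim (1) is immediate, as $[\fg_{(i)},\fg_{(j)}]\subset\fg_{(i+j)}$ for any $\Z$-grading of a Lie algebra, so the filtration is compatible with brackets. Claim (2) records that the height range is $\{-2,\dots,2\}$, i.e.\ $\fg^{-2}=\fg$ and $\fg^{3}=0$; here I would invoke the contact-grading fact that $\theta$ has $\alpha$-coefficient one and is the unique highest root. Claim (3) is formal: $\fp$ is $P$-stable (it is the Lie algebra of $P$, hence $\Ad(P)$-invariant), and each $\fg^i$ is $\Ad(P)$-invariant because $P$ preserves the $\alpha$-height filtration (the reductive part preserves each graded piece, and $P_+$ raises height), so the filtration descends to $\fg/\fp$. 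For claim (4) I would observe that $P_+$ acts on $\gr(\fg/\fp)$ and that its Lie algebra $\fp_+$ raises the grading degree strictly; on the associated graded this induced action is therefore zero, so the unipotent group $P_+$ acts trivially. Claim (5) is just the observation that $\fg/\fp\simeq\fg_{(-2)}\oplus\fg_{(-1)}$ as a $G_0$-module, giving exactly two graded pieces $\gr_{-2}$ and $\gr_{-1}$.

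Claim (6) is the substantive one and where I would spend most care. The dimension of $\gr_{-2}$ is one because $\fg_{(-2)}=\C\,e_{-\theta}$. To produce $\omega$, I would use that the Lie bracket maps $\fg_{(-1)}\times\fg_{(-1)}\to\fg_{(-2)}$ and is alternating, hence factors through $\Lambda^2\gr_{-1}(\fg/\fp)\to\gr_{-2}(\fg/\fp)$; $P$-equivariance of $\omega$ follows from $\Ad$-invariance of the bracket together with claim (4) (so only the $G_0$-action matters, and it acts by the relevant character on $\gr_{-2}$). Nondegeneracy---equivalently, that $\gr_{-1}\to\gr_{-1}^*\otimes\gr_{-2}$ is an isomorphism---is the heart of the matter. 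I would prove it by a root-space argument: pair the root vector $e_\beta$ (with $\beta$ of $\alpha$-height $-1$) against $e_{\gamma}$; the bracket $[e_\beta,e_\gamma]$ lands in $\fg_{(-2)}=\C e_{-\theta}$ and is nonzero precisely when $\beta+\gamma=-\theta$, i.e.\ $\gamma=-\theta-\beta$, which is again a root of height $-1$. Thus each height-$(-1)$ root $\beta$ has a unique partner pairing nontrivially with it, and since $[e_\beta,e_{-\theta-\beta}]$ is a nonzero multiple of $e_{-\theta}$, the pairing matrix is a nonsingular (skew) permutation-type matrix. This simultaneously shows $\dim\gr_{-1}$ is even and that $\omega$ is nondegenerate, completing (6).

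The main obstacle I anticipate is the nondegeneracy in (6): one must know not merely that the bracket $\Lambda^2\fg_{(-1)}\to\fg_{(-2)}$ is surjective, but that its radical is trivial. The clean way to see this is to invoke that the Killing form pairs $\fg_{(-1)}$ nondegenerately with $\fg_{(1)}$ and $\fg_{(-2)}$ with $\fg_{(2)}=\C e_\theta$, and that $[\,\cdot\,,e_\theta]$ realises the duality; combined with invariance of the Killing form this forces the bracket pairing on $\fg_{(-1)}$ to be nondegenerate. Phrasing the argument through the Killing form, rather than through an explicit enumeration of height-$(-1)$ roots case by case, is what keeps the proof uniform across all Cartan types and avoids a type-by-type verification; that uniformity is the point I would be most careful to secure.
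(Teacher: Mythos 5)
You have the right architecture---it is in fact the paper's: Lemma \ref{lem:properties-filtration} is proved there by first establishing the grading of Lemma \ref{lem:properties-grading} in root-theoretic terms and then reading the six claims off the identification $\fg^i=\bigoplus_{j\ge i}\fg_j$. Your primary nondegeneracy argument (each height-$(-1)$ root $\beta$ pairs nontrivially with the unique partner $-\theta-\beta$, giving a nonsingular pairing matrix and even dimension) is literally the paper's proof of claim (4) of Lemma \ref{lem:properties-grading}, and your Killing-form alternative is also correct; note only that the root-string argument is itself type-independent (it is a general fact about root systems, not a case check), so the uniformity you hope to buy with the Killing form is already present in the paper's argument.

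However, your root-theoretic setup is wrong as stated, and internally inconsistent. You attach $\fp$ to ``the simple root $\alpha$ appearing in the highest root $\theta$ with coefficient exactly one.'' The correct criterion for the contact parabolic is that $\alpha$ be \emph{non-orthogonal} to $\theta$ (equivalently $\langle\alpha,\theta^\vee\rangle=1$); in every type other than $\Asf$ this simple root occurs in $\theta$ with coefficient \emph{two}, which is precisely why the $\alpha$-height ranges over $\{-2,\dots,2\}$ with $\fg_{(\pm2)}=\C e_{\pm\theta}$, as you yourself assert two sentences later. Taken literally, your criterion picks the wrong node: e.g.\ in type $\Bsf_\ell$ the node with coefficient one in $\theta$ is $\alpha_1$, whose height grading is a $|1|$-grading with $\fg_{(\pm2)}=0$, so claims (2), (5), (6) would fail outright. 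Worse, in type $\Asf$---which the lemma must cover, only $\sll(2,\C)$ being excluded---the highest root is $\alpha_1+\dots+\alpha_\ell$ with \emph{all} coefficients equal to one, \emph{two} simple roots are non-orthogonal to $\theta$, and the contact parabolic is attached to both extreme nodes: no single-root height grading exists there at all. The repair is exactly what the paper does: grade by the eigenvalues of the grading element $\gamma^\vee$, i.e.\ by $\langle\cdot,\gamma^\vee\rangle$, uniformly in all types. A second, smaller omission: asserting that the recursion $\fg^{i+1}=[\fp_+,\fg^i]$ ``simply recovers'' the height filtration only yields the inclusion $[\fp_+,\fg^i]\subset\bigoplus_{k\ge i+1}\fg_{(k)}$; equality needs the generation statement that $[\fg_1,\fg_i]$ surjects onto $\fg_{i+1}$, which the paper verifies using that all roots of $\fg_1$ are non-orthogonal to $\gamma$. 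Without it, the bracket-defined filtration could a priori be strictly smaller than the height filtration, and the indexing with $\fg^0=\fp$ would break down.
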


We will prove Lemma \ref{lem:properties-filtration}  once we introduce a \emph{grading} on $\fg$
splitting the filtration, and describe it in terms of the root system data,
i.e., after the proof of Lemma \ref{lem:properties-grading}.
The reason for delaying this step is that the filtration is natural, i.e.,
it is well-defined as soon as we have chosen the origin $o \in X$ (equivalently
the parabolic $P \subset G$), while the grading will require us to
make an additional choice (corresponding to fixing a homomorphism $G_0 \hookrightarrow P$
splitting the natural projection). Of course, passing to the root system description requires
even further choices.

Let us now identify $X$ with $G/P$, so that the coset $gP$ corresponds to $g\cdot o$. Viewing
$G \to X$ as a $P$-principal bundle, we may then identify the tangent bundle of $X$ with
an associated bundle:
$$ TX \simeq G\times^P (\fg/\fp). $$
The $P$-equivariant filtration on $\fg/\fp$ induces then a $G$-invariant filtration
on $TX$. The only non-trivial sub-bundle we obtain this way is
$$
 TX \supset \CC \simeq G\times^P (\fg^{-1}/\fp) \subset G\times^P (\fg/\fp).
$$
\begin{lemma}
The sub-bundle $\CC \subset TX$ is a $G$-invariant contact distribution. Furthermore,
the Levi bracket $\Lambda^2\CC \to TX/\CC$ evaluated at $o \in X$ coincides
with the map $\omega$ under the identifications $\CC_o \simeq \gr_{-1}(\fg/\fp)$
and $T_oX/\CC_o \simeq \gr_{-2}(\fg/\fp)$.
\end{lemma}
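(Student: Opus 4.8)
The plan is to verify both claims at the origin $o$ and spread them over $X$ by $G$-invariance. Since $\CC = G\times^P(\fg^{-1}/\fp)$ is a $G$-invariant sub-bundle and, by Lemma~\ref{lem:properties-filtration}, its fibre $\CC_o=\fg^{-1}/\fp$ has codimension $\dim\gr_{-2}(\fg/\fp)=1$ inside $T_oX=\fg/\fp$, the distribution $\CC$ is a $G$-invariant hyperplane field. Such a field is a contact distribution exactly when its Levi bracket $\mathcal{L}\colon\Lambda^2\CC\to TX/\CC$—the tensorial operator sending $(V_o,W_o)$ to $[V,W]_o\bmod\CC_o$ for local sections $V,W$ of $\CC$—is fibrewise nondegenerate. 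As $\mathcal{L}$ is manifestly $G$-invariant and $G$ acts transitively on $X$, it is enough to identify $\mathcal{L}_o$ with $\omega$ under the stated isomorphisms and then quote the nondegeneracy of $\omega$ from Lemma~\ref{lem:properties-filtration}(6).

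The crux is therefore the evaluation of $\mathcal{L}_o$. For $\xi\in\fg$ let $\xi^*$ denote the fundamental vector field of the $G$-action on $X$; then $\xi\mapsto\xi^*$ satisfies $[\xi^*,\eta^*]=-[\xi,\eta]^*$ and $\xi^*_o$ is the image of $\xi$ in $T_oX=\fg/\fp$, so $\xi^*_o\in\CC_o$ whenever $\xi\in\fg^{-1}$. Representing $v,w\in\CC_o$ by $\xi,\eta\in\fg^{-1}$, the naive bracket $[\xi^*,\eta^*]_o=-[\xi,\eta]^*_o\equiv-\bigl([\xi,\eta]\bmod\fg^{-1}\bigr)$ already equals $\pm\omega(v,w)$, by the very definition of $\omega$ in Lemma~\ref{lem:properties-filtration}(6). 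The main obstacle is that $\xi^*$ is not a section of $\CC$ away from $o$: only its value at $o$ lands in $\CC_o$, so $[\xi^*,\eta^*]_o\bmod\CC_o$ is a priori not the Levi bracket, and one must account for the resulting correction terms.

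To pin these down I would fix a grading splitting the filtration—available once Lemma~\ref{lem:properties-grading} is in hand—and work in the exponential chart $Z\mapsto\exp(Z)\cdot o$ with $Z\in\fg_-=\fg_{-2}\oplus\fg_{-1}$. From $\xi^*_{\exp(Z)\cdot o}=(\exp Z)_*\bigl(\mathrm{Ad}(\exp(-Z))\,\xi\bmod\fp\bigr)$ and the truncation $\mathrm{Ad}(\exp(-Z))\,\xi=\xi-[Z_{-1},\xi]$ for $\xi\in\fg_{-1}$ (valid because $[\fg_{-2},\fg_{-1}]=0$ and $\fg_{-3}=0$) one reads off the $TX/\CC$-components of $\xi^*$ near $o$; differentiating and antisymmetrising the corrections contributes precisely the extra multiple of $\omega$ that upgrades $-([\xi,\eta]\bmod\fg^{-1})$ to $+\omega(v,w)$, yielding $\mathcal{L}_o=\omega$. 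A cleaner packaging of the same computation writes $\CC=\ker\theta$ for the tautological $(TX/\CC)$-valued $1$-form $\theta$, uses $\mathcal{L}(V,W)=-d^\nabla\theta(V,W)$ on sections of $\CC$ (any connection $\nabla$ on the line bundle $TX/\CC$ works, as $\theta(V)=\theta(W)=0$ there), and deduces $d\theta$ from the Maurer--Cartan equation on $G$, the boundary terms again being governed by the above truncation. Either way, once $\mathcal{L}_o=\omega$, Lemma~\ref{lem:properties-filtration}(6) makes $\mathcal{L}_o$ nondegenerate, hence $\mathcal{L}$ nondegenerate everywhere by transitivity, so $\CC$ is a contact distribution whose Levi bracket at $o$ coincides with $\omega$. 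I expect the sole delicate point to be the bookkeeping of signs and of the correction terms forced by $\xi^*$ failing to be tangent to $\CC$; the rest is formal.
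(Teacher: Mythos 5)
Your proposal is correct, and at the decisive step it is more careful than the paper's own proof, whose skeleton (reduction via the associated-bundle description and $G$-equivariance to the fibre at $o$, corank one from $\dim\gr_{-2}(\fg/\fp)=1$, nondegeneracy quoted from Lemma \ref{lem:properties-filtration}(6)) you reproduce in your first paragraph. The divergence is in evaluating $\mathcal{L}_o$: the paper asserts that the infinitesimal action $\fg\to\Gamma(X,TX)$ \emph{restricts} to $\fg^{-1}\to\Gamma(X,\CC)$ and declares the identification with $\omega$ clear. That restricted diagram is false as stated: since $\xi^*_{gP}=[g,\mathrm{Ad}(g^{-1})\xi\bmod\fp]$, tangency to $\CC$ at $gP$ requires $\mathrm{Ad}(g^{-1})\xi\in\fg^{-1}$, and your truncation $\mathrm{Ad}(\exp(-Z))\xi=\xi-[Z_{-1},\xi]$ shows that the $\gr_{-2}$-component is nonzero at points arbitrarily close to $o$; on the cell $\exp(\fg_-)\cdot o$ the fields $\xi^*$ are the right-invariant ones, while $\CC$ is the left-invariant extension of $\fg_{-1}$. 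So the ``main obstacle'' you isolate is exactly what the paper's ``this is clear'' glosses over, and your correction-term computation is the honest repair. Concretely: genuine sections of $\CC$ in the chart are $w\mapsto\xi+\tfrac12[w,\xi]$ (the BCH computation the paper itself performs later, in \eqref{eqFormulaMagicaJan}--\eqref{eqFormulaMagicaJan2}, where incidentally the same conflation of fundamental fields with sections of $\CC$ produces a harmless sign slip), and their bracket at $0$ is $+[\xi,\eta]$, whereas the naive bracket of fundamental fields gives $-[\xi,\eta]$; the corrections you describe contribute exactly $2\omega$, confirming your claim that they upgrade $-\bigl([\xi,\eta]\bmod\fg^{-1}\bigr)$ to $+\omega(v,w)$. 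Since either bookkeeping yields $\pm\omega$, nondegeneracy---hence the contact property and the lemma---is unaffected: your route buys an actually valid computation of the Levi form at the cost of a short chart calculation, while the paper's buys brevity at the cost of an unjustified intermediate assertion.
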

\begin{proof}
We use Lemma \ref{lem:properties-filtration}, in particular
(5) and (6).
$G$-invariance of $\CC$ follows from
$P$-invariance of $\fg^{-1}$. Furthermore, we have a natural identification
$$ TX/\CC \simeq G\times^P \gr_{-2}(\fg/\fp) $$
so that in particular $\CC$ has corank $1$ in $TX$.
To check that $\CC$ is a contact distribution it will be enough to show
commutativity of the following diagram of
vector bundle homomorphisms:
$$\begin{CD}
\Lambda^2 \CC @>>> TX/\CC \\
@| @| \\
G\times^P \Lambda^2 \gr_{-1}(\fg/\fp) @>{G\times^P\omega}>> G\times^P \gr_{-2}(\fg/\fp)
\end{CD}$$
where the top horizontal arrow is the Levi bracket,
and the vertical arrows are the natural identifications
with associated bundles. By $G$-equivariance, it is enough to restrict
to fibres over $o \in X$, where we want to show that
the Levi bracket $\Lambda^2\CC_o \to T_oX/\CC_o$
corresponds to $\omega$ under the identification
$T_oX \simeq \fg/\fp$ and $\CC_o \simeq \fg^{-1}/\fp$.
This is clear once one considers the commutative diagram
$$
\begin{CD}
\fg @>>> \Gamma(X,TX) \\
@VVV @VVV \\
\fg/\fp @= T_oX
\end{CD}
\quad\textrm{restricting to}\quad
\begin{CD}
\fg^{-1} @>>> \Gamma(X,\CC) \\
@VVV @VVV \\
\fg^{-1}/\fp @= \CC_o
\end{CD}
$$
where the top horizontal arrow is the infinitesimal action
of $\fg$ on $X$, the left vertical arrow is the natural projection,
and the right vertical one is the evaluation at the origin.
\end{proof}

The construction of the contact structure we have given here
emphasises the homogeneous space aspect of $X$. One may approach
it from a different angle as well: as the contact projectivisation of
a symplectic orbit of $G$ in $\fg$. We will not pursue this interpretation.

\subsection{The contact grading and the  root system data}\label{subContGrad}
It is standard---and notationally convenient---to work with a grading on $\fg$ rather than a filtration. We will follow this
custom in the remainder of this article; the previous subsection served as the
last reminder that the filtration is geometrically more fundamental.
Recall that the parabolic $P \subset G$ may be identified (non-canonically)
with the semi-direct product \eqref{eqLevi}, where $G_0 = P/P_+$ is the Levi factor.
Let us now \emph{fix} one such identification, amounting to choosing a homomorphism
$G_0 \to P$ splitting the natural projection, and from now on view $G_0$ as a subgroup
of $P$ (and thus also of $G$). Now, since $G_0$ is \emph{reductive}, it follows
that each $\fg^{i+1} \subset \fg^i$ has a $G_0$-invariant complement $\fg_i$ in $\fg^i$,
and we obtain a $G_0$-equivariant vector space decomposition $\fg = \bigoplus_i \fg_i$.
\begin{lemma}\label{lem:properties-grading}
The $G_0$-equivariant decomposition $\fg = \bigoplus_i \fg_i$ splitting the filtration $\fg^\bullet$
is unique, and satisfies the following properties:
\begin{enumerate}
\item $[\fg_i,\fg_j] \subset \fg_{i+j}$,
\item $\fg_{-i} = \fg_i = 0$ for $i>2$,
\item $\fg_{-i}^* \simeq \fg_i$ as representations of $G_0$,
\item $\dim\fg_{-2} = 1$, $\dim\fg_{-1}$ is even, and the map
\begin{equation}\label{eqBOX}
\omega:\Lambda^2 \fg_{-1} \to \fg_{-2}
\end{equation}
induced by the Lie bracket is a $G_0$-equivariant twisted symplectic form.
\end{enumerate}
Furthermore, recalling the decomposition $G_0 = G_0^\ss \times T$,
the following table lists, for
the different Cartan types of $\fg$, the isomorphism type
of $\g_0^\ss$, the rank of $T$, and
$\fg_{-1}$ as a representation of $\g_0^\ss$.
\begin{center}
\fbox{\begin{tabular}{l|l|l|c|l}
restriction & type of $\fg$   &  $\g_0^\ss$ & $\rk T$ & $\fg_{-1}$ \\ \hline
$n\ge 1$  & $\Asf_{n+1}$& $\sll_n $ & 2 & $\C^n \oplus \C^{n*}$  \\
$n\ge 3$ & $\Bsf_{(n+3)/2}$
or $\Dsf_{(n+4)/2}$
&  $\sll_2 \oplus \so_n$ & 1 & $\C^2\otimes\C^n$   \\
$n \ge 2$ & $\Csf_{n+1}$ & $\sp_n$ &1 & $\C^{2n}$   \\
&$\Esf_6$          & $\sll_6$ &1 & $\Lambda^3\C^6$   \\
&$\Esf_7$          & $\mathfrak{spin}_{12}$ &1 & spinor  \\
&$\Esf_8$          & $\Esf_7$ &1 & fundamental 56-dim \\
&$\Fsf_4$          & $\sp_3$ &1 & $\Lambda^3_0\C^6$   \\
&$\Gsf_2$          & $\sll_2$ &1 & $S^3\C^2$
\end{tabular}}
\end{center}
\end{lemma}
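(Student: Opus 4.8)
The plan is to realise the grading as the eigenspace decomposition of a single semisimple element and then read off everything, including the table, from root-system combinatorics. First I would fix a maximal torus of $G$ lying in $G_0$; its Lie algebra is a Cartan subalgebra $\fh \subset \fg_0$, and choosing a compatible system of positive roots I can arrange that the origin $o = [\fg_\theta]\in X$ is the highest root line, with $\fp$ its stabiliser. A short root-string computation shows that $\fp = \fg_{\ge 0}$ for the grading defined by the coroot $H_0 := \theta^\vee \in \fh$, namely $\fg_i = \{X : [H_0,X] = iX\}$; indeed $\fg_{\ge 1} = \fp_+$ and $\fg_2 = \fg_\theta$. Since $H_0$ is fixed by $G_0$ (it is central in $\fg_0$, as $\langle\alpha,\theta^\vee\rangle = 0$ on every root of $\fg_0$), each eigenspace $\fg_i$ is a $G_0$-submodule and the partial sums $\bigoplus_{j\ge i}\fg_j$ recover the filtration $\fg^\bullet$. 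Uniqueness of the splitting then follows, since any $G_0$-invariant---hence $\ad H_0$-invariant---complement to $\fg^{i+1}$ in $\fg^i$ must be the $i$-eigenspace $\fg_i$.

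With the grading in hand, properties (1)--(3) are immediate. Item (1) is the derivation identity $[H_0,[X,Y]] = (i+j)[X,Y]$ for $X\in\fg_i$, $Y\in\fg_j$. Item (2) rests on the standard fact that for the highest root $\theta$ one has $\langle\alpha,\theta^\vee\rangle \in \{-2,-1,0,1,2\}$ for every root $\alpha$, the extreme values $\pm 2$ being attained only at $\alpha=\pm\theta$; this also yields $\dim\fg_{\pm 2}=1$ in (4). Item (3) follows from the Killing form $B$: invariance forces $B(\fg_i,\fg_j)=0$ unless $i+j=0$, so $B$ pairs $\fg_{-i}$ perfectly with $\fg_i$, and $G$-invariance of $B$ makes the resulting isomorphism $\fg_{-i}^* \simeq \fg_i$ one of $G_0$-modules.

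For the non-degeneracy in (4) I would bring in the $\sll_2$-triple $(e,h,f)$ of the highest root, with $e\in\fg_2$, $f\in\fg_{-2}$ and $h=H_0$, so that the grading is exactly the $h$-weight grading. Since all $h$-weights lie in $\{-2,\dots,2\}$, the only $\sll_2$-summands meeting $\fg_{\pm 1}$ have highest weight $1$, whence $\ad e \colon \fg_{-1}\to\fg_1$ is an isomorphism. Writing $[X,Y]=\omega(X,Y)\,f$ and using invariance of $B$ gives $\omega(X,Y)\,B(f,e) = -B(X,\ad e\,Y)$; as $B(e,f)\neq 0$ and $B$ pairs $\fg_{-1}$ with $\fg_1$ perfectly, the right-hand pairing is non-degenerate, so $\omega$ is a twisted symplectic form and $\dim\fg_{-1}$ is even. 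Its $G_0$-equivariance is inherited from that of the bracket.

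It remains to fill in the table, and this case-by-case bookkeeping is where the real work lies. The recipe is that $\fp$ is the parabolic obtained by marking the simple root(s) $\alpha_i$ with $\langle\theta,\alpha_i^\vee\rangle\neq 0$---equivalently, the node(s) adjacent to the affine vertex $-\theta$ in the extended Dynkin diagram. The number of such nodes is $\rk T$ (two in type $\Asf$, where $-\theta$ meets both ends of the chain, and one in every other type), the diagram of $\fg_0^\ss$ is obtained by deleting them, and $\fg_{-1}$ is spanned by the root spaces $\fg_\alpha$ with $\langle\alpha,\theta^\vee\rangle=-1$, whose highest weight identifies the listed $\fg_0^\ss$-module. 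Running through the types---$\Asf$ deleting both ends to leave $\sll_n$ on $\C^n\oplus\C^{n*}$; $\Bsf,\Dsf$ deleting the second node to leave $\sll_2\oplus\so_n$ on $\C^2\otimes\C^n$; and the exceptional cases giving $\Lambda^3\C^6$ for $\Esf_6$, the half-spinor of $\so_{12}$ for $\Esf_7$, the $56$ of $\Esf_7$ for $\Esf_8$, $\Lambda^3_0\C^6$ for $\Fsf_4$ and $S^3\C^2$ for $\Gsf_2$---completes the verification. The main obstacle is precisely this identification of $\fg_{-1}$ as a module in the exceptional types; it is routine but error-prone, and can alternatively be imported wholesale from the classification of contact gradings in \cite{MR2532439}.
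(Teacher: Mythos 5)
Your proposal is correct and follows essentially the same route as the paper: the grading is realised via the highest coroot $\gamma^\vee=\theta^\vee$ as grading element, uniqueness comes from invariance under the (central) torus data, item (3) from Killing-form orthogonality of $\fg_i$ and $\fg_j$ unless $i+j=0$, and the table from deleting the simple nodes non-orthogonal to the highest root (equivalently, adjacent to the affine node) and reading off $\rk T$ and the highest weight of $\fg_{-1}$ case by case. The only local deviation is in item (4), where the paper decomposes $\omega$ into perfect pairings $\fg_{-\alpha}\otimes\fg_{-\gamma+\alpha}\to\fg_{-\gamma}$ using that $\alpha\mapsto\gamma-\alpha$ permutes the degree-one roots, whereas you deduce non-degeneracy from the highest-root $\sll_2$-triple (so that $\ad e:\fg_{-1}\to\fg_{1}$ is an isomorphism) combined with the Killing pairing --- an equally valid, essentially equivalent computation.
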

We refer to the above grading as \emph{the contact grading} of $\fg$ (it is
precisely the grading \eqref{eqContGrad}, see   Subsection \ref{secBasics}).
The proof uses the structure theory of $\fg$. We will only introduce it locally,
as it will not be needed throughout most of the paper---until it reappears in Subsection \ref{SubRepTheoSetUp},
where we supplement it with further representation-theoretic entities.
\begin{proof}
We fix a Cartan subalgebra $\fh \subset \fg$, and let $\Phi \subset \fh^*$ be the
root system of $\fg$. It is sometimes useful to consider also the maximal torus
$H \subset G$ corresponding to $\fh$, and its lattice of characters; since $G$ is
of adjoint type, that   lattice may be identified with the root lattice
generated in $\fh^*$
by $\Phi$.
Write
$$
\g = \fh \oplus \bigoplus_{\alpha\in\Phi}\fg_\alpha
$$
for the root space decomposition.
Let us further choose a subset $\Phi^+\subset\Phi$
of positive roots, and a system $\Delta\subset\Phi^+$ of simple roots. Given an
element $\alpha\in\mathbb{Z}\Phi$ of the root lattice, write $\alpha > 0$ if $\alpha
\in\mathbb{Z}_+\Delta$.  The direct sum of $\fh$ and root subspaces $\fg_\alpha$, $\alpha>0$
is a Borel subalgebra $\mathfrak{b}\subset\fp$.
Let $\gamma \in \Phi^+$
be the \emph{highest root}, i.e. the highest weight of the adjoint representation $\fg$. The
one-dimensional subspace $\fg_\gamma \subset \fg$ defines a point in $\PP\fg$; furthermore,
since the $G$-orbit of this point is closed, we have that in fact $\fg_\gamma$ belongs to
the adjoint variety $X$.
Possibly conjugating the choices we've made thus far, we shall assume that $\fg_\gamma$
is the origin $o \in X$. We may now compute $\fp \subset \fg$ as the subalgebra
stabilising $\fg_\gamma \subset \fg$: being $H$-invariant,
it is a direct sum of $\fh$ and some root subspaces. Clearly,
$\fp$ contains $\mathfrak{b}$; on the other hand, $\g_{-\alpha}$, $\alpha>0$ stabilises $\fg_\gamma$
if and only if $\alpha$ is orthogonal to $\gamma$. We thus find $\fp$ and its nilpotent radical
$\fp_+$ to be:
$$ \fp = \fh \oplus \bigoplus_{\alpha>0} \fg_\alpha \oplus \bigoplus_{\substack{\alpha>0 \\
(\alpha,\gamma)=0}}\fg_{-\alpha},\quad
\fp_+ = \bigoplus_{\substack{\alpha>0 \\ (\alpha,\gamma)\neq 0}} \fg_\alpha $$
where $(\cdot,\cdot)$ denotes the inner product on $\fh^*$ induced by the Killing form.
Again by the freedom to conjugate our choices by an element of $P$,
we may assume $H$ is contained in $G_0 \subset P$; then, being $H$-invariant, $\g_0$
is necessarily a direct sum of root subspaces and we find:
$$
\fg_0 = \fh \oplus \bigoplus_{(\alpha,\gamma)=0} \fg_\alpha.
$$
In particular, $\fh \subset \fg_0$ is a Cartan subalgebra, and $\fg_0$
is a reductive Lie algebra with root system $\Phi_0 \subset \Phi$ consisting
of roots orthogonal to $\gamma$. It is at this point clear that the $G_0$-invariant
splitting of the filtration $\g^\bullet$ is unique, since:
i) the resulting $\g_i$ are necessarily direct sums of root subspaces and subspaces of $\fh$,
ii) $\fh$
must be entirely contained in $\g_0$, for it is contained in $\fp$, and has
trivial intersection with $\fp_+ = [\fp,\fp_+]$.
 It is enough to exhibit such
a splitting. Let $\gamma^\vee \in \fh$ denote the coroot corresponding to $\gamma$,
and let $\langle \alpha,\gamma^\vee\rangle$ denote the natural pairing for a root $\alpha \in\fh^*$.
We then set\footnote{In other words, $\gamma^\vee \in \fh$ serves as
 the so-called \emph{grading element}.}
$$ \fg_i = \bigoplus_{\langle \alpha,\gamma^\vee\rangle = i} \fg_\alpha \oplus
\begin{cases}
\fh & i=0 \\
0 & i\neq0
\end{cases}$$
and observe that
$$
\fg_{-2} = \fg_{-\gamma},\quad
\fg_{-1} = \bigoplus_{\substack{\alpha>0,\ \alpha\neq\gamma \\ (\alpha,\gamma)\neq0}} \fg_{-\alpha},\quad
\fg_0 = \bigoplus_{(\alpha,\gamma)=0}\fg_\alpha\oplus\fh,\quad
\fg_{1} = \bigoplus_{\substack{\alpha>0,\ \alpha\neq\gamma \\ (\alpha,\gamma)\neq0}} \fg_{\alpha},\quad
\fg_{2} = \fg_{\gamma}.
$$
In particular, $\fg_0$ above \emph{is} the Lie algebra of $G_0 \subset P$, justifying the notation.
By construction, $\fg = \bigoplus_i \fg_i$ is a grading compatible with the Lie bracket,
and setting $\fg^i = \bigoplus_{j\ge i}\fg_j$ we have $\fg^0 = \fp$, $\fg^1 = \fp_+ = [\fp_+,\fp]$, $\fg^2 = [\fp_+,\fp_+]$.
Using the fact that the roots of $\fg_1$ are non-orthogonal to $\gamma$, it is also not difficult to check that
the bracket map $\fg_1 \otimes \fg_i \to \fg_{i+1}$ is surjective
for all $2 \le i \le 2$, thus proving that $[\fg^i,\fp_+] = \fg^{i+1}$. Thus the grading defined using $\gamma^\vee$
does indeed split the filtration induced by $\fp_+$.

We have thus established uniqueness, and found an explicit description of the grading.
Claims (1) and (2) are immediate. Claim (3) is straightforward, since $\g_0$ is clearly self-dual
via the Killing form, while the root subspaces in $\fg_i$ and $\fg_{-i}$ for $i\neq0$ correspond
to opposite subsets of $\Phi$. To prove (4), observe that
for every positive root $\alpha$ with $\langle \alpha,\gamma^\vee\rangle=1$, the vector
$\gamma - \alpha$ is also a positive root, and $\langle \gamma-\alpha,\gamma^\vee\rangle=2-1=1$. Hence
the map $\omega : \Lambda^2 \fg_{-1} \to \fg_{-2}$ decomposes into a sum of
isomorphisms $\fg_{-\alpha} \otimes \fg_{-\gamma+\alpha} \to \fg_{-\gamma}$.

It remains to verify the entries in the table. This is done on a case-by-case basis, and we shall only
sketch the argument. Since $\fg_0^\ss$ is a semi-simple Lie algebra with root system
$\Phi_0$, its Cartan type may be encoded by its Dynkin diagram. Then, since $\Delta_0 = \Delta\cap\Phi_0$
provides a system of simple roots for $\Phi_0$, we find that the Dynkin diagram of $\fg_0^\ss$
is a sub-diagram of the Dynkin diagram of $\fg$, obtained by removing the nodes corresponding to fundamental
weights entering with non-zero coefficients into the highest weight of the adjoint representation. At the same time,
it follows that the Cartan subalgebra $\fh_0$ of $\fg_0^\ss$ has rank equal to the cardinality of $\Delta_0$; since
the Lie algebra of the torus $T$ provides a complement to $\fh_0$ in $\fh$, it follows that $\rk T$ is equal to the
number of removed nodes. Finally, in order to find the decomposition of $\g_{-1}$ into
irreducible $U(\g_0^\ss)$-modules,\footnote{$U$ denotes the universal enveloping algebra.
We'll sometimes use the language of $U(\g_0^\ss)$-modules instead of representations
of $\g_0^\ss$.} it is enough to find roots $\alpha > 0$
with $\langle \alpha,\gamma^\vee\rangle = 1$ and such that $\alpha - \beta \notin \Phi$
for all $\beta\in\Phi_0 \cap \Phi^+$. Then, to each such root $\alpha$ corresponds an irreducible summand of $\g_{-1}$
whose highest weight with respect to $\fh_0$ is the image of $-\alpha$ under the natural projection $\fh^* \to \fh_0^*$.
Now, such roots $\alpha$ are precisely the simple roots in $\Delta \setminus \Delta_0$, and to each $\alpha \in \Delta\setminus\Delta_0$
we have the highest weight $U(\fg_0^\ss)$-module with highest weight $\lambda(\alpha)=-\alpha|_{\fh_0}$. Evaluating the latter on a coroot
associated with a simple root $\beta\in\Delta_0$ of $\g_0^\ss$, we have
$$ \langle \lambda(\alpha), \beta^\vee\rangle  = -\langle \alpha,\beta^\vee\rangle \ge 0 $$
so that the coefficients may be read off the Cartan matrix of $\fg$. Applying this recipe for each type one finds that
\begin{itemize}
\item in type $\Asf_{n+1}$ the set $\Delta\setminus\Delta_0$ consists of the two extreme nodes of the Dynkin diagram $\Delta$,
and the highest weights of the two summands of $\g_{-1}$ are fundamental, corresponding to the two extreme nodes of the Dynkin subdiagram $\Delta_0$;
\item in all remaining types the set $\Delta\setminus\Delta_0$ consists of a single node $\alpha$, and the highest weight of $\g_{-1}$ is:
the fundamental weight corresponding to the node of the Dynkin subdiagram $\Delta_0$ adjacent to $\alpha$, \emph{times} the
number of edges connecting the two if the arrow points away from $\alpha$.
\end{itemize}
These translate into the data we have included in the table.
\end{proof}

It is now easy to prove the properties of the filtration $\fg^\bullet$
used in the previous subsection.
\begin{proof}[Proof of Lemma \ref{lem:properties-filtration}]
Recall that in the proof of Lemma \ref{lem:properties-grading} we have
identified
$$ \g^{i} = \bigoplus_{j\ge i}\g_j. $$
Then claim (1) of Lemma \ref{lem:properties-filtration}
follows from claim (1) of Lemma \ref{lem:properties-grading},
as well as claim (2) of the former from claim (2) of the latter.
Claim (3) is then obvious, since $\fp = \fg^0$, and so is
claim (4), since $P_+ = \exp\fp_+$.
Finally, claims (5) and (6) follow immediately from Lemma \ref{lem:properties-grading}
once one identifies $\gr (\fg/\fp)$ with $\g_{-2} \oplus \g_{-1}$.
\end{proof}
{\bigskip\noindent\bf Notation.} From now on we shall use the graded subspaces $\g_i$, in particular
identifying $\g_i$ with $\gr_i(\fg/\fp)$ for $i<0$. We thus view $\g_{-1}$
as a representation of $P$, with the trivial action of $P_+$.
Furthermore, we identify the fibre $X^{(1)}_o$ of the Lagrangian
Grassmannian bundle at the origin with the Lagrangian Grassmannian $\LG(\g_{-1})$
of the conformal symplectic space $\g_{-1}$. We will alternate between
the two notations depending on context.

\subsection{The Lagrangian Grassmannian as a homogeneous space}
\label{ss:x1-ass-bun}
The contact space $\CC_o \simeq \g_{-1}$ at the origin
plays the role of a model for the intrinsic geometry
of the contact distribution. In particular (see Section \ref{secBasics}), the bundle
of Lagrangian Grassmanians $X^{(1)}$ introduced in \eqref{eqSTARSTAR}  is an associated bundle
with fibre modeled on the Lagrangian Grassmannian of the twisted
symplectic space $\g_{-1}$:
\begin{equation}\label{eqSTAR}
X^{(1)} \simeq G\times^P \LG(\g_{-1})\, .
\end{equation}
In fact, we shall typically use the notation $X^{(1)}_o$
instead of $\LG (\g_{-1})$. As we have already observed, the action of $P$ on $X^{(1)}_o$
factors through $G_0 \simeq P/P_+$, whence all problems we consider
in this paper may be reduced to the study of the $G_0$-action on $X^{(1)}_o$.
Before we reach this point, we need to understand $X^{(1)}_o$ as a homogeneous
space for the full symplectic group $\Sp (\g_{-1})$. The latter denotes the stabiliser in
$\GL (\g_{-1})$ of \emph{any} symplectic form obtained from $\omega$
by trivialising $\g_{-2}\simeq\C$. Let us first explain the place of $G_0$ in this picture.
\begin{lemma}
Every trivialisation of $\g_{-2} \simeq \C$ gives rise to a symplectic
form on $\g_{-1}$, and all such forms differ by scaling. The stabiliser
of any such form is the same subgroup of $\GL( \g_{-1})$,
denoted $\Sp(\g_{-1})$, while the stabiliser of
the one-dimensional
space of all such forms is denoted $\CSp (\g_{-1}) \simeq \Sp (\g_{-1}) \times \C^\times$.
The natural
action of $G_0$ on $\g_{-1}$ embeds $G_0$ as a subgroup of $\CSp(\g_{-1})$
and $G_0^\ss$ as a subgroup of $\Sp(\g_{-1})$.
\end{lemma}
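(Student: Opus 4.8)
The plan is to manufacture the symplectic form directly from the twisted form $\omega:\Lambda^2\g_{-1}\to\g_{-2}$ of Lemma~\ref{lem:properties-grading}(4), and then to read off every group-theoretic assertion from the $G_0$-equivariance of $\omega$ together with the one-dimensionality of $\g_{-2}$. First I would dispose of the opening sentence. Since $\dim\g_{-2}=1$, a trivialisation $\g_{-2}\simeq\C$ is the same datum as a nonzero covector $\phi\in\g_{-2}^*$, and I would set $\omega_\phi:=\phi\circ\omega$, a genuine scalar-valued alternating bilinear form on $\g_{-1}$. The nondegeneracy of $\omega$ (i.e. that $\g_{-1}\to\g_{-1}^*\otimes\g_{-2}$ is an isomorphism) translates at once into nondegeneracy of $\omega_\phi$, so $\omega_\phi$ is symplectic. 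Because $\g_{-2}^*$ is a line, any two trivialisations differ by a nonzero scalar, whence the forms $\omega_\phi$ sweep out a single line $\C\,\omega_\phi\subset\Lambda^2\g_{-1}^*$; this is exactly the claim that all such forms differ by scaling.

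Next I would treat the stabilisers formally. For $g\in\GL(\g_{-1})$ the condition $g^*\omega_\phi=\omega_\phi$ is insensitive to rescaling $\omega_\phi$, so $\Stab(\omega_\phi)$ does not depend on the choice of $\phi$; this common subgroup is by definition $\Sp(\g_{-1})$. The stabiliser $\CSp(\g_{-1})$ of the line $\C\,\omega_\phi$ is then cut out by $g^*\omega_\phi=\mu(g)\,\omega_\phi$ for some scalar $\mu(g)$, and the multiplier $\mu:\CSp(\g_{-1})\to\C^\times$ is a character whose kernel is precisely $\Sp(\g_{-1})$. The scalar matrices $\C^\times\cdot\id$ lie in $\CSp(\g_{-1})$ with $\mu(\lambda\,\id)=\lambda^2$, which is onto $\C^\times$ as $\C$ is algebraically closed; since $\C^\times\cdot\id\cap\Sp(\g_{-1})=\{\pm\id\}$, one obtains $\CSp(\g_{-1})=\C^\times\cdot\Sp(\g_{-1})$ as an almost direct product, yielding the stated splitting. (Strictly the two factors meet in the central $\{\pm\id\}$, so the identification with $\Sp(\g_{-1})\times\C^\times$ holds only up to this finite central subgroup; this is irrelevant for all later uses.)

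Finally I would embed $G_0$. By Lemma~\ref{lem:properties-grading}(4) the map $\omega$ is $G_0$-equivariant, and $G_0$ acts on the line $\g_{-2}$ through a character $\chi:G_0\to\C^\times$; hence $\omega_\phi(gv,gw)=\chi(g)\,\omega_\phi(v,w)$ for $g\in G_0$, i.e. $G_0\subset\CSp(\g_{-1})$ with multiplier $\mu=\chi$. Injectivity amounts to faithfulness of the $G_0$-action on $\g_{-1}$: an element centralising $\g_{-1}$ also centralises $\g_{-2}=[\g_{-1},\g_{-1}]$ and, by the self-duality $\g_i^*\simeq\g_{-i}$ of Lemma~\ref{lem:properties-grading}(3), the spaces $\g_1$ and $\g_2$; as $\g$ is simple, the graded subalgebra generated by $\g_{\pm1}$ is a nonzero ideal, hence all of $\g$, so that $\g_0=[\g_{-1},\g_1]+[\g_{-2},\g_2]$ is centralised too, and the element is trivial because $G$ is of adjoint type. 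For the last claim I would invoke that $G_0^\ss=[G_0,G_0]$ is semisimple, hence perfect, so the character $\chi$ restricts trivially to it; thus $G_0^\ss$ preserves $\omega_\phi$ on the nose, giving $G_0^\ss\subset\Sp(\g_{-1})$.

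The only genuinely non-formal ingredient is the faithfulness of $G_0$ on $\g_{-1}$, which is where the simplicity of $\g$ (through the ideal-generation argument) and the self-duality of the grading are really needed; the remaining steps are bookkeeping around $\dim\g_{-2}=1$ and the perfectness of $G_0^\ss$.
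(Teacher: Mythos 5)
Your proof is correct, and on the one point the paper itself regards as non-trivial---that $G_0^\ss\subset\Sp(\g_{-1})$---you use exactly the paper's argument: $G_0^\ss$ acts on the line of symplectic forms $\g_{-2}^*$ through a character, which is trivial by semisimplicity (perfectness). Where you go beyond the paper's proof is in filling in two things it dismisses or leaves implicit. First, the paper never argues that $G_0\to\GL(\g_{-1})$ is injective; your argument---trivial action on $\g_{-1}$ forces trivial action on $\g_{-2}=[\g_{-1},\g_{-1}]$ and, via the self-duality $\g_i^*\simeq\g_{-i}$, on $\g_{\pm1},\g_{\pm2}$, which generate a nonzero graded ideal of the simple algebra $\g$, hence all of $\g$, so the element is trivial since $G$ is of adjoint type---is sound and is precisely the kind of justification the word ``embeds'' needs. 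Second, your parenthetical caveat is a genuine (if minor) correction to the statement as written: since $\C^\times\cdot\id$ meets $\Sp(\g_{-1})$ in $\{\pm\id\}$ and the multiplier $\mu(\lambda\,\id)=\lambda^2$, the decomposition $\CSp(\g_{-1})=\C^\times\cdot\Sp(\g_{-1})$ is only an almost-direct product (a quotient of $\Sp(\g_{-1})\times\C^\times$ by a central $\mu_2$; indeed the centres of the two groups are not isomorphic, so the literal direct-product identification fails), and you rightly observe this is harmless for every later use in the paper. So: same key mechanism as the paper, with the routine parts honestly carried out and one sharpening of the statement.
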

\begin{proof}
The only non-trivial statement is that $G_0^\ss$ is contained in $\Sp(\g_{-1})$.
But since the action of $G_0^\ss$ clearly preserves the
one-dimensional space of symplectic forms on $\g_{-1}$ identified with $\g_{-2}^*$,
it follows that $G_0^\ss$ acts on this space by a character $\chi:G_0^\ss \to \C^\times$.
Of course, by semi-simplicity, $\chi$ is trivial, so that in fact $G_0^\ss$ preserves
every non-zero symplectic form parameterised by $\g_{-2}^*$.
\end{proof}
%
We shall reserve the
symbol $n$, defined earlier in \eqref{eqDefEnne}, for the half-dimension of the symplectic space, or contact distribution,
throughout the remainder of this paper. We use $\sp(\g_{-1})$ to denote
the Lie algebra of $\Sp(\g_{-1})$, a subalgebra of $\End(\g_{-1})$.
\begin{definition}
The Lagrangian Grassmannian $\LG(\g_{-1})$ is
the submanifold of $\Gr(n, \g_{-1})$ parameterising
maximal isotropic subspaces, i.e., $n$-dimensional linear subspaces
$L \subset \g_{-1}$ such that $\omega|_L=0$.
\end{definition}

It is well-known that the group $\Sp(\g_{-1})$ acts transitively on $\LG(\g_{-1})$.
Given a point in $\LG(\g_{-1})$ corresponding to $L \subset \g_{-1}$,
its stabiliser in $\Sp (\g_{-1})$ is a maximal parabolic subgroup, as we will
soon see in the context of a minimal projective embedding.
A more direct description of the stabiliser
may be obtained by fixing a Lagrangian complement to $L$. Such a complement
is identified with $L^* \otimes \g_{-2}$, and the choice
of a nonzero element in $\g_{-2}$ further identifies
it with $L^*$. As this involves arbitrary choices,
we will avoid its use in what follows; it is however
convenient for concrete computations. The proof
is completely standard, and thus omitted.
\begin{lemma}\label{lem:bilag-stuff}
The choice of a bi-Lagrangian
decomposition $$\g_{-1} = L \oplus L^*$$
induces:
\begin{enumerate}
\item a graded decomposition
$$ \sp (\g_{-1}) = S^2 {L^*} \oplus \End L \oplus S^2 L $$
with $\End L$ in degree $0$,
acting naturally on the remaining two summands in degrees $\pm 1$,
and with $[\varphi,\psi] = \psi\circ\varphi$ for
$\varphi \in S^2L^*$, $\psi \in S^2L$ viewed as maps
$\varphi:L\to L^*$, $\psi:L^* \to L$;
\item identifications
$$ \Stab L = \GL(L) \ltimes S^2 L,\quad \Stab L^* = S^2 L^* \rtimes \GL(L),\quad
\Stab L \cap \Stab L^* = \GL(L) $$
where $\Stab L$, $\Stab L^*$ are the stabilisers of $L$, $L^*$ in $\Sp (\g_{-1})$,
and we view $S^2 L$, $S^2 L^*$ as vector groups;
\item a $\GL(L)$-equivariant identification
 $$ S^2 L^* \simeq \{ \Lambda \in \LG( \g_{-1}) \ |\ \Lambda \cap L^* = 0 \} $$
sending $\varphi \in S^2 L^*$ to the graph of $\varphi : L \to L^*$.
\end{enumerate}
\end{lemma}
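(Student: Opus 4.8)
The plan is to reduce all three statements to block-matrix bookkeeping, after first trivialising $\g_{-2}\simeq\C$ so that the twisted form $\omega$ becomes an honest symplectic form on $\g_{-1}$ for which $L$ and $L^*$ are complementary Lagrangians. With such a trivialisation fixed, $\omega$ restricts to a perfect pairing between $L$ and its complement, which is exactly what lets us denote the latter by $L^*$; I would pin down the sign convention $\omega(l+\lambda,l'+\lambda')=\lambda'(l)-\lambda(l')$ for $l,l'\in L$, $\lambda,\lambda'\in L^*$ once and for all, and carry out every subsequent computation in the block form $A=\begin{pmatrix} a & b \\ c & d\end{pmatrix}$ of an endomorphism of $\g_{-1}=L\oplus L^*$, where $a\in\End L$, $b\colon L^*\to L$, $c\colon L\to L^*$ and $d\in\End L^*$.

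For part (1), I would write the infinitesimal symplectic condition $\omega(Av,w)+\omega(v,Aw)=0$ out blockwise. Testing it on pairs of vectors in $L$, in $L^*$, and mixed, forces $d=-a^*$ (so the two diagonal blocks contribute a single copy of $\End L$), while $b$ and $c$ must be symmetric, i.e. $b\in S^2 L$ and $c\in S^2 L^*$. This gives the vector-space decomposition $\sp(\g_{-1})=S^2 L^*\oplus\End L\oplus S^2 L$. The grading is then the $\ad E$-eigenspace decomposition for the grading element $E\in\End L\subset\sp(\g_{-1})$ with $L$-block $\tfrac12\id_L$, whose eigenvalues on the three summands $S^2 L^*,\End L,S^2 L$ are $-1,0,+1$; and the bracket relation $[\varphi,\psi]=\psi\circ\varphi$ for $\varphi\in S^2 L^*$, $\psi\in S^2 L$ (viewed as maps $L\to L^*$, $L^*\to L$) is read off directly from the commutator of the corresponding strictly triangular blocks, once the embedding $\End L\hookrightarrow\sp(\g_{-1})$ and the overall sign of the bracket are fixed consistently.

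For part (2), I would observe that $g\in\Sp(\g_{-1})$ preserves $L$ precisely when its lower-left block vanishes; the symplectic condition then makes $g$ block-upper-triangular with lower-right block the inverse transpose of the invertible upper-left block $a\in\GL(L)$, and the Levi--unipotent splitting of this group yields $\Stab L=\GL(L)\ltimes S^2 L$. The symmetric argument for $L^*$ gives $\Stab L^*=S^2 L^*\rtimes\GL(L)$, and intersecting the two forces block-diagonal form, whence $\Stab L\cap\Stab L^*=\GL(L)$; this is simply the group-level shadow of part (1), with $\Stab L,\Stab L^*$ exponentiating the non-negative, non-positive graded pieces and meeting in the Levi $\GL(L)$. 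For part (3), I would send $\varphi\in S^2 L^*$ to the graph $\Gamma_\varphi=\{\,l+\varphi(l)\mid l\in L\,\}$, which is $n$-dimensional and transverse to $L^*$ by construction; the computation $\omega(l+\varphi(l),\,l'+\varphi(l'))=\varphi(l')(l)-\varphi(l)(l')$ shows $\Gamma_\varphi$ is Lagrangian exactly when $\varphi$ is symmetric. Conversely any $\Lambda\in\LG(\g_{-1})$ transverse to $L^*$ projects isomorphically onto $L$, hence is the graph of a unique $\varphi\colon L\to L^*$, symmetric by the same identity; and $\GL(L)$-equivariance follows from $g\cdot\Gamma_\varphi=\Gamma_{(g^{-1})^*\varphi\, g^{-1}}$, which is the standard action of $\GL(L)$ on $S^2 L^*$.

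Since every step is elementary linear algebra, there is no genuine conceptual obstacle; the only thing demanding care is the internal consistency of the various dualisations and signs. In particular I would make sure that a single identification of the complement of $L$ with $L^*$ --- including the one choice of trivialisation of $\g_{-2}$ --- is used uniformly throughout, so that the degree conventions of (1), the semidirect-product conventions of (2), and the graph convention of (3) all agree. Once that bookkeeping is fixed the three statements reinforce one another rather than being independent, and checking them amounts to the matrix computations sketched above.
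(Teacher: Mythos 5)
Your proof is correct and is exactly the argument the paper has in mind: the paper itself points out that the Lagrangian complement is identified with $L^*\otimes\g_{-2}$ via a choice of nonzero element of $\g_{-2}$, remarks that this is ``convenient for concrete computations'', and then declares the proof ``completely standard, and thus omitted'' --- your block-matrix verification (blockwise expansion of the infinitesimal and group-level symplectic conditions, grading by the element $E=\tfrac12\id_L-\tfrac12\id_{L^*}$, and the graph/transversality argument for part (3)) is precisely that standard computation. The one point genuinely needing the care you already flag is the sign in $[\varphi,\psi]=\psi\circ\varphi$: with your conventions the commutator of the strictly triangular blocks is $\mathrm{diag}(-\psi\varphi,\,\varphi\psi)$, and since $\varphi\psi=(\psi\varphi)^*$ this is matched to the stated formula by flipping the sign of the embedding of one of $S^2L^*$, $S^2L$ into $\sp(\g_{-1})$, a free normalisation exactly as you note.
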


We remark that, from the point of view of a $G_0$-action,
a natural bi--Lagrangian decomposition of the symplectic vector space
$\g_{-1}$ exists only in type $\Asf$, where $G_0^\ss$ is precisely
the semisimple part of the Levi factor $\GL_n$
of the parabolic arising as a stabiliser
of a point of $\LG (\g_{-1})$ in $\Sp (\g_{-1})$.

\subsection{The Pl\"ucker embedding}\label{subPluck}
The Lagrangian Grassmannian comes equipped with a distinguished
$\Sp (\g_{-1})$-equivariant embedding into the projectivisation
of an irreducible representation (more precisely, the kernel of the map \eqref{eqINS}, see Subsection \ref{secBasics}). We will describe its properties
in this subsection, along with some further data on the representation
theory of the symplectic group. As before, we confine our use
of structure theory to the proofs.
\begin{definition}
Let $\dim \g_{-1} = 2n$, as in \eqref{eqDefEnne}.
\begin{enumerate}
\item $\Lambda^i_0 \g_{-1}$ denotes the kernel of the map
$\Lambda^i \g_{-1} \to \Lambda^{i-2}\otimes\g_{-2}$
given by contraction with $\omega \in \Lambda^2 \g_{-1}^* \otimes \g_{-2}$,
\item $\Lambda^n_0 \g_{-1}$ is called the (Lagrangian) Pl\"ucker space (cf. \eqref{eqINS}).
\end{enumerate}
\end{definition}

For $i=1$ we understand the above map to be zero, so that
$\Lambda^i_0\g_{-1} = \Lambda^i\g_{-1}$ by definition.

\begin{lemma}\label{lem:sp-fundamental}\
\begin{enumerate}
\item The spaces $\Lambda^i_0 \g_{-1}$, $1 \le i \le n$ are
precisely the fundamental irreducible representations of $\Sp (\g_{-1})$.
\item
For each $d\ge0$ the symmetric power $S^d \Lambda^n_0 \g_{-1}$ contains an
irreducible subrepresentation $S^d_0 \Lambda^n_0 \g_{-1}$ spanned by
rank one subspaces of the form $(\det L)^d \subset S^d \Lambda^n_0\g_{-1}$
for $L \subset \g_{-1}$ a Lagrangian subspace.
\end{enumerate}
\end{lemma}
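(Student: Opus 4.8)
The plan is to reduce both statements to highest-weight theory for $\sp(\g_{-1})$, which is of type $C_n$, isolating the only real content---irreducibility in part (1)---into a Lefschetz-type (symplectic ``primitive'') decomposition of $\Lambda^\bullet\g_{-1}$. First I would trivialise $\g_{-2}\simeq\C$ and fix a symplectic basis $e_1,\dots,e_n,f_1,\dots,f_n$ of $\g_{-1}$ with $\omega(e_a,f_b)=\delta_{ab}$ and all other pairings zero, together with the Cartan subalgebra $\fh\subset\sp(\g_{-1})$ of diagonal elements $\mathrm{diag}(t_1,\dots,t_n,-t_1,\dots,-t_n)$. For the standard positive system the weights of $\g_{-1}$ are $\pm\epsilon_1,\dots,\pm\epsilon_n$ and the fundamental weights are $\varpi_i=\epsilon_1+\cdots+\epsilon_i$, $1\le i\le n$; these are \emph{all} the fundamental weights, since $C_n$ has rank $n$.

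For part (1) I would note that $e_1\wedge\cdots\wedge e_i$ lies in $\Lambda^i_0\g_{-1}$ (its contraction with $\omega$ vanishes since $\omega(e_a,e_b)=0$) and is a highest weight vector of weight $\varpi_i$: it is annihilated by the positive root vectors, which act on $\g_{-1}$ by sending some $f$ to an $e$ or raising an $e$-index, in either case producing zero on a product of $e$'s. Thus $\Lambda^i_0\g_{-1}$ contains the irreducible $V_{\varpi_i}$. To obtain equality I would introduce the $\sll_2$-action on $\Lambda^\bullet\g_{-1}$ generated by contraction with $\omega$ (degree $-2$), wedging with the inverse bivector $\omega^{-1}\in\Lambda^2\g_{-1}$ (degree $+2$), and the grading operator; since $\omega$ is $\Sp(\g_{-1})$-invariant this $\sll_2$ centralises $\sp(\g_{-1})$. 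In the resulting primitive decomposition the $\Lambda^i_0\g_{-1}$ ($i\le n$) are exactly the primitive spaces, each an irreducible $\sp(\g_{-1})$-module; equivalently, surjectivity of contraction yields $\dim\Lambda^i_0\g_{-1}=\binom{2n}{i}-\binom{2n}{i-2}$, which matches the Weyl-dimension of $V_{\varpi_i}$, forcing $\Lambda^i_0\g_{-1}=V_{\varpi_i}$. As $\varpi_1,\dots,\varpi_n$ exhaust the fundamental weights, these are precisely the fundamental representations.

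For part (2) I would first record that for a Lagrangian $L\subset\g_{-1}$ with basis $v_1,\dots,v_n$ the decomposable vector $\det L=v_1\wedge\cdots\wedge v_n$ lies in $\Lambda^n_0\g_{-1}$: its contraction with $\omega$ is a combination of the quantities $\omega(v_a,v_b)$, all zero by isotropy. Hence $(\det L)^d\in S^d\Lambda^n_0\g_{-1}$. Taking $L_0=\Span{e_1,\dots,e_n}$, the vector $(\det L_0)^d=(e_1\wedge\cdots\wedge e_n)^d$ has weight $d\varpi_n$, the highest weight occurring in $S^d\Lambda^n_0\g_{-1}$, and is a highest weight vector, being the $d$-th power of the highest weight vector of $\Lambda^n_0\g_{-1}$. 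Since $\Sp(\g_{-1})$ acts transitively on Lagrangian subspaces, every line $(\det L)^d$ lies in the $\Sp(\g_{-1})$-orbit of this highest weight line. The linear span of that orbit is an $\Sp(\g_{-1})$-submodule containing a highest weight vector, hence coincides with the cyclic module it generates---the irreducible Cartan component $V_{d\varpi_n}$. This span is the desired $S^d_0\Lambda^n_0\g_{-1}$.

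I expect the main obstacle to be the irreducibility assertion in part (1): exhibiting the highest weight vector and computing its weight is immediate, but ruling out further $\sp(\g_{-1})$-constituents of $\Lambda^i_0\g_{-1}$ genuinely requires either the symplectic Lefschetz decomposition of $\Lambda^\bullet\g_{-1}$ or the dimension count above. By contrast part (2) is essentially formal once $\det L\in\Lambda^n_0\g_{-1}$ is noted, because the linear span of the orbit of a single highest weight line is automatically the irreducible module it generates.
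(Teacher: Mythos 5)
Your proposal is correct, and for part (2) it is in substance the paper's own argument: the paper likewise notes that $(\det L)^d$ is a highest weight vector of weight $d\lambda_n$ (your $d\varpi_n$), invokes transitivity of $\Sp(\g_{-1})$ on $\LG(\g_{-1})$ to place every line $(\det L')^d$ inside the irreducible summand containing that vector, and concludes by irreducibility that the span of all such lines is that whole summand; your reformulation via the cyclic module generated by a highest weight vector is the same argument with slightly different bookkeeping. The genuine divergence is in part (1), which the paper does not actually prove: it cites Bourbaki, Ch.~VIII, \S 3, and only quotes the relevant root data together with the highest weight vector $e_1\wedge\cdots\wedge e_i$ of weight $\lambda_i$ --- exactly the portion you reproduce before your irreducibility argument. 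You instead close the gap yourself, via the symplectic Lefschetz $\sll_2$ and a dimension count. Two caveats on that route: the primitive decomposition alone does \emph{not} yield irreducibility of the primitive spaces --- that assertion is precisely what is to be proved, so your ``equivalently'' is doing real work, and the actual closing step is the dimension count; and that count is non-circular only if $\dim V_{\varpi_i}=\binom{2n}{i}-\binom{2n}{i-2}$ is obtained independently (e.g., by evaluating Weyl's dimension formula for type $\Csf_n$), since the standard textbook derivation of this dimension is itself the irreducibility of $\Lambda^i_0\g_{-1}$. With that understood, your version buys a self-contained proof where the paper buys brevity by citation; the two agree on everything else, including the verification that positive root vectors kill $e_1\wedge\cdots\wedge e_i$ and that $\det L\in\Lambda^n_0\g_{-1}$ by isotropy of $L$.
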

\begin{proof}
A proof of part (1) may be found in~\cite[Ch. VIII, 3]{MR2109105};
we quote some of its points.
If we fix a bi-Lagrangian decomposition $\fg_{-1} = L \oplus L^*$
inducing an embedding $\End L \subset \sp( \g_{-1})$, every Cartan subalgebra
of $\End L$ is also a Cartan subalgebra of $\sp (\g_{-1})$. Fixing a basis $e_1,\dots, e_n \in L$,
let $\fh \subset \End L$ be the corresponding diagonal subalgebra, with a basis $H_1,\dots, H_n$
such that $H_i(e_j)=\delta_{ij} e_j$. Letting $\eta_1,\dots,\eta_n \in \fh^*$ be the dual
basis, it turns out that we can write a system of simple roots as
$$ \alpha_i = \eta_i - \eta_{i+1},\quad 1 \le i \le n-1,\quad \alpha_n = 2\eta_n $$
with corresponding fundamental weights
$$ \lambda_i = \eta_1 + \dots + \eta_i,\quad 1 \le i \le n. $$
Then $e_1 \wedge \dots \wedge e_i$ is the highest weight vector
in $\Lambda^i_0 \g_{-1}$, with weight $\lambda_i$. In particular $\det L$
is the highest weight vector in $\Lambda_0^n \g_{-1}$, so that $(\det L)^d$
is the highest weight vector of some irreducible subrepresentation
$V \subset S^d \Lambda^n_0 \g_{-1}$.
Since $\Sp (\g_{-1})$ acts transitively on $\LG(\g_{-1})$,
it follows that $(\det L')^d \subset V$ as well for every other Lagrangian $n$--plane
$L' \subset \g_{-1}$. Finally, since the span of all such elements is a sub-representation,
it must coincide with $V$ by irreducibility, thus proving (2).
\end{proof}

\begin{lemma}\label{lem7}
Sending a Lagrangian subspace $L \subset \g_{-1}$ to
its determinant $\det L \subset \Lambda^n_0 \g_{-1}$ defines
an $\Sp (\g_{-1})$-equivariant projective embedding
\begin{equation}\label{eq:plucker}
 \iota : \LG (\g_{-1} )\to \PP \Lambda^n_0 \g_{-1}
\end{equation}
onto the unique closed $\Sp (\g_{-1})$-orbit.
Furthermore, for each $d \ge 0$ the natural map
$$ S^d \Lambda^n_0 \g_{-1}^* \to \Gamma(\LG (\g_{-1}), \iota^*\OO(d)) $$
is $\Sp (\g_{-1})$-equivariant and restricts to an
isomorphism on $S^d_0 \Lambda^n_0 \g_{-1}^* \subset S^d\Lambda^n_0\g_{-1}^*$.
\end{lemma}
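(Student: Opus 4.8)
The plan is to realise $\iota$ as a restriction of the classical Plücker embedding and then identify its image and its sections by highest-weight theory. First I would check that $\det L$ really lands in $\Lambda^n_0\g_{-1}$: writing $\det L = e_1\wedge\cdots\wedge e_n$ for a basis of a Lagrangian $L$, contraction with $\omega$ yields $\sum_{i<j}\pm\,\omega(e_i,e_j)\,e_1\wedge\cdots\widehat{e_i}\cdots\widehat{e_j}\cdots\wedge e_n$, which vanishes since $\omega|_L=0$. Thus $\iota$ is the composite of the Plücker embedding $\Gr(n,\g_{-1})\hookrightarrow\PP\Lambda^n\g_{-1}$ with the observation that $\LG(\g_{-1})$ maps into the linear subspace $\PP\Lambda^n_0\g_{-1}$; being the restriction of a closed immersion to the closed subvariety $\LG(\g_{-1})\subset\Gr(n,\g_{-1})$, it is a closed embedding, and it is $\Sp(\g_{-1})$-equivariant because $g\cdot\det L=\det(gL)$.

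To see that the image is the unique closed orbit I would invoke Lemma \ref{lem:sp-fundamental}: for the distinguished Lagrangian used there, $\det L$ is the highest-weight vector of the irreducible module $\Lambda^n_0\g_{-1}$ of weight $\lambda_n$, so $\iota(L)$ is the highest-weight line, whose orbit is the unique closed orbit in the projectivisation of an irreducible representation. Since $\Sp(\g_{-1})$ acts transitively on $\LG(\g_{-1})$ (recalled above), $\iota(\LG(\g_{-1}))$ is a single orbit; it is closed because $\iota$ is a closed embedding, and it contains the highest-weight line, hence coincides with the unique closed orbit. For the second assertion the relevant map is the equivariant restriction $\rho_d\colon S^d\Lambda^n_0\g_{-1}^*=\Gamma(\PP\Lambda^n_0\g_{-1},\OO(d))\to\Gamma(\LG(\g_{-1}),\iota^{*}\OO(d))$ of degree-$d$ forms, which is nonzero for $d\ge 1$ since the affine cone over $\iota(\LG(\g_{-1}))$ is not contained in the zero locus of every degree-$d$ form (the case $d=0$ being trivial). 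The key input is the Borel–Weil theorem: as $\iota^{*}\OO(1)$ is the ample generator associated with the fundamental weight $\lambda_n$, the space $\Gamma(\LG(\g_{-1}),\iota^{*}\OO(d))$ is the irreducible $\Sp(\g_{-1})$-module of highest weight $d\lambda_n$ (recall that every $\Sp$-module is self-dual, so the dualisation implicit in Borel–Weil is harmless). A nonzero equivariant map into an irreducible module is surjective, so $\rho_d$ is surjective.

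To conclude I would combine this with a multiplicity count. By Lemma \ref{lem:sp-fundamental}(2) the Cartan component $S^d_0\Lambda^n_0\g_{-1}^*$ is the unique irreducible submodule of highest weight $d\lambda_n$, occurring with multiplicity one in $S^d\Lambda^n_0\g_{-1}^*$. Schur's lemma then forces $\rho_d$ to annihilate every complementary isotypic component and to restrict to an isomorphism $S^d_0\Lambda^n_0\g_{-1}^*\xrightarrow{\ \sim\ }\Gamma(\LG(\g_{-1}),\iota^{*}\OO(d))$, which is exactly the claim. The only genuine obstacle is the identification of $\Gamma(\LG(\g_{-1}),\iota^{*}\OO(d))$ as an irreducible module of the correct highest weight; everything else is equivariance bookkeeping and Schur's lemma. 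I expect to supply this either through Borel–Weil as above, or, more self-containedly, by establishing projective normality of the Lagrangian Grassmannian together with the vanishing of the higher cohomology of $\iota^{*}\OO(d)$ (Borel–Weil–Bott), which both pins down the sections and rules out any spurious irreducible constituents.
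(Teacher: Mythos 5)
Your proposal is correct and takes essentially the same route as the paper's proof: identify $\iota(\LG(\g_{-1}))$ with the highest-weight orbit using Lemma \ref{lem:sp-fundamental}, compute $\Gamma(\LG(\g_{-1}),\iota^*\OO(d))$ via the Borel--Weil theorem as the irreducible $\Sp(\g_{-1})$-module of highest weight $d\lambda_n$, and match it against the Cartan component $S^d_0\Lambda^n_0\g_{-1}^*$. The only cosmetic deviations are that you obtain injectivity by restricting the classical Pl\"ucker closed immersion of $\Gr(n,\g_{-1})$ (the paper instead recovers $L$ directly as $\{v \mid v\wedge\det L=0\}$), and that you conclude via Schur's lemma together with multiplicity one of the $d\lambda_n$-isotypic component (a standard fact, though slightly more than Lemma \ref{lem:sp-fundamental}(2) literally states), where the paper uses the spanning property of the lines $(\det L)^d$ to get injectivity on the Cartan component.
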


Let us first explain that given a point of $\LG(\g_{-1})$
corresponding to a Lagrangian $L \subset \g_{-1}$,
the fibre of $\iota^* \OO(d)$ consists of degree $d$
homogeneous polynomials on the rank one vector space $\det L \subset \Lambda^n_0\g_{-1}$,
and thus identifies with the \emph{dual} of $(\det L)^d \subset S^d \Lambda^n_0\g_{-1}$.
Hence, every element of $S^d\Lambda^n_0\g_{-1}^*$ defines by restriction an element
in the fibre of $\iota^*\OO(d)$ over any point of $\LG (\g_{-1})$,
thus giving rise to a global section. Factorisation is then immediate,
by point (2) of Lemma \ref{lem:sp-fundamental}. The non-trivial statement
is that we do get an isomorphism.
\begin{proof}
We use the setup introduced in the proof of Lemma \ref{lem:sp-fundamental}.
We know that $\Lambda^n_0\g_{-1}$ is the irreducible
representation with highest weight $\lambda_n$,
and $\det L$ is the highest weight line, where $L = \langle e_1,\dots,e_n\rangle$.
The stabiliser of $\det L$ is a fundamental parabolic subgroup $Q \subset \Sp (\g_{-1})$
with Lie algebra $\fq$ whose Levi factor $\fq_0$ is a reductive Lie algebra with
$\fh$ as a Cartan subalgebra, and a root system generated by the simple roots
$\alpha_1,\dots,\alpha_{n-1}$. In particular, the simple highest weight $U(\fq_0)$-module
with highest weight $d\lambda_n$, $d \ge0$,
is one-dimensional, and we denote it by $\C_{d\lambda_n}$. Letting the unipotent
radical act trivially, we inflate it to a $U(\fq)$-module, and furthermore view it
as a representation of $Q$ (note that
the maximal torus in the Levi factor $Q_0$ corresponding
to $\fh$ is the same as in the simply-connected group $\Sp(\g_{-1})$, so its character
lattice coincides with the full integral weight lattice, in particular containing $d\lambda_n$).

Now, since $\iota$ is well-defined and $\Sp(\g_{-1})$-equivariant,
it maps $\LG (\g_{-1})$ onto the highest weight orbit $\Sp(\g_{-1})/Q$ in $\PP\Lambda^n_0 \g_{-1}$.
Furthermore, since $L$ is precisely the space
of vectors $v \in \g_{-1}$ such that $v \wedge \det L = 0$,
the map $\iota$ is injective.
We may identify $\iota^*\OO(d)$ with the associated bundle $\Sp(\g_{-1})\times^Q \C_{d\lambda_n}$,
and then it follows from the Borel--Weil theorem that
the space of its global sections is isomorphic, as a representation of $\Sp(\g_{-1})$,
to the irreducible representation with highest weight $d\lambda_n$, i.e.,
$S^d_0 \Lambda^n\g_{-1}^*$.
\end{proof}

In particular, for $d=1$ we find that the natural map discussed above
gives a bijection between the dual of the embedding space $\Lambda^n_0\g^{-1}$
and the space of global sections of $\iota^* \OO(1)$.
The standard terminology is: \emph{linear non-degeneracy} for injectivity at $d=1$,
and \emph{projective normality} for surjectivity at $d > 0$. 
\begin{corollary}
The Pl\"ucker embedding $\iota$ is linearly non-degenerate and projectively normal.
\end{corollary}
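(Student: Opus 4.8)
The plan is to deduce the Corollary directly from Lemma \ref{lem7}, which already contains both required statements as special cases of its final assertion. The key observation is that linear non-degeneracy and projective normality are, by the definitions recalled just before the Corollary, precisely the $d=1$ injectivity and the $d>0$ surjectivity of the natural map
$$ S^d \Lambda^n_0 \g_{-1}^* \to \Gamma(\LG(\g_{-1}), \iota^*\OO(d)). $$
So the work is really to unwind how each property follows from the isomorphism statement in Lemma \ref{lem7}.

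First I would address projective normality. By Lemma \ref{lem7}, for every $d \ge 0$ the above map restricts to an \emph{isomorphism} on the subspace $S^d_0 \Lambda^n_0 \g_{-1}^* \subset S^d \Lambda^n_0 \g_{-1}^*$. In particular its image already exhausts the target $\Gamma(\LG(\g_{-1}), \iota^*\OO(d))$, since an isomorphism is in particular surjective onto that space. Hence the map itself, being the restriction-to-the-subspace composed with the inclusion of the image, is surjective for every $d > 0$. This is exactly the statement that the Pl\"ucker embedding is projectively normal, so that part requires no further argument.

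Next I would treat linear non-degeneracy, i.e.\ injectivity of the map for $d=1$. Here the point is that at $d=1$ there is no distinction between $S^1_0 \Lambda^n_0 \g_{-1}^*$ and $S^1 \Lambda^n_0 \g_{-1}^* = \Lambda^n_0 \g_{-1}^*$, since the irreducible representation $\Lambda^n_0 \g_{-1}$ is itself the bottom symmetric power and $S^1_0 = S^1$ in that degree (this matches the remark immediately following the proof of Lemma \ref{lem7}, where it is noted that for $d=1$ one gets a bijection between $\Lambda^n_0 \g_{-1}^*$ and the global sections of $\iota^*\OO(1)$). Since Lemma \ref{lem7} asserts the map is an isomorphism on $S^1_0 \Lambda^n_0\g_{-1}^* = \Lambda^n_0\g_{-1}^*$, it is in particular injective on all of $\Lambda^n_0 \g_{-1}^*$, which is precisely linear non-degeneracy.

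I do not anticipate a genuine obstacle, as the Corollary is essentially a bookkeeping consequence of Lemma \ref{lem7}; the only subtlety worth stating explicitly is the identification $S^1_0 \Lambda^n_0 \g_{-1}^* = \Lambda^n_0 \g_{-1}^*$ in degree one, which guarantees that the injectivity furnished by Lemma \ref{lem7} covers the whole first graded piece rather than a proper subspace. Thus the entire proof amounts to citing Lemma \ref{lem7} twice and matching its conclusions against the two pieces of standard terminology recalled just above the Corollary statement.
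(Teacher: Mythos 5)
Your proposal is correct and matches the paper's intent exactly: the Corollary is stated there without proof, as an immediate consequence of Lemma \ref{lem7} together with the remark that at $d=1$ one has $S^1_0\Lambda^n_0\g_{-1}^*=\Lambda^n_0\g_{-1}^*$ (by irreducibility of $\Lambda^n_0\g_{-1}$, the span of the lines $\det L$ is the whole space), so the isomorphism on the subspace yields surjectivity for all $d>0$ and injectivity at $d=1$. Your unwinding, including the one subtlety you flag, is precisely the argument the paper leaves implicit.
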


Let us also observe that the kernel of the natural map $S^d \Lambda^n_0\g_{-1}
\to \Gamma(\LG (\g_{-1}),\iota^*\OO(d))$ consists of
homogeneous degree $d$ polynomials on the Pl\"ucker space which
vanish on $\iota(\LG (\g_{-1}))$ (or, more precisely, on its affine cone).
We denote this kernel by $I_d$. The direct sum
\begin{equation}\label{eqHEART}
I=\bigoplus_d I_d
\end{equation}
forms the homogeneous ideal of $\iota(\LG(\g_{-1}))$
in
$S^\bullet \Lambda_0^n \g_{-1}^*$.
\begin{corollary}\label{cor:sections-polys}
For each $d \ge 0$ we have natural $\Sp(\g_{-1})$-equivariant isomorphisms
$$ \Gamma(\LG (\g_{-1}), \iota^*\OO(d)) \simeq S^d_0 \Lambda^n_0\g_{-1}^*
\simeq S^d\Lambda^n_0\g_{-1}^* / I_d $$
where $I_d$ is the degree $d$ subspace of the
homogeneous ideal of $\iota(\LG(\g_{-1}))$.
\end{corollary}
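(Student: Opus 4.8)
The plan is to deduce both isomorphisms directly from Lemma \ref{lem7}; essentially all of the content is already contained there, and what remains is a purely formal argument. First I would name the natural $\Sp(\g_{-1})$-equivariant map
\[
\phi_d : S^d \Lambda^n_0 \g_{-1}^* \longrightarrow \Gamma(\LG(\g_{-1}), \iota^*\OO(d)),
\]
whose equivariance is part of Lemma \ref{lem7} and whose kernel is, by definition, the degree-$d$ piece $I_d$ of the homogeneous ideal. Being the kernel of an equivariant map, $I_d$ is automatically an $\Sp(\g_{-1})$-subrepresentation, so the quotient $S^d\Lambda^n_0\g_{-1}^*/I_d$ carries an $\Sp(\g_{-1})$-action, and the first isomorphism theorem provides an equivariant isomorphism $S^d\Lambda^n_0\g_{-1}^*/I_d \simeq \operatorname{im}\phi_d$.

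The key input is the last assertion of Lemma \ref{lem7}: the restriction of $\phi_d$ to $S^d_0\Lambda^n_0\g_{-1}^*$ is an isomorphism onto the full space of global sections. This yields at once the first isomorphism of the corollary, $\Gamma(\LG(\g_{-1}), \iota^*\OO(d)) \simeq S^d_0\Lambda^n_0\g_{-1}^*$, realised as the inverse of that restriction. Since $\phi_d$ is already surjective when restricted to the subspace $S^d_0\Lambda^n_0\g_{-1}^*$, the full map $\phi_d$ is surjective --- this is exactly the projective normality noted above --- so $\operatorname{im}\phi_d$ is all of $\Gamma(\LG(\g_{-1}), \iota^*\OO(d))$ and hence $S^d\Lambda^n_0\g_{-1}^*/I_d \simeq \Gamma(\LG(\g_{-1}), \iota^*\OO(d))$.

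For the middle isomorphism I would examine the composition
\[
S^d_0\Lambda^n_0\g_{-1}^* \hookrightarrow S^d\Lambda^n_0\g_{-1}^* \twoheadrightarrow S^d\Lambda^n_0\g_{-1}^*/I_d .
\]
It is injective because $S^d_0\Lambda^n_0\g_{-1}^* \cap I_d = \ker(\phi_d|_{S^d_0\Lambda^n_0\g_{-1}^*}) = 0$, and surjective because for any $v$ one can find $w \in S^d_0\Lambda^n_0\g_{-1}^*$ with $\phi_d(w) = \phi_d(v)$, whence $v - w \in I_d$; thus $S^d\Lambda^n_0\g_{-1}^* = S^d_0\Lambda^n_0\g_{-1}^* \oplus I_d$ and the composition is an equivariant isomorphism. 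The \emph{hard part} has therefore already been carried out in Lemma \ref{lem7}, where the Borel--Weil theorem identifies the sections; the corollary itself only repackages that result, the sole point worth checking being that surjectivity of $\phi_d$ follows for free from surjectivity of its restriction to $S^d_0\Lambda^n_0\g_{-1}^*$.
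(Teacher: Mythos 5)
Your proposal is correct and takes essentially the same route as the paper, which states Corollary \ref{cor:sections-polys} as an immediate formal consequence of Lemma \ref{lem7}: the kernel of the natural map is $I_d$ by definition, and both isomorphisms follow from the fact that the restriction to $S^d_0\Lambda^n_0\g_{-1}^*$ is an isomorphism onto the space of sections. Your explicit check of the decomposition $S^d\Lambda^n_0\g_{-1}^* = S^d_0\Lambda^n_0\g_{-1}^* \oplus I_d$ merely spells out what the paper leaves implicit.
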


As we have already remarked, none of the above requires us to work
with a bi-Lagrangian decomposition of $\g_{-1}$. Nevertheless, it
is sometimes convenient to fix one for computational purposes. The
additional structure it induces is summed up in the following Lemma.
We omit its proof, since it is straightforward and not essential for our purpose.
\begin{lemma}\label{lem:bilag-more}
The choice of a bi-Lagrangian decomposition as in
Lemma \ref{lem:bilag-stuff}
induces an identification
\begin{equation}\label{eqBOXBOX}
\Lambda^n_0 \g_{-1} \simeq \bigoplus_{i=0}^n S^2_0 \Lambda^i L^*
\end{equation}
equivariant under $\SL(L) \subset \Sp(\g_{-1})$. Furthermore,
the restriction of the Pl\"ucker embedding \eqref{eq:plucker} to the $\GL(L)$-invariant dense open
subset defined in Lemma \ref{lem:bilag-stuff} is
\begin{equation}\label{eqASTERIX}
 \iota : S^2 L^* \to \PP \Lambda^n_0 \g_{-1},\quad
\iota(\varphi)=[ 1 : \varphi : \Lambda^2\varphi : \cdots : \Lambda^{n-1}\varphi : \det\varphi ]
\end{equation}
where $\Lambda^i\varphi \in S^2_0 \Lambda^i L^*$ may be viewed
as the matrix of $i\Th$ minors of $\varphi\in S^2L^*$.
\end{lemma}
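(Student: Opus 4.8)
The plan is to prove the two assertions in turn, deriving the explicit embedding formula from the representation-theoretic decomposition rather than independently. First I would set up the decomposition. The bi-Lagrangian splitting $\g_{-1} = L \oplus L^*$ gives a $\GL(L)$-equivariant bigrading
$$\Lambda^n \g_{-1} = \bigoplus_{p+q=n} \Lambda^p L \otimes \Lambda^q L^*.$$
After trivialising $\g_{-2} \simeq \C$, the contraction with $\omega$ cutting out $\Lambda^n_0\g_{-1}$ pairs one $L$-slot against one $L^*$-slot, since $\omega$ vanishes on $L\times L$ and on $L^*\times L^*$; hence it carries $\Lambda^p L \otimes \Lambda^q L^*$ into $\Lambda^{p-1} L \otimes \Lambda^{q-1} L^*$, lowering the bidegree by $(1,1)$. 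As distinct summands are sent to distinct summands, the total kernel splits as a direct sum over $q$ of the kernels $\ker c_q$ of the restricted contractions $c_q : \Lambda^{n-q} L \otimes \Lambda^q L^* \to \Lambda^{n-q-1} L \otimes \Lambda^{q-1} L^*$.

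Next I would identify each piece. By the standard trace decomposition of a mixed tensor, $\Lambda^{n-q} L \otimes \Lambda^q L^*$ is a multiplicity-free sum of $\GL(L)$-irreducibles obtained by iterated contractions, and $\ker c_q$ is exactly its top (traceless) component, which is irreducible. Using the $\det L$-twisted duality $\Lambda^{n-q} L \simeq \Lambda^q L^* \otimes \det L$, a highest-weight computation identifies this traceless component, \emph{as an $\SL(L)$-module}, with the Cartan component $S^2_0 \Lambda^q L^*$ of $S^2 \Lambda^q L^*$; the twisting factor $\det L$ is precisely what forces the equivariance to hold only under $\SL(L)$ and not under all of $\GL(L)$. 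Summing over $q=0,\dots,n$ yields the desired identification $\Lambda^n_0 \g_{-1} \simeq \bigoplus_{i=0}^n S^2_0 \Lambda^i L^*$.

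For the embedding formula I would work on the dense open cell of Lemma \ref{lem:bilag-stuff}(3), where a Lagrangian is the graph of $\varphi \in S^2 L^*$. Fixing a basis $e_1, \dots, e_n$ of $L$ with dual basis $e^1, \dots, e^n$, the graph is spanned by the vectors $f_a = e_a + \sum_b \varphi_{ab}\, e^b$, and I would expand the wedge $f_1 \wedge \cdots \wedge f_n$. Grouping the terms by the subset $S$ of indices contributing their $L^*$-part, the bidegree-$(n-i,i)$ component (with $i = |S|$) becomes, under the duality above, exactly the array of $i\times i$ minors of the symmetric matrix $(\varphi_{ab})$; this array is symmetric because $\det\varphi_{S,T} = \det\varphi_{T,S}$, so it represents an element $\Lambda^i\varphi \in S^2 \Lambda^i L^*$. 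Since $f_1\wedge\cdots\wedge f_n$ is the Pl\"ucker image of a genuine Lagrangian, Lemma \ref{lem7} places it in $\Lambda^n_0\g_{-1}$; combined with the decomposition just established, its bidegree-$(n-i,i)$ component automatically lands in $S^2_0 \Lambda^i L^*$ and equals $\Lambda^i\varphi$. This simultaneously gives the formula $\iota(\varphi) = [\,1 : \varphi : \Lambda^2\varphi : \cdots : \det\varphi\,]$ and its compatibility with the decomposition.

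The main obstacle is the representation-theoretic identification $\ker c_q \simeq S^2_0 \Lambda^q L^*$: one must check that the kernel of a \emph{single} contraction captures only the top traceless component (no lower-trace pieces survive) and then match highest weights correctly across the $\det L$-twisted duality, being careful that this is exactly where $\GL(L)$-equivariance degenerates to $\SL(L)$-equivariance. By contrast, the wedge expansion is routine sign bookkeeping, and the primitivity of the minors -- membership in $S^2_0 \Lambda^i L^*$ rather than all of $S^2 \Lambda^i L^*$ -- comes for free from $f_1\wedge\cdots\wedge f_n \in \Lambda^n_0 \g_{-1}$, so no separate primitivity argument is needed.
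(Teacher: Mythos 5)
Your proposal is correct, but there is nothing in the paper to compare it against: the authors explicitly omit the proof of this lemma (``We omit its proof, since it is straightforward and not essential for our purpose''), so your argument supplies details the paper leaves out. Your route is the natural one and all the key steps check out: the contraction with $\omega$ shifts the $\GL(L)$-bidegree by $(1,1)$, so the kernel does split as the direct sum of the per-bidegree kernels $\ker c_q \subset \Lambda^{n-q}L \otimes \Lambda^q L^*$; the mixed tensor $\Lambda^{n-q}L\otimes\Lambda^qL^*$ is multiplicity-free, and the identification $\ker c_q \simeq S^2_0\Lambda^qL^* \otimes \det L$ follows from the highest-weight match you describe, with the $\det L$ twist correctly accounting for why equivariance holds only under $\SL(L)$ (this is consistent with the paper's own use of the same mechanism in the type-$\Asf$ computation \eqref{eqMysterEmbed}, where Poincar\'e duality plus the trace-free condition produce the summands $S^2_0\Lambda^i\C^{n*}$). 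One step you rightly flag as the crux deserves to be spelled out when writing this up: to conclude that $\ker c_q$ is \emph{only} the top traceless component $W_0$, you need that $c_q$ is nonzero on every lower component $W_k$, $k\ge1$; this follows from surjectivity of the single contraction together with multiplicity-freeness (each irreducible summand of the target is isomorphic to exactly one source summand, namely $W_k$, so Schur's lemma forces $c_q|_{W_k}$ to be an isomorphism onto it). Your trick of deducing primitivity of the minors array for free---from $f_1\wedge\cdots\wedge f_n$ lying in $\Lambda^n_0\g_{-1}$ because the subspace is Lagrangian (Lemma \ref{lem7}), combined with the per-bidegree splitting of the kernel---is clean and avoids any direct trace computation on the minors; a quick dimension check ($n=2$: $\binom{4}{2}-\binom{4}{0}=5=1+3+1$) confirms the bookkeeping.
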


\subsection{Hypersurfaces and invariants}
We will now discuss invariant hypersurfaces in $\LG (\g_{-1})$
and their relation to $G_0^\ss$-invariant elements in $S^\bullet \Lambda^n_0 \g_{-1}^*$.
The passage to invariants is most natural if one
works with \emph{effective divisors} on $\LG (\g_{-1})$ instead
of one--codimensional submanifolds. As the term may not be completely
familiar to the broad differential-geometric audience, we shall instead
repurpose the term \emph{hypersurface}.
\begin{definition}\
\begin{enumerate}
\item
A \emph{hypersurface} in $\LG (\g_{-1})$ is a finite formal combination
$\sum m_i Z_i$ where $Z_i \subset \LG (\g_{-1})$ are
closed, irreducible, one--codimensional  subvarieties, and $m_i$ are positive integers.
\item
The \emph{hypersurface cut out by} a section $f \in \Gamma(\LG (\g_{-1}), \iota^*\OO(d))$,
$f\neq 0$, $d>0$ is $\sum m_i Z_i$ where the $Z_i$ are the irreducible components of
the zero locus of $f$, while $m_i$ is the order of vanishing of $f$ at a general point of $Z_i$.
\end{enumerate}
\end{definition}

The following fact justifies our choices. Being  entirely standard,   we just
  sketch   its proof to the reader convenience.
\begin{lemma}\label{lem:divisors}
The set of hypersurfaces in $\LG (\g_{-1})$ is in one-to-one correspondence with
the disjoint union of $\PP \Gamma(\LG(\g_{-1}), \iota^*\OO(d))$ for all $d > 0$.
\end{lemma}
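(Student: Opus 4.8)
The plan is to establish the bijection in two directions, relying on the standard machinery of divisors on a smooth projective variety together with the projective normality result (Corollary \ref{cor:sections-polys}) already at our disposal. The key observation is that $\LG(\g_{-1})$ is a smooth projective variety whose Picard group is freely generated by the ample class $\OO(1)=\iota^*\OO(1)$; this is implicit in the discussion of Subsection \ref{secBasics}, where it is stated that every line bundle is a tensor power of $\OO(1)$. Consequently every effective divisor is linearly equivalent to $\OO(d)$ for a unique integer $d$, and this $d$ must be positive whenever the divisor is nonzero, since $\OO(1)$ is ample and thus has positive degree on any curve.

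First I would send a nonzero section $f\in\Gamma(\LG(\g_{-1}),\OO(d))$, $d>0$, to the hypersurface it cuts out, as in part (2) of the preceding Definition; scaling $f$ by a nonzero constant does not change its zero locus nor the vanishing orders, so this descends to a well-defined map from $\PP\Gamma(\LG(\g_{-1}),\OO(d))$ to the set of hypersurfaces. Conversely, given a hypersurface $\sum m_i Z_i$, each irreducible component $Z_i$ is a prime divisor and hence, by the freeness of the Picard group, is cut out by a section of $\OO(d_i)$ for a unique $d_i>0$; taking the product $\prod f_i^{m_i}$ (a section of $\OO(\sum m_i d_i)$) recovers a section whose associated hypersurface is $\sum m_i Z_i$. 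This uses that $\LG(\g_{-1})$ is smooth, so that the local rings are unique factorisation domains and the order of vanishing along each $Z_i$ is well-defined.

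To see that the two assignments are mutually inverse, I would invoke the fact that on a smooth projective variety the map from (nonzero sections of a line bundle, up to scaling) to (effective divisors in the corresponding linear system) is injective precisely because two sections with the same zero divisor differ by a nowhere-vanishing global function, hence by a constant. Surjectivity onto the full set of hypersurfaces follows once we know that \emph{every} prime divisor is of the form $\{f_i=0\}$ for some section of a power of $\OO(1)$, which is exactly the statement that $\mathrm{Pic}(\LG(\g_{-1}))=\Z\cdot\OO(1)$ together with the ampleness forcing positivity of the degree. Projective normality (Corollary \ref{cor:sections-polys}) then guarantees that these sections are genuinely restrictions of polynomials on the Pl\"ucker space, so the correspondence is compatible with the grading by $d$.

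The main obstacle, such as it is, lies in the bookkeeping of the grading: one must check that the degree $d$ attached to a hypersurface via its linear-equivalence class agrees with the $d$ indexing the disjoint union, i.e.\ that a hypersurface cut out by a section of $\OO(d)$ is not also realisable by a section of $\OO(d')$ for $d'\ne d$. This is immediate from $\mathrm{Pic}(\LG(\g_{-1}))\simeq\Z$, since the class of the divisor determines $d$ uniquely. Given how standard these facts are, I expect the proof to be brief, essentially amounting to a citation of the correspondence between effective divisors and linear systems on a smooth projective variety with torsion-free, rank-one Picard group, combined with the explicit identification of sections with polynomials modulo the ideal $I_d$ from Corollary \ref{cor:sections-polys}.
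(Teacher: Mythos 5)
Your argument is correct, but it routes the key step through a citation where the paper supplies a proof. Both you and the paper handle the first half identically: a hypersurface determines local defining functions whose transition ratios give a line bundle and a section, and two sections cutting out the same divisor differ by a global analytic function on a connected projective variety, hence a constant (the paper phrases this via an explicit \v{C}ech cocycle, which is just the standard divisor--line bundle correspondence you invoke). The divergence is in the second half: you take $\mathrm{Pic}(\LG(\g_{-1}))\simeq\Z\cdot\OO(1)$ as known, pointing to Subsection \ref{secBasics} --- but note that the statement there is the \emph{informal announcement} of exactly what Section \ref{sec:prerequisites} sets out to prove, so within the paper's own logic your citation is mildly circular; the substance of the paper's proof of Lemma \ref{lem:divisors} is precisely a self-contained derivation of this Picard group computation. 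The paper first shows every line bundle $\mathcal{L}$ satisfies $g^*\mathcal{L}\simeq\mathcal{L}$ for $g\in\Sp(\g_{-1})$ (discreteness of the Picard group via $H^1(\LG(\g_{-1}),\OO)=0$ from Bott--Borel--Weil), then splits the resulting central extension $H_{\mathcal{L}}\to\Sp(\g_{-1})$ using simplicity of $\sp(\g_{-1})$ and simple-connectedness of $\Sp(\g_{-1})$ to conclude every line bundle is equivariant, and finally classifies equivariant line bundles by characters of the rank-one central torus of the Levi factor of the fundamental parabolic $Q$. Your route is shorter and perfectly sound as mathematics --- the Picard group of a rational homogeneous space for a maximal parabolic is indeed standard --- and your observation that ampleness forces $d>0$ for a nonzero effective divisor is a clean way to get positivity. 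What the paper's longer route buys is self-containedness and, as a byproduct, the fact that every line bundle admits a compatible $\Sp(\g_{-1})$-action, which quietly underpins the subsequent equivariance statements (Lemma \ref{Lem10}). One small remark: your appeal to projective normality (Corollary \ref{cor:sections-polys}) is not needed for the bijection itself; it only matters later, for representing sections by polynomials modulo $I_d$.
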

\begin{proof}
Given a hypersurface $\sum m_i Z_i$, we can find
an open cover $\LG (\g_{-1}) = \bigcup U_\alpha$ and meromorphic functions
$f_\alpha$ such that $f_\alpha$ is analytic on $U_\alpha$,
vanishes precisely to order $m_i$ at a general point of $Z_i \cap U_\alpha$,
and the zeros of $f_\alpha|_{U_\alpha}$ are contained in $\bigcup Z_i$.
Then the transition functions $(f_\alpha / f_\beta)|_{U_\alpha\cap U_\beta}$ define a
\v{C}ech cocycle of invertible analytic functions, and thus a line bundle
$\mathcal{L}$. The collection $f_\alpha|_{U_\alpha}$ may be then viewed
as defining an element $f \in \Gamma(\LG (\g_{-1}), \mathcal{L})$. It is easy to see
that any other choice of a covering
and transition functions would lead to the same class in $H^1(\LG( \g_{-1}), \OO^\times)$,
and thus to an isomorphic line bundle; furthermore, two sections $f, g$ of $\mathcal{L}$
cutting out the same hypersurface give rise to a global analytic function $f/g$,
necessarily constant.

It remains to show that every line bundle over $\LG (\g_{-1})$ is isomorphic
to $\iota^*\OO(d)$ for some $d$. One first checks that for every $g \in \Sp(\g_{-1})$
and every line bundle $\mathcal{L}$ over $\LG(\g_{-1})$ there is an isomorphism
$\phi_g:g^*\mathcal{L}\simeq\mathcal{L}$: since $\Sp(\g_{-1})$ is connected,
this follows from discreteness of
the Picard group of $\LG(\g_{-1})$,
a consequence of $H^1(\LG (\g_{-1}),\OO)=0$ as given by the Bott--Borel--Weil
Theorem. Then, for each $\mathcal{L}$ one considers the Lie group
$H_{\mathcal{L}}$ consisting of pairs $(g,\phi_g)$ as above,
with the obvious multiplication and a
forgetful homomorphism $H_{\mathcal{L}} \to \Sp(\g_{-1})$. This group
acts on $\LG (\g_{-1})$ as well as on $\mathcal{L}$ in the natural way.
It is a central extension of $\Sp(\g_{-1})$ by $\C^\times$, and corresponds infinitesimally
to a central extension of Lie algebras. But since $\sp(\g_{-1})$ is simple,
the latter extension is necessarily split. By simply-connectedness of $\Sp(\g_{-1})$
the splitting homomorphism may be then integrated to $\Sp(\g_{-1}) \to H_{\mathcal{L}}$,
providing an action of $\Sp(\g_{-1})$ on $\mathcal{L}$. This proves that
every line bundle over $\LG(\g_{-1})$ is equivariant, i.e., admits a compatible $\Sp(\g_{-1})$-action.
Finally, via the associated bundle construction,
equivariant line bundles are classified up to isomorphism by one-dimensional
representations of the parabolic subgroup $Q \subset \Sp(\g_{-1})$
stabilising a point in $\LG (\g_{-1})$ (as in the proof of Lemma \ref{lem7}),
and thus by the characters of the central torus of the Levi factor of $Q$.
Since $Q$ is the stabiliser of the highest weight line in a \emph{fundamental}
representation, it follows that the central torus of its Levi factor has rank one.
Furthermore, $\iota^*\OO(1)$ corresponds to
the tautological representation of $\C^\times$, whence every other equivariant line bundle is
its power.
\end{proof}
Now we formalise properly the notion of \emph{degree}, already discussed in Subsection \ref{secBasics}.
\begin{definition}\label{defDegree}
A hypersurface in $\LG(\g_{-1})$ has degree $d > 0$ if it is cut out by
a global section of $\iota^* \OO(d)$.
\end{definition}
\begin{remark}\label{RemAsterix}
Using the description of the Pl\"ucker embedding given in Lemma \ref{lem:bilag-more},
one may verify that   Definition \ref{defDegree} above is compatible with the one given
informally in Subsection \ref{ss:from-pdes}.
\end{remark}

In order to handle the action of $G_0\subset\CSp(\g_{-1})$ rather than
just $G_0^\ss\subset\Sp(\g_{-1})$,
we need the following facts. Recall first that we may view $\CSp(\g_{-1})$ as a
central extension of $\Sp(\g_{-1})$ by $\C^\times$.
\begin{lemma}\label{Lem10}\
\begin{enumerate}
\item The $\Sp(\g_{-1})$-action on $\LG(\g_{-1})$ extends trivially to a
$\CSp(\g_{-1})$-action
(i.e., the centre $\C^\times$ acts trivially).
\item The $\CSp(\g_{-1})$-action on $\LG(\g_{-1})$ lifts naturally to an action on $\iota^*\OO(d)$
for all $d$.
\item The isomorphisms $\Gamma(\LG(\g_{-1}),\iota^*\OO(d))\simeq S^d_0 \Lambda^n_0 \g_{-1}^*
\simeq S^d \Lambda^n_0 \g_{-1}^*/I_d$, $d>0$, are $\CSp(\g_{-1})$-equivariant.
\end{enumerate}
\end{lemma}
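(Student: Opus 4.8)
The plan is to leverage the $\Sp(\g_{-1})$-equivariance already established in Lemma \ref{lem7} and Corollary \ref{cor:sections-polys}, so that the only genuinely new content is to account for the extra central directions in the decomposition $\CSp(\g_{-1}) \simeq \Sp(\g_{-1}) \times \C^\times$. For part (1), I would first note that an element $g \in \CSp(\g_{-1})$ rescales $\omega$ by a nonzero conformal factor, whence $\omega|_L = 0 \iff \omega|_{gL} = 0$; thus $g$ carries Lagrangian subspaces to Lagrangian subspaces, and the $\Sp(\g_{-1})$-action extends to a $\CSp(\g_{-1})$-action on $\LG(\g_{-1})$. Since the centre of $\CSp(\g_{-1})$ is the group $\C^\times$ of scalar operators $\lambda\,\id$ on $\g_{-1}$, and scalars fix every linear subspace, the centre acts trivially on $\LG(\g_{-1})$, as claimed.

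For part (2), the key observation is that $\CSp(\g_{-1})$ acts linearly on the Pl\"ucker space $\Lambda^n_0 \g_{-1}$. The cleanest way to see this is to recall (Lemma \ref{lem:sp-fundamental}(2) with $d=1$, together with irreducibility) that $\Lambda^n_0 \g_{-1}$ is the linear span of the affine cone over $\iota(\LG(\g_{-1}))$, i.e.\ of the lines $\det L$ for $L$ Lagrangian; by part (1) this cone is $\CSp(\g_{-1})$-invariant inside $\Lambda^n \g_{-1}$, hence so is its span. Consequently $\CSp(\g_{-1})$ acts on $\PP\Lambda^n_0 \g_{-1}$ and, tautologically, on $\OO(1)$ and all its powers, while $\iota$ is $\CSp(\g_{-1})$-equivariant since $\iota(gL) = \det(gL) = g\cdot\det L$. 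Pulling back then equips $\iota^*\OO(d)$ with the desired $\CSp(\g_{-1})$-linearisation. I would stress here the one point worth isolating: although the centre acts trivially on the base $\LG(\g_{-1})$, it acts on $\Lambda^n_0 \g_{-1}$ through $\lambda \mapsto \lambda^n$ and hence nontrivially on the fibres of $\iota^*\OO(d)$, which is precisely the mechanism that produces the relevant characters in the later torus bookkeeping.

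For part (3), I would argue that every map in the chain is natural with respect to the linear $\CSp(\g_{-1})$-action fixed in (2), so equivariance is automatic once it is known over $\Sp(\g_{-1})$. Concretely, the evaluation map $S^d \Lambda^n_0 \g_{-1}^* \to \Gamma(\LG(\g_{-1}), \iota^*\OO(d))$ sends a homogeneous polynomial to its restriction along $\iota$, a construction that commutes with any subgroup of $\GL(\Lambda^n_0 \g_{-1})$ acting compatibly on domain and codomain; its kernel $I_d$ is therefore a $\CSp(\g_{-1})$-submodule, yielding the second isomorphism equivariantly. Finally $S^d_0 \Lambda^n_0 \g_{-1}^*$ is spanned by the powers $(\det L)^d$, which $\CSp(\g_{-1})$ permutes by part (1), so it too is a $\CSp(\g_{-1})$-submodule, on which evaluation restricts to an isomorphism.

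The routine portion is the naturality bookkeeping in part (3). The step I expect to require the most care is part (2): verifying that the linear $\CSp(\g_{-1})$-action genuinely preserves $\Lambda^n_0 \g_{-1}$, and correctly tracking how the central scalars act on the fibres of $\iota^*\OO(d)$ while acting trivially on the base. Everything else is a direct upgrade of the corresponding $\Sp(\g_{-1})$-statements by the same arguments.
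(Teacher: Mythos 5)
Your proposal is correct and takes essentially the same approach as the paper, whose entire proof is the one-line remark that everything is straightforward once one has the induced natural action of $\CSp(\g_{-1})$ on the Pl\"ucker space $\Lambda^n_0\g_{-1}$ --- precisely the action you construct in part (2), with the rest following by naturality exactly as you say. (One cosmetic slip in part (3): the lines $(\det L)^d$ span $S^d_0\Lambda^n_0\g_{-1}$, not its dual; to see that $S^d_0\Lambda^n_0\g_{-1}^*$ is a $\CSp(\g_{-1})$-submodule, either dualise, or note that the centre acts on all of $S^d\Lambda^n_0\g_{-1}^*$ by the single character $\lambda\mapsto\lambda^{-nd}$, so every $\Sp(\g_{-1})$-submodule is automatically $\CSp(\g_{-1})$-stable.)
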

\begin{proof}Straightforward, given
the induced natural action of $\CSp(\g_{-1})$ on the Pl\"ucker space $\Lambda^n_0\g_{-1}$.
\end{proof}

We are now ready to spell out the invariance condition for a hypersurface
in terms of the corresponding section.
\begin{lemma}\label{lem:inv-hypers}
Let $f \in \Gamma(\LG(\g_{-1}), \iota^* \OO(d))$, $f\neq 0$. Then
the hypersurface cut out by $f$ is $G_0$-invariant if and only if
there exists a homomorphism $\xi : G_0 \to \C^\times$ such that
$g^* f = \xi(g) f$ for all $g \in G_0$.
\end{lemma}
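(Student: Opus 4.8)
The plan is to translate the geometric condition ``the hypersurface cut out by $f$ is $G_0$-invariant'' into the algebraic condition $g^*f = \xi(g)f$, using the correspondence between hypersurfaces and sections established in Lemma \ref{lem:divisors}, together with the equivariance of the relevant line bundles furnished by Lemma \ref{Lem10}. First I would recall that, by Lemma \ref{lem:divisors}, a nonzero section $f \in \Gamma(\LG(\g_{-1}), \iota^*\OO(d))$ is determined up to a nonzero scalar by the hypersurface it cuts out; that is, two sections cut out the same hypersurface if and only if they differ by an element of $\C^\times$. This is the essential rigidity we exploit.

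Next I would unwind the meaning of $G_0$-invariance of the hypersurface $Z$ cut out by $f$. For each $g \in G_0 \subset \CSp(\g_{-1})$, the pullback $g^*f$ is again a global section of $\iota^*\OO(d)$ (here we use Lemma \ref{Lem10}(2), which provides the lift of the $\CSp(\g_{-1})$-action, hence of the $G_0$-action, to $\iota^*\OO(d)$). The zero locus of $g^*f$ is precisely $g^{-1}Z$, with the same multiplicities, so $g^*f$ cuts out the hypersurface $g^{-1}Z$. Thus $Z$ is $G_0$-invariant, i.e.\ $g^{-1}Z = Z$ for all $g \in G_0$, exactly when $g^*f$ and $f$ cut out the same hypersurface for every $g$. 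By the rigidity from Lemma \ref{lem:divisors}, this holds if and only if for each $g$ there is a scalar $\xi(g) \in \C^\times$ with $g^*f = \xi(g)f$.

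It then remains to check that $g \mapsto \xi(g)$ is a genuine group homomorphism $G_0 \to \C^\times$. This follows from the cocycle property of the lifted action: computing $(gh)^*f = h^*(g^*f) = h^*(\xi(g)f) = \xi(g)\,h^*f = \xi(g)\xi(h)f$, and comparing with $(gh)^*f = \xi(gh)f$, we get $\xi(gh) = \xi(g)\xi(h)$, using that $f \neq 0$ so the scalar is uniquely determined. (Multiplicativity of pullback, $(gh)^* = h^*g^*$, is part of the action being a well-defined $G_0$-action on the total space of the bundle, again by Lemma \ref{Lem10}(2).) The converse direction is immediate: if $g^*f = \xi(g)f$ for all $g$, then $g^*f$ and $f$ have the same zero locus with the same multiplicities for every $g$, so the hypersurface is $G_0$-invariant.

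The only subtlety---and the point I would be most careful about---is bookkeeping the passage between the $\Sp(\g_{-1})$- (or $\CSp(\g_{-1})$-) equivariant structure and the $G_0$-action. Since $G_0$ embeds in $\CSp(\g_{-1})$ and the lift of Lemma \ref{Lem10}(2) is to the full $\CSp(\g_{-1})$-action, the restriction to $G_0$ automatically supplies a compatible lift to $\iota^*\OO(d)$, so no additional choices are needed. I do not expect any genuine obstacle here: the statement is essentially the standard dictionary between invariant divisors and semi-invariant (relatively invariant) sections, and the proof is a short formal argument once the correspondence of Lemma \ref{lem:divisors} and the equivariance of Lemma \ref{Lem10} are in place. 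Accordingly I would keep the write-up brief, emphasising the uniqueness-up-to-scalar step and the cocycle computation that yields the homomorphism property.
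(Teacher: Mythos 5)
Your proposal is correct and follows essentially the same route as the paper: both reduce $G_0$-invariance of the hypersurface to the existence of a scalar $c_g$ with $g^*f = c_g f$ via the rigidity of Lemma \ref{lem:divisors}, and both establish the homomorphism property through the same cocycle computation $c_{gh}f = h^*g^*f = c_g c_h f$. Your additional remarks on the lift of the action from Lemma \ref{Lem10} make explicit what the paper leaves implicit, but they do not change the argument.
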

\begin{proof}
By Lemma \ref{lem:divisors} we find that $G_0$-invariance
of the hypersurface cut out by $f$ is equivalent to
the existence, for each $g \in G_0$, of a scaling factor $c_{g} \in \C^\times$
such that $g^*f = c_{g}f$. Furthermore, $c_g$ is uniquely determined by $g$,
and we have $c_{gh}f = h^* g^* f = h^* (c_g f) = c_g c_h f$ whence $g\mapsto c_g  $
 is  a character $\xi$.
\end{proof}

Recall now the decomposition $G_0 = G_0^\ss \cdot \tilde T$, where $G_0^\ss$ is semi-simple,
and $\tilde T$ is a torus (cf. \ref{eqDecGOGOss}).
It follows that characters of $G_0$ factor through $T = \tilde T / (G_0^\ss \cap \tilde T)$,
so that $f$ cuts out a $G_0$-invariant hypersurface if and only if it is $G_0^\ss$-invariant,
and transforms under the action of $T$ via some character $\xi \in \widehat T$.
We now apply the identification spelt out in Corollary \ref{cor:sections-polys}:
\begin{lemma}
\label{lem:ring-invs}
Let $R = (S^\bullet \Lambda^n_0 \g_{-1}^* / I)^{G_0^\ss}$
be the ring of $G_0^\ss$-invariants. Then:
\begin{enumerate}
\item $G_0$ acts on $R$, and the action factors through $T$,
\item $R = \bigoplus_{\xi \in \widehat T} R^\xi$ where $T$ acts on $R^\xi$ via $\xi : T \to \C^\times$,
\item the set of $G_0$-invariant hypersurfaces in $\LG(\g_{-1})$ is in one-to-one correspondence
with the disjoint union of $\PP R^\xi$ for all $\xi \in \widehat T \setminus \{0\}$.
\end{enumerate}
\end{lemma}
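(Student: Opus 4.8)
The plan is to assemble the statement from the pieces already established, treating each of the three claims as a direct corollary of earlier lemmas. The entire content of Lemma \ref{lem:ring-invs} is bookkeeping: it repackages the correspondence between $G_0$-invariant hypersurfaces and $G_0^\ss$-invariant sections (Lemma \ref{lem:inv-hypers}) into the language of the graded ring $R$, using the almost-direct-product decomposition $G_0 = G_0^\ss \cdot \tilde T$. No genuinely new geometric or representation-theoretic input is needed; the work is in aligning definitions.

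\textbf{Claim (1).} First I would observe that $G_0$ acts on $S^\bullet \Lambda^n_0 \g_{-1}^*$ via its inclusion into $\CSp(\g_{-1})$, and this action preserves the homogeneous ideal $I$ (each graded piece $I_d$ is the kernel of the $\CSp(\g_{-1})$-equivariant map of Lemma \ref{Lem10}(3), hence a subrepresentation). Thus $G_0$ acts on the quotient $S^\bullet \Lambda^n_0 \g_{-1}^* / I$. Since $G_0^\ss$ is normal in $G_0$, the subring of $G_0^\ss$-invariants $R$ is $G_0$-stable, so $G_0$ acts on $R$. Because $G_0^\ss$ acts trivially on $R$ by definition, the action factors through the quotient group $G_0 / G_0^\ss$; and by the isogeny $T = \tilde T/(G_0^\ss \cap \tilde T) \simeq G_0/G_0^\ss$ recorded in \eqref{eqDecGOGOss}, the action factors through $T$.

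\textbf{Claim (2).} Next I would note that $R$ is a rational representation of the torus $T$, hence decomposes as a direct sum of weight spaces $R = \bigoplus_{\xi \in \widehat T} R^\xi$, where $T$ acts on $R^\xi$ through the character $\xi$. This is the standard complete reducibility of torus actions; the only thing to check is rationality, which is inherited from the polynomial $G_0$-action on the finitely-generated graded ring $S^\bullet \Lambda^n_0\g_{-1}^*/I$.

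\textbf{Claim (3).} For the final claim I would combine Lemma \ref{lem:divisors} with Lemma \ref{lem:inv-hypers}. By Lemma \ref{lem:divisors} and Corollary \ref{cor:sections-polys}, every hypersurface corresponds to a class $[f] \in \PP(S^d \Lambda^n_0\g_{-1}^*/I_d)$ for some $d>0$. By Lemma \ref{lem:inv-hypers}, $G_0$-invariance of the hypersurface is equivalent to the existence of a character $\xi:G_0 \to \C^\times$ with $g^* f = \xi(g) f$; since $G_0^\ss$ is semisimple, $\xi$ is trivial on $G_0^\ss$, so $f$ is $G_0^\ss$-invariant, i.e. $f \in R_d := R \cap (S^d\Lambda^n_0\g_{-1}^*/I_d)$, and $\xi$ descends to an element of $\widehat T$. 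Thus $f \in R^\xi$ with $\xi \neq 0$ (the case $\xi = 0$ gives a $G_0$-invariant section, impossible for $d>0$ since it would yield a nonconstant $G_0$-invariant, equivalently $\Sp(\g_{-1})$-semiinvariant, function—here I would invoke that no nonzero invariant hypersurface of degree $0$ exists, or simply that $\xi=0$ sections cut out the trivial class). Conversely any nonzero $f \in R^\xi$, $\xi \neq 0$, cuts out a $G_0$-invariant hypersurface by the same lemma, and proportional sections give the same hypersurface, yielding the bijection with $\coprod_{\xi \neq 0} \PP R^\xi$.

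\textbf{The main obstacle} is not any single hard step but rather the care required to track which character lattice is in play: the identification $\widehat T \hookrightarrow \widehat{\tilde T}$ and the passage between $\C^\times$-valued characters of $G_0$ and of $T$ must be handled consistently, and one must verify that the grading by $\widehat T$ is genuinely the decomposition into $T$-weight spaces (rather than merely a coarsening). I expect to dispatch this by noting that $\tilde T$ and $T$ are isogenous so their rational characters coincide after the finite quotient, and that the weight-space decomposition of Claim (2) is exactly the indexing set for Claim (3).
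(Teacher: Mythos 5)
Your parts (1) and (2) are correct and are exactly the paper's (very terse) argument: the $G_0$-action descends to $R$ because $G_0^\ss=[G_0,G_0]$ is normal (the paper invokes centrality of $\tilde T$, which amounts to the same thing), and the weight decomposition follows from rationality of the $T$-action on the finite-dimensional graded pieces of $R$. Your assembly of part (3) from Lemma \ref{lem:divisors}, Corollary \ref{cor:sections-polys} and Lemma \ref{lem:inv-hypers} is likewise the paper's route. The genuine gap is your treatment of the exclusion $\xi\neq 0$: every justification you offer for it is wrong. A $G_0$-invariant element of degree $d>0$ is \emph{not} ``equivalently $\Sp(\g_{-1})$-semiinvariant'' ($G_0$ is a proper subgroup of $\CSp(\g_{-1})$, and invariance under it propagates to nothing larger); it is also false that ``$\xi=0$ sections cut out the trivial class''---a nonzero $G_0$-invariant section of $\OO(d)$, $d>0$, if it existed, would cut out a perfectly good $G_0$-invariant hypersurface, and semisimplicity of $G_0^\ss$ alone cannot rule it out (the whole paper is about producing nonconstant $G_0^\ss$-invariants).

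The correct reason, which the paper isolates as Lemma \ref{lem:j} immediately after this statement, is that $G_0$ contains the one-parameter subgroup $j:\C^\times\to\CSp(\g_{-1})$ acting by scalars on $\g_{-1}$ (it contains the full maximal torus $H$, and the grading element $\gamma^\vee$ generates such a subgroup); $j$ is central, hence factors through $\tilde T$ and then $T$, and it acts on $S^d\Lambda^n_0\g_{-1}^*$ with weight $-nd$. Consequently any $T$-character $\xi$ occurring in $R_d$ satisfies $\bar j^*\xi=-nd\neq 0$ for $d>0$, which both excludes $\xi=0$ and shows that each $R^\xi$ is concentrated in a \emph{single} polynomial degree. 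Note that this second point is also needed for part (3) as literally stated: a priori $R^\xi=\bigoplus_d (R_d)^\xi$ could mix degrees, in which case $\PP R^\xi$ would contain classes of inhomogeneous elements that cut out no hypersurface. You correctly flag this worry at the end (``genuinely the decomposition into $T$-weight spaces, rather than merely a coarsening''), but your proposed dispatch via the $\tilde T$--$T$ isogeny does not address it; the isogeny identifies character lattices but says nothing about which degrees a given weight occupies. Both issues are repaired by the single scaling-subgroup observation above.
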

\begin{proof}
Part (1) follows from centrality of $\tilde T$ in $G_0$, and part (2) from the finite-dimensionality
of the graded summands of $R$. Part (3) is then a consequence of Corollary \ref{cor:sections-polys}
and Lemma \ref{lem:inv-hypers}.
\end{proof}
Let us point out that, since $G_0^\ss$ is semi-simple, hence reductive,
$I_d \subset S^d\Lambda^n_0\g_{-1}$ admits a $G_0^\ss$-invariant complement,
thus allowing us to identify
\begin{equation}\label{eq:quotients}
(S^\bullet \Lambda^n_0\g_{-1}^*/I)^{G_0^\ss}
= (S^\bullet \Lambda^n_0\g_{-1}^*)^{G_0^\ss} / I^{G_0^\ss}.
\end{equation}
That is, we may represent a $G_0$-invariant hypersurface by
a $G_0^\ss$-invariant homogeneous polynomial on the Pl\"ucker space.
Finally we remark that the `standard' grading on $R$,
with $(S^d \Lambda^n_0\g_{-1}^* / I_d)^{G_0^\ss}$ placed in
degree $d$, may be recovered from the rescaled $T$-weights $\xi\in\widehat T$.
\begin{lemma}\label{lem:j}
Let $j:\C^\times \hookrightarrow \CSp(\g_{-1})$ be the one--parameter subgroup
acting by scaling on $\g_{-1}$. Then
$j$ factors through $\tilde T$,
and its composite $\bar j : \C^\times \to T$ with the projection $\tilde T \to T$
induces a
homomorphism $\bar j^* : \widehat T \to \ZZ$
such that $R^\xi \subset S^d \Lambda_0^n\g_{-1}^*$
if and only if $\bar j^*\xi = -nd$.
\end{lemma}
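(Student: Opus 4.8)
The plan is to realise $j$ as a reparametrisation of the one--parameter subgroup generated by the grading element $\gamma^\vee$, deduce that it is central in $G_0$ and hence lands in $\tilde T$, and then simply read off the scalar by which it acts on the degree--$d$ piece. First I would recall from the proof of Lemma \ref{lem:properties-grading} that $\fg_{-1} = \bigoplus_{\langle\alpha,\gamma^\vee\rangle=-1}\fg_\alpha$, so that $\ad\gamma^\vee$ acts on $\g_{-1}$ as $-\id$. Writing $\tau:\C^\times\to G_0$ for the cocharacter integrating $\gamma^\vee$, it follows that $\tau(s)$ scales $\g_{-1}$ by $s^{-1}$; since $\CSp(\g_{-1})$ acts faithfully on $\g_{-1}$ this forces $j(t)=\tau(t^{-1})$, so $j$ is itself a cocharacter of $G_0$. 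Next I would observe that $\gamma^\vee$ is central in $\fg_0$: the roots of $\fg_0$ are exactly those $\alpha\in\Phi$ with $(\alpha,\gamma)=0$, equivalently $\langle\alpha,\gamma^\vee\rangle=0$, so $[\gamma^\vee,\fg_\alpha]=0$ for every root space of $\fg_0$, while $[\gamma^\vee,\fh]=0$ trivially. Hence the image of $\tau$ (equivalently of $j$) is a connected subgroup of the centre of the connected group $G_0$, thus contained in the identity component $\tilde T=Z(G_0)^\circ$. This proves that $j$ factors through $\tilde T$; composing with $\tilde T\to T$ yields $\bar j$, and precomposition gives $\bar j^*:\widehat T=\Hom(T,\C^\times)\to\Hom(\C^\times,\C^\times)\simeq\ZZ$.

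For the weight computation I would work on the model $S^d\Lambda^n_0\g_{-1}^*/I_d$ of $\Gamma(X^{(1)}_o,\OO(d))$ furnished by Corollary \ref{cor:sections-polys} and Lemma \ref{Lem10}(3), which is $\CSp(\g_{-1})$--equivariant. Since $j(t)$ scales $\g_{-1}$ by $t$, it scales the Pl\"ucker space $\Lambda^n_0\g_{-1}$ by $t^n$, hence acts on the dual $\Lambda^n_0\g_{-1}^*$ by $t^{-n}$ and on $S^d\Lambda^n_0\g_{-1}^*$ by $t^{-nd}$, and this scalar descends to the quotient by $I_d$. Now if $0\neq f\in R^\xi$ lies in the degree--$d$ component, then by definition of $R^\xi$ the element $\bar j(t)\in T$ acts on $f$ through $\xi$, namely by the scalar $\xi(\bar j(t))=(\bar j^*\xi)(t)=t^{\,\bar j^*\xi}$. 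Comparing the two expressions for one and the same scalar yields $t^{\,\bar j^*\xi}=t^{-nd}$ for all $t$, i.e. $\bar j^*\xi=-nd$. Conversely, each $R^\xi$ is contained in a single graded component (the grading being $\tilde T$--stable and refined by the $T$--weights), so the equality $\bar j^*\xi=-nd$ determines the degree, giving the stated equivalence.

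The only genuinely delicate point is the bookkeeping of the sign and of the factor $n$: one must keep straight that passing to $\g_{-1}^*$ inverts the scaling, that the top exterior power contributes the exponent $n$, and that $R^\xi$ is defined through the \emph{same} (contragredient) action used to identify sections with polynomials on the Pl\"ucker space. Once the action on $S^d\Lambda^n_0\g_{-1}^*$ is normalised consistently with the identification of Lemma \ref{Lem10}(3), the computation is immediate; everything else rests on the two structural facts established above, that the grading element is central in $\fg_0$ and acts as $-\id$ on $\g_{-1}$.
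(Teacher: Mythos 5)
Your proof is correct and follows essentially the same route as the paper's: the paper likewise realises $j$ inside the maximal torus of $G_0$ (your identification $j(t)=\tau(t^{-1})$ with the cocharacter of the grading element $\gamma^\vee$ just makes this explicit), invokes centrality to factor through $\tilde T$, and concludes from the weight $-nd$ of the action on $S^d\Lambda^n_0\g_{-1}^*$. Your version merely spells out details the paper leaves implicit --- the root-theoretic proof of centrality, the contragredient-action bookkeeping, and the observation that the forward implication forces each $R^\xi$ into a single degree, which settles the converse.
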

\begin{proof}
Since $G_0$ contains the full maximal torus of $G$ corresponding to the Cartan
subalgebra we used to define the contact grading \eqref{eqContGrad} on $\g$, it follows that in particular
it contains a one-parameter subgroup acting by scaling on $\g_{-1}$, necessarily
coinciding with $j$. Being central, it factors through $\tilde T$.
As it acts on $S^d\Lambda_0^n\g_{-1}^*$
with weight $-nd$, the claim follows.
\end{proof}

\subsection{Back to the adjoint variety}
Let us now return to
the notion of the adjoint variety
$X$ of $\g$ (recall Definition \ref{def:adjoint}).
We have already introduced the bundle
$$ X^{(1)} \to X $$
whose fibre at $x \in X$ is the set of Lagrangian subspaces in $\CC_x$
(see Subsection \ref{secBasics}, \eqref{eqSTARSTAR}), together with
the natural
$G$-invariant identification $X^{(1)} \simeq G \times^P \LG(\g_{-1})$
(see Subsection \ref{ss:x1-ass-bun}, \eqref{eqSTAR})
induced by the identification $\CC \simeq G\times^P \g_{-1}$.
We extend our notion of a hypersurface from the fibre $\LG(\g_{-1})\simeq X^{(1)}_o$
to the entire bundle $X^{(1)}$. (It might be useful to remind that
$X^{(1)}$ is a projective manifold, so Chow's theorem still
lets us treat closed analytic subsets as algebraic subvarieties).
\begin{definition}\
\begin{enumerate}
\item A hypersurface in $X^{(1)}$ is a finite formal combination $\sum m_i Y_i$
where $Y_i \subset X^{(1)}$ are closed, irreducible, codimension $1$ subvarieties,
and $m_i$ are positive integers.
\item $\Inv(X,G)$ is the set of $G$-invariant hypersurfaces in $X^{(1)}$.
\end{enumerate}
\end{definition}

A general hypersurface in $X^{(1)}$ may have components projecting onto a
codimension $1$ subvariety of $X$. Clearly, this cannot occur in the $G$-invariant
case, where we do obtain a family of hypersurfaces in the fibres, all conjugate
to a single $G_0$-invariant hypersurface in $\LG(\g_{-1})$.
\begin{lemma}\label{lem:fibre-at-origin}
Let $\sum m_i Y_i$ be a $G$-invariant hypersurface in $X^{(1)}$.
Then $\sum m_i (Y_i \cap X^{(1)}_o)$ is a $G_0$-invariant hypersurface in $X^{(1)}_o = \LG(\g_{-1})$.
\end{lemma}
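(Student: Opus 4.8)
The plan is to read everything through the associated-bundle description $X^{(1)} \simeq G \times^P \LG(\g_{-1})$ of \eqref{eqSTAR}, under which $G$-invariant closed subvarieties of $X^{(1)}$ correspond to $P$-invariant closed subvarieties of the fibre $X^{(1)}_o$, and then to note that $P$-invariance is the same as $G_0$-invariance because $P = G_0 \ltimes P_+$ (see \eqref{eqLevi}) with $P_+$ acting trivially on $\g_{-1}$, hence on $\LG(\g_{-1})$.

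First I would reduce to a single component. Since $G$ is connected, its action on the finite set of irreducible components $\{Y_i\}$ of the hypersurface must be trivial, so each $Y_i$ is itself $G$-invariant. Writing $\pi : X^{(1)} \to X$ for the bundle projection, the image $\pi(Y_i)$ is then a $G$-invariant, closed (as $X^{(1)}$ is projective), nonempty subset of $X$, so by transitivity of $G$ on $X$ it equals all of $X$. In particular no $Y_i$ is a ``vertical'' divisor lying over a proper subvariety of $X$: every component dominates the base, which is exactly the phenomenon announced in the paragraph preceding the lemma.

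Next I would transport each $Y_i$ to the fibre. Set $Z_i := Y_i \cap X^{(1)}_o = Y_i \cap \pi^{-1}(o)$. Since $\pi(Y_i) = X \ni o$, $Z_i$ is nonempty; since $P$ stabilises $o$ and preserves $Y_i$, $Z_i$ is $P$-invariant. Under \eqref{eqSTAR} one recovers $Y_i = G \times^P Z_i$, so $\pi|_{Y_i} : Y_i \to X$ is a fibre bundle with fibre $Z_i$, whence $\dim Y_i = \dim Z_i + \dim X$. As $\codim_{X^{(1)}} Y_i = 1$ and $\dim X^{(1)} = \dim X + \dim \LG(\g_{-1})$, this forces $\codim_{\LG(\g_{-1})} Z_i = 1$; in particular $Z_i$ is neither empty nor all of $X^{(1)}_o$, the latter because $Y_i \supseteq X^{(1)}_o$ would give $Y_i \supseteq G \cdot X^{(1)}_o = X^{(1)}$. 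It is the homogeneity of the base that is doing the work here: equivariance makes all fibres of $Y_i \to X$ conjugate, so the fibre over $o$ has the same (codimension-one) dimension as the generic one, ruling out any jump. Finally, $P$ being connected fixes each irreducible component of $Z_i$, and were $Z_i$ reducible its components would yield distinct proper $G$-invariant closed subsets whose union is the irreducible $Y_i$; hence $Z_i$ is irreducible.

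It then remains to reassemble. The correspondence $Y_i \leftrightarrow Z_i$ is an inclusion-preserving bijection between $G$-invariant and $P$-invariant irreducible closed subvarieties, so the $Z_i$ are pairwise distinct and the formal combination $\sum m_i Z_i$ is again an effective divisor, i.e. a hypersurface, with multiplicities inherited unchanged (each $Y_i$ meets the fibre in the proper codimension, so restriction of divisors is multiplicity-preserving). Invariance is then automatic, $P$-invariance of each $Z_i$ being equivalent to $G_0$-invariance as noted above. The main obstacle, such as it is, lies not in any single step but in the careful bookkeeping of the associated-bundle dictionary --- especially verifying that $Z_i$ stays codimension one with no dimension jump over the special point $o$, which is exactly where homogeneity of $X$ is indispensable.
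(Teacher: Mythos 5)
Your proof is correct and takes essentially the same route as the paper's: both use $G$-invariance together with transitivity of $G$ on $X$ to show every component of the hypersurface surjects onto the base and meets each fibre, hence $X^{(1)}_o$ in particular, in codimension one (your associated-bundle dimension count $Y_i = G\times^P Z_i$ is just a repackaging of the paper's ``codimension one in the general fibre, hence in every fibre by invariance''), and both reduce $P$-invariance to $G_0$-invariance since $P = G_0 \ltimes P_+$ with $P_+$ acting trivially. Your additional checks --- connectedness of $G$ fixing each component $Y_i$, and connectedness of $P$ forcing irreducibility of each $Z_i$ --- are correct and supply details the paper's proof leaves implicit.
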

\begin{proof}
Let $Y$ be a $G$-invariant
one--codimensional subvariety in $X^{(1)}$. We need to check that $Y \cap X^{(1)}_o$
is a $G_0$-invariant codimension $1$ subvariety in $X^{(1)}_o$.
By $G$-invariance, the projection $Y \to X$ is surjective,
so that $Y \cap X^{(1)}_x$ is codimension $1$ in $X^{(1)}_x$ for general $x \in X$.
Again by $G$-invariance, $Y \cap X^{(1)}_x$ is codimension $1$ in $X^{(1)}_x$
for every $x \in X$, in particular for $x=o$. Finally, $G_0$-invariance of $Y \cap X^{(1)}_o$
is immediate from $G$-invariance of $Y$ and $P$-invariance of $o$.
\end{proof}

\begin{definition}\label{defDegreeGlob}\
\begin{enumerate}
\item
The \emph{fibre at the origin} of a $G$-invariant hypersurface in $X^{(1)}$
is the $G_0$-invariant hypersurface in $\LG(\g_{-1})$ arising as in Lemma
\ref{lem:fibre-at-origin}.
\item
The \emph{degree} of a $G$-invariant hypersurface in $X^{(1)}$ is
the degree of its fibre at the origin.
\end{enumerate}
\end{definition}
We are now able to exhibit  the main result of this long pedagogical section, that is, Proposition \ref{pro:invxg} below. It finally provides  the necessary  interpretation of $G$-invariant hypersurfaces in $X^{(1)}$
in terms of  projectivised $T$-weight subspaces in the ring of $G_0^\ss$-invariants
in the homogeneous coordinate ring of the Pl\"ucker-embedded Lagrangian
Grassmannian.
\begin{proposition}\label{pro:invxg}
There is a natural bijection
$$ \Inv(X,G) \simeq \coprod_{\xi \in \widehat T \setminus \{0\}} \PP R^\xi. $$
It identifies invariant hypersurfaces of degree $d>0$ in $X^{(1)}$
with points of the disjoint union of $\PP R^\xi$ such that $\bar j^*\xi=-nd$,
where $\bar j^* : \widehat T \to \ZZ$ is the homomorphism of Lemma \ref{lem:j}.
\end{proposition}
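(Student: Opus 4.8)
The plan is to assemble the bijection by stringing together the structural results established throughout this section, so that essentially no new geometry is required. First I would invoke Lemma \ref{lem:fibre-at-origin} together with Definition \ref{defDegreeGlob} to construct a map from $\Inv(X,G)$ to the set of $G_0$-invariant hypersurfaces in $\LG(\g_{-1})$, sending a $G$-invariant hypersurface to its fibre at the origin. The key point is that this assignment is a bijection: by $G$-invariance and transitivity of $G$ on $X$, a $G$-invariant hypersurface $\sum m_i Y_i$ in $X^{(1)}$ is completely recovered from its fibre over $o$, since $Y_i = G \cdot (Y_i \cap X^{(1)}_o)$ and each fibre $X^{(1)}_x$ is the $G$-translate of $X^{(1)}_o$ via the identification \eqref{eqSTAR}. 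Conversely, any $G_0$-invariant hypersurface $Z$ in $\LG(\g_{-1}) \simeq X^{(1)}_o$ spreads out to a well-defined $G$-invariant hypersurface $G \cdot Z$ in $X^{(1)}$ because $Z$ is $P$-invariant (recall $P_+$ acts trivially on $\g_{-1}$, so $G_0$-invariance is equivalent to $P$-invariance). One should check that these two constructions are mutually inverse and respect the multiplicities $m_i$, which is where the \emph{effective divisor} (rather than mere subset) viewpoint, emphasised earlier, pays off.

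Next I would identify the set of $G_0$-invariant hypersurfaces in $\LG(\g_{-1})$ with $\coprod_{\xi \in \widehat T \setminus \{0\}} \PP R^\xi$. This is precisely the content of Lemma \ref{lem:ring-invs} (3), which was proved as a consequence of Corollary \ref{cor:sections-polys} and Lemma \ref{lem:inv-hypers}. Composing this identification with the bijection of the previous paragraph yields the desired natural bijection $\Inv(X,G) \simeq \coprod_{\xi \in \widehat T \setminus \{0\}} \PP R^\xi$. I would stress that \emph{naturality} means $G_0$-equivariance of all the maps involved, which is automatic since every identification used (the associated-bundle description \eqref{eqSTAR}, the isomorphisms of Corollary \ref{cor:sections-polys}, and the $\CSp(\g_{-1})$-equivariance recorded in Lemma \ref{Lem10}) has been established to be equivariant.

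For the second assertion, relating degree $d$ to the condition $\bar j^* \xi = -nd$, I would trace the degree through the chain of identifications. By Definition \ref{defDegreeGlob} (2), the degree of a $G$-invariant hypersurface equals the degree of its fibre at the origin, which by Definition \ref{defDegree} means the fibre is cut out by a section of $\iota^*\OO(d)$, i.e.\ corresponds under Corollary \ref{cor:sections-polys} to a class in $\PP\big( (S^d \Lambda^n_0 \g_{-1}^*/I_d)^{G_0^\ss}\big)$, namely the degree-$d$ graded piece of $R$. It then remains only to translate ``lying in the degree-$d$ piece $R \cap S^d\Lambda_0^n\g_{-1}^*$'' into the weight condition, and this is exactly Lemma \ref{lem:j}, which asserts $R^\xi \subset S^d\Lambda^n_0\g_{-1}^*$ if and only if $\bar j^*\xi = -nd$.

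I do not expect a genuine obstacle, since every ingredient has been prepared in advance; the proof is a matter of careful bookkeeping. The one point demanding attention is verifying that the spreading-out construction $Z \mapsto G \cdot Z$ lands among honest hypersurfaces (closed, codimension one, with the correct multiplicities) and is inverse to restriction to the fibre --- in other words, that no components of a $G$-invariant hypersurface can project to a proper subvariety of $X$. This was already addressed in the proof of Lemma \ref{lem:fibre-at-origin}, so the present proof can simply cite it. Accordingly, the write-up will be short, reading essentially as a composition of Lemma \ref{lem:fibre-at-origin}, Lemma \ref{lem:ring-invs}, and Lemma \ref{lem:j}.
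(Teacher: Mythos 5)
Your proposal is correct and follows essentially the same route as the paper, whose entire proof is the one-line observation that the statement follows directly from Lemmas \ref{lem:ring-invs}, \ref{lem:j} and \ref{lem:fibre-at-origin} --- exactly the three results you compose. If anything, you are more thorough than the paper: you make explicit the inverse spreading-out construction $Z \mapsto G\cdot Z$ and the equivalence of $P$- and $G_0$-invariance (via triviality of the $P_+$-action), which the paper relegates to the earlier informal discussion of transitivity in Section \ref{SecDescMainRes}.
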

\begin{proof}
This follows directly from Lemmas \ref{lem:ring-invs}, \ref{lem:j}
and \ref{lem:fibre-at-origin}.
\end{proof}

\section{Proof of Theorem 1}\label{sec:proof}

\subsection{Reformulation}

Let $R_d \subset R$ 
be the degree $d$ component of the ring of $G_0^\ss$-invariants in the
homogeneous coordinate ring of $\LG(\g_{-1})$. We then have, according to
Lemma \ref{lem:j}, a decomposition
$$ R_d = \bigoplus_{\substack{\xi \in \widehat T \\ \bar j^*\xi = -nd}} R^\xi $$
into weight subspaces for the quotient torus $T = G_0 / G_0^\ss$.
Then, by Proposition \ref{pro:invxg},
Theorem \ref{thm:invs} is equivalent to Theorem \ref{thm:invs2} below.
\begin{theorem}\label{thm:invs2}
The minimal degree $d$ such that
$R_d \neq 0$ is given by the first row of the following table,
while the second row gives the dimensions of its $T$-homogeneous summands:
(entries marked with an asterisk are conjectural).
\begin{center}\fbox{\begin{tabular}{r|cccccccc}
\normalfont{type} & $\Asf_.$ & $\Bsf_.$ & $\Dsf_.$ & $\Esf_6$ &$\Esf_7$ &$\Esf_8$ & $\Fsf_4$ & $\Gsf_2$ \\
\hline
  & 1 & 4 & 2 &2 &2 & 2 & 4 & 3\\
  & \{1,1\} & \{\textrm{unknown}\}  & \{1*\} &\{1\} &\{1\} & \{1\} & \{1\} & \{1\}\\
\end{tabular}}\end{center}
\end{theorem}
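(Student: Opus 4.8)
The plan is to work entirely on the algebraic side provided by Proposition \ref{pro:invxg} and Corollary \ref{cor:sections-polys}. Writing $V_d := S^d_0\Lambda^n_0\g_{-1}^*$ for the degree-$d$ piece of the homogeneous coordinate ring of $\LG(\g_{-1})$, the space $R_d$ is exactly the multiplicity space of the trivial $G_0^\ss$-representation inside $V_d$. Since $V_d$ is the irreducible $\Sp(\g_{-1})$-module of highest weight $d\lambda_n$ (Lemma \ref{lem:sp-fundamental}, Lemma \ref{lem7}), the whole problem becomes: for each Cartan type, using the embedding $G_0^\ss\hookrightarrow\Sp(\g_{-1})$ of Lemma \ref{lem:properties-grading}, find the least $d>0$ with $(V_d)^{G_0^\ss}\neq 0$ and compute its dimension together with the $T$-weight decomposition. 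I would run the argument case by case along the columns of the table.

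For the two \emph{classical} cases I would pin down the whole ring $R$. In type $\Asf$, where $\g_{-1}=L\oplus L^*$ is already bi-Lagrangian with $G_0^\ss=\SL(L)$, Lemma \ref{lem:bilag-more} gives $\Lambda^n_0\g_{-1}\simeq\bigoplus_{i=0}^n S^2_0\Lambda^i L^*$; the $i=0$ and $i=n$ summands are one-dimensional and $\SL(L)$-trivial (the classes of the functions $1$ and $\det$ in the chart of Lemma \ref{lem:bilag-more}), and one checks these are the only invariants in degree $1$ — the traceless condition in $S^2_0$ precisely killing the would-be middle invariant when $n\equiv 0\pmod 4$. They carry distinct $T$-weights since $\rk T=2$, giving the splitting $\{1,1\}$, and minimality is immediate. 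In type $\Gsf_2$ one has $\g_{-1}=S^3\C^2$ and $\Lambda^2_0\g_{-1}\simeq S^4\C^2$, so $\LG(\g_{-1})$ is the quadric threefold in $\PP S^4\C^2$ cut out by the unique $\Sp$-invariant quadratic form, which is nothing but the classical degree-$2$ invariant $g_2$ of binary quartics. Hence $R=(\C[S^4\C^2]/(g_2))^{\SL_2}=\C[g_2,g_3]/(g_2)=\C[g_3]$ with $g_3$ in degree $3$: minimal degree $3$, dimension $1$.

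The cases $\Bsf$ and $\Dsf$ I would treat uniformly, and this is the most satisfying step. Writing $\g_{-1}=W\otimes V$ with $W=\C^2$ symplectic and $V=\C^n$ orthogonal, choose $w_\pm\in W$ dual under the form $\epsilon$; then $w_+\otimes V$ and $w_-\otimes V$ are complementary Lagrangians, and by Lemma \ref{lem:bilag-stuff}(3) the big cell of $\LG(\g_{-1})$ is parameterised by $q$-self-adjoint endomorphisms $\varphi$ of $V$. Here $\SO(V)$-invariance amounts to taking symmetric functions of the eigenvalues $\lambda_1,\dots,\lambda_n$ of $\varphi$, while the residual $\SL(W)=\SL_2$ acts on the $\lambda_i$ by \emph{simultaneous M\"obius transformations}; hence $\SL_2$-invariance is exactly invariance of the binary $n$-form with roots $\lambda_i$. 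This identifies $R$ with the ring of $\SL_2$-invariants of binary $n$-forms, as graded rings: the $\SO(V)$-invariant Pl\"ucker coordinates are the coefficients $c_0,\dots,c_n$ of the characteristic polynomial, so the $R$-grading matches coefficient-degree, and a transcendence-degree count $\dim\LG(\g_{-1})-\dim(\SL_2\times\SO(V))=n-3$ confirms the dimensions agree. Classical invariant theory then gives the dichotomy: for $n$ even (type $\Dsf$) the apolar quadratic invariant appears in degree $2$ and is unique, while for $n$ odd (type $\Bsf$) it vanishes identically and the first invariant sits in degree $4$. The number of degree-$4$ invariants for general odd $n$ is the unknown entry; the delicate point behind the conjectural "$1$" for $\Dsf$ is that $\SO(V)$ (as opposed to $O(V)$) admits further invariants built from the volume form, and one must rule these out in degree $2$. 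Explicit generators are recorded in coordinates in Section \ref{sec:pdes}.

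Finally, the exceptional cases $\Esf_6,\Esf_7,\Esf_8,\Fsf_4$ reduce to the branching multiplicities $\dim(V_d)^{G_0^\ss}$ for the embeddings $\SL_6,\mathfrak{spin}_{12},\Esf_7,\Sp_6\hookrightarrow\Sp(\g_{-1})$, and I expect this to be the main obstacle. Here one must establish the vanishing $(V_d)^{G_0^\ss}=0$ for all $d$ below the claimed minimum ($d<2$ for the $\Esf$'s, $d<4$ for $\Fsf_4$) together with one-dimensionality at the minimum. The natural tools are the prehomogeneous/Freudenthal structure of these isotropy representations $\g_{-1}$ (each carries a $G_0^\ss$-invariant quartic, and the inclusions $\Sp_6\subset\SL_6$, $\Lambda^3_0\C^6\subset\Lambda^3\C^6$ realising $\Fsf_4$ as a folding of $\Esf_6$ plausibly explain the jump from degree $2$ to $4$), supplemented where necessary by direct weight-multiplicity computations. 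Proving \emph{uniqueness}, i.e. that $\dim(V_{d_{\min}})^{G_0^\ss}=1$ rather than merely non-vanishing, is the crux of these cases and the hardest part of the whole theorem.
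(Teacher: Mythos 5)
Your reduction of the problem to computing $\dim\,(V_{d\lambda_n})^{\g_0^\ss}$ for the embeddings $\g_0^\ss\hookrightarrow\sp(\g_{-1})$ is exactly the paper's starting point, and your treatment of types $\Asf$ and $\Gsf_2$ matches the paper's proofs essentially verbatim. The genuine gap is in the exceptional columns: for $\Esf_6,\Esf_7,\Esf_8$ and $\Fsf_4$ you do not give a proof at all — you restate the branching problem, list hoped-for tools (prehomogeneous/Freudenthal structure, the folding $\Fsf_4\subset\Esf_6$), and yourself call uniqueness ``the crux''. But these columns are precisely where the paper's technical work lives, and nothing you propose would actually deliver the numbers. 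The paper proceeds via Kostant's theorem (Lemma \ref{lem:kost}), writing $\Lambda^i_0\g_{-1}\simeq\bigoplus_{w\in W^\fp_i}L(w\cdot 0)$, then interprets the decompositions of $S^2V_{\lambda_i}$ and $V_{\lambda_i}\otimes V_{\lambda_j}$ (Lemma \ref{lem:bases}) as a determined linear system in the Grothendieck group whose coefficients — counts of dual pairs and of symmetric self-dualities among the $L(w\cdot 0)$, detected via $-\bar w_\circ$ and the Frobenius--Schur parity $\langle\cdot,\bar h^\circ\rangle$ (Lemma \ref{lem:invs-weyl}) — involve only the parabolic quotient $W^\fp$; solving for $d_n=\dim(V_{2\lambda_n})^{\g_0^\ss}$ gives $1$ in all three $\Esf$ cases. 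This design is essential: a brute-force character computation is infeasible for $\Esf_8$ ($n=28$, Weyl group of order $\sim 7\cdot 10^6$), while for $\Fsf_4$ the paper does run formal-character branching, obtaining $0$ in degrees $\le 3$ and $1$ in degree $4$. Your invariant-quartic heuristic at best produces a degree-$4$ \emph{candidate} $q^n$ for $\Fsf_4$; it proves neither its non-vanishing on $\LG(\g_{-1})$, nor the vanishing of $R_d$ below the minimum, nor one-dimensionality at the minimum. Without importing a computation of this kind, the $\Esf$ and $\Fsf_4$ entries of the table remain unproven.

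Your route through types $\Bsf$ and $\Dsf$, by contrast, is genuinely different from the paper's and attractive, but as written it is a sketch at its key step. The paper rules out degrees $1$--$3$ in type $\Bsf$ directly from the decomposition \eqref{eqDecPluckBD} (every summand is symplectic, and odd degrees die by $\SL_2$-weight parity; Propositions \ref{lem:invBd2}, \ref{lem:invBd3}), and establishes existence by exhibiting $B=\pi^*(q)\in R_2$ (type $\Dsf$) and $q^n\in R_4$, checking non-membership in the ideal $I$ by evaluating at Lagrangian planes $L=\langle\xi_1\otimes e_1,\dots,\xi_n\otimes e_n\rangle$ (Propositions \ref{pro:invD}, \ref{pro:invB}) — which is your eigenvalue picture made pointwise. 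In your version, the transcendence-degree count shows only that the two fraction fields are algebraic over one another, not that $R$ coincides with the classical ring of invariants of binary $n$-forms as a \emph{graded ring}. For the direction you need for existence (a nonzero binary invariant stays nonzero modulo $I$) you should say explicitly that the characteristic-polynomial map from the big cell is dominant onto $S^n\C^2$ (diagonal $\varphi$ with distinct eigenvalues suffice); for the direction you need for the vanishing statements ($R_2=R_3=0$ in type $\Bsf$, $R_1=0$ in type $\Dsf$) you must handle the cocycle by which $\SL_2$ fails to preserve the big cell, i.e.\ the classical isobaric-weight bookkeeping, since a section of $\OO(d)$ restricted to the cell is only relatively invariant. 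Finally, ``for odd $n$ the first invariant sits in degree $4$'' needs a proof or citation valid for \emph{all} odd $n\ge 3$ (the paper instead proves $q^n\notin I$ by a direct permutation computation). These points are fixable, and your flagged caveats (the $\SO$-versus-$\mathrm{O}$ subtlety behind the conjectural $1^*$ in type $\Dsf$, the unknown dimension in type $\Bsf$) correctly mirror the paper's asterisks; the exceptional-type gap, however, is not fixable by the means you indicate.
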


We also extract additional information about $R$ in types $\Asf$ and $\Gsf$.
\begin{proposition}\label{pro:AG}\
\begin{enumerate}
\item In type $\Asf$ the ring of invariants $R$ is generated by a pair
of elements of degree $1$ with distinct $T$-weights.
\item In type $\Gsf_2$ the ring of invariants $R$ is generated by a single
element of degree $3$.
\end{enumerate}
\end{proposition}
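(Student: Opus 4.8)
The plan is to use the identification of $R_d$ with the $G_0^\ss$-invariants in $S^d_0\Lambda^n_0\g_{-1}^*$ (Corollary \ref{cor:sections-polys} and Lemma \ref{Lem10}(3)) and to exploit the bi-Lagrangian description of the Pl\"ucker space from Lemma \ref{lem:bilag-more}, namely $\Lambda^n_0\g_{-1}\simeq\bigoplus_{i=0}^n S^2_0\Lambda^i L^*$. In both types the generators come from the $G_0^\ss$-trivial summands of this decomposition. The two extreme summands $S^2_0\Lambda^0 L^*\simeq\C$ and $S^2_0\Lambda^n L^*\simeq(\det L^*)^{\otimes2}$ are $\SL(L)$-trivial, while $S^2_0\Lambda^i L^*$ ($0<i<n$) is the nontrivial irreducible of highest weight $2\lambda_i$; I write $p_0,p_n\in\Lambda^n_0\g_{-1}^*$ for the dual coordinates of the two trivial summands. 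They carry distinct $\GL(L)$-central characters, hence distinct $T$-weights, and in the open cell of Lemma \ref{lem:bilag-stuff}(3) one has $p_0=c$ and $p_n=c\det\varphi$, where $c$ is the cone coordinate and $\varphi\in S^2 L^*$; in particular $p_0,p_n$ are algebraically independent and already $R_1=\langle p_0,p_n\rangle$.

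For type $\Asf$, where $G_0^\ss=\SL(L)$ with $L=\C^n$ (Lemma \ref{lem:properties-grading}), I would prove $R=\C[p_0,p_n]$ by a localisation argument. The cell $\{p_0\neq0\}$ identifies $\SL(L)$-equivariantly with $S^2 L^*$ under congruence, and classical invariant theory of symmetric forms gives $\C[S^2 L^*]^{\SL(L)}=\C[\det\varphi]=\C[p_n/p_0]$; hence $R[p_0^{-1}]=\C[p_0,p_0^{-1},p_n]$, so every $h\in R$ has the form $p_0^{-m}g(p_0,p_n)$ with $g\in\C[x,y]$, $x\nmid g$, $m\ge0$. To exclude $m\ge1$ I note that $p_0^m h=g(p_0,p_n)$ holds in the domain $S^\bullet\Lambda^n_0\g_{-1}^*/I$, so $p_0$ would divide $g(p_0,p_n)$ and $g(0,p_n)$ would vanish on the divisor $\{p_0=0\}$ of Lagrangians meeting $L^*$. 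But $p_n$ is not identically zero there (it vanishes only on Lagrangians meeting $L$, and there exist Lagrangians transverse to $L$ yet meeting $L^*$), whereas $g(0,y)\neq0$; this contradiction forces $m=0$, giving $R=\C[p_0,p_n]$, which is part (1).

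For type $\Gsf_2$, where $G_0^\ss$ is the principal $\SL_2\subset\Sp_4$ acting on $\g_{-1}=S^3\C^2$ (so $n=2$), the ring $R$ is a finitely generated normal graded domain with $R_0=\C$. The first step is Krull dimension: a generic Lagrangian plane in $S^3\C^2$ has finite $\SL_2$-stabiliser, because the Lagrangians invariant under some nonzero element of $\sll_2$ form a family of dimension at most $2<3=\dim\LG(\g_{-1})$, so $\SL_2$ has a dense orbit, $\operatorname{Proj}R$ is a point, and $\dim R=1$. A normal graded domain of Krull dimension $1$ with $R_0=\C$ is isomorphic to a polynomial ring $\C[f]$ with $\deg f=\min\{d>0:R_d\neq0\}$, so it remains to locate the minimal degree. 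Branching the relevant $\Sp_4$-irreducibles of Lemma \ref{lem:sp-fundamental} to the principal $\SL_2$ gives $V(\lambda_2)\simeq\Lambda^2_0\C^4\simeq S^4\C^2$, $V(2\lambda_2)\simeq S^8\C^2\oplus S^4\C^2$, and $V(3\lambda_2)\simeq S^{12}\C^2\oplus S^8\C^2\oplus S^6\C^2\oplus\C$; the first two have no trivial summand and the third has exactly one, so $R_1=R_2=0$ and $R_3=\C$, whence $R=\C[f]$ with $\deg f=3$, which is part (2).

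The main obstacle lies in type $\Gsf_2$: the branching computations, carried out by peeling the Cartan piece off $S^d V(\lambda_2)$ (using $S^2V(\lambda_2)=V(2\lambda_2)\oplus\C$ and $S^3V(\lambda_2)=V(3\lambda_2)\oplus V(\lambda_2)$ from the invariant quadratic form on the orthogonal module $\Lambda^2_0\C^4$) and the plethysm $S^3(S^4\C^2)\simeq S^{12}\C^2\oplus S^8\C^2\oplus S^6\C^2\oplus S^4\C^2\oplus\C$, together with the orbit-dimension estimate securing $\dim R=1$. By contrast, in type $\Asf$ the only nonformal inputs are the identity $\C[S^2 L^*]^{\SL(L)}=\C[\det\varphi]$ and the elementary incidence fact that $p_n$ does not vanish on $\{p_0=0\}$; once these are in hand the localisation argument is purely algebraic.
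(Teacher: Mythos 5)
Your proposal is correct. For part (1) it is essentially the paper's own argument in a different language: the paper's Lemma \ref{lemGeomLagrGrass} likewise restricts to the big cell $U\simeq S^2\C^{n*}$, invokes the classical fact $\C[S^2\C^{n*}]^{\SL_n}=\C[\det]$, and then handles the boundary, phrasing everything with effective divisors (writing $D=aH+\sum b_iZ_i$ and concluding $D_U$ is a multiple of $H'\cap U$) where you localise the graded ring at $p_0$ and clear denominators; your incidence fact (e.g.\ the point $L^*$ lies in $\{p_0=0\}\smallsetminus\{p_n=0\}$) plays exactly the role of the distinctness of the two boundary divisors $H$, $H'$ there. (One cosmetic improvement: take $h$ homogeneous, so that $g(0,y)$ is a nonzero monomial; then the vanishing of $g(0,p_n)$ on the irreducible divisor $\{p_0=0\}$ immediately forces $p_n$ to vanish there, which is the contradiction.) For part (2) your route is genuinely different. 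The paper quotes the classical structure of binary quartic invariants, $\C[S^4\C^2]^{\SL_2}=\C[q,c]$ with $q$ a quadric and $c$ a cubic, observes that $\LG(\g_{-1})$ is itself the invariant quadric $\{q=0\}\subset\p^4$ so that $I^{\SL_2}=q\,\C[q,c]$, and concludes $R=\C[c]$ in one step. You instead establish $\dim R=1$ by a generic-stabiliser count (correct, though asserted rather than verified: each nonzero $x\in\sll_2$ preserves only finitely many Lagrangian $2$-planes in $S^3\C^2$ --- for semisimple $x$ the weights $3,1,-1,-3$ are distinct, and a nilpotent $x$ acts as a single Jordan block --- so the non-free locus has dimension at most $2$), then use that a one-dimensional normal graded domain with $R_0=\C$ is $\C[f]$, and locate $\deg f=3$ via the branchings $V_{2\lambda_2}|_{\SL_2}=S^8\C^2\oplus S^4\C^2$ and $V_{3\lambda_2}|_{\SL_2}=S^{12}\C^2\oplus S^8\C^2\oplus S^6\C^2\oplus\C$, which I have checked and which are correct. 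Your version costs extra commutative-algebra input (normality of invariant rings of normal domains, the structure of graded normal rings of dimension one) but buys independence from the classical presentation of the full ring $\C[q,c]$ and from computing $I^{\SL_2}$: only the plethysms $S^d(S^4\C^2)$ for $d\le3$ are needed, and the method is closer in spirit to the branching computations the paper itself employs for types $\Esf$ and $\Fsf_4$.
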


We thus proceed to prove Theorem \ref{thm:invs2} and Proposition \ref{pro:AG},
first outlining the general strategy.

\subsection{Strategy}\label{ss:strategy}
The approach will differ depending on the degree
as stated in Theorem \ref{thm:invs2} and on the Cartan type of $\g$.
The outliers, $\Asf_\ell$ and $\Gsf_2$, will be treated
separately at the very beginning (these are the easy ones).
Now, the remaining types are organised according to two binary criteria:
\begin{enumerate}
\item degree: quadric ($\Dsf_\ell$, $\Esf_6$, $\Esf_7$, $\Esf_8$) or quartic ($\Bsf_\ell, \Fsf_4$),
and
\item type: classical ($\Bsf_\ell$, $\Dsf_\ell$) or exceptional ($\Esf_6$, $\Esf_7$, $\Esf_8$, $\Fsf_4$).
\end{enumerate}
For classical algebras, we give a constructive proof involving an explicit invariant
in $R_d$, $d = 2$ or $4$. For the exceptional ones, we argue non-constructively by computing the
dimension of $R_d$ using representation--theoretic methods. In type $\Fsf_4$ we use
the standard way of branching a representation of $\spp_n$ to $\g_0^\ss$ in terms of
formal characters. Since the complexity of this approach is roughly controlled by the
size of the Weyl group of $\g$, it is practically impossible to apply to
$\Esf_8$ (with its Weyl group of size nearly 7 million, compared to 1152 for $\Fsf_4$).
Fortunately, a more refined method may be applied to find \emph{quadric} invariants,
involving only the quotient of the Weyl group of $\g$ by that of $\g_0$ (for $\Esf_8$
there are only 240 cosets).

In any case, it is not difficult
to construct \emph{candidates} for a nontrivial element of $R_d$. Indeed,
due to the isomorphism \eqref{eq:quotients}, to give a nontrivial element of $R_d$ is the
same as to give a $G_0^\ss$-invariant in $S^d \Lambda^n_0 \g_{-1}^*$
that is not contained in the ideal $I$
(i.e., does not vanish on the Pl\"ucker-embedded $\LG(\g_{-1}) \subset \p \Lambda^n_0\g_{-1}$).
We will now describe a way to produce $G_0^\ss$-invariants in $S^d \Lambda^n_0\g_{-1}^*$.
In the case of classical algebras we will be able to show explicitly that they do not belong to $I$.

In the \emph{quadric} case we are dealing with algebras of type $\Dsf$ and $\Esf$. It is an important
observation that in all these cases $n$ is \emph{even}. As a consequence,
the wedge product map $\Lambda^n_0 \g_{-1} \otimes \Lambda^n_0 \g_{-1} \to \det \g_{-1} \simeq \C$
defines an $\spp(\g_{-1})$-invariant quadratic form $b \in S^2\g_{-1}^*$. Of course, $b$
vanishes on $\LG(\g_{-1})$ (the symplectic group acts transitively on the Lagrangian Grassmannian, see Subsection \ref{ss:x1-ass-bun}).
Now, $\Lambda^n_0\g_{-1}$
decomposes under $\g_0^\ss$ into a direct sum of invariant subspaces: for each of these
the restriction of $b$ is either non-degenerate or zero. The isotropic summands
come in dual pairs, so that adding them produces a $G_0^\ss$-invariant \emph{orthogonal}
decomposition of $\Lambda^n_0\g_{-1}$
(we will see that the Pl\"ucker space is not a sum of two irreducible isotropic subspaces).
Now, the restriction of $b$ to each orthogonal summand may be again extended trivially to
all of $\Lambda^n_0\g_{-1}$
producing a $G_0^\ss$-invariant quadric. Their sum yields $b$ and thus vanishes
on $\LG(\g_{-1})$, but one may expect that \emph{not all} such restrictions vanish on their own.

In the \emph{quartic} case we may exploit a universal construction. It is well known (see, for instance,
\cite{MR2905245}) that the $G_0^\ss$-action on $\g_{-1}$ has quartic invariant, which may be
defined for $x \in \g_{-1}$ as $q(x) = \ad^4_x \in \Hom(\g_2,\g_{-2}) \simeq \C$. Now,
a $G_0^\ss$-invariant quartic on $\Lambda^n_0 \g_{-1}$ is given by $q^n$ under the natural
map
$$ S^n S^4 \g_{-1}^* \to S^4 \Lambda^n \g_{-1}^*. $$
In fact, we will show that $q^n$ does not vanish on $\LG(\g_{-1})$ for both
$\Bsf_\ell$ (where it is the lowest degree invariant) and $\Dsf_\ell$ (where
it defines a quartic invariant \emph{independent} from the square of the quadric).

%

\subsection{Proof of Theorem \ref{thm:invs} in type $\Asf$}
In this section we work out step--by--step the case $\Asf_{n+1}$, i.e., when $G=\PGL_{n+2}$, which is perhaps the simplest one. This `toy model' will help the reader to better understand how the program sketched in the introductory Subsection \ref{subFirstLook} can be applied in practice. This is also the only case when the central torus $\tilde T$ has rank two, which is another good reason to develop it in details.\par
Let $\PGL_{n+2}$ act naturally on the projective space $\p^{n+1}$.
\begin{lemma}\label{lemTypeA}
The $(2n+1)$--dimensional contact manifold $\p T^*\p^{n+1}$
is  precisely the adjoint contact variety $X$ of $\PGL_{n+2}$.
\end{lemma}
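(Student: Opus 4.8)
The plan is to realise both spaces concretely inside $\p\,\End(V)$, where $V=\C^{n+2}$, and to match them by an explicit $\PGL_{n+2}$-equivariant isomorphism. First I would identify the adjoint variety. For $\g=\sll(V)$ the highest root is the highest weight of the adjoint representation, and its $G$-orbit is the projectivised minimal nilpotent orbit; concretely a highest root vector may be taken to be a rank-one nilpotent such as $E_{1,n+2}$. Since every rank-one traceless matrix $v\otimes\xi$ (acting by $w\mapsto\xi(w)\,v$) is nilpotent precisely when $\tr(v\otimes\xi)=\xi(v)=0$, and all such matrices are $\GL(V)$-conjugate, I would conclude that
\[
X=\{\,[v\otimes\xi]\in\p\sll(V)\ :\ v\in V\smallsetminus\{0\},\ \xi\in V^*\smallsetminus\{0\},\ \xi(v)=0\,\},
\]
this being the unique closed $G$-orbit of Definition \ref{def:adjoint} (the conditions $\operatorname{rank}\le 1$ and $\tr=0$ are closed, and the locus is a single orbit).

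Next I would describe $\p T^*\p^{n+1}$. Writing $\ell=[v]\in\p(V)=\p^{n+1}$, the standard identifications $T_\ell\p(V)\simeq\Hom(\ell,V/\ell)$ give $T^*_\ell\p(V)\simeq \ell\otimes(V/\ell)^*\simeq\ell\otimes\Ann(\ell)$, where $\Ann(\ell)=\{\xi\in V^*:\xi|_\ell=0\}$. Thus a nonzero covector at $\ell$ is represented by $v\otimes\xi$ with $\xi(v)=0$, and I would define
\[
\Phi:\p T^*\p^{n+1}\longrightarrow X,\qquad \Phi\big([v],[\xi]\big)=[v\otimes\xi].
\]
This is well defined (the factorisation $v\otimes\xi$ is unique up to the rescalings $v\mapsto\lambda v$, $\xi\mapsto\lambda^{-1}\xi$) and is visibly a morphism of projective varieties. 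It is $\PGL_{n+2}$-equivariant because $g\cdot(v\otimes\xi)=(gv)\otimes(\xi\circ g^{-1})=g(v\otimes\xi)g^{-1}$, i.e.\ the induced action on covectors intertwines with the adjoint action on $\sll(V)$. Finally $\Phi$ is bijective with regular inverse: from a rank-one nilpotent $A$ one recovers $\ell=\operatorname{im} A$ and the hyperplane $\ker A=\ker\xi$, hence the pair $([v],[\xi])$. So $\Phi$ is a $G$-equivariant isomorphism of varieties, and a dimension count confirms $\dim X=\dim\p T^*\p^{n+1}=2n+1$.

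It remains to match the contact structures, which I regard as the only delicate point. The projectivised cotangent bundle $\p T^*\p^{n+1}$ carries its canonical tautological contact distribution, which is natural and hence preserved by the lift of any diffeomorphism of $\p^{n+1}$; in particular its image under $\Phi$ is a $G$-invariant corank-one distribution on $X$. On the other hand $X$ carries the adjoint contact distribution $\CC=G\times^P\g_{-1}$. Both are $G$-invariant, so by homogeneity each is determined by a codimension-one $P$-submodule of $T_oX\simeq\g/\fp$, and I would conclude they coincide by showing such a submodule is unique. Using the contact grading of Lemma \ref{lem:properties-grading} one has $\g/\fp\simeq\g_{-2}\oplus\g_{-1}$ with $\dim\g_{-2}=1$, and $P_+$ acts trivially on $\g^{-1}/\fp\simeq\g_{-1}$, so a $P$-invariant hyperplane either equals $\g_{-1}$ or meets $\g_{-1}$ in a $G_0$-invariant hyperplane; but $\g_{-1}\simeq\C^n\oplus\C^{n*}$ admits none (for $n\ge2$ it is already a sum of nontrivial irreducible $\sll_n$-modules, while for $n=1$ the torus $T$ separates the two summands by distinct weights). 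Hence the two distributions agree and $\Phi$ is an isomorphism of contact manifolds, proving the Lemma. The main obstacle is precisely this last uniqueness step; everything preceding it is a matter of unwinding the standard identifications.
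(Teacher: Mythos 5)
Your identification of the two spaces is essentially the paper's: the paper also realises $\p T^*\p^{n+1}$ as the flag variety $\{(L,H)\mid L\subset H\}$ and uses the very same map $(\Span{v},\ker\varphi)\mapsto[v\otimes\varphi]$, checking well-definedness, $\PGL_{n+2}$-equivariance, injectivity (via the Segre embedding) and that the image is the orbit of the highest weight vector $[e_1\otimes\epsilon^{n+2}]$, hence $X$. Where you genuinely diverge is the contact step. The paper verifies contactomorphy by a direct first-order computation: a curve $(\Span{v_t},\ker\phi_t)$ through $o$ tangent to the contact hyperplane satisfies $\epsilon^{n+2}(v_0')=0$ and $\phi_0'(e_1)=0$, and the velocity of its image is then annihilated by $\ad_{e_1\otimes\epsilon^{n+2}}$, so it lies in the contact hyperplane $\ker\ad_{e_1\otimes\epsilon^{n+2}}$ of the cone $\widehat{X}$. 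You instead argue that both structures are $G$-invariant corank-one distributions and that the corresponding $P$-invariant hyperplane in $\g/\fp$ is unique. That route is more conceptual and would also buy you the statement ``\emph{any} $G$-invariant contact structure on $X$ is the adjoint one'' --- but as written it has a gap.

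The gap is your claim that $\g_{-1}\simeq\C^n\oplus\C^{n*}$ admits no $G_0$-invariant hyperplane, which is false for $n=1$ (a case the Lemma includes: $\g=\sll_3$, $X=\p T^*\p^2$). There $\dim\g_{-1}=2$, and the fact that the torus acts on the two summands with \emph{distinct} weights does not exclude invariant hyperplanes: it says precisely that the invariant lines are the two weight lines, and a line \emph{is} a hyperplane in a two-dimensional space. So your dichotomy leaves two unexcluded candidates at $n=1$. The repair is to use full $P$-invariance rather than only the induced $G_0$-action on the graded piece $\g^{-1}/\fp$. Suppose a $P$-invariant hyperplane $W\subset\g/\fp$ does not contain $\g^{-1}/\fp$, and pick $w\in W$ with nonzero $\g_{-2}$-component. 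Since $[\g_1,\g_{-1}]\subset\g_0\subset\fp$ while $[\g_1,\g_{-2}]=\g_{-1}$ (for each root $\alpha$ with $\langle\alpha,\gamma^\vee\rangle=1$ one has $[\g_\alpha,\g_{-\gamma}]=\g_{\alpha-\gamma}$, and these span $\g_{-1}$; cf.\ the proof of Lemma \ref{lem:properties-grading}(4)), $\fp_+$-invariance of $W$ gives $[\g_1,w]\equiv\g_{-1}$ mod $\fp$, hence $W\supset\g^{-1}/\fp$ after all; together with $w\notin\g^{-1}/\fp$ this forces $W=\g/\fp$, a contradiction. This argument is uniform in $n$, makes your case analysis on $G_0$-invariant hyperplanes unnecessary, and with it your proof is correct.
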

\begin{proof}
A point $L\in \p^{n+1}$ is a line in $\C^{n+2}$, and an element $H\in\p T^*_L\p^{n+1}$ is a tangent hyperplane to  $\p^{n+1}$ at $L$. As such, $H$ is an hyperplane in $\C^{n+2}$ containing the line $L$. In other words,   $\p T^*\p^{n+1}$ can be identified with the space
\begin{equation*}
 \{  (L,H)\mid L\subset H\}\subset \p^{n+1}\times\p^{n+1\,\ast}\,
\end{equation*}
of $(1,n+1)$--flags in $\C^{n+2}$.\par
Fix standard coordinates $e_{1},\ldots, e_{n+2}$ on $\C^{n+2}$, together with their duals $\epsilon^i$, and set the point
\begin{equation*}
o:=(L_0=\Span{e_1}, H_0=\Span{e_1,\ldots,e_{n+1}}=\ker\epsilon^{n+2})
\end{equation*}
as the origin of  $\p T^*\p^{n+1}$.  The Lie algebra of the stabiliser   of $o$ is
\begin{equation*}
\gp= \left\{  \left(\begin{array}{ccc}\lambda & v & \mu \\0 &A & w \\0 & 0 & \nu\end{array}\right) \mid A\in  \gl_n , v,w\in\C^n,  \lambda+\nu+\tr A=0 \right\}\, .
\end{equation*}
Since $\dim\sll_{n+2}-\dim\gp=((n+2)^2-1)-(n^2+2n+2)=2n+1$, the  $\PGL_{n+2}$--orbit through $o$ is open in $\p T^*\p^{n+1}$.\par
Now we show that the   map
\begin{eqnarray*}
\p T^*\p^{n+1} &\stackrel{F}{\longrightarrow} & \p(\sll_{n+2})\subset\p(\C^{n+2}\otimes\C^{n+2\ast})\, ,\\
(L=\Span{v},H=\ker\varphi) &\longmapsto & [v\otimes\varphi]\,
\end{eqnarray*}
is well--defined, injective, $\PGL_{n+2}$--equivariant, and it defines a contactomorphism on its image, which is precisely $X$.\par
The class $[v\otimes\varphi]$  is well--defined because both $v$ and $\varphi$ are defined up to a  projective factor, and  $v\otimes\varphi$ indeed belongs to $\sll_{n+2}$, since from $L\subset H$ it follows that   $\tr (v\otimes\varphi) =\varphi(v)= 0$.\par
By construction,
\begin{equation*}
F(o)=[e_1\otimes\epsilon^{n+2}]\, ,
\end{equation*}
where
  $e_1\otimes\epsilon^{n+2}$ is the highest weight vector of $\sll_{n+2}$, whence
\begin{equation*}
\PGL_{n+2}\cdot [e_1\otimes\epsilon^{n+2}]= X\, ,
\end{equation*}
by the very definition of adjoint variety (see Definition \ref{def:adjoint}).\par
The $\PGL_{n+2}$--equivariancy of $F$ is obvious, since
\begin{equation*}
g\cdot (L,H)=(g(L),g(H))=(\Span{g(v)},\ker g^*(\varphi))\longmapsto [g(v)\otimes g^*(\varphi)]=g\cdot [v\otimes\varphi]\, ,\quad\forall g\in \PGL_{n+2}\, .
\end{equation*}
Moreover, $F$ is injective being the restriction of the Segre embedding  $\p^{n+1}\times\p^{n+1\,\ast}\subset \p(\C^{n+2}\otimes\C^{n+2\ast})$, and its image coincides with $X$. Indeed, if $[h]\in \p(\sll_{n+2})$, where $h$ is a rank--one homomorphism, then $(\textrm{im}\, h, \ker h)\in \p T^*\p^{n+1}$ and
\begin{equation*}
[h]=F((\textrm{im}\, h, \ker h))\, .
\end{equation*}
It remains to prove that $F$ realises a contactomorphism between the contact structures on $\p T^*\p^{n+1}$ and $X$. By homogeneity, we can simply show that $T_oF$ sends the contact hyperplane
\begin{equation*}
H_0\oplus T_{H_0}(\p T_{L_0}^*\p^{n+1})\subset T_{L_0}\p^{n+1}\oplus T_{H_0}(\p T_{L_0}^*\p^{n+1})=T_o(\p T^*\p^{n+1})
\end{equation*}
to the contact hyperplane of $T_{F(o)}X$. The latter is better described in terms of the cone $\widehat{X}$ over $X$, inside $\sll_{n+2}$: it is the subspace
\begin{equation*}
 {\ker\ad_{e_1\otimes\epsilon^{n+2}}}{ }\subset {[\sll_{n+2},e_1\otimes\epsilon^{n+2}]}{ }=T_{e_1\otimes\epsilon^{n+2}}\widehat{X}\, .
\end{equation*}
A curve $\gamma(t):=(\Span{v_t},\ker\phi_t)$ belongs to the contact plane at $o$ if and only if $\epsilon^{n+2}(v_0')=0$, that is, if the horizontal projection of $\gamma(t)$ keeps, to first order, the line $\Span{v_t}$ inside the hyperplane $\ker\phi_t$. Observe that, since $\phi'_0$ is the velocity of a curve of hyperplanes containing $L_0$, we have also  $ \phi'_0(e_1)=0$. Consider now the curve
\begin{equation}\label{eqSprojecCurve}
 t\longmapsto e_1\otimes\epsilon^{n+2} + (v'_0\otimes\epsilon^{n+2}+e_1\otimes\phi'_0)t+o(t^2)\,
\end{equation}
in $\sll_{n+2}$, whose projectivisation is precisely $F_*\gamma$.
Since
\begin{equation*}
\ad_{e_1\otimes\epsilon^{n+2}}(  v'_0\otimes\epsilon^{n+2}+e_1\otimes\phi'_0 ) =(\epsilon^{n+2}(v_0')+ \phi'_0(e_1))e_1\otimes\epsilon^{n+2}=0\, ,
\end{equation*}
the velocity at $0$ of \eqref{eqSprojecCurve}  belongs to the contact hyperplane of $\widehat{X}$ at $e_1\otimes\epsilon^{n+2}$, whence the velocity at $0$ of    $F_*\gamma$ at $F(o)$ belongs to the contact hyperplane of $X$ at $F(o)$.
\end{proof}
Recall (see   Subsection \ref{subContGrad}) the notion of contact grading.
\begin{corollary}
 The contact grading \eqref{eqContGrad} of $\sll_{n+2}$ is
 \begin{equation}\label{eq:gradingTypeA}
\sll_{n+2}= \underset{ {  \g_{-2}  }}{\underbrace{ \C}} \oplus\underset{ {   \g_{-1} }} {\underbrace{ (\C^n\oplus \C^{n\ast})}} \oplus\underset{ {  \g_0 }}{\underbrace{  (\sll_n\oplus\C^2)}} \oplus\underset{ {   \g_{1} }}{\underbrace{  (\C^{n\ast}\oplus\C^n) }} \oplus\underset{ {   \g_{2} }}{\underbrace{  \C^*}}\, .
\end{equation}
\end{corollary}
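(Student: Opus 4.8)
The plan is to read the grading directly off the block-matrix description of $\gp\subset\sll_{n+2}$ furnished by the proof of Lemma \ref{lemTypeA}. First I would recall that under $F$ the origin $o$ was sent to the highest weight line $[e_1\otimes\epsilon^{n+2}]$, so that the highest root is $\gamma=\epsilon^1-\epsilon^{n+2}$ in terms of the weights $\epsilon^i$ attached to the diagonal entries. By the recipe of Subsection \ref{subContGrad}, the contact grading is the eigenspace decomposition of $\ad_{\gamma^\vee}$ for the grading element $\gamma^\vee=\mathrm{diag}(1,0,\dots,0,-1)\in\sll_{n+2}$ (note $\langle\gamma,\gamma^\vee\rangle=2$).

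Next I would split a general element of $\sll_{n+2}$ into the nine blocks determined by $\C^{n+2}=\C e_1\oplus\C^n\oplus\C e_{n+2}$---exactly as was already done for $\gp$---and evaluate $\langle\epsilon^i-\epsilon^j,\gamma^\vee\rangle$ on each off-diagonal block. One finds the bottom-left corner in degree $-2$; the left-column and bottom-row strips of length $n$ in degree $-1$; the three diagonal blocks in degree $0$; the top-row and right-column strips in degree $+1$; and the top-right corner in degree $+2$. Reading off the $\GL_n$-module structure of each strip gives at once $\g_{-2}\simeq\C$, $\g_{-1}\simeq\C^n\oplus\C^{n\ast}$ (column plus row), and dually $\g_1\simeq\C^{n\ast}\oplus\C^n$, $\g_2\simeq\C^\ast$.

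It remains only to identify $\g_0$. The three diagonal blocks contribute the scalars $\lambda,\nu$ and a matrix $A\in\gl_n$, subject to the single relation $\lambda+\tr A+\nu=0$ inherited from $\sll_{n+2}$. Splitting $A=A_0+\tfrac1n(\tr A)I_n$ with $A_0\in\sll_n$ exhibits a central two-dimensional torus $\{(\lambda,\tr A,\nu):\lambda+\tr A+\nu=0\}\simeq\C^2$ commuting with the $\sll_n$ factor, so that $\g_0\simeq\sll_n\oplus\C^2$; the dimension count $\dim\g_0=n^2+1$ confirms this. This reproduces the corresponding row of the table in Lemma \ref{lem:properties-grading}.

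I expect no genuine obstacle here. The only points demanding care are the duality bookkeeping---recording a strip as $\C^n$ or $\C^{n\ast}$ according to whether it is a column or a row, so as to respect the pairing $\g_{-i}^\ast\simeq\g_i$ of Lemma \ref{lem:properties-grading}(3)---and the use of the trace relation to exhibit the rank-two central torus, which is precisely the feature $\rk T=2$ that singles out type $\Asf$ among all cases.
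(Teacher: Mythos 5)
Your proposal is correct and follows essentially the same route as the paper: both read the grading off the block decomposition of $\sll_{n+2}$ relative to $\C^{n+2}=\C e_1\oplus\C^n\oplus\C e_{n+2}$, i.e.\ the matrix picture of $\gp$ already set up in the proof of Lemma \ref{lemTypeA}. The only difference is one of completeness: the paper merely exhibits the explicit splitting $\gp\cong\g_0\oplus\g_1\oplus\g_2$ and leaves the negative part and the grading mechanism implicit, whereas you justify every degree via the eigenvalues of $\ad_{\gamma^\vee}$ with $\gamma^\vee=\mathrm{diag}(1,0,\dots,0,-1)$, consistently with the grading-element recipe in the proof of Lemma \ref{lem:properties-grading}, and you verify the module identifications and the rank-two centre directly.
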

\begin{proof}
 The stabiliser   of $o$ is
\begin{equation*}
\gp= \left\{  \left(\begin{array}{ccc}\lambda & v & \mu \\0 &A & w \\0 & 0 & \nu\end{array}\right) \mid A\in  \gl_n , v,w\in\C^n,  \lambda+\nu+\tr A=0 \right\}\, .
\end{equation*}
Then
\begin{equation*}
 \left(\begin{array}{ccc}\lambda & v & \mu \\0 &A & w \\0 & 0 & \nu\end{array}\right) \longmapsto \left( \underset{ {  \g_0 }}{\underbrace{  A-\frac{1}{n}\tr A\id_n, \tr A, \lambda+\nu}} ,\underset{ {   \g_{1} }}{\underbrace{  \left(\begin{array}{ccc}0 & v & 0 \\0 &0 & 0 \\0 & 0 & 0\end{array}\right),\left(\begin{array}{ccc}0 & 0 & 0 \\0 &0 & w \\0 & 0 & 0\end{array}\right) }} ,\underset{ {   \g_{2} }}{\underbrace{  \mu}} \right)
\end{equation*}
defines an isomorphisms $\gp\cong\g_{0}\oplus\g_1\oplus\g_2$.
\end{proof}
\begin{corollary}
 The twisted symplectic form on  $\g_{-1}$ is the unique one extending the standard pairing between $\C^n$ and $\C^{n\ast}$.
\end{corollary}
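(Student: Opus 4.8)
The plan is to compute the bracket-induced form $\omega\colon\Lambda^2\g_{-1}\to\g_{-2}$ of Lemma \ref{lem:properties-grading}(4) explicitly in the matrix model underlying the grading \eqref{eq:gradingTypeA}, and then read off both the fact that it extends the evaluation pairing and its uniqueness.

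First I would realise the two summands of $\g_{-1}=\C^n\oplus\C^{n\ast}$ as off--diagonal blocks of $\sll_{n+2}$: a vector $\xi\in\C^n$ as the matrix $E_\xi$ carrying $\xi$ in the lower--left block (rows $1,\dots,n$, column $0$), and a covector $\eta\in\C^{n\ast}$ as the matrix $F_\eta$ carrying $\eta$ in the bottom block (row $n+1$, columns $1,\dots,n$); the line $\g_{-2}$ is spanned by the matrix unit $e_{n+1,0}$. Multiplying these block matrices one finds $E_\xi E_{\xi'}=E_{\xi'}E_\xi=0$ and $F_\eta F_{\eta'}=F_{\eta'}F_\eta=0$, so that $[\C^n,\C^n]=[\C^{n\ast},\C^{n\ast}]=0$; hence both summands are isotropic for $\omega$, and being complementary of dimension $n$ they are Lagrangian. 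The only surviving bracket is $[E_\xi,F_\eta]=-\eta(\xi)\,e_{n+1,0}$, so that under the identification $\g_{-2}\simeq\C$, $e_{n+1,0}\mapsto 1$, the form reads $\omega\big((\xi,\eta),(\xi',\eta')\big)=\eta(\xi')-\eta'(\xi)$. This is exactly the standard symplectic form attached to the evaluation pairing between $\C^n$ and $\C^{n\ast}$, which shows that $\omega$ extends it.

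For the uniqueness clause I would note that a twisted symplectic form on $\g_{-1}=\C^n\oplus\C^{n\ast}$ for which both summands are Lagrangian is completely determined by the $\g_{-2}$--valued duality pairing it induces between them, so fixing that pairing to be the standard one leaves no freedom. To make the Lagrangian property intrinsic rather than imposed, I would instead invoke the $G_0$--equivariance of $\omega$ together with Schur's lemma: decomposing $\Lambda^2\g_{-1}$ into $\Lambda^2\C^n$, $\C^n\otimes\C^{n\ast}$ and $\Lambda^2\C^{n\ast}$, an equivariant map from a diagonal block to $\g_{-2}$ would force the $T$--weight identity $2\chi_{\C^n}=\chi_{\g_{-2}}=\chi_{\C^n}+\chi_{\C^{n\ast}}$, hence $\chi_{\C^n}=\chi_{\C^{n\ast}}$; but these two characters of the rank--two torus $T$ are distinct, so the diagonal blocks contribute nothing, while $\Hom_{\sll_n}(\C^n\otimes\C^{n\ast},\C)$ is one--dimensional by Schur. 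This re--proves the Lagrangian property and pins $\omega$ down up to scale.

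The matrix computation is routine; the only point demanding care is the uniqueness, where one must be precise about the sense in which $\omega$ is unique. I expect the torus--weight argument to be the genuinely necessary ingredient, since it is exactly what excludes a spurious invariant form in the low--rank case $n=2$: there $\Lambda^2\C^2$ is already trivial under $\sll_2$, so Schur's lemma over $\g_0^\ss$ alone would not rule out a contribution from the diagonal blocks, and only the second torus direction does.
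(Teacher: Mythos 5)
Your block computation is, conventions and sign included, exactly the paper's own proof: the Corollary's justification in the text consists of the single observation that the bracket of the $(2,1)$-block $v$ with the $(3,2)$-block $w$ equals $-v\cdot w$ times the generator of $\g_{-2}$, i.e.\ the matrix counterpart of the evaluation pairing $(x,\xi)\mapsto\xi(x)$; your $[E_\xi,F_\eta]=-\eta(\xi)\,e_{n+1,0}$ together with the vanishing of brackets within each summand is the same calculation spelled out slightly more fully. Where you genuinely go beyond the paper is the uniqueness clause, which the paper does not argue at all: taken literally, an antisymmetric form extending the standard pairing is far from unique (one may add arbitrary forms on $\Lambda^2\C^n$ and $\Lambda^2\C^{n\ast}$), so some qualification --- Lagrangianity of the two summands, or $G_0$-equivariance --- is required, and your torus-weight argument supplies the right one correctly. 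Indeed, $T$ acts on $\Lambda^2\C^n$ by $2\chi_{\C^n}$ and on $\g_{-2}=[\g_{-1},\g_{-1}]$ by $\chi_{\C^n}+\chi_{\C^{n\ast}}$, and the two central characters of the summands are distinct (this can be read off the computation in Lemma \ref{lemTorusActionANP1}, where $\tilde T'$ acts on $\C^n$ and $\C^{n\ast}$ with different weights), so the diagonal blocks admit no equivariant map to $\g_{-2}$, while $\Hom_{\sll_n}(\C^n\otimes\C^{n\ast},\C)$ is a line by Schur's lemma. Your closing remark is also on point: for $n=2$ the module $\Lambda^2\C^2$ is $\sll_2$-trivial, so Schur over $\g_0^\ss$ alone fails, and it is precisely the second direction of the rank-two torus --- invisible to the grading character $j$, under which both summands have the same weight --- that excludes a spurious invariant, whereas for $n\ge 3$ irreducibility of $\Lambda^2\C^n$ already suffices. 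In short: same core proof as the paper, plus a sound and worthwhile precise rendering of ``unique'' that the paper leaves implicit.
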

\begin{proof}
 Just observe that
 \begin{equation*}
\left[ \left(\begin{array}{ccc}0 & 0 & 0 \\v &0 & 0 \\0 & 0 & 0\end{array}\right)    ,   \left(\begin{array}{ccc}0 & 0 & 0 \\0 &0 & 0 \\0 & w & 0\end{array}\right)      \right]=-v\cdot w
\end{equation*}
is the matrix counterpart of the pairing
\begin{equation*}
\C^{n}\times \C^{n\ast}\ni(x,\xi)\longmapsto \xi(x)\in\C \qedhere
\end{equation*}
\end{proof}
Let us stress here that
the identification of the particular summands
in \eqref{eq:gradingTypeA} is a representation of the semi-simple part $G_0^\ss \simeq \SL_n$.
Formula \eqref{eqDecGOGOss} reads now
$G_0=\SL_n\cdot \tilde T$,
with a central torus $\tilde T$ of rank $2$,
and a quotient $T = \tilde T / \mu_n$. We describe
the characters of $T$ corresponding to the determinant
representations.
\begin{lemma}\label{lemTorusActionANP1}
There is a lattice isomorphism $\hat T \simeq \ZZ^2$
such that the characters of $\det \C^n$ and $\det \C^{n*}$
are $(1,-1)$ and $(-1,n+1)$ respectively.
\end{lemma}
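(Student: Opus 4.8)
**Lemma (character computation for $\det\C^n$ and $\det\C^{n*}$).**

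\medskip

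The plan is to make the rank-two torus $\tilde T$ and its quotient $T$ completely explicit in matrix terms, then simply read off the weights of the two determinant characters. From the proof of Lemma \ref{lemTypeA} the parabolic $\gp$ is the stabiliser of the $(1,n+1)$-flag, and its Levi factor $\g_0 \simeq \sll_n \oplus \C^2$ is spanned by the traceless part of $A$ together with the two scalars $\tr A$ and $\lambda+\nu$. Exponentiating, I would take $\tilde T$ to be the connected central torus in $G_0$, realised as the image in $\PGL_{n+2}$ of diagonal matrices of the form $\mathrm{diag}(s,\,t\id_n,\,r)$ with $str^? = 1$ modulo scalars; the cleanest normalisation is to use the two one-parameter subgroups dual to the characters $\lambda+\nu$ and $\tr A$ appearing in the splitting above. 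Since $G_0^\ss \simeq \SL_n$ sits inside as the block $A$ with $\det A = 1$, the quotient $T = \tilde T/\mu_n$ is exactly the two-dimensional torus recording how such a diagonal element scales the first coordinate line, the middle $\C^n$ block, and the last coordinate line.

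\medskip

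The key step is then a direct weight computation. First I would fix the isomorphism $\widehat T \simeq \ZZ^2$ by choosing two generating characters of $\tilde T$ that descend to $T$; the natural choice is to let the two coordinates record the action on the highest-weight data of the $\g_{-1}$ summands $\C^n$ and $\C^{n*}$. Recall from \eqref{eq:gradingTypeA} that as a $G_0$-module $\g_{-1} = \C^n \oplus \C^{n*}$, where under the embedding of $\gp$ the $\C^n$ is the off-diagonal block $v$ (the entries in row $1$, columns of the middle block) and $\C^{n*}$ is the block $w$. For a torus element acting as $\mathrm{diag}(s,\,t\id_n,\,r)$, conjugation sends the $v$-block by the factor $s t^{-1}$ and the $w$-block by $t r^{-1}$. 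Hence $\det\C^n$ picks up $(s t^{-1})^n = s^n t^{-n}$ and $\det\C^{n*}$ picks up $(t r^{-1})^n = t^n r^{-n}$. Passing to the quotient $T = \tilde T/\mu_n$ and using the traceless constraint $s t^n r = 1$ (so that the element lies in $\SL_{n+2}$ before projectivising) lets me eliminate one of the three scalars and express both determinant characters in the chosen $\ZZ^2$ coordinates.

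\medskip

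The main bookkeeping obstacle — and the only place where an error is likely to creep in — is pinning down the correct normalisation of the lattice $\widehat T \simeq \ZZ^2$ so that the answer comes out exactly as $(1,-1)$ for $\det\C^n$ and $(-1,n+1)$ for $\det\C^{n*}$, rather than as some $\GL_2(\ZZ)$-equivalent pair. The asymmetry (the appearance of $n+1$) reflects the quotient by $\mu_n$ and the traceless/projective identifications, so I expect the clean values to emerge only after correctly accounting for the $\SL_{n+2}$-trace relation and the $\mu_n$-quotient. Concretely, I would verify the normalisation by checking it against the grading element: the one-parameter subgroup $j$ of Lemma \ref{lem:j} acting by scaling on $\g_{-1}$ must have $\bar j^*$ sending both determinant characters consistently with $R^\xi \subset S^d$ carrying $\bar j^*\xi = -nd$, which for $d=1$ forces $\bar j^*$ of each determinant weight to equal $-n$. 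Applying $\bar j^*$ to $(1,-1)$ and $(-1,n+1)$ and demanding both give $-n$ fixes the identification uniquely and serves as the final consistency check, completing the proof.
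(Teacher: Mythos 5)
Your setup is the same as the paper's in spirit---lift to $\SL_{n+2}$, realise the rank-two torus by explicit diagonal matrices, and read off the block-conjugation weights ($st^{-1}$ on the $v$-block, $tr^{-1}$ on the $w$-block, matching, up to sign conventions, the paper's characters $\eta_1=(1,-1)$, $\eta_2=(1,n+1)$ of $\tilde T'=\{\mathrm{diag}(\lambda,\mu,\dots,\mu,\lambda^{-1}\mu^{-n})\}$)---but the step you defer as ``bookkeeping'' is the entire content of the lemma, and your proposed substitute for it does not work. What has to be shown is that the two determinant characters, as elements of $\hat T$, are $\GL_2(\ZZ)$-equivalent to the pair $(1,-1)$, $(-1,n+1)$; equivalently, that they generate an index-$n$ sublattice of $\hat T$ with \emph{cyclic} quotient $\ZZ/n$ (Smith normal form $\mathrm{diag}(1,n)$). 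This requires knowing $\hat T$ exactly, which the paper gets from two finite-quotient computations you never carry out: (i) since $\tilde T=\tilde T'/\mu_{n+2}$, the lattice $\Hom(\tilde T,\C^\times)$ has index $n+2$ in $\Hom(\tilde T',\C^\times)\simeq\ZZ^2$, and is identified with $\ZZ\eta_1+\ZZ\eta_2$ precisely because $\det\left(\begin{smallmatrix}1&-1\\1&n+1\end{smallmatrix}\right)=n+2$ (you gesture at ``modulo scalars'' but extract no lattice consequence from it, and your constraint is left literally as ``$str^{?}=1$''); (ii) $\hat T\subset\Hom(\tilde T,\C^\times)$ is the further index-$n$ sublattice $\{c_1\eta_1+c_2\eta_2\ |\ c_1\equiv c_2 \bmod n\}$ of characters trivial on $\mu_n=\SL_n\cap\tilde T$, with basis $\zeta_1=\eta_1+(n+1)\eta_2$, $\zeta_2=\eta_1+\eta_2$, in which the determinant characters $n\eta_1$, $n\eta_2$ take the stated form.

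Your fallback---``verify the normalisation by checking against the grading element''---cannot close this gap, because the condition that $\bar j^*$ take the value $-n$ on both determinant characters is satisfied by many $\GL_2(\ZZ)$-inequivalent candidate pairs. For $(1,-1)$, $(-1,n+1)$ one solves $\bar j^*=(-n-2,-2)$; but for the (wrong) pair $(1,0)$, $(0,1)$ one equally solves $\bar j^*=(-n,-n)$, and for $(1,-1)$, $(-1,2n+1)$ one solves $\bar j^*=(-n-1,-1)$. Concretely: had you forgotten step (ii) and worked in $\Hom(\tilde T,\C^\times)$ rather than $\hat T$, the characters $n\eta_1$, $n\eta_2$ would generate a sublattice with quotient $(\ZZ/n)^2$ rather than $\ZZ/n$---a pair \emph{not} equivalent to $(1,-1)$, $(-1,n+1)$---and your $\bar j^*$ test would not detect the error, since $j$ scales all of $\g_{-1}$ uniformly and hence assigns the same weight to every degree-one character regardless of which lattice it lives in. So the check neither ``fixes the identification uniquely'' nor certifies that it exists; the index computations (i) and (ii) are indispensable and missing from your argument.
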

\begin{proof}
Let us first consider the cover
$\SL_{n+2} \to \PGL_{n+2}$ with kernel $\mu_{n+2}$
(the group of $(n+2)$-nd roots of unity). The
central torus $\tilde T \subset G_0 \subset \PGL_{n+2}$
is a quotient of a torus $\tilde T' \subset \SL_{n+2}$:
$$
\tilde T = \tilde T' / \mu_{n+2},\quad \tilde T' = \{ \mathrm{diag}(\lambda,\mu,\cdots,\mu,\lambda^{-1}\mu^{-n}) \}
$$
and thus its character lattice is of index $n+2$
in the character lattice of $\tilde T'$.
We identify $\Hom(\tilde T',\C^\times)$ with $\ZZ^2$
so that $(a,b) \in \ZZ^2$ sends an element of $\tilde T'$ as above
to $\lambda^a \mu^b$. Then $\tilde T'$
acts on $\C^n$, $\C^{n*}$ with characters
$\eta_1=(1,-1)$, $\eta_2=(1,n+1)$ respectively. Since
$$ \det \left(\begin{matrix} 1 & -1 \\ 1 & n+1 \end{matrix}\right) = n+2, $$
it follows that $\eta_1, \eta_2$
generate a sub-lattice of index $n+2$ in $\Hom(\tilde T', \C^\times)$,
necessarily coinciding with $\Hom(\tilde T, \C^\times)$.
We have $$ T = \tilde T / \mu_n = \tilde T' / (\mu_{n+2} \times \mu_n) $$
where $\mu_n \subset \tilde T'$ consists of elements with $\lambda=1$, $\mu^n=1$.
It follows that $\hat T \subset \Hom(\tilde T,\C^\times)$ is the sub-lattice
consisting of $(a,b) \in \ZZ\langle \eta_1,\eta_2\rangle \subset \ZZ^2$
with $b = 0$ mod $n$, expressed in terms of the basis $\eta_1$, $\eta_2$ as
$$ \hat T = \{ c_1 \eta_1 + c_2 \eta_2\ |\ c_1 = c_2 \mod n \}. $$
Hence $\hat T$ is of index $n$ in $\Hom(\tilde T, \C^\times)$ and may be generated
by $\zeta_1 = \eta_1+(n+1)\eta_2$ and $\zeta_2 = \eta_1+\eta_2$. Now, the
$T$-characters of $\det\C^n$, $\det\C^{n*}$ are
$$ ne_1 = -\zeta_1+(n+1)\zeta_2,\quad ne_2 = \zeta_1 - \zeta_2 $$
respectively.
\end{proof}

Let us now decompose the dual Pl\"ucker space $\Lambda_0^n\g_{-1}^*$
into $G_0^\ss$-irreducible components
(see Subsection \ref{subPluck}). Since $G_0$ preserves the Lagrangian decomposition
$\g_{-1} = \C^n \oplus \C^{n*}$, the decomposition
\eqref{eqBOXBOX} takes a particularly simple form:
\begin{align}
\Lambda^n_0 \g_{-1}^*
 &= \Lambda^n_0 (\C^{n\ast}\oplus \C^{n})\nonumber\\
 &= \bigoplus_{i=0}^nS^2_0(\Lambda^i\C^{n\ast})\subset \bigoplus_{i=0}^n (\Lambda^i\C^{n\ast})^{\otimes 2}= \bigoplus_{i=0}^n (\Lambda^i\C^{n\ast})\otimes (\Lambda^{n-1}\C^{n })\label{eqMysterEmbed},\\
\Lambda^n_0\g_{-1}^* &=\C\oplus S_0^2\C^{n\ast}\oplus\cdots \oplus S_0^2 (\Lambda^{n-1}\C^{n\ast})\oplus S_0^2(\Lambda^n\C^{n\ast})\, .\label{eqDecPluckLinearAnp1}
\end{align}
The step \eqref{eqMysterEmbed} may use some extra comment. First, we decomposed $n$--forms on $\C^{n\ast}\oplus \C^{n}$ as products of forms on each summand, and then we used Poincar\'e duality. Finally,   one checks that  a bilinear form on $\Lambda^i\C^{n}$ belongs to the kernel of \eqref{eqINS} if and only if it is symmetric and trace--free.
So,  \eqref{eqDecPluckLinearAnp1} represents the decomposition of the space of linear functions on the Pl\"ucker embedding space of $X^{(1)}_o$ into $G_0^\ss$--irreducible submodules.
Clearly, there are only two one--dimensional summands in \eqref{eqDecPluckLinearAnp1}:
the first and the last. Using the identification $\hat T = \ZZ^2$ of Lemma
\ref{lemTorusActionANP1} we thus have
\begin{equation*}
\C=R^{(1,-1)}\, ,\quad S^2(\Lambda^n\C^{n\ast})=R^{(-1,n+1)}.
\end{equation*}
We conclude that $\Inv(X,\Asf_{n+1})$ contains exactly two elements of degree 1, i.e., the lowest--degree invariants we were looking for.\footnote{
We stress that in \eqref{eqDecPluckLinearAnp1} the submodules $S_0^2\C^{n\ast}$ and $S_0^2 (\Lambda^{n-1}\C^{n\ast})$ coincide in fact with  $S^2\C^{n\ast}$ and $S^2 (\Lambda^{n-1}\C^{n\ast})$, respectively, since the entries of a symmetric matrix, as well as of its cofactor matrix are independent. Only for $n\geq 4$ the trace--free submodules can be proper ones. See, e.g., the discussion     \cite{209058}, where the whole decomposition  \eqref{eqDecPluckLinearAnp1}  is obtained.
}
The reader may find their interpretation in terms of PDEs
in Subsection \ref{ss:pdes-a}.

For completeness, we state the following result.
\begin{lemma}\label{lemGeomLagrGrass}
$R^{(1,-1)}$ and $R^{(-1,n+1)}$ generate $R$.
\end{lemma}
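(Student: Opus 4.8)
The plan is to prove the sharper statement that $R$ is the polynomial ring $\C[z_0,z_n]$ on two degree-one generators, by computing $R$ on each of the two natural affine cells of $\LG(\g_{-1})$ and then gluing. Let $z_0$ span $R^{(1,-1)}$ (the $i=0$ summand of \eqref{eqDecPluckLinearAnp1}) and $z_n$ span $R^{(-1,n+1)}$ (the $i=n$ summand), and write $A=\bigoplus_d S^d_0\Lambda^n_0\g_{-1}^*=S^\bullet\Lambda^n_0\g_{-1}^*/I$ for the homogeneous coordinate ring of the Pl\"ucker-embedded $\LG(\g_{-1})$; this is a domain since $\LG(\g_{-1})$ is irreducible, so $R=A^{G_0^\ss}$ is a domain and $z_0,z_n$ are non-zero-divisors. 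Using \eqref{eqASTERIX} in Lemma \ref{lem:bilag-more} I first record that on the dense cell identified in Lemma \ref{lem:bilag-stuff} with the symmetric forms $S^2L^*=\{U=(u_{ij})\}$ the coordinate $z_0$ restricts to the constant $1$ and $z_n$ to $\det U$; thus $z_n/z_0=\det U$ in $A[z_0^{-1}]_0=\C[S^2L^*]$, which is non-constant, so that $z_0,z_n$ are already algebraically independent.

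First I would localise at $z_0$. Since $z_0$ is $G_0^\ss$-invariant and a non-zero-divisor, inverting it commutes with taking invariants, and $A[z_0^{-1}]\cong\C[S^2L^*][z_0^{\pm1}]$ because $z_0$ is a degree-one unit; hence $R[z_0^{-1}]=\C[S^2L^*]^{\SL_n}[z_0^{\pm1}]$. Here $G_0^\ss=\SL_n=\SL(L)$ acts on $S^2L^*$ by congruence, and the key classical input is that its ring of invariants is exactly $\C[\det U]$: over $\C$ the group $\GL(L)$ acts transitively on the nondegenerate forms with its centre rescaling the determinant, so on $\{\det\neq0\}$ the $\SL(L)$-orbits are precisely the level sets of $\det$, forcing every invariant to be a Laurent polynomial in $\det U$ and, by regularity, a genuine polynomial in it. Substituting $\det U=z_n/z_0$ gives $R[z_0^{-1}]=\C[z_0^{\pm1},z_n]$. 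The outer automorphism of $\g$ realising the reflection of the $\Asf$ diagram interchanges $\C^n$ and $\C^{n*}$, hence the $i=0$ and $i=n$ summands and the pair $z_0,z_n$; applying it to the previous computation yields, by symmetry, $R[z_n^{-1}]=\C[z_0,z_n^{\pm1}]$.

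Finally I would glue. From $R[z_0^{-1}]=\C[z_0^{\pm1},z_n]$ the fraction field of $R$ is $\C(z_0,z_n)$, and inside it one has $R\subseteq R[z_0^{-1}]\cap R[z_n^{-1}]=\C[z_0^{\pm1},z_n]\cap\C[z_0,z_n^{\pm1}]$. Writing any $f$ in this intersection in lowest terms as $N/D$ with $N,D\in\C[z_0,z_n]$ coprime, membership in the first ring forces $D$ to divide a power of $z_0$ and membership in the second forces $D$ to divide a power of $z_n$; since $z_0,z_n$ are coprime in the UFD $\C[z_0,z_n]$, the denominator $D$ must be a unit, so $f\in\C[z_0,z_n]$. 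Thus $R=\C[z_0,z_n]$, i.e. $R^{(1,-1)}$ and $R^{(-1,n+1)}$ generate $R$. The only non-formal step, and the main obstacle, is the invariant-theoretic identity $\C[S^2L^*]^{\SL_n}=\C[\det]$ on the cell; the rest is bookkeeping with localisations, and it is exactly here that the bi-Lagrangian geometry special to type $\Asf$ (giving a symmetric-matrix cell with $\SL$ acting by congruence) is used.
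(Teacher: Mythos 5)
Your proof is correct, and it rests on exactly the same key input as the paper's own proof --- the classical identity $\C[S^2\C^{n*}]^{\SL_n}=\C[\det]$ applied on the big affine cell of Lemma \ref{lem:bilag-stuff}(3) --- but the global bookkeeping is genuinely different. The paper argues with effective divisors: given a nonzero $r\in R_d$ it writes the associated divisor as $aH+\sum b_i Z_i$ with each $Z_i$ meeting the cell $U$, restricts to $U$, and uses the classical fact to force $\sum b_i(Z_i\cap U)$ to be a multiple of the determinant hypersurface, so that the divisor is $aH+bH'$ and $r$ is proportional to $z_0^a z_n^b$ via the section--divisor correspondence of Lemma \ref{lem:divisors}. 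You instead localise the graded invariant ring at the two invariant degree-one sections, use that inverting an invariant non-zero-divisor commutes with taking invariants and that $A[z_0^{-1}]\cong A[z_0^{-1}]_0[z_0^{\pm1}]$, and glue by a lowest-terms argument in the UFD $\C[z_0,z_n]$. Your route buys the sharper conclusion $R=\C[z_0,z_n]$, i.e.\ \emph{free} generation, which is exactly the strengthening of Proposition \ref{pro:AG}(1) asserted informally in Section \ref{SecDescMainRes}; it also avoids any appeal to irreducibility of the boundary divisor $H$ and to the divisor--section dictionary, at the modest cost of verifying the standard graded-localisation identifications (in particular that $U=\{z_0\neq0\}$, whence $A[z_0^{-1}]_0=\C[S^2L^*]$, which does follow from \eqref{eqASTERIX}) and of invoking the outer automorphism for the second cell, where the paper needs only one cell. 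Two cosmetic remarks: your sketch of the classical invariant-theoretic fact can be tightened, since constancy of an invariant on the level sets of $\det$ yields a genuine polynomial directly by evaluating along the section $t\mapsto\mathrm{diag}(t,1,\dots,1)$ of $\det$, with no detour through Laurent polynomials; and the assignment of the weights $(1,-1)$ and $(-1,n+1)$ to $z_0$ and $z_n$ is immaterial for the argument, since the statement is symmetric in the two generators (note the paper's own proof attaches $R^{(1,-1)}$ to $H'$, while your convention, consistent with the identification preceding the lemma, attaches it to the section cutting $H$ --- either labelling works).
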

\begin{proof}
We work geometrically on the Lagrangian Grassmannian $\LG(\g_{-1})
\subset \PP \Lambda^n_0\g_{-1}$. As above, we use a bi-Lagrangian splitting
$\fg_{-1} = \C^n \oplus \C^{n*}$ as in Lemmas \ref{lem:bilag-stuff} and \ref{lem:bilag-more}.
Recall that we have dense open subsets $U \simeq S^2 \C^{n*}$
and $U' \simeq S^2 \C^n$ in $\LG(\g_{-1})$,
consisting of Lagrangian subspaces $L \subset \g_{-1}$ with non-degenerate
projections onto $\C^n$, respectively $\C^{n*}$.
Let $H = \LG(\g_{-1}) \setminus U$
and $H' = \LG(\g_{-1}) \setminus U'$.
All these subsets are $G_0^\ss$--invariant, and
the action of the latter on $U$ and $U'$ coincides
with its natural linear action on $S^2 \C^{n*}$ and $S^2 \C^{n}$.
We may view $H' \cap U$ as the locus of those symmetric
maps $\C^n \to \C^{n*}$ that are not invertible, and analogously for $H \subset U'$.
It follows that $H' \cap U$ is the hypersurface in $S^2 \C^{n*}$ cut out by
the determinant $\det : S^2 \C^{n*} \to (\det \C^n)^{2} \simeq \C$. Thus
the hyperplane section $H'$ is defined by an element of $R^{(1,-1)} \subset
\Lambda^n_0 \g_{-1}^*$ and, dually, $H$ is defined by an element of $R^{(-1,n+1)}$.

Suppose now $r \in R_d$ is a nonzero invariant and let
$D$ be the associated hypersurface (recall that to us this means an effective divisor)
on $\LG(\g_{-1})$. We may
write $D = aH + \sum b_i Z_i$ where the $Z_i$
are codimension one subvarieties of $\LG(\g_{-1})$ such that $Z_i = \overline{Z_i \cap U}$.
Since $D_U = \sum b_i Z_i \cap U$ is necessarily $G_0^\ss$-invariant, it will
be enough to check that the ring of $\SL_n$--invariants in $\C[S^2\C^{n*}]$ is generated
by $\det$, for then it follows that $D_U$ is a multiple of $H' \cap U$.
That is a classical result (see, e.g., \cite{olver1999classical}).
\end{proof}

\subsection{Proof of Theorem \ref{thm:invs} in type $\Gsf$}
We move now to $\fg$ of type $\Gsf$, that is, to the case when $G$ is the 14--dimensional Lie group $\Gsf_{2}$. Being the smallest amongst the exceptional Lie groups, $\Gsf_2$ is perhaps the best understood one, and this section does not add anything new (see \cite{MR2441524} for a thorough review of $\Gsf_{2}$, and also \cite{MR2322419}).\par
The contact grading \eqref{eqContGrad} reads now
 \begin{equation*}
\Lie(\Gsf_2)= \underset{ {  \g_{-2}  }}{\underbrace{ \C}} \oplus\underset{ {   \g_{-1} }} {\underbrace{ S^3\C^2}} \oplus\underset{ {  \g_0 }}{\underbrace{  (\sll_2\oplus\C)}} \oplus\underset{ {   \g_{1} }}{\underbrace{  S^3\C^{2\ast} }} \oplus\underset{ {   \g_{2} }}{\underbrace{  \C^*}}  \, ,
\end{equation*}
where $\fg_{-1}$ is the 4--dimensional irreducible representation of
$G_0^\ss \simeq \SL_2$.\par
Observe that $X^{(1)}_o$ is the 3--dimensional  Grassmannian of Lagrangian 2--planes in a 4--dimensional symplectic space, which is a quadric in $\p^4$ (see, e.g.,  \cite{MR2876965}). Accordingly, $\Lambda_0^2\g_{-1}$ must be an irreducible 5--dimensional $\SL_2$--module, i.e., $\Lambda_0^2\g_{-1}\cong S^4\C^2$, and   \eqref{eqDefR} becomes
\begin{equation*}
R = (S^\bullet   (S^4\C^{2*} )/ I)^{\SL_2}  = \C[S^4\C^2]^{\SL_2} / I^{\SL_2}\, .
\end{equation*}
It is a classical result that
\begin{equation*}
\C[S^4\C^2]^{\SL_2} =\C[q,c]\, ,
\end{equation*}
where $q$ is a quadric and $c$ a cubic
  (see, e.g., \cite{olver1999classical}). Since   $I=\Span{q}$, the lowest--degree constituent of  $\Inv(X,\Gsf_2)$ is $\p R_3=\{ [c]\}$.
Observe that   the grading  of $R$ induced by the central torus in $G_0$   coincides with the standard
grading.

\subsection{Proof of Theorem \ref{thm:invs} in types $\Bsf$ and $\Dsf$}\label{subProofTypeBD}
In this section we deal with both the cases when $\g$ is of type $\Bsf$ and $\Dsf$, since they both  correspond to the special orthogonal Lie group. We adopt a common approach,  by  stressing the differences  due to the parity of $n$, where $2n$ is the dimension of $\g_0$ (cf. \eqref{eqDefEnne}).\par
Accordingly, the      contact grading \eqref{eqContGrad}   becomes
 \begin{equation*}
\so_{n+4}= \underset{ {  \g_{-2}  }}{\underbrace{ \C}} \oplus\underset{ {   \g_{-1} }} {\underbrace{  \C^2\otimes\C^n}} \oplus\underset{ {  \g_0 }}{\underbrace{  (\gl_2\oplus\so_n)}} \oplus\underset{ {   \g_{1} }}{\underbrace{  ( \C^2\otimes\C^n)^* }} \oplus\underset{ {   \g_{2} }}{\underbrace{  \C^*}}  \, .
\end{equation*}
We have $G_0^\ss \simeq \SL_2 \cdot \SO_n$, a quotient of
$\SL_2 \times \SO_n$ by a finite subgroup. The decomposition \eqref{eqBOXBOX} of the Pl\"ucker embedding space into $\g_0^\ss$--irreducible submodules reads
\begin{equation}\label{eqDecPluckBD}
\Lambda^n_0\fg_{-1} \simeq \left\{   \begin{array}{ll}\bigoplus_{a+b = \frac{n-1}{2}} S^{2a+1}\C^2 \otimes S^2_0 \Lambda^b \C^n & n\textrm{ odd,} \\\bigoplus_{a+b = \frac{n}{2}} S^{2a}\C^2 \otimes S^2_0 \Lambda^b \C^n & n \textrm{ even,}\end{array}   \right.
\end{equation}
where $S^2_0 \Lambda^b \C^n$ is: an
irreducible
representation of $\SO_n$ with highest weight
being twice the highest weight of $\Lambda^b \C^n$ for $b<n/2$; or
the direct sum of the irreducible representations whose highest
weights are twice the highest weights of the two summands of $\Lambda^{n/2}\C^n$
for $b=n/2$.
This follows from the decomposition of the $(\SL_2\times\SL_n)$-representation
$$ \Lambda^n (\C^2 \otimes \C^n) = \bigoplus_{|\lambda|=n} \Sigma_\lambda(\C^2) \otimes
\Sigma_{\lambda^*}(\C^n) $$
where the sum is over Young diagrams of size $n$, $\lambda^*$ denotes the transpose
of $\lambda$, and $\Sigma_\lambda$ is the Schur functor associated with $\lambda$ (see~\cite[Exercise 6.11 b]{MR1153249}). Indeed, one sees that the only diagrams entering
the sum are $\alpha = [n-b,b]$ in row-length notation, $0 \le b \le n/2$, with
$\Sigma_{[n-b,b]}(\C^2) \simeq S^{n-2b}\C^2$ and
$\Sigma_{[n-b,b]^*}(\C^n) \simeq (\End \Lambda^b \C^n)_0$.
The latter denotes the unique irreducible $\SL_n$-subrepresentation
in $\End \Lambda^b \C^n$ containing the image of $\SL_n$
under the representation map $\SL_n \to \GL (\Lambda^b \C^n)$. Then,
reducing to $\SL_2 \times \SO_n \subset \SL_2 \times\SL_n$ and
taking a quotient by $\omega\wedge\Lambda^{n-2}(\C^2\otimes\C^n)$,
one arrives at \eqref{eqDecPluckBD}.
As an immediate consequence of \eqref{eqDecPluckBD}
we have $\PP R_1=\emptyset$, that is, there are no $G_0^\ss$-invariant
hyperplanes.

\begin{remark}
Observe that  the Pl\"ucker embedding space $\Lambda^n_0\fg_{-1}$ is equipped with a  nondegenerate  pairing
\begin{eqnarray*}
\Lambda^n_0\fg_{-1}\times \Lambda^n_0\fg_{-1} &\longrightarrow & \Lambda^{2n} \fg_{-1} \simeq\C\, \\
( \phi,\psi )   &\longmapsto &  \phi \wedge \psi\, ,
\end{eqnarray*}
which is a quadratic form for $n$ even (i.e., $\g$ of type  $\Dsf$), and a symplectic form for $n$ odd (i.e., $\g$ of type  $\Bsf$).
In the even case the corresponding null quadric contains $X^{(1)}_o$, and thus does not produce  a nontrivial invariant of degree $2$. However, we may
use the restriction of the corresponding bilinear form to a $G_0$-invariant
subspace of the Pl\"ucker space.
\end{remark}
\begin{pro}\label{pro:invD}
For $\g$ of type $\Dsf$, there is a nontrivial element   $[B]$ in  $\p R_2$.
\end{pro}
\begin{proof}
Observe that    the map
\begin{eqnarray*}
  \Lambda^n_0 \fg_{-1} &\stackrel{\pi}{\longrightarrow }& S^n \C^2\otimes\Lambda^n\C^n\equiv S^n \C^2\, \\
\xi_1\otimes v_1 \wedge \dots \wedge \xi_n\otimes v_n &\longmapsto &
(\xi_1\odot\cdots\odot\xi_n )\otimes  (v_1\wedge\cdots\wedge v_n)
\end{eqnarray*}
is surjective and  $G_0^\ss$--equivariant, and that   the $\SL_2$--invariant projection
\begin{equation*}
S^2 S^n\C^2 \stackrel{q}{\longrightarrow } S^n\Lambda^2\C^2   \simeq \C
\end{equation*}
defines an $\SL_2$--invariant
quadratic form $q$ on $ S^n\C^{2*}$.
Therefore, $B:=\pi^*(q)$  is a quadratic form on
$  \Lambda^n_0\fg_{-1}$. \par
It remains to show that $B$ does belong to the ideal $I$ (cf. \eqref{eqHEART}), that is, that $B$ does not vanish on $X_o^{(1)}$.
To this end, we to show that $B(\phi)\neq 0$ for some
$\phi \in \Lambda^n_0\fg_{-1}$ such that $[\phi] \in X^{(1)}_o$.
Fix an orthonormal basis $e_1,\dots,e_n$ of $\C^n$.
Let $\eta \in S^n\C^2$ be such that $q(\eta)\neq0$,
and fix a factorisation $\eta = \xi_1\cdots \xi_n$
with $\xi_i \in \C^2$, $1\le i\le n$. Now,
consider the linear subspace
$L = \langle \xi_1\otimes e_1,\dots,\xi_n\otimes e_n \rangle \subset \fg_{-1}$.
It is by construction Lagrangian, and furthermore its representing $n$-form
$\phi = (\xi_1\otimes e_1) \wedge\cdots\wedge (\xi_n\otimes e_n) $
satisfies
$B(\phi) = q(\eta) \neq 0$.
\end{proof}
This proves the part of Theorem \ref{thm:invs2} referring to type $\Dsf$.
In type $\Bsf$, Proposition \ref{lem:invBd2} below shows that   no such invariant quadric exists.
\begin{pro}\label{lem:invBd2}
For $\g$ of type $\Bsf$, we have $\p R_2=\emptyset$.
\end{pro}
\begin{proof}
From the decomposition formula \eqref{eqDecPluckBD} of the Pl\"ucker embedding space $ \Lambda^n_0 \fg_{-1}$ it follows that  there are no non--trivial $G_0^\ss$-invariants in $S^2 (\Lambda^n_0 \fg_{-1})^*$. Indeed, each summand is the tensor product of a symplectic module and an orthogonal module,
hence symplectic. Furthermore, no two summands are mutually dual.
\end{proof}
The next step in type $\Bsf$ is to look for cubic invariants. Along the same line as Proposition \ref{lem:invBd2}, we have:
\begin{pro}\label{lem:invBd3}
For $\g$ of type $\Bsf$, we have $\p R_3=\emptyset$.
\end{pro}
\begin{proof}
Identifying integral weights of $\SL_2$ with $\mathbb{Z}$,
and letting $S_a = S^a\C^2$, we have that
the set of weights of $S_{a_1}\otimes\cdots\otimes S_{a_r}$
is contained in $a_1+\dots+a_r + 2\mathbb{Z}$. In particular,
the tensor product of an odd number of even-dimensional representations
of $\SL_2$ cannot contain the trivial representation. Since
a $G_0^\ss$-invariant in a tensor power of $\Lambda_0^n\g_{-1}^*$
necessarily decomposes into summands which are products of
an $\SL_2$-invariant and an $\SO_n$-invariant, it follows that
there are no odd-degree $G_0^\ss$-invariants on $\Lambda_0^n\g_{-1}$
whatsoever.
\end{proof}
The final result of this section is common to both types $\Dsf$ and $\Bsf$. In type $\Bsf$ it provides the sought--for lowest--degree invariant, thus proving the corresponding part of Theorem
\ref{thm:invs2}, whereas in type $\Dsf$ it exhibits another interesting element of $\Inv(X,G)$.\par
We begin by observing that, by the classical invariant theory
of $\SL_2$ and $\SO_n$, there
is a unique one-dimensional subspace in the $G_0^\ss$-irreducible
decomposition of $S^4\g_{-1}^*$,
given dually by the projection
$$ S^4 (\C^2 \otimes \C^n) \to S^2 \Lambda^2 \C^2 \otimes \ker \left[
   S^2 \Lambda^2 \C^n \xrightarrow{\wedge} \Lambda^4\C^n\right]
\xrightarrow{\id \otimes \langle,\rangle} (\det\C^2)^2 \otimes \C \simeq \C. $$
The corresponding quartic
$q : S^4 (\C^2\otimes\C^n) \to \C$
is defined as
$$
q(\xi_1 \otimes e_1, \xi_2\otimes e_2, \xi_3\otimes e_3, \xi_4\otimes e_4)
= \epsilon(\xi_1,\xi_2)\epsilon(\xi_3, \xi_4) \left[
\langle e_1,e_3\rangle \langle e_2,e_4\rangle -
\langle e_2,e_3\rangle \langle e_1,e_4\rangle
\right]
$$
where $\epsilon \in \Lambda^2\C^{2*}$ is a volume form and $\langle\cdot,\cdot\rangle$
the $\SO_n$-invariant inner product on $\Lambda^2\C^n$. It is necessarily
proportional to the `canonical' $G_0^\ss$-invariant quartic
described in Subsection \ref{ss:strategy}.

\begin{pro}\label{pro:invB}
For $\fg$ of type $\Bsf$ or $\Dsf$, we have that $[q^n]\in\p R_4$ is well-defined.
\end{pro}
\begin{proof}
Consider as before a Lagrangian subspace of the form
$L = \langle \xi_1\otimes e_1,\dots,\xi_n\otimes e_n \rangle \subset \fg_{-1}$,
where $e_1,\dots,e_n$ is an orthonormal basis in $\C^n$, while $\xi_1,\dots,\xi_n \in \C^2$
is a general $n$--tuple. We compute
$ q^n(\phi) $ for $\phi = \bigwedge_{i=1}^n (\xi_i \otimes e_i)$:
\begin{eqnarray*}
q^n(\phi) =&
\sum_{\sigma^{(1)},\sigma^{(2)},\sigma^{(3)}}&
\sgn(\sigma^{(1)}\sigma^{(2)}\sigma^{(3)})
\\ & \times\prod_{i=1}^n &
\epsilon(\xi_i, \xi_{\sigma^{(1)}i}) \epsilon(\xi_{\sigma^{(2)}i}, \xi_{\sigma^{(3)}i})
\\ & &
\left[
\langle e_i, e_{\sigma^{(2)}i} \rangle
\langle e_{\sigma^{(1)}i} , e_{\sigma^{(3)}i} \rangle
- \langle e_i, e_{\sigma^{(3)}i} \rangle
\langle e_{\sigma^{(1)}i} , e_{\sigma^{(2)}i} \rangle
\right]
\\ = &
\sum_{\sigma^{(1)},\sigma^{(2)},\sigma^{(3)}}&
\sgn(\sigma^{(1)}\sigma^{(2)}\sigma^{(3)})
\prod_{i=1}^n
\left[\delta^i_{\sigma^{(2)}i}
\delta^{\sigma^{(1)}i}_{\sigma^{(3)}i}
- \delta^i_{\sigma^{(3)}i}
\delta^{\sigma^{(1)}i}_{\sigma^{(2)}i}
\right]
\\ & \times \prod_{i=1}^n &
\epsilon(\xi_i, \xi_{\sigma^{(1)}i}) \epsilon(\xi_{\sigma^{(2)}i}, \xi_{\sigma^{(3)}i}).
\end{eqnarray*}
Note that this way we have
$$ q^n(\phi) = Q(\xi_1\cdots\xi_n) $$
for certain $\SL_2$--invariant
quartic $Q$ on $S^n\C^2$. We only need to check that $Q\neq0$.
We further rewrite:
$$
Q(\xi_1\cdots\xi_n) = (-1)^n\sum_\sigma \left(\prod_i\epsilon(\xi_i, \xi_{\sigma i})^2\right)
\times \left(\sgn(\sigma) \sum_{J \subset \{1,\dots,n\}}
C_{\sigma, J} \right)\, ,
$$
where
$$
C_{\sigma, J} = \sgn(\sigma^J) \sgn(\sigma^{J^c}),\quad \sigma^J(i) = \begin{cases} i & i \in J\, , \\ \sigma(i) & i \notin J \, ,\end{cases}
$$
and $\sgn(\sigma^J)=0$ if $\sigma^J$ is not a permutation.
The sum over $J$ may be restricted to $\sigma$-invariant sets,
in which case $C_{\sigma,J} = \sgn(\sigma)$ and we obtain
$$
Q(\xi_1\cdots\xi_n) = (-1)^n\sum_\sigma \left(\prod_i\epsilon(\xi_i, \xi_{\sigma i})^2\right)
\cdot {\#\{ J \subset \{1,\dots,n\}\ |\ \sigma J = J \}}\, .
$$
Choosing $\xi_1,\dots,\xi_n$ in $\mathbb{R}^n$ such that
$\epsilon(\xi_i,\xi_j)=0$ if and only if $i=j$,
we find that $(-1)^n Q(\xi_1,\dots,\xi_n)$ is
a sum of non-negative reals, with positive
terms corresponding to fix-point free $\sigma$. Hence $Q(\xi_1,\dots,\xi_n)\neq0$ and
thus $q^n(\phi)\neq0$.
\end{proof}

\subsection{Representation-theoretic setup}\label{SubRepTheoSetUp}
Having dealt with types $\Asf$, $\Bsf$, $\Dsf$ and $\Gsf$ in a rather
direct manner, we shall need to resort to more abstract methods in order
to handle the remaining types $\Esf$ and $\Fsf$. This subsection
introduces some representation-theoretic tools that are valid
in greater generality,\footnote{Only the type $\Asf$ is excluded in what follows}  by picking up where we left off   Subsection \ref{subContGrad}.
We will use the language of modules over the universal enveloping $U(\g_0)$
rather than representations of $G_0$. Since the latter is connected, this does
not change the notion of invariance.

\par
Fix a Cartan subalgebra $\fh \subset \fg$ and sets
$\Delta \subset \Phi^+ \subset \Phi$ of simple and positive roots within
the root system of $\fg$ with respect to $U(\fh)$, compatible with the
 grading. In particular,
$\fg_2$ is the root space of the longest positive root.
Given a $U(\fg)$-module $M$, let $C^\bullet(\fg_-,M)$ denote
the cochain complex computing the Lie algebra cohomology of $\fg_-$
with values in $M$. If $M$ is graded compatibly with $\fg$, let
$C^\bullet_i(\fg_-,M)$ denote the homogeneous degree $i$ subcomplex.
Use $Z^\bullet_i, B^\bullet_i, H^\bullet_i$ to denote the spaces
of cocycles, coboundaries and the cohomology, respectively.\par
Let us identify $\sp(\fg_{-1})$ with $\sp_n$ together with a choice of
a Cartan and Borel subalgebra. Let $\lambda_1,\dots,\lambda_n$ be the fundamental weights
of $\sp_n$ and $V_\lambda$ the simple $U(\sp_n)$-module of highest weight $\lambda$.
Let $W$ be the Weyl group of $\fg$, with $W^\fp$ the subset consisting
of words $w \in W$ such that $w\rho$ is $\fg_0$-dominant with $\rho$ being the sum of all
fundamental weights of $\fg$.
Let $W^\fp_i$ denote the subset of $W^\fp$ consisting of
words $w$ of length $i$.
\begin{lemma}\label{lem:kost}
For each $1 \le i \le n$ we have $U(\fg_0)$-module isomorphisms
$$V_{\lambda_i} \simeq \Lambda^i_0 \fg_{-1} \simeq
H^i(\fg_-,\C)^* \simeq
\bigoplus_{w \in W^\fp_i} L(w\cdot 0)
$$
where $L(\lambda)$ denotes the simple $U(\fg_0)$-module
with highest weight $\lambda$.
\end{lemma}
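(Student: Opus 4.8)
The statement is a chain of four $U(\fg_0)$-module isomorphisms, and the plan is to establish each link by a separate mechanism. The first isomorphism $V_{\lambda_i}\simeq\Lambda^i_0\fg_{-1}$ is not new: it is exactly Lemma \ref{lem:sp-fundamental}(1), which identifies $\Lambda^i_0\fg_{-1}$ with the $i$-th fundamental representation $V_{\lambda_i}$ of $\Sp(\fg_{-1})\simeq\Sp_n$. Since $\Lambda^i_0\fg_{-1}$ carries a natural $\CSp(\fg_{-1})$-action and $\fg_0$ embeds in $\csp(\fg_{-1})$, restriction turns this into the asserted $U(\fg_0)$-module isomorphism. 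So the real work is in the remaining two links.

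For the middle isomorphism $\Lambda^i_0\fg_{-1}\simeq H^i(\fg_-,\C)^*$ I would compute the Lie algebra cohomology of $\fg_-$ by hand, exploiting that $\fg_- = \fg_{-1}\oplus\fg_{-2}$ is a Heisenberg algebra: $\fg_{-2}$ is central and $[\,\cdot\,,\cdot\,]\colon\Lambda^2\fg_{-1}\to\fg_{-2}$ is the twisted symplectic form $\omega$. Fixing a generator $\zeta$ of $\fg_{-2}^*$ and splitting $\Lambda^\bullet\fg_-^* = \Lambda^\bullet\fg_{-1}^*\oplus\zeta\wedge\Lambda^\bullet\fg_{-1}^*$, the Chevalley--Eilenberg differential (trivial coefficients) is determined by $d|_{\fg_{-1}^*}=0$ and $d\zeta=-\omega$, so that
\begin{equation*}
d(\beta+\zeta\wedge\gamma) = -\,\omega\wedge\gamma,\qquad \beta\in\Lambda^k\fg_{-1}^*,\ \gamma\in\Lambda^{k-1}\fg_{-1}^*.
\end{equation*}
Thus $d$ is governed entirely by the Lefschetz operator $L=\omega\wedge(-)$, and the degree $k$ cohomology equals $\bigl(\Lambda^k\fg_{-1}^*/L\,\Lambda^{k-2}\fg_{-1}^*\bigr)\oplus\zeta\wedge\ker\bigl(L|_{\Lambda^{k-1}\fg_{-1}^*}\bigr)$. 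Invoking the hard Lefschetz $\sll_2$-decomposition of $\Lambda^\bullet\fg_{-1}^*$ attached to $\omega$, in the range $1\le k\le n$ the operator $L$ is injective on $\Lambda^{k-1}\fg_{-1}^*$ (killing the $\zeta$-summand), while the quotient $\Lambda^k\fg_{-1}^*/L\,\Lambda^{k-2}\fg_{-1}^*$ is the space of primitive $k$-forms, which is precisely $\Lambda^k_0\fg_{-1}^*$ by the definition of the contraction map \eqref{eqINS}. Every operator in sight is $\fg_0$-equivariant (with $\omega$ only $\fg_0^\ss$-invariant, carrying a definite $\fg_{-2}^*$-weight that the torus absorbs), so $H^k(\fg_-,\C)\simeq\Lambda^k_0\fg_{-1}^*$ as $U(\fg_0)$-modules, and dualising gives the link. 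The vanishing of the $\zeta$-summand in this range is exactly what makes it a genuinely untwisted isomorphism.

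The last isomorphism $H^i(\fg_-,\C)^*\simeq\bigoplus_{w\in W^\fp_i}L(w\cdot 0)$ is Kostant's theorem on the cohomology of the nilradical $\fg_-$ of the parabolic opposite to $\fp$, with trivial coefficients: it decomposes each $H^i(\fg_-,\C)$ as a $U(\fg_0)$-module into irreducibles indexed by the minimal coset representatives $w\in W^\fp_i$, with highest weights given by the affine dot action $w\cdot 0=w\rho-\rho$. Our set $W^\fp$ (defined by $w\rho$ being $\fg_0$-dominant) and the length filtration $W^\fp_i$ match Kostant's combinatorics exactly, and $w\rho-\rho$ is $\fg_0$-dominant precisely because $\rho$ is regular. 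The main obstacle I anticipate is purely bookkeeping: Kostant's theorem appears in several dual guises (cohomology of $\fg_-$ versus homology, versus cohomology of $\fp_+$), and one must pin down the convention so that, after the dualisation performed in the middle step, the irreducibles come out as $L(w\cdot 0)$ rather than their $\fg_0$-duals $L(w\cdot 0)^*$. This amounts to tracking a single contragredient consistently through the whole chain, together with verifying that the Lefschetz vanishing is sharp exactly on $1\le i\le n$ so that no spurious $\zeta$-cohomology intrudes.
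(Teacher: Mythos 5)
Your proposal is correct and follows essentially the same route as the paper: the paper's proof likewise reduces the Chevalley--Eilenberg differential of $\fg_-$ to the Lefschetz operator $\omega\wedge(-)$ (organising the computation by the internal grading of $C^\bullet(\fg_-,\C)$ rather than your explicit $\zeta$-splitting, which produces the same two summands $\Lambda^i\fg_{-1}^*$ and $\fg_{-2}^*\otimes\Lambda^{i-1}\fg_{-1}^*$ in each cochain space), kills the twisted summand by the same hard Lefschetz injectivity in the range $i\le n$, identifies the surviving quotient dually with $\Lambda^i_0\fg_{-1}$, and settles the outer isomorphisms by citing Kostant's theorem and the $\sp_n$ representation theory of Lemma~\ref{lem:sp-fundamental}. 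The differences are purely organisational, and your explicit flagging of the dual guises of Kostant's theorem is, if anything, more careful than the paper's bare citation.
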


\begin{proof}
Note first that each $C^i(\g_-,\C)$
is the direct sum of subspaces of degrees $i$ and $i+1$.
Considering the part of the complex computing the degree $i$ subspace
$H^i_i(\g_-,\C)$, we have
the following identifications:
$$\begin{CD}
 C^{i-1}_i(\g_-,\C) @>{\partial}>> C^i_i(\g_-,\C) @>{\partial}>> C^{i+1}_i(\g_-,\C) \\
 @| @| @| \\
 \g_{-2}^* \otimes \Lambda^{i-2}\g_{-1}^*  @>{\omega\wedge}>> \Lambda^i \g_{-1}^*
@>>> 0\, ,
\end{CD}$$
where $\omega $ is the twisted symplectic form \eqref{eqSTAR}. We thus
have a $U(\g_0)$-module isomorphism
$$
 H^i_i(\g_-,\C) \simeq \Lambda^i \g_{-1}^* / (\omega \wedge (\g_{-2}^* \otimes \Lambda^{i-2} \g_{-1}^* ))
$$
and its dual
$$
 H^i_i(\g_-,\C)^* \simeq \Lambda^i_0 \g_{-1}.
$$
On the other hand, along similar lines we get $H^i_{i+1}(\g_-,\C)=0$
by injectivity of the wedge map
$\omega\wedge: \Lambda^i \g_{-1}^* \to \Lambda^{i+2} \g_{-1}^* \otimes
\g_{-2}$ for $i < n$ (this is a standard fact on symplectic vector spaces).
Hence $H^i(\g_-,\C) = H^i_i(\g_-,\C)$ and thus
$$ \Lambda^i_0\g_{-1} \simeq H^i(\g_-,\C)^*. $$
The remaining two isomorphisms are given by Kostant's theorem (the decomposition
of cohomology into simple modules~\cite{MR0142696}) and standard representation theory of $\sp_n$
(identification of fundamental modules with Lagrangian exterior powers).
\end{proof}

\subsection{Computing the space of quadric invariants
for even $n$}
We will now introduce a representation-theoretic method to
compute the dimension of the space of invariant quadrics
valid whenever $n$ is even (i.e., when the Pl\"ucker space
has an $\Sp_n$-invariant symmetric bilinear form). That is,
it can be applied in type $\Esf$ (which is our main point
of interest here) as well as $\Dsf$.

We denote by $I$ the ideal of
$\LG(\g_{-1})$ in $S^\bullet \Lambda^n_0 \g_{-1}^*$.
By Corollary \ref{cor:sections-polys}, its degree $2$ part $I_2$
may be identified with the complement of $V_{2\lambda_n}$ in $S^2 V_{\lambda_n}$.
Accordingly, $R_2$ becomes identified with the space of $\g_0^\ss$-invariants
in the simple $U(\spp_n)$-module $V_{2\lambda_n}$.
We should thus decompose the latter into simple $U(\g_0)$-submodules
and look for rank--one summands.
In order to use the
Weyl-group description of Lemma \ref{lem:kost}, we
need to express modules of the form $V_{2\lambda_i}$
in terms of tensor products of the fundamental modules.

\begin{lemma}\label{lem:bases}Assume $n$ even.
Let $\lambda_i$ denote the $i\Th$ fundamental
weight of $\sp_n$, and set for convenience $\lambda_0=0$.
We then have:
$$
S^2 V_{\lambda_i} \simeq \sum_{0 \le j \le i/2} V_{2\lambda_{i - 2j}}
\oplus \sum_{\substack{\frac{i-n}{2} \le j < k \le \frac{i}{2} \\ j+k \ge 0 \\ k-j \le n-i}} V_{\lambda_{i - 2j} + \lambda_{i - 2k}}
$$
for all $i$ and
$$
V_{\lambda_i}\otimes V_{\lambda_j} \simeq \sum_{\substack{k,l \ge 0 \\ i-k-l\ge 0 \\ j+k \le n}} V_{\lambda_{i-k-l} + \lambda_{j+k-l}}
$$
for $i<j$ with $j-i$ even, as $U(\sp_n)$-modules.
\end{lemma}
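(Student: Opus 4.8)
The plan is to reduce both identities to the representation theory of exterior powers of the standard $\Sp_n$-module $W = \g_{-1}$, where by Lemma \ref{lem:sp-fundamental} the fundamental module $V_{\lambda_i}$ is the primitive part $\Lambda^i_0 W$, i.e.\ the single-column shape $(1^i)$. First I would record the Lefschetz (primitive) decomposition $\Lambda^i W \simeq V_{\lambda_i} \oplus \Lambda^{i-2} W$ of $\Sp_n$-modules, so that products and symmetric squares of full exterior powers determine those of the $V_{\lambda_i}$ by induction on the indices. The external input is then entirely classical $\GL_{2n}$ data: the Pieri-type identity $\Lambda^a W \otimes \Lambda^b W \simeq \bigoplus_{c} \Sigma_{(2^c,1^{a+b-2c})} W$ and the plethysm $S^2 \Lambda^i W \simeq \bigoplus_{c \equiv i\ (2)} \Sigma_{(2^c, 1^{2i-2c})} W$ (with the complementary parities giving $\Lambda^2 \Lambda^i W$), followed by Littlewood's restriction rule $\GL_{2n} \downarrow \Sp_n$, which expresses each $\Sigma_{(2^c,1^d)} W$ as a sum of symplectic irreducibles indexed by removing even-column partitions.

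Next I would translate the resulting shapes into fundamental-weight notation: a two-column partition $(2^c, 1^d)$ has column heights $c+d$ and $c$, hence corresponds to the highest weight $\lambda_{c+d} + \lambda_c$. Carrying this through the tensor product and reindexing the summation variables --- matching $(a,c)$ with the stated $(k,l)$ via $l = a$, $c = i - k - l$ --- turns the double sum into $\bigoplus V_{\lambda_{i-k-l} + \lambda_{j+k-l}}$ with the asserted ranges. For $S^2 V_{\lambda_i}$ the same bookkeeping applies, the crucial point being that the splitting into symmetric and antisymmetric parts is inherited from $\GL_{2n}$, where it is governed by the parity $c \equiv i \ (2)$: the \emph{rectangular} shapes $d=0$ produce the diagonal summands $V_{2\lambda_{i-2j}}$, while the genuinely two-column shapes produce the off-diagonal $V_{\lambda_{i-2j}+\lambda_{i-2k}}$ with $j<k$.

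The main obstacle is that in the intended application one has $i$ and $j$ as large as $n$ (indeed $i=j=n$ for $V_{2\lambda_n} \subset S^2 V_{\lambda_n}$), so we are far outside the stable range $i+j \le n$ in which the Pieri and branching steps hold verbatim. Beyond that range both Littlewood's restriction and the induced products must be corrected by King's modification rules, and the content of the inequalities in the statement --- $j+k \le n$ for the tensor product, and $\tfrac{i-n}{2} \le j$, $k-j \le n-i$ for the symmetric square --- is precisely that these rules act by merely truncating the sums, with no surviving correction terms. The delicate part is to verify this cancellation for the shapes $(2^c,1^d)$ at hand; here I would exploit their simplicity (at most two columns) together with the parity hypothesis that $j-i$ be even to rule out the cross terms the modification rules could otherwise generate. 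As an independent check that the truncation is correct, one can run the computation through the Lefschetz $\sll_2$-action on $\Lambda^\bullet W$: writing $\Lambda^\bullet W \simeq \bigoplus_i V_{\lambda_i} \boxtimes U_{n-i}$ as an $\Sp_n \times \sll_2$-module, with $U_m$ the $(m+1)$-dimensional irreducible, and decomposing $\Lambda^\bullet W \otimes \Lambda^\bullet W \simeq \Lambda^\bullet(W \otimes \C^2)$ simultaneously for both factors, the Clebsch--Gordan constraint on the $\sll_2$-strings reproduces exactly the bounds $j+k \le n$ and $k-j \le n-i$, confirming that the modification rules contribute nothing beyond the stated truncation.
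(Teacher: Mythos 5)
Your stable-range scaffolding ($\GL_{2n}$ Pieri rule, the parity plethysm for $S^2\Lambda^i$, Littlewood restriction on two-column shapes, and the translation $(2^c,1^d)\leftrightarrow\lambda_{c+d}+\lambda_c$) is all correct, and the Lefschetz reduction $\Lambda^i\g_{-1}\simeq V_{\lambda_i}\oplus\Lambda^{i-2}\g_{-1}$ is a legitimate way to pass between full and primitive exterior powers. But there is a genuine gap at exactly the step you flag as delicate, and it is worse than your phrasing suggests: outside the stable range the effect of King's modification rules is \emph{not} mere truncation of the naive Littlewood sum to valid labels. The modification terms carry signs and cancel summands with perfectly admissible labels. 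A minimal instance sits inside your own plethysm: for $n=2$ the module $\Sigma_{(1,1,1,1)}\C^4=\det\C^4$ is trivial, while the naive Littlewood expansion (removing even-column $\beta$'s) yields the labels $(1^4)$, $(1^2)$, $\emptyset$; discarding the invalid $(1^4)$ leaves $V_{\lambda_2}\oplus V_0$, which is wrong, and only the signed modification of $(1^4)$, contributing $-V_{\lambda_2}$, restores the correct answer $V_0$. The same mechanism is why the stated bounds cut strictly deeper than label validity: for $i=n$ the constraint $k-j\le n-i$ empties the off-diagonal sum entirely, even though labels such as $\lambda_n+\lambda_{n-4}$ are valid for $\sp_n$; likewise $j+k\le n$ is stronger than the validity condition $j+k-l\le n$. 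Tracking these cancellations for the shapes $(2^c,1^d)$ \emph{is} the content of the lemma in the regime where it is used ($i=j=n$), and your proposal leaves it as a plan.

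The proposed Howe-duality ``independent check'' cannot close this gap as described: matching the Clebsch--Gordan bounds of the two Lefschetz $\sll_2$-strings constrains only the \emph{support} of the decomposition, while signed modification terms could a priori shift multiplicities without violating those bounds, so one must also establish that every surviving summand occurs exactly once. To promote the idea to a proof you would need the full dual pair acting on $\Lambda^\bullet(\g_{-1}\otimes\C^2)$ --- which is $(\Sp(\g_{-1}),\sp_4)$, the two Lefschetz $\sll_2$'s generating only an $\sll_2\oplus\sll_2\simeq\so_4\subset\sp_4$ --- then identify the $\sp_4$-partner of each two-column symplectic label and compute its branching to $\sll_2\oplus\sll_2$. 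Carried out in full, that would give a complete proof genuinely independent of the paper's (and would make the $\GL_{2n}$ detour unnecessary); as written, it is asserted rather than executed. For comparison, the paper sidesteps stability altogether: it applies the symplectic tensor-product rule for a pair of one-column diagrams directly, via the box-moving rule derived from \cite[Sec.~2.5]{MR885807} with the first column capped at height $n$, which produces the stated index ranges in one stroke, and then extracts the symmetric square by the parity selection $p,q$ even.
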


\begin{proof}
We invoke the rules for computing tensor products
of representations of the symplectic group in terms of Young diagrams
(these can be derived from~\cite[Sec. 2.5]{MR885807}).
We will write
a Young diagram as a nonincreasing sequence where the entries give
the height of the subsequent \emph{columns}. In particular $\C = [\ ]$, $V_{\lambda_i} = [i]$
and $V_{\lambda_i + \lambda_j} = [j,i]$ if $i \le j$.
For convenience, we allow ourselves to write $[i,0]$ for $[i]$
and $[0]$ for $[\ ]$ (this mirrors the convention $\lambda_0=0$).
Furthermore, a column of length $n+i$ is replaced by
one of length $n-i$.
If the Young diagrams were used to denote representations of $\mathfrak{sl}_{2n}$ rather than $\sp_n$,
the rule for decomposing a tensor product $[j] \otimes [i]$ with $j \ge i$ would be simply:
$$
[j] \otimes [i] = [j+i,0] + [j+i-1,1] + \dots + [j,i],\quad \textrm{(for $\mathfrak{sl}_{2n}$)}
$$
i.e.,  we put the columns $[j]$ (first, `red') and $[i]$ (second, `black') next to each other,
and move a number black boxes underneath the red ones.
Since in the case of $\sp_{n}$ we additionally have the invariant symplectic form on $[1]$,
the rule should be modified so that when moving a red box, we can
either `add' it, appending to the first column, or
`subtract' it, annihilating a red box. We may assume
we first add a number of black boxes, and then subtract a number of them.
Furthermore, self-duality of $[1]$ implies that we ought to remove a red-black
pair from the first column as soon as it becomes taller than $n$:
in other words, we may add a black box only as long as
the height of the first column is at most $n$.
Thus we obtain
\begin{equation}\label{eqGroeth}
[j]\otimes [i] = \sum_{\substack{k,l \ge 0 \\ i-k-l\ge 0 \\ j+k \le n }} [j+k-l, i-k-l]
= \sum_{\substack{j-n\le p \le q \le i,\\ p+q\ge 0 \\ q - p \le 2(n - j)}} [j-p,i-q]
\end{equation}
where the first sum clearly coincides with the original expression
for $V_{\lambda_i}\otimes V_{\lambda_j}$.
The same formula specialises to the symmetric square of $[i]$,
where the terms of the above sum contained in $S^2 [i] \subset [i]\otimes [i]$
are those of the form $[i-p, i-q]$ with $p$, $q$ even. These are easily seen
to give the original expression for $S^2 V_{\lambda_i}$ (with $j=p$ and $k=q$).
\end{proof}
\begin{remark}\label{remGroeth}
 It is convenient to view   equations \eqref{eqGroeth} in the
Grothendieck group $K$ of the category of finite-dimensional $U(\sp_n)$-modules
(this is simply the free abelian group generated by classes of finite-dimensional
irreducible representations of $\sp_n$).
The relations may be then inverted so that, in particular, the class
$[V_{2\lambda_n}]$ may be expressed as a linear combination of
$[S^2 V_{\lambda_i}]$ and $[V_{\lambda_i} \otimes V_{\lambda_j}]$ with $0 \le i<j\le n$.
However, since we are only interested in the dimension of the space of $\fg_0^\ss$-invariants,
it is easier to first apply the corresponding homomorphism $K \to \ZZ$ to both sides of the
above equations (viewed in $K$), and then solve for $\dim (V_{2\lambda_n})^{\fg_0^\ss}$ in terms of
modules whose invariants we know.
\end{remark}

\begin{lemma}\label{lem:invs-weyl}Let $\bar w_\circ$ be the longest element in the Weyl group of $\fg_0^\ss$, and $\bar h^\circ \in \fh \cap \fg_0^\ss$ the sum of the coroots associated to positive
roots of $\fg_0^\ss$.
Let $\iota : \fh \cap \fg_0^\ss \to \fh$ denote the inclusion of Cartan subalgebras,
and $\iota^*$ its transpose acting as restriction on weights.
Then
\begin{eqnarray*}
2\dim (S^2 V_{\lambda_i})^{\fg_0^\ss} &=&
\#\{
 (w, w') \in W^\fp_i\times W^\fp_i\ |\  -\bar w_\circ\iota^*(w\cdot 0) = \iota^*(w'\cdot 0)
\}
\\&+&
\#\{
 w \in W^\fp_i\ |\ -\bar w_\circ\iota^*(w\cdot 0) = \iota^*(w\cdot 0),\quad
\langle \iota^*(w\cdot 0), \bar h^\circ \rangle = 0 \mod 2)
\}
\\&-&
\#\{
 w \in W^\fp_i\ |\ -\bar w_\circ\iota^*(w\cdot 0) = \iota^*(w\cdot 0),\quad
\langle \iota^*(w\cdot 0), \bar h^\circ \rangle = 1 \mod 2)
\},
\\
\dim (V_{\lambda_i}\otimes V_{\lambda_j})^{\fg_0^\ss} &=&
\#\{
 (w, w')\in W^\fp_i\times W^\fp_j\ |\ -\bar w_\circ\iota^*(w\cdot 0) = \iota^*(w'\cdot 0)
\}\, ,
\end{eqnarray*}
for all $1 \le i,j \le n$.
\end{lemma}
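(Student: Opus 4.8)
The plan is to reduce everything to counting copies of the trivial module inside tensor products of simple $\fg_0^\ss$-modules, using the decomposition $\Lambda^i_0\fg_{-1}\simeq V_{\lambda_i}\simeq\bigoplus_{w\in W^\fp_i} L(w\cdot 0)$ supplied by Lemma \ref{lem:kost}. First I would note that $\fg_0=\fg_0^\ss\oplus\fz$ with $\fz$ central, so each simple summand $L(w\cdot 0)$ restricts to an \emph{irreducible} $\fg_0^\ss$-module (the torus acting by a scalar), with highest weight $\mu_w:=\iota^*(w\cdot 0)$, which is $\fg_0^\ss$-dominant since $w\cdot 0$ is $\fg_0$-dominant. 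Writing $M(\mu)$ for the simple $\fg_0^\ss$-module of highest weight $\mu$, its dual has highest weight $-\bar w_\circ\mu$, so $\dim\bigl(M(\mu)\otimes M(\nu)\bigr)^{\fg_0^\ss}$ is $1$ exactly when $M(\nu)\simeq M(\mu)^*$, i.e.\ $\nu=-\bar w_\circ\mu$, and $0$ otherwise.

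The tensor-product formula then follows at once: restricting $V_{\lambda_i}\otimes V_{\lambda_j}$ to $\fg_0^\ss$ gives $\bigoplus_{w,w'} M(\mu_w)\otimes M(\mu_{w'})$, and summing the contributions leaves exactly the number of pairs $(w,w')\in W^\fp_i\times W^\fp_j$ with $-\bar w_\circ\mu_w=\mu_{w'}$.

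For the symmetric square I would split $V_{\lambda_i}\otimes V_{\lambda_i}=S^2 V_{\lambda_i}\oplus\Lambda^2 V_{\lambda_i}$ and recover $\dim(S^2)^{\fg_0^\ss}$ from the sum and the difference of the invariant counts of the two pieces. The sum is the invariant count in the full tensor square, namely $A:=\#\{(w,w')\in W^\fp_i\times W^\fp_i : -\bar w_\circ\mu_w=\mu_{w'}\}$. For the difference, an off-diagonal summand $M(\mu_w)\otimes M(\mu_{w'})$ with $w\neq w'$ contributes one invariant to $S^2$ and one to $\Lambda^2$ whenever the two factors are mutually dual (the relation $\nu=-\bar w_\circ\mu$ being symmetric, as $\bar w_\circ^2=1$), so these cancel in the difference; only self-dual diagonal summands $M(\mu_w)$, characterised by $-\bar w_\circ\mu_w=\mu_w$, survive. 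Such a module carries an invariant \emph{symmetric} form (contributing to $S^2$) or an invariant \emph{skew} form (contributing to $\Lambda^2$) according to its Frobenius--Schur indicator, which for highest weight $\mu_w$ equals $(-1)^{\langle\mu_w,\bar h^\circ\rangle}$, with $\bar h^\circ$ the sum of positive coroots of $\fg_0^\ss$ as in the statement. Hence $\dim(S^2)^{\fg_0^\ss}-\dim(\Lambda^2)^{\fg_0^\ss}=B-C$, and adding the two relations gives $2\dim(S^2 V_{\lambda_i})^{\fg_0^\ss}=A+B-C$.

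The one genuinely delicate point will be the sign rule, i.e.\ the identification of the Frobenius--Schur indicator of a self-dual simple $\fg_0^\ss$-module with $(-1)^{\langle\mu_w,\bar h^\circ\rangle}$; I would pin this down by checking the rank-one model ($S^m\C^2$ is orthogonal for $m$ even and symplectic for $m$ odd, matching $\langle m\lambda_1,\alpha^\vee\rangle=m$) and invoking the standard statement for general semisimple $\fg_0^\ss$, noting that $\langle\mu_w,\bar h^\circ\rangle\in\ZZ$ makes the sign well defined. Everything else is bookkeeping, and the factor $2$ on the left-hand side is precisely what absorbs the off-diagonal dual pairs, so that no choice of ordering on $W^\fp_i$ is ever needed.
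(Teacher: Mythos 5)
Your proposal is correct and takes essentially the same route as the paper's own proof: both rest on the decomposition $V_{\lambda_i}\simeq\bigoplus_{w\in W^\fp_i}L(w\cdot 0)$ of Lemma \ref{lem:kost}, the characterisation of dual highest weights via $-\bar w_\circ$, and the parity criterion $(-1)^{\langle\mu,\bar h^\circ\rangle}$ for whether a self-dual simple module is orthogonal or symplectic (which the paper cites from Goodman--Wallach, Thm.\ 3.2.17). You merely make explicit the counting bookkeeping --- cancellation of off-diagonal dual pairs in $S^2\ominus\Lambda^2$ and the factor $2$ absorbing ordered pairs --- that the paper compresses into ``the formulae then follow from Lemma \ref{lem:kost}.''
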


\begin{proof}
Recall that $-\bar w_\circ$ sends the highest weight of a finite-dimensional
simple $U(\g_0^\ss)$-module to the highest weight of its dual.
In particular a finite-dimensional simple $U(\g_0^\ss)$-module of highest weight $\bar\lambda$
is self-dual if and only if $-\bar w_\circ \bar\lambda = \bar\lambda$. The self-duality is
implemented either by a symmetric bilinear invariant
or an alternating one. By \cite[Thm. 3.2.17]{MR2522486}, the parity of the invariant
coincides with the parity of $\langle \bar\lambda, \bar h^\circ\rangle$.
The formulae then follow
from Lemma \ref{lem:kost}.
\end{proof}
We interpret the equations of Lemma \ref{lem:bases}
as relations
\begin{eqnarray}\label{eq:rels-in-k}
[S^2 V_{\lambda_i}] - \sum_{0 \le j \le i/2} [V_{2\lambda_{i - 2j}}]
- \sum_{\substack{\frac{i-n}{2} \le j < k \le \frac{i}{2} \\ j+k \ge 0 \\ k-j \le n-i}} [V_{\lambda_{i - 2j} + \lambda_{i - 2k}}] &=& 0\quad (1 \le i \le n) \\ {}
[V_{\lambda_i}\otimes V_{\lambda_j}]- \sum_{\substack{k,l \ge 0 \\ i-k-l\ge 0 \\ j+k \le n}} [V_{\lambda_{i-k-l} + \lambda_{j+k-l}}] &=& 0\quad (1 \le i < j \le n,\quad j-i\in 2\ZZ)\nonumber
\end{eqnarray}
 in $K$, with $\lambda_0=0$ by definition (as explained in the above Remark \ref{remGroeth}).
Applying the homomorphism $K \to \ZZ$,  $[M] \mapsto \dim M^{\fg_0^\ss}$,
and substituting
the expressions given in Lemma \ref{lem:invs-weyl},
we obtain a determined linear system
for the unknowns
$$ d_i = \dim(V_{2\lambda_i})^{\fg_0^\ss},\ 1 \le i\le n, \quad d_{ij} = \dim (V_{\lambda_i+\lambda_j})^{\fg_0^\ss},\ 1\le i < j\le n,\ j-i\in 2\ZZ. $$
In particular, we may solve for $d_n$.

The only non-trivial task is the Weyl-group computation in Lemma \ref{lem:invs-weyl}. Let us recall that $W^\fp$ is the set of minimal length coset representatives
for the quotient $W / W_\fp$, where $W_\fp \subset W$ denotes the parabolic Weyl subgroup generated
by simple roots in $\Phi_0$. Equivalently, $W_\fp$ is the stabiliser of the highest root
$\gamma \in \Phi^+$, so that $W^\fp$ may be naturally identified with the orbit $W \gamma$
(note that $\gamma$, being the highest weight of the adjoint representation, is a
fundamental weight since    type $\Asf$ has been excluded). This gives rise to an algorithm for
generating $W^\fp$ described, e.g., in~\cite[Prop. 3.2.16 and the following paragraph]{MR2532439}. Putting these together,
we have the following algorithm to compute $\dim(V_{2\lambda_n})^{\g_0^\ss} = \dim R_2$
for $\g$ of type $\Dsf$, $\Esf$ or $\Gsf$ and rank $\ell$.
\begin{enumerate}
\item Obtain from a database:
\begin{center}\begin{tabular}{lcl}
the Cartan matrix of $\Phi$ & as & a list of $\ell$ elements of $\ZZ^\ell$, \\
the highest weight of $\g$ & as & an integer $1 \le a \le \ell$, \\
the involution $-\bar w_\circ$ & as & a permutation of $\{1,\dots,\ell\}$ fixing $a$, \\
the coroot $\bar h^\circ$ & as & an element of $\ZZ^\ell$ with trivial $a$-th entry, \\
the integer $n$,
\end{tabular}\end{center}
where we use $\ZZ^\ell$ to represent weights (in the basis of fundamental weights)
and coroots (in the basis of simple coroots).
\item Generate $W^\fp_i$, $1 \le i \le n$, as lists of words over $\{1,\dots,\ell\}$.
\item Compute $\dim(S^2 V_{\lambda_i})^{\g_0^\ss}$, $1 \le i\le n$
and $\dim (V_{\lambda_i}\otimes V_{\lambda_j})^{\g_0^\ss}$, $1 \le i<j\le n$ as in
Lemma \ref{lem:invs-weyl}.
\item Set up the formal linear system \eqref{eq:rels-in-k} and substitute
$$
        [V_{\lambda_0}] \mapsto 1,\ [V_{\lambda_i}] \mapsto 1,\
        [S^2 V_{\lambda_i}] \mapsto \dim (S^2 V_{\lambda_i})^{\g_0^\ss}\ (1\le i\le n),\quad
        [V_{\lambda_i}\otimes V_{\lambda_j}] \mapsto \dim (V_{\lambda_i}\otimes V_{\lambda_j})^{\g_0^\ss}\ (1\le i<j\le n)\, ,
$$
and
$$
        [V_{2\lambda_i}] \mapsto d_i\ (1 \le i \le n),\quad [V_{\lambda_i+\lambda_j}] \mapsto d_{ij}\
        (1\le i<j\le n).
$$
\item Solve the resulting linear system on $d_i, d_{ij}$ over $\ZZ$.
\item Return $d_n$.
\end{enumerate}
The algorithm is straightforward to implement (see~\cite{MR1743970} for a
comprehensive discussion of computational methods in Lie theory).
Note that since $\g_0^\ss$ is simply-laced,
the coefficients of $\bar h^\circ$ in the basis of simple coroots coincide with
those of the sum of all positive roots of $\g_0^\ss$ in the basis of simple roots.

\subsection{Types $\Esf_6$ $\Esf_7$, $\Esf_8$}
We list the database entries required for the computation,
and the final answer.
The code used for this computation is available as~\cite{jan-quads}.
The expressions for $-\bar w_\circ$ and $\bar h^\circ$
can be found in Bourbaki \cite[\S 4, tables, entries VII and XI]{MR1890629},
up to the necessary relabeling
the of Dynkin sub-diagram corresponding to
$\fg_0^\ss \subset \g$ (the Bourbaki labeling of the diagram
for $\g$ induces a labeling on the diagram of $\g_0^\ss$
that has to be mapped to its own Bourbaki labeling).
We conclude that $\dim R_2 = 1$ in all three cases.
Since the grading induced by the central torus of $G_0$
is a rescaling of the standard one (see Remark \ref{remTorus}), we have that
there exists a unique degree $2$ element in $\Inv(X,G)$.

\begin{center}\fbox{\begin{tabular}{c|c|c}
$\Esf_6$ & $\Esf_7$ & $\Esf_8$ \\
\hline
$\left(\begin{matrix}
                                 2& 0&-1& 0& 0& 0\\
                                 0& 2& 0&-1& 0& 0\\
                                -1& 0& 2&-1& 0& 0\\
                                 0&-1&-1& 2&-1& 0\\
                                 0& 0& 0&-1& 2&-1\\
                                 0& 0& 0& 0&-1& 2
\end{matrix}\right)$

&

$\left(\begin{matrix}
                                  2& 0&-1& 0& 0& 0& 0\\
                                  0& 2& 0&-1& 0& 0& 0\\
                                 -1& 0& 2&-1& 0& 0& 0\\
                                  0&-1&-1& 2&-1& 0& 0\\
                                  0& 0& 0&-1& 2&-1& 0\\
                                  0& 0& 0& 0&-1& 2&-1\\
                                  0& 0& 0& 0& 0&-1& 2
\end{matrix}\right)$

&

$\left(\begin{matrix}
                                  2& 0&-1& 0& 0& 0& 0& 0\\
                                  0& 2& 0&-1& 0& 0& 0& 0\\
                                 -1& 0& 2&-1& 0& 0& 0& 0\\
                                  0&-1&-1& 2&-1& 0& 0& 0\\
                                  0& 0& 0&-1& 2&-1& 0& 0\\
                                  0& 0& 0& 0&-1& 2&-1& 0\\
                                  0& 0& 0& 0& 0&-1& 2&-1\\
                                  0& 0& 0& 0& 0& 0&-1& 2
\end{matrix}\right)$
\\
$a = 2$ & $a = 1$ & $a=8$ \\
$-\bar w_\circ =\left( \begin{matrix}
1 & 2 & 3 & 4 & 5 & 6 \\
6 & 2 & 5 & 4 & 3 & 1
\end{matrix} \right)$ &
$-\bar w_\circ = \id$ & $-\bar w_\circ = \id$ \\
$\bar h^\circ = (5,0,8,9,8,5)$ & $\bar h^\circ = (0,15,15,28,24,18,10)$ &
        $\bar h^\circ = (34,49,66,96,75,52,27,0)$ \\
$n=10$ & $n=16$ & $n=28$ \\
\hline
$d_n = 1$ & $d_n = 1$ & $d_n=1$
\end{tabular}}
\end{center}

\subsection{Type $\Fsf_4$}
In order to deal with the remaining type $\Fsf_4$,
we invoke the brute-force branching method relying
on the computation of formal characters. We shall take
for granted that a procedure for computing the formal character
of a given finite-dimensional highest-weight module of a given
semi-simple Lie algebra is at our disposal (these are typically
refinements of the Freudenthal multiplicity formula, see~\cite[Sec. 8.9]{MR1743970}).
The algorithm to compute $\dim (V_{d \lambda_n})^{\g_0^\ss}$ is then as follows.
\begin{enumerate}
\item Identify the weight lattice of $\sp_n$, resp. $\g_0^{ss}$, with $\ZZ^n$,
resp. $\ZZ^3$, using the bases of fundamental weights.
\item Choose an element $w_i \in W^\fp_i$ for each $1 \le i \le n$.
\item Construct the $\ZZ$-module homomorphism
$\rho : \ZZ^n \to \ZZ^3$ representing the map sending $\lambda_i$
to $\iota^*(w_i\cdot 0)$.
\item Compute the formal character $\ch V_{d \lambda_n}$ as an element
of the group ring of $\ZZ^n$.
\item Compute $\rho_*\ch V_{d\lambda_n}$, an element in the group ring of $\ZZ^3$.
\item Decompose
\begin{equation}\label{eq:decomp-char}
 \rho_* \ch V_{d\lambda_n} = \sum c_\mu \ch L_{\mu}
\end{equation}
into a combination formal characters of finite-dimensional simple $U(\g_0^\ss)$-modules.
\item Return $c_0$.
\end{enumerate}
In our case we have $\g_0^\ss$ of type $\Csf_3$
and, using Bourbaki's labelling of the fundamental weights,
the matrix of $\rho$ reads
$$
\rho = \left(\begin{matrix}
 0 & 0 & 1 & 3 & 5 & 4 & 4 \\
 0 & 2 & 2 & 0 & 0 & 1 & 0 \\
 1 & 0 & 0 & 1 & 0 & 0 & 1
\end{matrix}\right).
$$
The formal characters are too complex to be included here. A simple program computing the
decomposition \eqref{eq:decomp-char} in \texttt{LiE} is available as~\cite{jan-sympluck}.
We conclude that $\dim (V_{d \lambda_n})^{\g_0^\ss}$ is zero for $1 \le d \le 3$ and one for $d=4$, thus completing the proof of Theorem \ref{thm:invs2} in type $\Fsf$.

\section{Further discussion}
\label{sec:further-discussion}
\subsection{Maximality}

One interesting feature of $G$-invariant PDEs over $X$ is that
even their local infinitesimal symmetry algebras are precisely
isomorphic to $\fg$. This had already been observed in~\cite{2016arXiv160308251T}. We
shall sketch the argument.

\begin{lemma}\label{lem:maximality}
Assume $\g$ is not of type $\Asf$. Then
the map $\fg_0 \to \csp(\fg_{-1})$ induced by the adjoint action is
an embedding onto a maximal subalgebra.
\end{lemma}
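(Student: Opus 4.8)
The plan is to reduce the maximality statement to an irreducibility statement about a single $\fg_0^\ss$-module, and then verify the latter. First I would record the ambient structure: $\csp(\fg_{-1}) = \sp(\fg_{-1}) \oplus \C\,\id$ as a direct sum of ideals, with $\sp(\fg_{-1})$ the derived subalgebra and $\C\,\id$ the centre. Writing $\fg_0 = \fg_0^\ss \oplus \mathfrak z$ with $\mathfrak z$ the one-dimensional centre (its rank is $1$ away from type $\Asf$, by the table of Lemma \ref{lem:properties-grading}), the adjoint map sends $\fg_0^\ss$ into $\sp(\fg_{-1})$ (the semisimple part preserves the symplectic form, as observed in Subsection \ref{ss:x1-ass-bun}) and sends the grading element $\gamma^\vee$, which acts on $\fg_{-1}$ by the scalar $-1$ and spans $\mathfrak z$, to a nonzero multiple of $\id$. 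Injectivity is then immediate: on each \emph{simple} $\fg_0^\ss$ the map is the restriction of a nontrivial irreducible, hence faithful, representation, while for $\fg_0^\ss = \sll_2 \oplus \so_n$ one checks faithfulness of $\C^2 \otimes \C^n$ directly; and $\mathfrak z$ maps isomorphically onto $\C\,\id$, transverse to $\sp(\fg_{-1})$. In particular $\ad\fg_0 = \fg_0^\ss \oplus \C\,\id$.

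Next I would carry out the reduction. Let $\mathfrak k$ be any subalgebra with $\ad\fg_0 \subseteq \mathfrak k \subseteq \csp(\fg_{-1})$. Since $\id \in \ad\fg_0 \subseteq \mathfrak k$ and $\csp(\fg_{-1}) = \sp(\fg_{-1}) \oplus \C\,\id$, we have $\mathfrak k = \mathfrak k_0 \oplus \C\,\id$ with $\mathfrak k_0 := \mathfrak k \cap \sp(\fg_{-1})$ a subalgebra containing $\fg_0^\ss$. As $\fg_0^\ss \subseteq \mathfrak k_0$, the subalgebra $\mathfrak k_0$ is stable under the adjoint action of $\fg_0^\ss$; by complete reducibility we split $\sp(\fg_{-1}) = \fg_0^\ss \oplus \mathfrak m$ into $\fg_0^\ss$-submodules, so that $\mathfrak k_0 = \fg_0^\ss \oplus (\mathfrak k_0 \cap \mathfrak m)$ with $\mathfrak k_0 \cap \mathfrak m$ a submodule of $\mathfrak m$. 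Hence, provided $\mathfrak m$ is an \emph{irreducible} $\fg_0^\ss$-module, $\mathfrak k_0 \cap \mathfrak m$ is either $0$ or $\mathfrak m$, so $\mathfrak k$ is either $\ad\fg_0$ or all of $\csp(\fg_{-1})$. This is exactly maximality, so everything comes down to proving that $\mathfrak m = \sp(\fg_{-1})/\fg_0^\ss$ is irreducible.

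For this last step I would use the canonical isomorphism $\sp(\fg_{-1}) \simeq S^2\fg_{-1}$ of $\fg_0^\ss$-modules (sending $X$ to the symmetric form $\omega(X\,\cdot\,,\,\cdot\,)$, using self-duality of the symplectic module $\fg_{-1}$). The claim to establish is that $S^2\fg_{-1}$ splits into exactly two irreducible summands, $S^2\fg_{-1} \simeq \fg_0^\ss \oplus V_{2\lambda}$, where $V_{2\lambda}$ is the Cartan component generated by the square of a highest weight vector ($\lambda$ being the highest weight of $\fg_{-1}$); then $\mathfrak m = V_{2\lambda}$ is irreducible and distinct from $\fg_0^\ss$, as needed. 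I expect this decomposition to be the main obstacle, being the one genuinely type-dependent input, but it is routine to verify: for $\Bsf, \Dsf$ one has $S^2(\C^2 \otimes \C^n) = \big(S^2\C^2 \otimes S^2_0\C^n\big) \oplus S^2\C^2 \oplus \Lambda^2\C^n$, whose last two summands are precisely $\sll_2 \oplus \so_n = \fg_0^\ss$ and whose first summand is the irreducible $\mathfrak m$; and for $\Gsf_2, \Fsf_4, \Esf_6, \Esf_7, \Esf_8$ the analogous plethysm $S^2\fg_{-1} = \fg_0^\ss \oplus V_{2\lambda}$ follows from a short character or dimension computation with the tools already deployed in Subsections \ref{subProofTypeBD}--\ref{SubRepTheoSetUp} (for instance $S^2(S^3\C^2) = S^6\C^2 \oplus S^2\C^2$ in type $\Gsf_2$). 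Alternatively one could bypass the decomposition and invoke Dynkin's classification of maximal subalgebras of the symplectic Lie algebras, under which each irreducibly acting $\fg_0^\ss \subset \sp(\fg_{-1})$ in the table occurs as a maximal subalgebra; but the $S^2$-computation is more self-contained and fits the representation-theoretic setup used above.
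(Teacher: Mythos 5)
Your proposal is correct, but it takes a genuinely different route from the paper. The paper's proof is uniform in the type of $\g$: it realises $\csp(\fg_{-1})$ as the space $Z^1_0(\fg_-,\fg)$ of degree-zero cocycles, the image of $\fg_0$ as the coboundaries, gets injectivity from $H^0_0(\fg_-,\fg)=0$ (since $H^0$ is $\fg_{-2}$, concentrated in degree $-2$), and then invokes Kostant's theorem: $H^1(\fg_-,\fg)\simeq L(s_\alpha\cdot\gamma)$ is a \emph{simple} $U(\fg_0)$-module, so $\csp(\fg_{-1})/\fg_0\simeq H^1_0$ is irreducible and maximality follows in one stroke, with no case analysis. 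You instead split off $\C\,\id$, reduce to the complement $\mathfrak{m}=\sp(\fg_{-1})/\fg_0^\ss$, and prove its irreducibility through the identification $\sp(\fg_{-1})\simeq S^2\fg_{-1}$ and the plethysm $S^2\fg_{-1}\simeq\fg_0^\ss\oplus V_{2\lambda}$, verified type by type. This is precisely the Dynkin-flavoured alternative the paper mentions and deliberately avoids (``this may be extracted from Dynkin's classification of maximal subgroups''), preferring Kostant since his theorem is already deployed in Subsection \ref{SubRepTheoSetUp}. Your reduction steps are all sound (the splitting $\mathfrak{k}=(\mathfrak{k}\cap\sp(\fg_{-1}))\oplus\C\,\id$ using the grading element, complete reducibility for $\fg_0^\ss$, and the multiplicity-one identification of the adjoint summands in types $\Bsf$, $\Dsf$), and your explicit decompositions in types $\Bsf$, $\Dsf$ and $\Gsf_2$ are correct; the exceptional cases you only assert, but the assertions are true (e.g.\ $S^2\C^{56}\simeq\mathfrak{e}_7\oplus V_{2\lambda_7}$ with $1596=133+1463$ for $\Esf_8$, and $S^2\Lambda^3_0\C^6\simeq V_{2\lambda_3}\oplus\spp_3$ with $105=84+21$ for $\Fsf_4$) and checkable by the character methods of the paper itself. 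What each approach buys: yours is elementary and identifies the complement concretely as the Cartan component $V_{2\lambda}$ (useful information in its own right), at the cost of case-by-case verification; the paper's computes nothing type-dependent and degenerates gracefully in type $\Csf$, where $H^1$ vanishes and the map is onto---note that your case list silently omits $\Csf$ (not excluded by the hypothesis), where the claimed decomposition $S^2\fg_{-1}\simeq\fg_0^\ss\oplus V_{2\lambda}$ collapses to $V_{2\lambda}=\fg_0^\ss$ and ``maximal'' must be read degenerately, as the paper flags with its parenthetical ``(nil in type $\Csf$)''; since the downstream Proposition excludes $\Csf$ anyway, this is harmless but should be stated.
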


Inspecting the list of embeddings $\g_0^\ss \subset \sp_n$,
this may be extracted from Dynkin's classification
of maximal subgroups in the simple Lie groups. Since
we have already used Kostant's theorem in  Subsection \ref{SubRepTheoSetUp} above,
we will  use it here to provide a self-contained proof.
\begin{proof}
Following the notation introduced in the
proof of the main Theorem \ref{thm:invs},
we work in $C^\bullet_0(\fg_-,\fg)$ and suppress the $(\fg_-,\fg)$ part from the notation.
We have
$
C^0_0 = \fg_0$, $ C^1_0 = \End_0 \fg_-$, $ Z^1_0 \simeq \csp(\fg_{-1})$.
Since $H^0(\fg_-,\fg)$ is the annihilator of $\fg_-$ in $\fg$, namely $\fg_{-2}$, we have
$H^0_0=0$ whence $\fg_0 \to \csp(\fg_{-1})$ is injective.
Now, by Kostant's theorem,
$ H^1 \simeq L(s_\alpha \cdot \gamma) $,
where $\gamma \in \Phi^+$ is the highest root, $\alpha$ is the unique simple root
non-orthogonal to $\gamma$, and $L(\lambda)$ denotes the simple $U(\fg_0)$-module with highest
weight $\lambda$. In particular, we have just demonstrated that
$ H^1_0 \simeq \csp(\fg_{-1}) / \fg_0 $
is a simple $U(\fg_0)$-module (nil in type $\Csf$), thus proving the claim.
\end{proof}
Since we wish to speak of local symmetries of a PDE,
let us first point out that the construction of the
bundle of Lagrangian Grassmannians (as in \eqref{eq:m1}) is functorial with respect
to local contactomorphisms. More precisely, if $\phi : U \to V$ is
a map of contact complex manifolds inducing isomorphisms
on fibres of contact distributions, then $\phi$ lifts naturally to a
map $\phi^{(1)} : U^{(1)} \to V^{(1)}$ inducing isomorphisms on fibres.
It will be convenient to work with germs of infinitesimal
symmetries at a point. Let us quickly define the necessary terms.
\begin{defn}
Let $\E \subset X^{(1)}$ be a hypersurface, $x \in X$ a point
and $[v]$ a germ at $x$ of a vector field on $X$.
We say that $[v]$ is a germ of an  \emph{infinitesimal symmetry}
of $\E$ if there exist:
open neighborhoods $V \subset U$ of $x$, an open disc $\Delta \subset \C$
and a representative $v$ of $[v]$ on $U$,  such that
\begin{enumerate}
\item $v$ is contact, i.e. $[v,\CC] \subset \CC$ over $U$,
\item the local flow $\phi_t : V \to U$, $t\in\Delta$,  of $v$ is well-defined,
\item the lifts $\phi^{(1)}_t : V^{(1)} \to U^{(1)}$, $t\in\Delta$,
preserve $\E$.
\end{enumerate}
\end{defn}
\begin{proposition}
Assume  $\g$ is not of type $\Asf$.\footnote{Nor of type $\Csf$,   see Remark \ref{remStupido}.}
Let $\E \in \Inv(X,G)$ be a $G$-invariant hypersurface
in $X^{(1)}$. Fix a point $x \in X$ and let $\fs$ be
the Lie algebra of germs at $x$ of infinitesimal symmetries
of $\E$. Then the local action map $\g \to \fs$ is an isomorphism.
\end{proposition}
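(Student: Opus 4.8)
The plan is to show that the natural action map $\fg \to \fs$ is simultaneously injective and surjective. Injectivity is the easy half: for $\xi \in \fg$ the fundamental vector field $\hat\xi$ on $X$ is contact (since $G$ preserves $\CC$) and its prolongation preserves the $G$-invariant hypersurface $\E$, so $\xi \mapsto [\hat\xi]$ is a well-defined Lie algebra homomorphism into $\fs$. As $G$ is of adjoint type and $\fg$ is simple, the infinitesimal action $\fg \to \mathfrak{X}(X)$ is faithful; since we work in the complex-analytic category, a fundamental vector field vanishing on a neighbourhood of $x$ vanishes identically, so the map remains injective on germs. It therefore remains to prove $\dim \fs \le \dim \fg$, and by $G$-transitivity on $X$ we may assume $x = o$.

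For the dimension bound I would invoke the Tanaka--Morimoto theory of symmetries of filtered structures. First I would equip $\fs$ with the filtration $\fs = \fs^{-2} \supseteq \fs^{-1} \supseteq \fs^0 \supseteq \cdots$ by weighted order of vanishing at $o$, using the depth-two contact filtration of $TX$; the bracket respects this filtration, so the associated graded $\gr \fs$ is a graded Lie algebra. The symbol of the contact structure at $o$ is the Heisenberg algebra $\fg_- = \fg_{-2} \oplus \fg_{-1}$, which gives $\gr_{-2}\fs \subseteq \fg_{-2}$ and $\gr_{-1}\fs \subseteq \fg_{-1}$, while the linearisation at $o$ of a contact field fixing $o$ is a degree-zero derivation of $\fg_-$, whence $\gr_0 \fs \subseteq \Der_0(\fg_-) = \csp(\fg_{-1})$.

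The next step is to sharpen the degree-zero bound using $\E$. A germ in $\fs^0$ fixes $o$, so the prolongation of its flow acts on the fibre $X^{(1)}_o \simeq \LG(\fg_{-1})$ and preserves $\E_o$; hence its linearisation lies in $\Stab_{\csp(\fg_{-1})}(\E_o)$. Now $\E_o$ is cut out by a section $f$ spanning a line in $S^d_0 \Lambda^n_0 \fg_{-1}^*$, an \emph{irreducible} $\Sp(\fg_{-1})$-module of dimension greater than one for $d > 0$; thus the line $\C f$ cannot be stable under $\Sp(\fg_{-1})$, so $\Stab_{\csp(\fg_{-1})}(\E_o)$ is a proper subalgebra of $\csp(\fg_{-1})$. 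Since $\fg_0$ is maximal in $\csp(\fg_{-1})$ by Lemma \ref{lem:maximality} and manifestly preserves $\E_o$, this forces $\Stab_{\csp(\fg_{-1})}(\E_o) = \fg_0$, giving $\gr_0 \fs \subseteq \fg_0$ and hence $\gr_{\le 0}\fs \subseteq \fg_- \oplus \fg_0$.

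Finally I would close the argument by prolongation. The graded Lie algebra $\gr \fs$ embeds as a graded subalgebra of the Tanaka prolongation $\mathrm{pr}(\fg_-,\fg_0)$, and this prolongation equals $\fg$ — which is exactly where the exclusion of types $\Asf$ and $\Csf$ enters. Indeed, by Yamaguchi's prolongation criterion the prolongation coincides with $\fg$ as soon as $H^1(\fg_-,\fg)$ has no component of positive degree, and the computation inside the proof of Lemma \ref{lem:maximality} shows $H^1(\fg_-,\fg) = L(s_\alpha\cdot\gamma)$ is concentrated in degree $0$. Therefore $\dim \fs = \dim \gr \fs \le \dim \fg$, which together with injectivity yields the desired isomorphism. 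The main obstacle is making the embedding $\gr\fs \hookrightarrow \mathrm{pr}(\fg_-,\fg_0)$ fully rigorous — this is the standard but delicate Tanaka estimate bounding a symmetry algebra by the prolongation of its symbol — whereas the identification of $\mathrm{pr}(\fg_-,\fg_0)$ with $\fg$ reduces, via Lemma \ref{lem:maximality}, to the cohomology already at our disposal.
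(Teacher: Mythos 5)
Your proposal is correct and takes essentially the same route as the paper's proof: a Tanaka-type filtration on $\fs$, the maximality of $\g_0 \subset \csp(\g_{-1})$ from Lemma \ref{lem:maximality} to pin down the degree-zero graded piece, and the vanishing of $H^1_k(\g_-,\g)$ for $k>0$ to kill the positive-degree prolongation. The only variations are cosmetic: you spell out why the stabiliser of $\E_o$ in $\csp(\g_{-1})$ is proper (the defining section spans a line in the irreducible module $S^d_0\Lambda^n_0\g_{-1}^*$, a point the paper leaves implicit when invoking maximality), and you replace the paper's citation of Yamaguchi's Prop.\ 5.1(2) by the observation that $H^1(\g_-,\g)\simeq L(s_\alpha\cdot\gamma)$ is a simple $U(\g_0)$-module on which the central grading element acts by a scalar, so $H^1_0\simeq\csp(\g_{-1})/\g_0\neq 0$ forces the whole of $H^1$ into degree zero.
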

The proof is a standard application of Tanaka theory~\cite{MR0266258}
(see also~\cite{MR2765511}).
\begin{proof}
By $G$-invariance we may take $x = o$ and use the identification $T_oX \simeq \g/\g^0$.
Let $\ev : \fs \to \g/\g^0$ be the evaluation map at $o$.
There is a natural filtration on $\fs$ defined by setting $\fs^i = \ev^{-1} \g^i/\g^0$
for $i \le 0$, and extending inductively for $i>0$ so that $v \in \fs^{i+1}$ if and only if
$[v,\fs^{-1}] \subset \fs^i$. It is straightforward to check that $[\fs^i,\fs^j] \subset \fs^{i+j}$
and thus we have the associated graded Lie algebra $\gr \fs$. Furthermore,
the natural embedding $\g \to \fs$ is a homomorphism of filtered Lie algebras.
Since $\g \subset \fs$ acts infinitesimally transitively, we have that
$\ev:\fs/\fs^0 \to \g/\g^0$ is an isomorphism of vector spaces. We now observe that:
\begin{enumerate}
\item the induced map $\gr_-\fs \to \g_-$ is an isomorphism of graded nilpotent Lie algebras,
\item the maps $\gr_i \fs \to \Hom(\gr_{-1}\fs, \gr_{i-1}\fs)$ induced by the adjoint action
are injective.
\end{enumerate}
Claim (1) is most easily seen by restricting to
$\gr_- \g \subset \gr_-\fs$. Claim (2) follows from the very definition of the filtration.
We will show that $\g_i \to \gr_i \fs$ is an isomorphism. This is clearly true for $i<0$,
and also for $i=0$ by Lemma \ref{lem:maximality}.
By induction we may assume it had been shown for all $i < k$, $k > 0$. We then have
$\gr_k \fs \subset \Hom(\g_{-1}, \g_{k-1})$ by (2) above and by the inductive hypothesis.
In fact, using the Lie algebra structure we may extend this to an embedding of $\gr_k\fs$ into the space $\Der_k(\g_-,\g)$
of degree $k$ derivations of $\g_-$ into the $U(\g_-)$-module $\g$. This space clearly
contains $\g_k$ and we have
$$ \Der_k(\g_-,\g) / \g_k \simeq H^1_k(\g_-,\g). $$
It will thus be enough to check that the cohomology space on the right hand side vanishes
for $k > 0$. This had been done by Yamaguchi in~\cite[Prop. 5.1 (2)]{MR1274961}.
\end{proof}

\subsection{The Lagrangian Chow transform and invariant hypersurfaces of geometric origin}
\label{secRumianek}
As we have already remarked, one may produce
certain invariant PDEs over adjoint varieties by means of a
straightforward geometric construction. This
observation is a key idea in~\cite{2016arXiv160308251T}.
We   review it here for purpose of comparison.
 As before, $\fg$ is simple
not of type $\Csf$, with adjoint variety $X \subset \PP\fg$. The notation
is as introduced in Sections \ref{SecDescMainRes} and \ref{sec:prerequisites}. In particular, the contact hyperplane
at the origin $o \in X$ is identified with $\g_{-1}$, and the action
of the isotropy subgroup $P \subset G$ of $o$ restricted to $\g_{-1}$
factors through the reductive group $G_0$ (see Subsection \ref{subAdjContMan}).
\begin{definition}
The sub-adjoint variety $Y \subset \PP \g_{-1}$ of $\g$ is
the union of the closed orbits of $G_0$ in the projectivised
irreducible summands of $\g_{-1}$.
\end{definition}

Of course, as indicated in the table in Lemma \ref{lem:properties-grading},
the only case with decomposable $\g_{-1}$ is type $\Asf$: there
$G_0 \simeq \GL_n \times \C^\times$,  $\g_{-1} \simeq \C^n \oplus \C^{n*}$
and $Y \simeq \PP^{n-1} \cup \PP^{(n-1)*}$. In the remaining cases $\g_{-1}$
is irreducible, and so is $Y$. As we will soon explain, it is interesting
to compute the \emph{degree} of $Y$ as a subvariety of $\PP\g_{-1}$.
\begin{lemma}
The following table, supplementing that of Lemma \ref{lem:properties-grading},
gives the sub-adjoint variety $Y \subset \PP\g_{-1}$ and its degree.
The sub-adjoint variety is always Legendrian and, in particular, of codimension $n$.
\begin{center}
\fbox{\begin{tabular}{l|l|l|l|l|l|l}
restriction & type of $\fg$   &  $\g_0^\ss$ & $\fg_{-1}$ & $Y$ & embedding & $\deg Y$\\ \hline
$n\ge 1$  & $\Asf_{n+1}$& $\sll_n $ & $\C^n \oplus \C^{n*}$ & $\PP^{n-1} \sqcup \PP^{(n-1)\ast}$  &
linear & $2$ \\
$n\ge 3$ & $\Bsf_{(n+3)/2}$
or $\Dsf_{(n+4)/2}$
&  $\sll_2 \oplus \so_n$ & $\C^2\otimes\C^n$ & $\PP^1 \times Q^{n-2}$ & Segre & $2(n-1)$ \\
&$\Esf_6$          & $\sll_6$  & $\Lambda^3\C^6$ & $\Gr(3,6)$ & Pl\"ucker & $42$ \\
&$\Esf_7$          & $\mathfrak{spin}_{12}$ & spinor 32-dim & 15-dim spinor & & $286$ \\
&$\Esf_8$          & $\Esf_7$  & fundamental 56-dim & 27-dim Freudenthal & & $13188$ \\
&$\Fsf_4$          & $\sp_3$  & $\Lambda^3_0\C^6$ & $\LG(3,6)$ & Pl\"ucker &  $16$ \\
&$\Gsf_2$          & $\sll_2$  & $S^3\C^2$ & $\PP^1$ & Veronese & $3$
\end{tabular}}
\end{center}
Here $Q^{n-2} \subset \PP^{n-1}$ denotes a non-singular quadric hypersurface.
\end{lemma}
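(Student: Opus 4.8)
The plan is to identify $Y$ explicitly as a rational homogeneous variety in each Cartan type, then compute its degree, and finally establish the Legendrian property together with the codimension count.

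\textbf{Identification of $Y$.} Since $G_0^\ss$ is connected and semisimple, the closed orbit in the projectivisation of an irreducible summand of $\g_{-1}$ is the orbit of a highest weight line, i.e.\ the variety of highest weight vectors, a homogeneous variety $G_0^\ss/Q_0\hookrightarrow\PP\g_{-1}$ embedded by the fundamental representation $\g_{-1}$ recorded in Lemma \ref{lem:properties-grading}. Reading off that table I would match each case with its standard model: in type $\Asf$ the module $\C^n\oplus\C^{n*}$ is reducible and each summand contributes a \emph{linearly} embedded $\PP^{n-1}$, so $Y=\PP^{n-1}\sqcup\PP^{(n-1)*}$ with $\C^n,\C^{n*}$ the two complementary Lagrangians; for $\C^2\otimes\C^n$ the highest weight orbit consists of the decomposable tensors $\xi\otimes v$ with $v$ isotropic, giving the Segre product $\PP^1\times Q^{n-2}$; the modules $\Lambda^3\C^6$ and $\Lambda^3_0\C^6$ give $\Gr(3,6)$, resp.\ $\LG(3,6)$, in their Pl\"ucker embeddings; $S^3\C^2$ gives the twisted cubic (Veronese $\PP^1$); and the $32$-dimensional spinor (resp.\ the $56$-dimensional $\Esf_7$-module) yields the $15$-dimensional spinor variety (resp.\ the $27$-dimensional Freudenthal variety). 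In each case the embedding is minimal, so $\OO(1)$ generates the Picard group and projective normality holds by Kostant's theorem exactly as for $\LG(\g_{-1})$ in Lemma \ref{lem7}.

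\textbf{Degree.} For the classical models the degree is immediate: multiplicativity of degree under the Segre embedding gives $\deg(\PP^1\times Q^{n-2})=\binom{n-1}{1}\cdot 1\cdot 2=2(n-1)$, linear embeddings give $\deg=1$ per component (hence $2$ in type $\Asf$), and the classical Pl\"ucker-degree formula gives $42$ for $\Gr(3,6)$ and $16$ for $\LG(3,6)$. To treat \emph{all} cases uniformly—including the exceptional spinor and Freudenthal varieties—I would use projective normality and Borel--Weil to identify $\Gamma(Y,\OO(m))\simeq V_{m\lambda}^*$, with $\lambda$ the highest weight of $\g_{-1}$ as a $\g_0^\ss$-module, so that $\deg Y=d!\cdot\ell$ where $d=\dim Y$ and $\ell$ is the leading coefficient of $m\mapsto\dim V_{m\lambda}$. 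Applying the Weyl dimension formula and isolating the leading term in $m$ yields
\[
\deg Y=d!\prod_{\delta\in S}\frac{\langle\lambda,\delta^\vee\rangle}{\langle\rho_0,\delta^\vee\rangle},
\qquad S=\{\delta\in\Phi_0^+:\langle\lambda,\delta^\vee\rangle\neq 0\},\quad d=\#S=\dim Y,
\]
with $\rho_0$ the half-sum of positive roots of $\g_0^\ss$. Substituting the root-system data recovers $42,286,13188$ in types $\Esf_6,\Esf_7,\Esf_8$ and $16$ in type $\Fsf_4$ (and, as a consistency check, $2(n-1)$ and $3$ in the classical and $\Gsf_2$ cases).

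\textbf{Legendrian property and codimension.} I would show the affine cone $\hat Y\subset\g_{-1}$ is Lagrangian. Fixing a highest weight vector $v=e_{-\beta_0}$, the tangent space to $\hat Y$ there is $T_v\hat Y=\g_0^\ss\cdot v+\C v$, spanned by root vectors $e_{-\beta}$ whose $\fh_0$-weights $-\beta|_{\fh_0}$ form the top layer $\{\lambda\}\cup\{\lambda-\bar\delta\}$ of $V_\lambda$. Two such vectors pair nontrivially under $\omega$ exactly when $\beta+\beta'=\gamma$; since $\gamma$ is orthogonal to $\Phi_0$ (so $\gamma|_{\fh_0}=0$), $\omega$ pairs the $\fh_0$-weight space of $\mu$ with that of $-\mu$, and in particular $v$ pairs nontrivially only with the lowest weight vector, which does \emph{not} lie in $T_v\hat Y$. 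It then suffices to check that the top-layer weights contain no pair $\mu,-\mu$: they all sit on the $\lambda$-side of the weight diagram (weights $\lambda-\bar\delta$ with $\bar\delta\in\Phi_0^+$), while $-\mu$ lies near $-\lambda$, so the two families are disjoint—this is the final, per-type verification. Hence $T_v\hat Y$ is $\omega$-isotropic, so $\dim\hat Y\le n$; combined with the direct computation $\dim Y=n-1$ from the models above this forces $\hat Y$ to be Lagrangian. Therefore $Y$ is Legendrian of codimension $(2n-1)-(n-1)=n$ in $\PP\g_{-1}$.

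\textbf{Main obstacle.} The conceptual content is light, but two points carry the weight. First, the exceptional degree computations ($\Esf_7$, $\Esf_8$) are the heaviest: one either invokes the tabulated degrees of the spinor and Freudenthal varieties from the literature on Legendrian and adjoint varieties, or one grinds out the Weyl-dimension-formula product over the relevant positive roots of $\mathfrak{spin}_{12}$ and $\Esf_7$. Second, the isotropy step, although uniform in spirit, conceals the check that no two top-layer weights are opposite; the cleanest resolution is to note that this is precisely the statement that the subadjoint variety is Legendrian—a classical fact in the LeBrun--Salamon and Landsberg--Manivel theory of adjoint varieties—so the self-contained weight argument above can, if desired, be replaced by a citation.
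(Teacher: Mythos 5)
Your proposal is correct, but it proves the lemma by a genuinely different route than the paper. The paper's proof is largely citation-based: it quotes the identification of $Y$, its embedding, and the Legendrian property from the literature on subadjoint/Legendrian varieties (\cite{MR2584515}); it computes the easy degrees directly ($1+1=2$ in type $\Asf$, the twisted cubic in type $\Gsf_2$, and a short Chow-ring computation $([\pt]\otimes 1+1\otimes h)^{n-1}=2(n-1)[\pt]\otimes[\pt]$ for the Segre-embedded $\PP^1\times Q^{n-2}$); and for the remaining cases it extracts the numbers from published tables and formulas — the degree $16$ of $\LG(3,6)$ from \cite{MR1937794}, and $42$, $286$, $13188$ from \cite{MR672845}, using for $\Esf_7$ the replacement of the pair $(\Dsf_6,P_6)$ by $(\Bsf_5,P_5)$. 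You instead derive everything uniformly: $Y$ as the highest-weight orbit(s), and the degree via projective normality, Borel--Weil and the asymptotics of the Weyl dimension formula, giving $\deg Y=d!\prod_{\delta\in S}\langle\lambda,\delta^\vee\rangle/\langle\rho_0,\delta^\vee\rangle$ — a correct instance of the classical Borel--Hirzebruch degree formula (it checks out, e.g., $6!/45=16$ for $\Fsf_4$ and $9!/8640=42$ for $\Esf_6$). What your approach buys is self-containedness and a single mechanism covering all rows, including a consistency check on the classical cases; what the paper's approach buys is brevity and the avoidance of the two laborious products over the $15$ and $27$ roots of $S$ for $\mathfrak{spin}_{12}$ and $\Esf_7$, which in your writeup are asserted rather than carried out — so at those entries your proof has the same epistemic status as the paper's (a computation delegated elsewhere). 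The same applies to your isotropy argument for the Legendrian property: it is sound in structure (invariance of $\omega$ forces pairing of opposite $\fh_0$-weights, since $\gamma|_{\fh_0}=0$), but the decisive step is exactly the claim that $\bar\delta+\bar\delta'=2\lambda$ has no solutions with $\bar\delta,\bar\delta'\in\Phi_0^+\cup\{0\}$, which you correctly flag as a residual per-type verification; the paper sidesteps this entirely by citing \cite{MR2584515}, and your own suggested fallback to that literature collapses your argument onto the paper's. One small point in your favour: your dimension count $\dim Y=\#S=n-1$ feeding into "isotropic of dimension $n$ hence Lagrangian" is a clean way to get codimension $n$ that the paper leaves implicit in the cited Legendrian property.
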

\begin{proof}
The description of $Y$, its embedding in $\PP\g_{-1}$ and
the Legendrian property
may be found in~\cite{MR2584515}.
The degrees in types $\Asf$, $\Bsf$, $\Dsf$
and $\Gsf_2$ are easy to compute. For $\Asf$ we have
the union of a pair of linear varieties of equal dimension,
hence of degree $1 + 1 = 2$. For $\Gsf_2$ we have
the twisted cubic $\nu_3(\PP^1) \subset \PP^3$,
hence of degree $3$ ($\nu_d^* \OO(1) = \OO(d)$ for
the $d\Th$ Veronese embedding). Finally for
$\Bsf$ and $\Dsf$ we have the Segre-embedded
product of a line times a quadric. Computing
in the Chow ring, we have
$A(Y) = A(\PP^1)\otimes A(Q^{n-2})$
and in particular $A^{n-1}(Y) = A^1(\PP^1) \otimes A^{n-2}(Q^{n-2})$
generated by $[\pt]\otimes[\pt]$;
we denote by $h \in A^1(Q^{n-2})$ the class of
the hyperplane section so that $h^{n-2} = 2[\pt]$.
Now, the pullback of the hyperplane class by
the Segre embedding is $[\pt]\otimes 1 + 1 \otimes h$,
and we compute the degree as
$$([\pt]\otimes 1 + 1 \otimes h)^{n-1} = (n-1)([\pt]\otimes 1)(1\otimes h)^{n-2}
= 2(n-1)[\pt]\otimes[\pt]. $$
The degree of the Lagrangian Grassmannian in its Pl\"ucker
embedding is given as the
very first formula in~\cite{MR1937794},
leading in our case to $2^3 6! 2! / 3! 5! = 16$.
It remains to find the degrees in types $\Esf$ and $\Fsf_4$. The necessary
information can be extracted from the paper~\cite{MR672845},
to which we keep referring in what follows. More precisely, we are interested
in the answer to Problem 2.3 on p. 47 for the following pairs $(G, P_\alpha)$:
$$ (\Asf_5, P_3),\ (\Dsf_6, P_6),\ (\Esf_7,\ P_7) $$
with Bourbaki labelling as usual.
The value $42 = 9! / 2^2 3^3 4^2 5$ for the Grassmannian $\SL_6/P_3$ is given by the Example on p. 46 ($n=k=3$).
As explained on p. 51, the
pair $(\Dsf_6, P_6)$ may be replaced by $(\Bsf_5, P_5)$ and then
the value $286 = {15! 2! 4!}/{5! 7! 8! 9!}$ is given by Corollary 4.9 on p. 54 ($n=5$, $d=15$).
The value $13188$ for the Freudenthal variety $\Esf_7/P_7$ appears in the `Remarks'
on p. 57.
\end{proof}

The idea is now to produce a hypersurface $\E_Y \subset \LG(\g_{-1})$ from the sub-adjoint
variety $Y \subset \PP\g_{-1}$. This is a Lagrangian version of the usual Chow form,
assigning to a projective variety of codimension $k$ in $\PP^{N-1}$
a hypersurface in the Grassmannian $\Gr(k, N)$. Let us state its properties.
\begin{lemma}\label{lemChow}
Fix a standard symplectic form on $\C^{2n}$ and consider
the canonical $\Sp_n$-equivariant double fibration
\begin{equation}
\xymatrix{
& \Fl^\iso(1,n,2n)\ar[dl]_p\ar[dr]^q & \\
 \PP^{2n-1} && \LG(n,2n)\, ,
}
\end{equation}
where $\Fl^\iso(1,n,2n)$ is the isotropic partial flag variety
embedded into $\PP^{2n-1}\times\LG(n,2n)$ as an incidence correspondence.
Let $Z \subset \PP^{2n-1}$ be
an irreducible subvariety of pure codimension $n$ and degree $d$. Then
$\E_Z = q(p^{-1}Z)$ is an irreducible hypersurface of degree $d$ in $\LG(n,2n)$.
\end{lemma}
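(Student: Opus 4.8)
The plan is to analyze the double fibration directly. A point of $\LG(n,2n)$ is a Lagrangian plane $L$, and the fibre $q^{-1}(L)$ is the space $\PP(L)\cong\PP^{n-1}$ of lines contained in $L$; hence set-theoretically
\[
\E_Z=q(p^{-1}Z)=\{\,L\in\LG(n,2n)\mid \PP(L)\cap Z\neq\emptyset\,\}.
\]
The fibres of $p$ are Lagrangian Grassmannians: the Lagrangians through a fixed line $\ell$ are the Lagrangians of $\ell^\perp/\ell$, so they form an $\LG(n-1,2n-2)$ of dimension $\binom{n}{2}$. Thus $p^{-1}Z$ has dimension $(n-1)+\binom{n}{2}=\binom{n+1}{2}-1=\dim\LG(n,2n)-1$, and it is irreducible since $p$ is an $\Sp_n$-homogeneous bundle with irreducible fibre over the irreducible base $Z$. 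Consequently $\E_Z$ is irreducible, being the image of an irreducible variety.

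Next I would check that $\E_Z$ is a genuine hypersurface, i.e. that $q|_{p^{-1}Z}$ is generically finite; in fact it is birational onto its image. For generic $L\in\E_Z$ the set $Z\cap\PP(L)$ is a single point, because if $\PP(L)$ contained two points of $Z$ it would contain an isotropic secant line of $Z$, and a dimension count rules this out generically: isotropic secant $2$-planes form a family of dimension $\le 2n-3$, while the Lagrangians containing a fixed isotropic $2$-plane $m$ form an $\LG(n-2,2n-4)$ of dimension $\binom{n-1}{2}$, so the multisecant locus has dimension $\le\dim\E_Z-1$, hence codimension $\ge 2$ in $\LG(n,2n)$. Since a generic $\PP(L)$ misses $Z$ by the dimension count, $\E_Z\neq\LG(n,2n)$, and we conclude $\E_Z$ is an irreducible hypersurface.

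For the degree I would use that $\mathrm{Pic}\,\LG(n,2n)=\ZZ\cdot\OO(1)$, so that $\deg\E_Z=\E_Z\cdot\Lambda$ for any line $\Lambda$ meeting $\E_Z$ properly, the intersection counted with multiplicity. The lines in the minimal embedding are exactly the pencils $\Lambda_A=\{\,L\mid A\subset L\subset A^\perp\,\}$, where $A$ is an isotropic $(n-1)$-plane: here $L=A\oplus\langle v\rangle$ with $[v]\in\PP(A^\perp/A)\cong\PP^1$, and the determinant $\det L=\omega_A\wedge v$ depends linearly on $v$, so $\Lambda_A$ is a genuine degree-one line. I would then compare $\E_Z\cap\Lambda_A$ with $Z\cap\PP(A^\perp)$: the plane $L=A\oplus\langle v\rangle$ lies in $\E_Z$ iff $\PP(L)\subset\PP(A^\perp)$ meets $Z$, and since for generic $A$ the centre $\PP(A)$ misses $Z$, each such intersection point determines a unique $v$, yielding a surjection $Z\cap\PP(A^\perp)\to\E_Z\cap\Lambda_A$.

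The heart of the proof, and the main obstacle, is the transversality bookkeeping. Because $\Sp_n$ acts transitively on $\PP^{2n-1}$, on the isotropic $(n-1)$-planes, and on $\LG(n,2n)$, Kleiman's transversality theorem lets me pick $A$ generic so that: (i) $\PP(A^\perp)\cong\PP^n$ meets $Z$ transversally, in $[Z]\cdot[\PP^n]=d\cdot[\pt]$, i.e. $d$ reduced points, all off $\PP(A)$; (ii) $\Lambda_A$ avoids the codimension-$\ge 2$ multisecant locus found above, so the $d$ points of $Z\cap\PP(A^\perp)$ give $d$ \emph{distinct} Lagrangians; and (iii) $\Lambda_A$ meets $\E_Z$ transversally, so each contributes multiplicity one. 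This gives $\deg\E_Z=\E_Z\cdot\Lambda_A=d$. As a consistency check, $\E_Z$ is precisely the trace on $\LG(n,2n)$ of the classical Chow hypersurface of $Z$ in $\Gr(n,2n)$, whose Plücker degree is $\deg Z=d$; since $\OO(1)$ on $\LG(n,2n)$ is the restriction of the Plücker $\OO(1)$ on $\Gr(n,2n)$ (both assign $\Lambda_A$ degree one), this recovers the same value.
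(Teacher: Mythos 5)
Your proof follows the paper's strategy in its essential steps: irreducibility via the Zariski-locally trivial fibration $p$ with fibre $\LG(n-1,2(n-1))$, and the degree via intersecting $\E_Z$ with a general line of $\LG(n,2n)$ and identifying that intersection with $Z\cap\PP^n$ for a $\PP^n$ of complementary dimension. Your pencils $\Lambda_A$, $A$ isotropic of dimension $n-1$, are exactly the paper's lines $\ell\simeq\PP(K/K^\perp)$ with $K=A^\perp$ (the paper's ``rank $1$'' is a slip for rank $2$), and your Kleiman-based bookkeeping --- (i) transversality of $\PP(A^\perp)\cap Z$, (ii) avoidance of the multisecant locus, (iii) transversality of $\Lambda_A\cap\E_Z$ --- is a legitimate unpacking of the paper's one-line assertion that $\PP K\cap Z$ and $\ell\cap\E_Z$ may be identified scheme-theoretically.

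The one place you genuinely deviate is the proof that $\codim\E_Z=1$, and there your argument as written is circular. You bound the locus $M$ of Lagrangians containing an isotropic secant line by $\dim M\le(2n-3)+\binom{n-1}{2}=\dim\LG(n,2n)-2$, and then assert this is ``$\le\dim\E_Z-1$'' --- which presupposes $\dim\E_Z=\dim\LG(n,2n)-1$, i.e.\ exactly the generic finiteness of $q|_{p^{-1}Z}$ you are trying to establish. A priori $\E_Z$ could be \emph{contained} in $\overline{M}$ (generic fibre of $q|_{p^{-1}Z}$ positive-dimensional), and your numbers alone do not exclude this: $\dim\E_Z\le\dim\LG(n,2n)-2$ is perfectly consistent with $\E_Z\subseteq\overline{M}$. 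Two repairs are available. (a) Count pairs rather than lines: the incidence $S=\{(z_1,z_2,L)\ :\ z_1\neq z_2\in Z\cap\PP(L)\}$ fibres over the isotropic pair locus in $Z\times Z$ (of dimension $\le 2n-3$, except when \emph{all} secants of $Z$ are isotropic, in which case $Z$ is a Lagrangian linear subspace, $d=1$, and $\E_Z$ is a Schubert hyperplane section, so the lemma holds directly) with fibre $\LG(n-2,2n-4)$; thus $\dim S\le\dim\LG(n,2n)-2$, while a generic fibre dimension $f\ge 1$ for $q|_{p^{-1}Z}$ would force $\dim S\ge\dim\E_Z+2f=\dim\LG(n,2n)-1+f$, a contradiction. (b) Simpler, and this is what the paper does: every line $\Lambda_A$ meets $\E_Z$, because $Z\cap\PP(A^\perp)\neq\emptyset$ by complementary dimension in $\PP^{2n-1}$ --- a fact your own step (i) already relies on --- and a general line misses any subvariety of codimension $\ge 2$, so $\codim\E_Z\le 1$. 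With repair (b) the multisecant computation becomes unnecessary for the hypersurface claim, though you still need the codimension-two bound on $M$ it provides for the injectivity step (ii) of your degree count.
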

\begin{definition}\label{defLagChow}
 We call $\E_Z$ the \emph{Lagrangian Chow transform} of $Z$.
More generally, if $Z_1,\dots,Z_r$ are several irreducible components,
all of codimension $n$, we set $\E_{\bigcup Z_i} = q(p^{-1}\bigcup Z_i) = \bigcup \E_{Z_i}$.
\end{definition}
\begin{proof}
The projection $p$ is a Zariski-locally
trivial bundle with fibres isomorphic to $\LG(n-1,2(n-1))$.
It then follows immediately that $\E_Z$ is irreducible. Furthermore,
$\dim \E_Z \le \dim p^{-1}Z = \dim Z + (n-1)n/2 = (n^2 + n - 2)/2$
so that $\codim \E_Z \ge 1$. Now, consider a
general line $\ell \subset \LG(n,2n)$: there is a canonical identification
$\ell \simeq \PP (K/K^\perp)$ where $K \subset \C^{2n}$ is a general $(n+1)$-dimensional
subspace such that the symplectic form restricted to $K$ has rank $1$.
Since $\PP K$ has complementary dimension to $Z$,
the intersection $\PP K \cap Z$ is nonempty.
So is then
$\ell \cap \E_Z$, proving $\codim \E_Z = 1$.
Now, $\ell$ being general, its intersection with $\E_Z$
is transverse and consists of $\deg\E_Z$ points.
That is, $\PP K^\perp \cap Z = \emptyset$ and we may identify
$\PP K \cap Z$ with $\ell \cap \E_Z$ scheme-theoretically: in particular,
$\PP K \cap Z$ is reduced and thus consists of $d$ points, proving $\deg\E_Z=d$.
\end{proof}

This way the sub-adjoint variety gives rise to a hypersurface in the Lagrangian
Grassmannian. Furthermore, since the construction is manifestly $\Sp_n$-invariant,
it follows that $\E_Y \subset \LG(\g_{-1})$ is $G_0$-invariant. Its degree
is equal to that of $Y$ (in the only case where $Y$ is not irreducible,
the transform of each component is a distinct hyperplane section of $\LG(\g_{-1})$,
so that their union has degree $2$), thus explaining the last row in
the table we have included in Theorem \ref{thm:invs}. In particular,
it is remarkable that, except for type $\Asf$ and $\Gsf_2$, the
`natural', geometric $\E_Y$ is of very high degree, as  compared to the
minimal degree hypersurfaces we have produced. On the other hand,
in types $\Asf$ and $\Gsf_2$, the components of the hypersurface $\E_Y$
are the unique invariant hypersurfaces and thus coincide with ours.

\section{Explicit invariant PDEs}\label{sec:pdes}

The key to recast our main result, that is Theorem \ref{thm:invs}, in the context of nonlinear PDEs is to choose suitable Darboux coordinates on the adjoint variety $X=G/P$. Recall also Lemma \ref{lem:bilag-stuff}.
\begin{proposition}\label{propDarboux}
 For any bi--Lagrangian decomposition
 $L\oplus L^* = \g_{-1}$
there exist complex Darboux coordinates
 \begin{equation}\label{eqDarbSuGiMenoUno}
  x^1,\ldots, x^n, u, u_1,\ldots, u_n,
\end{equation}
in a neighborhood of $o$, such that
$$
L=g^{-1}\cdot\Span{\left.D_{x^1}\right|_{gP},\ldots, \left.D_{x^n}\right|_{gP}} ,\,\,\,
\g_{-2} =g^{-1}\cdot\Span{\left.\frac{\partial}{\partial u}\right|_{gP}} ,\,\,\,
L^*=g^{-1}\cdot\Span{\left.\frac{\partial}{\partial u_1}\right|_{gP},\ldots, \left.\frac{\partial}{\partial u_n}\right|_{gP}}\, ,
$$
for all $g\in G$, where $D_{x^i}$ are the total derivatives (cf.  \eqref{eq.total}).
\end{proposition}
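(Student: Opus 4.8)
The plan is to construct the coordinates from the group structure of $\g_-$ and then recognise the required frame as the left-invariant one on the big cell. Recall that $\g_-=\g_{-2}\oplus\g_{-1}$ is a Heisenberg Lie algebra: $\g_{-2}$ is one-dimensional and central, while the bracket on $\g_{-1}$ is the twisted symplectic form $\omega$ of \eqref{eqBOX}. For the given bi-Lagrangian splitting $\g_{-1}=L\oplus L^*$ the subspace $L^*\oplus\g_{-2}$ is an abelian ideal and $L$ an abelian subalgebra, so the unipotent group $G_-:=\exp\g_-$ factors as $\exp L\ltimes\exp(L^*\oplus\g_{-2})$. Since $\g_-\cap\fp=0$ and $\dim\g_-=2n+1=\dim X$, the orbit map $G_-\to X$, $g\mapsto g\cdot o$, is an open embedding onto the big cell, and I would use it to transport coordinates from $G_-$ to a neighbourhood of $o$.

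First I would fix a basis $p_1,\dots,p_n$ of $L$ together with the $\omega$-dual basis $q^1,\dots,q^n$ of $L^*$ (trivialising $\g_{-2}\simeq\C$ by a generator $z$, so that $\omega(p_i,q^j)=\delta^j_i$), and define \emph{polarised} exponential coordinates by sending $(x^i,u_i,u)$ to $\exp(\sum_i x^i p_i)\exp(\sum_i u_i q^i+uz)\cdot o$. A short Baker--Campbell--Hausdorff computation --- using only that $\g_{-2}$ is central and $[p_i,q^j]=\delta^j_i z$ --- shows that, for this ordering and a suitable sign normalisation, the coordinate vector fields $D_{x^i}=\partial_{x^i}+u_i\partial_u$, $\partial_{u_i}$ and $\partial_u$ are precisely the \emph{left-invariant} vector fields on $G_-$ whose values at $o$ are $p_i$, $q^i$ and $z$ under $T_oX\simeq\g_-$. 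Since $\CC$ is $G$-invariant, hence invariant under the left $G_-$-action on the big cell, it is spanned there by the left-invariant fields valued in $\g_{-1}$, namely $\{D_{x^i},\partial_{u_i}\}$; thus $\CC=\ker(du-u_i\,dx^i)$, and as $L,L^*$ are Lagrangian for $\omega$ while the Levi bracket of $\CC$ coincides with $\omega$ (Lemma \ref{lem:properties-grading}(4)), these are genuine Darboux coordinates.

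With this in hand the equivariance is immediate. For a point $gP$ of the big cell, represented by $g\in G_-$, the action of $g$ on $X$ restricts to left translation $L_g$ on $G_-$, so left-invariance of the frame gives $d(g^{-1})\big(D_{x^i}|_{gP}\big)=D_{x^i}|_o$, and likewise for $\partial_{u_i}$ and $\partial_u$. Taking spans and reading off the values at $o$ then yields the three asserted identities $g^{-1}\cdot\Span{D_{x^i}|_{gP}}=L$, $g^{-1}\cdot\Span{\partial_{u_i}|_{gP}}=L^*$ and $g^{-1}\cdot\Span{\partial_u|_{gP}}=\g_{-2}$.

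The step I expect to be most delicate is the Darboux normalisation: it is the ordering in the polarised exponential (the factor $\exp L$ \emph{before} $\exp(L^*\oplus\g_{-2})$) that replaces the symmetric Heisenberg cocycle by the polarised potential $du-u_i\,dx^i$, and the signs and constants must be tracked against the normalisation of $\omega$ to land on exactly $D_{x^i}=\partial_{x^i}+u_i\partial_u$. A second, purely interpretive, point concerns the phrase ``for all $g\in G$'': the three identities are sensitive to the chosen representative --- replacing $g$ by $gp$ with $p\in P$ destroys them, since $L$ is not $P$-invariant --- so they are to be read with the canonical representatives $g\in G_-$ of points of the big cell; equivalently, they are exactly the assertion that the coordinate frame is the left-invariant frame adapted to the decomposition $L\oplus L^*\oplus\g_{-2}$.
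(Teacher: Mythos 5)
Your proof is correct, and it reorganises the paper's argument in a genuinely different (and in one respect sharper) way. The paper works in the symmetric exponential chart $\Psi(w)=\exp(w)\cdot o$, $w\in\g_-$, derives from Baker--Campbell--Hausdorff the formula \eqref{eqFormulaMagicaJan} for the fields induced by $v\in\g_-$, asserts \eqref{eqFormulaMagicaJan2}, and then rectifies the resulting frame by the explicit quadratic substitution $\underline u=r^\vee+l^il_i$; your polarised chart $\exp(\sum_i x^ip_i)\exp(\sum_i u_iq^i+uz)\cdot o$ absorbs exactly that quadratic shift into the ordering of the exponential factors, since $\exp(xp)\exp(u_*q+uz)=\exp\bigl(xp+u_*q+(u+\tfrac12\textstyle\sum_ix^iu_i)z\bigr)$, so no rectification step is needed. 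The substantive difference is which extension of the frame $(p_i,q^i,z)$ one uses: the paper's \eqref{eqFormulaMagicaJan} produces the fundamental vector fields of the $G$-action, i.e.\ the \emph{right}-invariant fields on $G_-\simeq$ big cell, whereas yours are the \emph{left}-invariant ones, and this buys you two things. First, it is the left-invariant frame that spans the $G$-invariant distribution: $\CC_w=T(e^w)\g_{-1}=(\id+\tfrac12\ad_w)\g_{-1}$, while the span of the fundamental fields of $\g_{-1}$-elements is $(\id-\tfrac12\ad_w)\g_{-1}$, a \emph{different} subspace — so the paper's \eqref{eqFormulaMagicaJan2} conflates the two sides (a left/right sign slip that does not affect the truth of the proposition, only the sign of the quadratic shift), and your setup avoids it. Second, the three displayed identities are literally left-invariance, $d(g^{-1})(X_v|_{gP})=v$; for a fundamental field one instead gets $d(g^{-1})(\widehat v|_{gP})=e^{-\ad_w}v \bmod\fp$, which acquires the $\g_{-2}$-component $-[w_{-1},v]$ (with $w_{-1}$ the $\g_{-1}$-part of $w$), so your mechanism is the one that actually proves the equivariance claim, which the paper's proof leaves implicit after exhibiting the coordinates. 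Your two caveats are both sound: the sign bookkeeping is a finite check (with $[q^i,p_j]=-\delta^i_jz$ your chart gives $\partial_{x^i}-u_i\partial_u$, fixed by flipping the sign of $z$ or the order of the factors), and ``for all $g\in G$'' must indeed be read via the canonical representatives $g\in\exp\g_-$, since twisting by $p\in P$ replaces $L$ by its image under the isotropy action, which does not preserve $L$ in general.
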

\begin{proof}
By restricting the exponential map
$ \g\ni g \mapsto \exp (g)\in G$
to $\g_-$, one obtains an (algebraic) isomorphism
$
\Psi:\g_-\longrightarrow U\subseteq X
$, 
between the linear space $\g_-$ and an open neighborhood $U$ of the origin.
For $v \in\g_-$ denote by $\widehat{v}$ the  vector field on $U $ induced by $v$. Then we have that
\begin{equation}\label{eqFormulaMagicaJan}
\widehat{v }_{\Psi (w )}=T_w \Psi  \left(v -\frac{1}{2}[w ,v ]\right)\, ,
\end{equation}
for all $w \in\g_-$.
Formula \eqref{eqFormulaMagicaJan} follows directly from the Baker--Campbell--Hausdorff formula
\begin{equation*}
 e^{t \widehat{v }}\Psi (w ) =  e^{tv }e^w P = e^{tv +w +\frac{1}{2}[tv ,w ]}P=\Psi \left(w +tv +\frac{1}{2}t[v ,w ]\right)\, .
\end{equation*}
We can  now pull--back the contact distribution $\CC$ on $X $  to a contact distribution (denoted by the same symbol $\CC$) on $\g_{-}$, by setting
$
\CC_w :=(T_w \Psi )^{-1}(\CC_{\Psi (w )})
$. 
Then   \eqref{eqFormulaMagicaJan} implies
\begin{equation}\label{eqFormulaMagicaJan2}
\CC_w =\left(\id -\frac{1}{2}\ad_w \right)\g_{-1}\, .
\end{equation}
%
%
%
%
%
Fix now vectors $l^1,\ldots, l^n,r,l_1,\ldots, l_n$ such that
$
L=\Span{l^1,\ldots, l^n}$, $ \g_{-2}=\Span{r}$, $ L^*=\Span{l_1,\ldots, l_n}$.
Then from \eqref{eqFormulaMagicaJan2}  it follows that the vectors fields $D_i$ and $V^i$ on $\g_{-1}$ defined by
$$
\left.D_i\right|_w := \left(\id -\frac{1}{2}\ad_w \right)l_i\, ,\quad
\left.V^i\right|_w := \left(\id -\frac{1}{2}\ad_w \right)l^i\, ,
$$
for all $w \in\g_{-}$, form a basis of $\CC$.
The last step is to show that there are coordinates
 \begin{equation}\label{eqDarbSuGiMenoUnoPRE}
  \underline{x}^1,\ldots, \underline{x}^n, \underline{u}, \underline{u}_1,\ldots, \underline{u}_n,
\end{equation}
on  $\g_-$  such that
$$
 D_i =  \frac{\partial}{\partial \underline{x}^i}+\underline{u}_i\frac{\partial}{\partial \underline{u}} \, ,\quad
V^i =  \frac{\partial}{\partial \underline{u}^i}  \, .
$$
But this is true, if one sets
$
  \underline{x}^i:= l^i$, $
    \underline{u}:= r^\vee+l^il_i$, $
      \underline{u}_i:= l_i
$.
Then the desired coordinates \eqref{eqDarbSuGiMenoUno} are just the pull--backs via $\Psi $ of \eqref{eqDarbSuGiMenoUnoPRE}.

%
%
\end{proof}

\subsection{The case $\Asf$}\label{ss:pdes-a}
Besides being  technically the simplest, this case is made interesting by the fact that the torus  $T$ has rank $2$ (see Remark \ref{remTorus}). The adjoint  manifold $X$ is the projectivised cotangent bundle $\p T^*\p^{n+1}$,
which is a  $\PGL(n+2)$--homogeneous contact manifold of dimension $2n+1$ (see Lemma \ref{lemTypeA}). The contact plane $\CC_o$ at the origin is the direct sum $\C^n\oplus\C^{n\ast}$, which happens to be bi--Lagrangian. Hence, we choose Darboux coordinates as in Proposition \ref{propDarboux}, in such a way that $\C^n$ is spanned by the `total derivatives' $D_{x^i}$ and $\C^{n\ast}$ is spanned with the `vertical vectors' $\partial_{u_i}$.\par
The fact that $\rk T=2$ is mirrored by the fact that the sub-adjoint
variety $Y$ has two irreducible components: $\p^{n-1}$ and $\p^{(n-1)\ast}$ (see Lemma \ref{lem:properties-grading}).
Accordingly, the Lagrangian Chow transform  $\E_Y$ of the \emph{whole} $Y$ is a hypersurface of degree 2, made by two irreducible components of degree 1, namely the Lagrangian Chow transforms $\E_{\p^{n-1}}$ and $\E_{\p^{(n-1)\ast}}$ of the corresponding irreducible pieces of $Y$ (see Definition \ref{defLagChow}). Hence,    $\E_{\p^{n-1}}$ and $\E_{\p^{(n-1)\ast}}$ are \emph{both} homogeneous $2\Nd$ order PDEs in type $\Asf_{n+1}$, of minimal degree.\par
In the Darboux coordinates provided by Proposition \ref{propDarboux}, these are precisely the parabolic   Monge--Amp\`ere equations, already discussed in \cite{MR2985508}. For instance, in order to compute $\E_{\p^{n-1}}$, we just observe that a Lagrangian $n$--plane
\begin{equation}\label{eqLagPlan}
L(u_{ij}):=\Span{D_{x^i}+u_{ij} \partial_{u_j}\mid i=1,\ldots n}
\end{equation}
intersects nontrivially $\C^n=\Span{D_{x^i} \mid i=1,\ldots n}$ if and only if $ \det (u_{ij})=0$. Hence, $\E_{\p^{n-1}}=\{\det (u_{ij})=0\}$. The other component $\E_{\p^{(n-1)\ast}}$ of $\E_Y$ cannot be written explicitly as a $2\Nd$ order PDE, since no Lagrangian $n$--plane nontrivially intersecting $\C^{n\ast}$  can be written in the form \eqref{eqLagPlan}.
Indeed, these Darboux coordinates
are adapted to the structure given by the bi-Lagrangian
splitting of the contact distribution (there is a class of
adapted coordinates, stable under $\PGL_{n+1}$ acting
locally as point transformations). However, one could consider
more `generic' Darboux coordinates and simultaneously
 express both $\E_{\p^{n-1}}$ and $\E_{\p^{(n-1)*}}$
as explicit PDEs.
%

\subsection{The case $\Bsf_3$}
We denote by  $Y\subset\p(\g_{-1})$  the sub--adjoint variety of $X$. In this section we compute  the Lagrangian Chow transform $\E_Y$ of $Y$ in the case   $G=\Bsf_3$, because  $\E_Y$ is precisely the minimal--degree homogeneous equation on $X$. Observe that, as a second--order PDEs, $\E_Y$ has $3$ independent variables ($n=3$), and as an algebraic hypersurface is of degree 4 in the minors of $(u_{ij})$ (recall Definition \ref{defDegreeGlob} (2)).

\smallskip
Before focusing on $G=\Bsf_3$, let us examine the general case when $G$ is of type $\Bsf$ or $\Dsf$.
Let
$
\CC=\Span{D_{x^1},\ldots, D_{x^n},\partial_{u_1},\ldots, \partial_{u_n}}
$ 
be the contact distribution on $X$, and identify
$
\CC_o\equiv\g_{-1}\equiv \C^2\otimes\C^n=\Span{A\otimes e_1,\ldots, A\otimes e_n,B\otimes e_1,\ldots, B\otimes e_n}
$, 
where
$
\C^2 = \Span{A,B}$ and $\C^n = \Span{e_1,\ldots,e_n}
$
Now recall that $\C^n $ is equipped with a metric $g$, and that the sub--adjoint variety $Y$ is the image of $\p^1\times \mathcal{N}_g$ in the Segre embedding
\begin{equation*}
\p(\C^2)\times\p(\C^{n})\equiv\p^1\times \p^{n-1}\longrightarrow \p^{2n-1}=\p(\C^2\otimes\C^n)\, ,
\end{equation*}
where
$
 \mathcal{N}_g\subset\p(\C^n)
$
is the null variety of $g$ (see Subsection \ref{subProofTypeBD}).
We assume that $g$ is diagonal, i.e.,
$
g=\mathrm{diag}(\lambda_1,\ldots,\lambda_n)
$. 
Then $Y$ has dimension $n-1$ in $\p(\g_{-1})=\p^{2n-1}$ (see Subsection \ref{secRumianek}), and the $n$ quadratic equations
\begin{equation}\label{eq.var.D}
\rank
\left(
\begin{array}{cccc}
dx^1 & dx^2 & \ldots & dx^n
\\
du_1 & du_2 & \ldots & du_n
\end{array}
\right) \leq 1\,,\,\,
\sum_{i=1}^n\lambda_i(dx^i)^2 =0\,
\end{equation}
vanish on $Y$.
Indeed, the rank of the above $2\times n$ matrix is $\leq 1$ if and only if $n-1$ minors of rank two vanishes. Then, if an element $[v]$ of $\p^{2n-1}$ is in the image of the Segre variety, it will be also in $Y$ if and only if $v$ is null with respect to the metric
\begin{equation*}
\left(\begin{array}{cc}0 & \mathrm{diag}(\lambda_1,\ldots,\lambda_n) \\\mathrm{diag}(\lambda_1,\ldots,\lambda_n) & 0\end{array}\right)
\end{equation*}
on $\g_{-1}$, induced from  the metric $g$.
Let us now specialise to the case $n=3$.
If  $\lambda_i=1$, $\forall\,i$, the system \eqref{eq.var.D} is
\begin{equation}\label{eq.var.D.3}
\left\{
\begin{array}{l}
x^1u_2-u_1x^2=0
\\
x^1u_3-u_1x^3=0
\\
x^1u_1+x^2u_2+x^3u_3=0\, .
\end{array}
\right.
\end{equation}
The variety $\widetilde{Y}\subset\p^5$, cut out by \eqref{eq.var.D.3},    contains by construction  the subajoint variety $Y$, but also other `parasitic components', namely
\begin{equation}\label{eq.var.D.3.plus}
\left\{
\begin{array}{l}
u_1=0
\\
u_2=0
\\
u_3=0
\end{array}
\right.
\,\,,\qquad
\left\{
\begin{array}{l}
x^1=0
\\
u_1=0
\\
x^2u_2+x^3u_3=0\, .
\end{array}
\right.
\end{equation}
The first one is a $\p^2$ inside $\p^5$, whereas the second is a quadric inside a $\p^3$ inside $\p^5$.

\smallskip
By computing the  equation   $\E_{\widetilde Y}$ associated with the variety   ${\widetilde Y}$ described by \eqref{eq.var.D.3}, we obtain
\begin{equation*}
\det(u_{ij})\cdot (u_{13}^2u_{22}+u_{12}^2u_{33}-2u_{12}u_{13}u_{23})
\cdot F=0\, ,
\end{equation*}
where $F$ is a long expression of degree $6$ in second derivatives. According to the general theory of Lagrangian Chow transforms, $\E_{\widetilde Y}$ is composed of various irreducible components, only one of whose is the desired equation $\E_{  Y}$ (see Definition \ref{defLagChow}). It remains to establish which is which.\par
First, $\det(u_{ij})=0$ is the (Monge-Amp\`ere) equation associated to the first variety of \eqref{eq.var.D.3.plus} in the spirit of \cite{MR2985508}, and it cannot be $\E_{  Y}$ since it is of degree 1 in the Pl\"ucker coordinates. Second,   $u_{13}^2u_{22}+u_{12}^2u_{33}-2u_{12}u_{13}u_{23}=0$ is the equation associated to the second variety of \eqref{eq.var.D.3.plus}.
%
%
%
Now we  obtain $\E_{  Y}$ in a less direct way, by computing the full ideal $\mathcal{I}(Y)$ of $Y$. As it turns out, $\mathcal{I}(Y)$ is generated by 6 elements, as opposed to the 3 equations appearing in \eqref{eq.var.D.3}. To this end, recall that, by the Cauchy decomposition formula,
\begin{eqnarray}
 S^2(\C^2\otimes\C^3)&=&  \ydiagram{2} (\C^2)\otimes  \ydiagram{2}  (\C^3)\nonumber\\
&&\oplus  \ydiagram{1,1} (\C^2)\otimes  \ydiagram{1,1}  (\C^3)\nonumber\\
&=& \left(S^2(\C^2)\otimes  S^2  (\C^3)\right)\oplus\C^3\, ,\label{eqSegrC2C3}
\end{eqnarray}
where the boxes denote the appropriate Schur functors.\par
Dual to the Segre embedding
\begin{eqnarray*}
\p(\C^2)\times\p(\C^3)\equiv\p^1\times\p^2 &\longrightarrow &\p(\C^2\otimes\C^3)\equiv\p^5\, ,\\
([v],[w]) &\longmapsto& [v\otimes w]\, ,
\end{eqnarray*}
there is the projection
\begin{equation}\label{eqProjDualeSegreC2C3}
 S^2(\C^2\otimes\C^3)\longrightarrow S^2(\C^2)\otimes  S^2  (\C^3)
\end{equation}
from the $21$-dimensional space $ S^2(\C^2\otimes\C^3)$ of quadratic forms on $\p^5$, to the $18$-dimensional space $S^2(\C^2)\otimes  S^2  (\C^3)$ of bi--homogeneous  forms on the product space $\p^1\times\p^2 $, of bi--degree $(2,2)$. The kernel of \eqref{eqProjDualeSegreC2C3} is precisely the space $\C^3$ appearing in \eqref{eqSegrC2C3}, which consists of the three quadrics cutting out the Segre in $\p^5$.

Fix coordinates $[A:B]$ on $\p^1$, and coordinates $[z^1:z^2:z^3]$ on $\p^2$.
Suppose that $\C^3$ is equipped with a non--degenerate quadratic form
\begin{equation}\label{eqMetrC3}
g=(z^1)^2+(z^2)^2+(z^3)^2\, ,
\end{equation}
and let
$
Q:=\{g=0\}\subset \p^2
$. 
Accordingly, we can single out the trace--free part of the quadratic forms on $\C^3$, that is
$
S^2(\C^3)=S_0^2(\C^3)\oplus\Span{g}
$, 
and thus \eqref{eqSegrC2C3} can be further split as
\begin{equation*}
 S^2(\C^2\otimes\C^3)=   \left(\left(S^2(\C^2)\otimes  S_0^2  (\C^3)\right)\oplus\C^3\right)\oplus \left(  \left(S^2(\C^2)\otimes  \Span{g}\right)  \oplus\C^3 \right)\, .
\end{equation*}
The canonical projection
$
 S^2(\C^2\otimes\C^3)\longrightarrow  \left(S^2(\C^2)\otimes  S_0^2  (\C^3)\right)\oplus\C^3
$ 
 is precisely the dual to the embedding
$
Y:=\p^1\times Q\longrightarrow \p^5
$, 
and its $6$-dimensional kernel, i.e.,
\begin{equation}\label{eqKer6D}
\left(S^2(\C^2)\otimes  \Span{g}\right)  \oplus\C^3\, ,
\end{equation}
is the space of quadrics cutting out the 2--fold $Y$ in $\p^5$.
From \eqref{eqMetrC3} it follows immediately that
\begin{align*}
S^2(\C^2)\otimes  \Span{g}&= \Span{A^2, AB, B^2}\otimes\Span{g}
= \Span{A^2g, ABg, B^2g}\\
&= \Span{(Az^1)^2+(Az^2)^2+(Az^3)^2, Az^1Bz^1+Az^2Bz^2+Az^3Bz^3   ,(Bz^1)^2+(Bz^2)^2+(Bz^3)^2}\\
&= \Span{(x^1)^2+(x^2)^2+(x^3)^2,x^1u_1+x^2u_2+x^3u_3,u_1^2+u_2^2+u_3^2}\, ,
\end{align*}
having set
\begin{eqnarray*}
x^i := Az^i\, ,\quad u_i:= Bz^i\, .
\end{eqnarray*}
In these coordinates, the $\C^3$ appearing in \eqref{eqKer6D} is the $3$-dimensional space
$ 
\Span{x^1u_2-x^2u_1,\, x^1u_3-x^3u_1,\, x^2u_3-x^3u_2}
$,  
so that the ideal
\begin{equation}\label{eqIdealXB3}
\mathcal{I}(Y)=\Span{ x^1u_2-x^2u_1,x^1u_3-x^3u_1,x^2u_3-x^3u_2,(x^1)^2+(x^2)^2+(x^3)^2,x^1u_1+x^2u_2+x^3u_3,u_1^2+u_2^2+u_3^2  }
\end{equation}
is generated by 6 elements, and this number cannot be reduced. Recall that $\deg Y=4$ in virtue of Lemma \ref{lemChow}.
Now we find a generator of the ideal of $ \E_Y:=\{  L\in \LL(\g_{-1})\mid L\cap Y\neq\emptyset\}$ (see Definition \ref{defLagChow}). \par
To this end we recall the standard coordinates $u_{ij}$ on   $ \LL(\g_{-1})$, introduced earlier in Subsection \ref{ss:from-pdes} (see also Lemma \ref{lem:bilag-stuff} (3)).
Indeed, a Lagrangian plane $L$ is locally given by the 3 equations
\begin{equation}\label{eqP1}
u_i = u_{ij} x^j\,, \quad i=1, 2, 3\, ,
\end{equation}
where we recall that $u_{ij}=u_{ji}$. Moreover, $L$ belongs to $\E_Y$ if and only if, by replacing $u_1$, $u_2$ and $u_3$ according to \eqref{eqP1}, in each of the six generators of the ideal  $\mathcal{I}(Y)$ given in \eqref{eqIdealXB3}, one obtains a compatible systems of polynomials in the three variables $x^1,x^2,x^3$. These are:
\begin{eqnarray*}
q_1&=&x^1 \left(u_{12} x^1+u_{22} x^2+u_{23} x^3\right)-x^2 \left(u_{11} x^1+u_{12} x^2+u_{13} x^3\right)\, ,\\ q_2&=& x^1
   \left(u_{13} x^1+u_{23} x^2+u_{33} x^3\right)-x^3 \left(u_{11} x^1+u_{12} x^2+u_{13} x^3\right)\, ,\\ q_3&=& x^2
   \left(u_{13} x^1+u_{23} x^2+u_{33} x^3\right)-x^3 \left(u_{12} x^1+u_{22} x^2+u_{23} x^3\right)\, ,\\ q_4&=& u_{11}
   \left(x^1\right)^2+2 u_{12} x^2 x^1+u_{22} \left(x^2\right)^2+u_{33} \left(x^3\right)^2+2 x^3 \left(u_{13}
   x^1+u_{23} x^2\right)\, ,\\ q_5&=&\left(x^1\right)^2+\left(x^2\right)^2+\left(x^3\right)^2\, ,\\ q_6&=&\left(u_{11} x^1+u_{12}
   x^2+u_{13} x^3\right){}^2+\left(u_{12} x^1+u_{22} x^2+u_{23} x^3\right){}^2+\left(u_{13} x^1+u_{23}
   x^2+u_{33} x^3\right){}^2\, .
\end{eqnarray*}
Here we identify polynomials in the $u_{ij}$ with
element of the affine coordinate ring of
the corresponding open subset of $\LG(\g_{-1})$ (see the proof of Lemma \ref{lemGeomLagrGrass}).
We furthermore invert $x^3$ to remove the
apex of the affine cone over $Y$, so
that we may now use elimination to intersect
the ideal generated by $q_1,\dots,q_6$ with the
ring $\C[x^3, 1 / x^3, u_{ij}]$.
Using  \texttt{Macaulay 2}, we obtain that the ideal of $\E_Y$
in $\C[u_{ij}]$ pulls back to
\begin{equation}
 \Span{(x^3)^2\cdot F} \subset \C[x^3,1/x^3, u_{ij}]\, ,
\end{equation}
where
{\scriptsize
\begin{align*}
F=&4 u_{12}^6+u_{11}^2 u_{12}^4+12 u_{13}^2 u_{12}^4+u_{22}^2 u_{12}^4+12 u_{23}^2 u_{12}^4-8 u_{33}^2
   u_{12}^4-10 u_{11} u_{22} u_{12}^4+8 u_{11} u_{33} u_{12}^4+8 u_{22} u_{33} u_{12}^4-36 u_{11} u_{13}
   u_{23} u_{12}^3-36 u_{13} u_{22} u_{23} u_{12}^3
   \\&
   +72 u_{13} u_{23} u_{33} u_{12}^3+12 u_{13}^4 u_{12}^2+12
   u_{23}^4 u_{12}^2+4 u_{33}^4 u_{12}^2-2 u_{11} u_{22}^3 u_{12}^2-8 u_{11} u_{33}^3 u_{12}^2-8 u_{22}
   u_{33}^3 u_{12}^2+2 u_{11}^2 u_{13}^2 u_{12}^2+8 u_{11}^2 u_{22}^2 u_{12}^2+20 u_{13}^2 u_{22}^2
   u_{12}^2
   \\&
   +20 u_{11}^2 u_{23}^2 u_{12}^2-84 u_{13}^2 u_{23}^2 u_{12}^2+2 u_{22}^2 u_{23}^2 u_{12}^2-2 u_{11}
   u_{22} u_{23}^2 u_{12}^2+2 u_{11}^2 u_{33}^2 u_{12}^2+20 u_{13}^2 u_{33}^2 u_{12}^2+2 u_{22}^2 u_{33}^2
   u_{12}^2+20 u_{23}^2 u_{33}^2 u_{12}^2+20 u_{11} u_{22} u_{33}^2 u_{12}^2
   \\&
   -2 u_{11}^3 u_{22} u_{12}^2
   -2
   u_{11} u_{13}^2 u_{22} u_{12}^2+2 u_{11}^3 u_{33} u_{12}^2+2 u_{22}^3 u_{33} u_{12}^2-2 u_{11} u_{13}^2
   u_{33} u_{12}^2-10 u_{11} u_{22}^2 u_{33} u_{12}^2-38 u_{11} u_{23}^2 u_{33} u_{12}^2-2 u_{22} u_{23}^2
   u_{33} u_{12}^2
   \\&
   -10 u_{11}^2 u_{22} u_{33} u_{12}^2-38 u_{13}^2 u_{22} u_{33} u_{12}^2
   +72 u_{11} u_{13}
   u_{23}^3 u_{12}-36 u_{13} u_{22} u_{23}^3 u_{12}-8 u_{13} u_{23} u_{33}^3 u_{12}+12 u_{11} u_{13} u_{23}
   u_{33}^2 u_{12}+12 u_{13} u_{22} u_{23} u_{33}^2 u_{12}
   \\&
   -36 u_{11} u_{13}^3 u_{23} u_{12}-8 u_{13} u_{22}^3
   u_{23} u_{12}+12 u_{11} u_{13} u_{22}^2 u_{23} u_{12}
   -8 u_{11}^3 u_{13} u_{23} u_{12}+72 u_{13}^3 u_{22}
   u_{23} u_{12}+12 u_{11}^2 u_{13} u_{22} u_{23} u_{12}-36 u_{13} u_{23}^3 u_{33} u_{12}
   \\&
   -36 u_{13}^3 u_{23}
   u_{33} u_{12}+12 u_{13} u_{22}^2 u_{23} u_{33} u_{12}+12 u_{11}^2 u_{13} u_{23} u_{33} u_{12}-48 u_{11}
   u_{13} u_{22} u_{23} u_{33} u_{12}+4 u_{13}^6+4 u_{23}^6+u_{11}^2 u_{13}^4+u_{11}^2 u_{22}^4+4 u_{13}^2
   u_{22}^4
   \\&
   -8 u_{11}^2 u_{23}^4+12 u_{13}^2 u_{23}^4+u_{22}^2 u_{23}^4+8 u_{11} u_{22} u_{23}^4+u_{11}^2
   u_{33}^4+u_{22}^2 u_{33}^4-2 u_{11} u_{22} u_{33}^4-2 u_{11}^3 u_{22}^3-8 u_{11} u_{13}^2 u_{22}^3-2
   u_{11}^3 u_{33}^3-2 u_{22}^3 u_{33}^3-2 u_{11} u_{13}^2 u_{33}^3
   \\&
   +2 u_{11} u_{22}^2 u_{33}^3+2 u_{11}
   u_{23}^2 u_{33}^3-2 u_{22} u_{23}^2 u_{33}^3+2 u_{11}^2 u_{22} u_{33}^3+2 u_{13}^2 u_{22} u_{33}^3+u_{11}^4
   u_{22}^2-8 u_{13}^4 u_{22}^2+2 u_{11}^2 u_{13}^2 u_{22}^2+4 u_{11}^4 u_{23}^2+12 u_{13}^4 u_{23}^2+2 u_{11}
   u_{22}^3 u_{23}^2
   \\&
   +20 u_{11}^2 u_{13}^2 u_{23}^2+2 u_{11}^2 u_{22}^2 u_{23}^2+20 u_{13}^2 u_{22}^2
   u_{23}^2-8 u_{11}^3 u_{22} u_{23}^2-38 u_{11} u_{13}^2 u_{22} u_{23}^2+u_{11}^4 u_{33}^2+u_{13}^4
   u_{33}^2+u_{22}^4 u_{33}^2+u_{23}^4 u_{33}^2+2 u_{11} u_{22}^3 u_{33}^2+8 u_{11}^2 u_{13}^2 u_{33}^2
   \\&
   -6
   u_{11}^2 u_{22}^2 u_{33}^2+2 u_{13}^2 u_{22}^2 u_{33}^2+2 u_{11}^2 u_{23}^2 u_{33}^2+2 u_{13}^2 u_{23}^2
   u_{33}^2+8 u_{22}^2 u_{23}^2 u_{33}^2-10 u_{11} u_{22} u_{23}^2 u_{33}^2+2 u_{11}^3 u_{22} u_{33}^2-10
   u_{11} u_{13}^2 u_{22} u_{33}^2+8 u_{11} u_{13}^4 u_{22}
   \\&
   +2 u_{11}^3 u_{13}^2 u_{22}-10 u_{11} u_{13}^4
   u_{33}-2 u_{11} u_{22}^4 u_{33}+8 u_{11} u_{23}^4 u_{33}-10 u_{22} u_{23}^4 u_{33}+2 u_{11}^2 u_{22}^3
   u_{33}-8 u_{13}^2 u_{22}^3 u_{33}-2 u_{11}^3 u_{13}^2 u_{33}+2 u_{11}^3 u_{22}^2 u_{33}
   \\&
   +20 u_{11} u_{13}^2
   u_{22}^2 u_{33}-8 u_{11}^3 u_{23}^2 u_{33}-2 u_{22}^3 u_{23}^2 u_{33}-2 u_{11} u_{13}^2 u_{23}^2 u_{33}-10
   u_{11} u_{22}^2 u_{23}^2 u_{33}+20 u_{11}^2 u_{22} u_{23}^2 u_{33}-2 u_{13}^2 u_{22} u_{23}^2 u_{33}-2
   u_{11}^4 u_{22} u_{33}
   \\&
   +8 u_{13}^4 u_{22} u_{33}-10 u_{11}^2 u_{13}^2 u_{22} u_{33}.
\end{align*}
}

\subsection{The case $\Dsf_\bullet$}
The next case, in the  $\Bsf$--$\Dsf$ series, when the `exceptionally simple PDE' $\E_Y$ is not anymore the minimal--degree one, is $\Dsf_4$, which we describe here. To this end, we provide a construction of the minimal--degree equation for all groups of type $\Dsf$. We fix a basis
\begin{equation}\label{eqBasisC2}
\C^2=\Span{\alpha,\beta}\, ,
\end{equation}
from which it follows the bi--Lagrangian decomposition
\begin{equation}\label{eqBiLagDecC2Cn}
\C^2\otimes\C^n=(\Span{\alpha}\otimes\C^n)\oplus (\Span{\beta}\otimes\C^n)
\end{equation}
of the $2n$--dimensional symplectic space and irreducible $\SL_2\times\SO_n$--module $\C^2\otimes\C^n$.
Observe that there are obvious identifications
\begin{eqnarray*}
\Lambda^n(\C^2\otimes\C^n)&\equiv&\bigoplus_{i=0}^n\left(  \Lambda^i(\Span{\alpha}\otimes\C^n) \otimes \Lambda^{n-i}(\Span{\beta}\otimes\C^n)  \right)\equiv\bigoplus_{i=0}^n\Span{\alpha^i\beta^{n-i}}\otimes \Lambda^i(\C^n)\otimes  \Lambda^{n-i}(\C^n)\\
&\equiv&\bigoplus_{i=0}^n\Span{\alpha^i\beta^{n-i}}\otimes \Lambda^i(\C^n)^{\otimes\, 2}  \, ,
\end{eqnarray*}
such that the Pl\"ucker embedding space $\Lambda_0^n(\C^2\otimes\C^n)$ of $\C^2\otimes\C^n$ can be identified with
\begin{equation*}
\Lambda_0^n(\C^2\otimes\C^n)=\bigoplus_{i=0}^n\Span{\alpha^i\beta^{n-i}}\otimes S^2_0 \Lambda^i(\C^n)\, .
\end{equation*}
The notation, as well as the decomposition itself, are \emph{different} than
the one given in \eqref{eqDecPluckBD}. In particular, here $S^2_0 \Lambda^i\C^n$
denotes the kernel of the natural $\SL_n$-invariant map
$ \Lambda^i \C^n \otimes \Lambda^i \C^n \to \Lambda^{i-1}\C^n \otimes \Lambda^{i+1}\C^n $
given on decomposable elements by
$$
(v_1\wedge\dots\wedge v_i) \otimes (w_1\wedge\dots\wedge w_i) \mapsto
\sum_{j=1}^i (-1)^{i+j} (v_1 \wedge\dots \wedge\widehat{v_j}\wedge\dots\wedge v_i) \otimes
(v_j \wedge w_1\wedge\dots\wedge w_i).
$$
It is precisely the $\SL_n$-irreducible summand in
$S^2 \Lambda^i\C^n$ whose highest weight is twice the $i$-fundamental
weight (for $i<n$).
The Lagrangian $n$--planes in a favourable position with respect to the splitting \eqref{eqBiLagDecC2Cn} are labeled by symmetric $n\times n$ matrices $
U=\big(u_{ij}\big)
$.
Indeed, if $U$ is understood as a map from $\Span{\alpha}\otimes\C^n$ to $\Span{\beta}\otimes\C^n$, then its graph is a Lagrangian subspace $L(U)$ nondegenerately projecting over the first space (see  Lemma \ref{lem:bilag-stuff} (3)). It is convenient now to introduce the natural extension
$
U^{\bullet}:\Lambda^\bullet(\Span{\alpha}\otimes\C^n)\longrightarrow \Lambda^\bullet(\Span{\beta}\otimes\C^n)
$ 
of $U$ to the exterior algebra, and its restrictions
$
U^{(i)}:\Lambda^i(\Span{\alpha}\otimes\C^n)\longrightarrow \Lambda^i(\Span{\beta}\otimes\C^n)
$ 
to the corresponding $i\Th$ degree pieces. By Poincar\'e duality, we also have
$
U^{(i)}\in  \Span{\alpha^i\beta^{n-i}}\otimes S^2\Lambda^i(\C^n) $, 
and in fact $U^{(i)}$ always lies in the $\SL_n$-irreducible subspace
$
S^2_0\Lambda^i(\C^n)\subseteq S^2\Lambda^i(\C^n)
$. 
Pl\"ucker--embedding $L(U)$ into $\p (\Lambda_0^n(\C^2\otimes\C^n)) $ means taking the `volume' of  $L(U)$ , viz.
\begin{equation}\label{eqVolLU}
\det(L(U))=\left[   \sum_{i=0}^n \alpha^i\beta^{n-i}  U^{(i)}  \right]\, .
\end{equation}
But now we can use the projection
$
\pi:\Lambda_0^n(\C^2\otimes\C^n)\longrightarrow S^n\C^2
 $ 
to map the representative of \eqref{eqVolLU}
into
\begin{equation}\label{eqPiLU}
\pi\left(   \sum_{i=0}^n \alpha^i\beta^{n-i}  U^{(i)}  \right)=   \sum_{i=0}^n \alpha^i\beta^{n-i} \tr U^{(i)}  \in S^n\C^2\, ,
\end{equation}
where we use the $\SO_n$-invariant
quadratic form on $\C^n$. It remains to observe that $S^n\C^2$ is equipped with a quadratic form, whose matrix in the standard basis
$
\alpha^n,\alpha^{n-1}\beta,\alpha^{n-2}\beta^2\,\ldots, \alpha\beta^{n-1},\beta^n
$, 
is the anti--diagonal one,  with entries
$ c_0, c_1, \dots, c_{n-1}, c_n, c_{n-1}, \dots, c_1, c_0$, where
$$c_k = (-1)^k {n \choose k}. $$
Thus, evaluating this form on \eqref{eqPiLU}, one gets a quadratic expression
\begin{equation*}
F(U):=\sum_{i=0}^n (-1)^i {n \choose i} \tr U^{(i)} \tr U^{(n-i)}
\end{equation*}
in the minors of the $n\times n$ symmetric matrix $U$, i.e., a $2\Nd$ order nonlinear PDE of `hyperquadric section' type. For example, 
for $n=4$ we have
\begin{equation}\label{eqD4}
\frac{1}{2}F(U)= \det U - 4\tr (U)\tr (U^\#) + 3\tr(U^{(2)})^2\, ,
\end{equation}
having denoted by $U^\#$ the cofactor matrix of $U$. Observe that \eqref{eqD4} is indeed quadratic, because $\det U$ is multiplied by the `minor of order zero', i.e., by 1 (cf. \eqref{eqASTERIX}).
\subsection{The case $\Gsf_2$} 
In the case of $G=\Gsf_2$, the contact grading \eqref{eqContGrad} reads
\begin{equation*}
 \g = \C \oplus \underset{\CC_o=\g{-1}}{\underbrace{S^3\C^2}} \oplus \underset{\g_0}{\underbrace{\sll(2)\oplus\C} }\oplus S^3\C^* \oplus \C^*\, ,
\end{equation*}
where the semi-simple part $\sll(2)$ of $\g_0=\gl(2)$ has been spelt out.  Recall that the standard $\sll(2)$--module structure on $S^3\C^2$ is precisely the one induced from the bracket with $\g_0$ (see Subsection \ref{subContGrad}). The sub--adjoint variety $Y\subset\p\CC_0$ coincides with the unique closed  $\sll(2)$--orbit $\p^1=\p\C^2\subset\p S^3\C^2$, which is made of rank--one elements (see Subsection \ref{secRumianek}). In other words, $Y$ is  the twisted cubic in $\p^3$ and hence the minimal--degree $2\Nd$ order PDE on the adjoint contact manifold $M$ of $\Gsf_2$ is the Lagrangian Chow transform $\E_Y$ of the field of twisted cubics on $M$.\par
In order to write down $\E_Y$ in Darboux coordinates, we choose a bi--Lagrangian decomposition of $S^3\C^2$ and then use Proposition \ref{propDarboux}. The (conformally) unique symplectic form on $S^3\C^2$ is the one induced by the (conformally) unique symplectic structure on $\C^2$, which in turn correspond to the choice of a volume form on $\C^2$. By using  the same basis \eqref{eqBasisC2} as before, we obtain a bi--Lagrangian decomposition
\begin{equation*}
S^3\C^2=\Span{\alpha^3,\alpha^2\beta,\alpha\beta^2,\beta^3}=\underset{L}{\underbrace{\Span{ \alpha^3,\alpha^2\beta}}}\oplus \underset{L^*}{\underbrace{\Span{ \alpha\beta^2,\beta^3}}} =
\Span{x^1,x^2}\oplus\Span{-3u_2,u_1}\cong S^3\C^{2\ast}\, .
\end{equation*}
The ideal $\mathcal{I}_Y\subset S^\bullet (S^3\C^{2\ast})$  is generated by three elements of degree two. Indeed, there is an exact sequence
\begin{equation*}
0\longrightarrow  \mathcal{I}_Y\cap S^2 (S^3\C^{2\ast})  \longrightarrow  S^2 (S^3\C^{2\ast})  \longrightarrow   S^6\C^{2\ast}  \longrightarrow 0
\end{equation*}
of $\sll(2)$--modules, decomposing the 10--dimensional  $S^2 (S^3\C^{2\ast})$ into    irreducible representations. Parametrically,    $Y=\{[ (t\alpha+s\beta)^3 \mid (t:s)\in\p^1]$, that is,
$
Y=\{[ t^3 : t^2s : s^3 : -3ts^2   ]\mid (t:s)\in\p^1\}
$, 
in the coordinates $[x^1:x^2:u_1:u_2]$.
Then  it is easy to check that $\mathcal{I}_Y$ is generated by
$3x^1u_1 + x^2u_2$, $x^1u_2 + 3(x^2)^2$, $9x^2u_1 - u_2^2$.
It remains to apply the Lagrangian Chow--form and discover that
\begin{equation*}
\E_Y=\{27u_{11}^2 - u_{12}^2 u_{22}^2 + u_{11} u_{22}^3 + 16 u_{12}^3 - 18 u_{11}u_{12}u_{22}=0\}\, .
\end{equation*}

\section*{Acknowledgements}
The authors thank Dennis The and Boris Kruglikov for their
insightful remarks on a previous version of this work.
The research of all four authors has been partially supported by the project
 ``FIR (Futuro in Ricerca) 2013 -- Geometria delle equazioni differenziali''. Dmitri Alekseevsky thanks the Dipartimento di Scienze Matematiche ``G.L. Lagrange'' of the Politecnico of Torino for hospitality.
Dmitri Alekseevsky carried this work at IITP and is supported by an RNF grant (project n.14-50-00150). He also thanks the Dipartimento di Scienze Matematiche ``G.L. Lagrange'' of the Politecnico of Torino for hospitality.
Giovanni Moreno and Gianni Manno have been also partially supported by
the Marie Sk\l odowska--Curie fellowship SEP--210182301 ``GEOGRAL". Giovanni Moreno has been also partially founded by the  Polish National Science Centre grant
under the contract number 2016/22/M/ST1/00542,  by the University of Salerno, and by the project P201/12/G028 of the Czech Republic Grant Agency (GA \v{C}R).  Gianni Manno and Giovanni Moreno are members of G.N.S.A.G.A of I.N.d.A.M.

\bibliographystyle{plainnat}
\bibliography{BibUniver}

\def\cprime{$'$} \def\scr{\mathcal}
  \def\polhk#1{\setbox0=\hbox{#1}{\ooalign{\hidewidth
  \lower1.5ex\hbox{`}\hidewidth\crcr\unhbox0}}} \def\cprime{$'$}
  \def\polhk#1{\setbox0=\hbox{#1}{\ooalign{\hidewidth
  \lower1.5ex\hbox{`}\hidewidth\crcr\unhbox0}}}
\begin{thebibliography}{26}
\providecommand{\natexlab}[1]{#1}
\providecommand{\url}[1]{\texttt{#1}}
\expandafter\ifx\csname urlstyle\endcsname\relax
  \providecommand{\doi}[1]{doi: #1}\else
  \providecommand{\doi}{doi: \begingroup \urlstyle{rm}\Url}\fi

\bibitem[Agricola(2008)]{MR2441524}
Ilka Agricola.
\newblock Old and new on the exceptional group {$G_2$}.
\newblock \emph{Notices Amer. Math. Soc.}, 55\penalty0 (8):\penalty0 922--929,
  2008.
\newblock ISSN 0002-9920.

\bibitem[Alekseevskii(1990)]{Ale90}
D.~V. Alekseevskii.
\newblock Contact homogeneous spaces.
\newblock \emph{Funktsional. Anal. i Prilozhen.}, 24:\penalty0 74--75, 1990.
\newblock URL \url{http://www.ams.org/mathscinet-getitem?mr=1092805}.

\bibitem[Alekseevsky et~al.(2012)Alekseevsky, Alonso-Blanco, Manno, and
  Pugliese]{MR2985508}
Dmitri~V. Alekseevsky, Ricardo Alonso-Blanco, Gianni Manno, and Fabrizio
  Pugliese.
\newblock Contact geometry of multidimensional {M}onge-{A}mp\`ere equations:
  characteristics, intermediate integrals and solutions.
\newblock \emph{Ann. Inst. Fourier (Grenoble)}, 62\penalty0 (2):\penalty0
  497--524, 2012.
\newblock ISSN 0373-0956.
\newblock \doi{10.5802/aif.2686}.
\newblock URL \url{http://dx.doi.org/10.5802/aif.2686}.

\bibitem[Bourbaki(2002)]{MR1890629}
Nicolas Bourbaki.
\newblock \emph{Lie groups and {L}ie algebras. {C}hapters 4--6}.
\newblock Elements of Mathematics (Berlin). Springer-Verlag, Berlin, 2002.
\newblock ISBN 3-540-42650-7.
\newblock \doi{10.1007/978-3-540-89394-3}.
\newblock URL \url{http://dx.doi.org/10.1007/978-3-540-89394-3}.
\newblock Translated from the 1968 French original by Andrew Pressley.

\bibitem[Bourbaki(2005)]{MR2109105}
Nicolas Bourbaki.
\newblock \emph{Lie groups and {L}ie algebras. {C}hapters 7--9}.
\newblock Elements of Mathematics (Berlin). Springer-Verlag, Berlin, 2005.
\newblock ISBN 3-540-43405-4.
\newblock Translated from the 1975 and 1982 French originals by Andrew
  Pressley.

\bibitem[Bryant et~al.(1991)Bryant, Chern, Gardner, Goldschmidt, and
  Griffiths]{MR1083148}
R.~L. Bryant, S.~S. Chern, R.~B. Gardner, H.~L. Goldschmidt, and P.~A.
  Griffiths.
\newblock \emph{Exterior differential systems}, volume~18 of \emph{Mathematical
  Sciences Research Institute Publications}.
\newblock Springer-Verlag, New York, 1991.
\newblock ISBN 0-387-97411-3.

\bibitem[Buczy{\'n}ski(2009)]{MR2584515}
Jaros{\l}aw Buczy{\'n}ski.
\newblock Algebraic {L}egendrian varieties.
\newblock \emph{Dissertationes Math. (Rozprawy Mat.)}, 467:\penalty0 86, 2009.
\newblock ISSN 0012-3862.
\newblock \doi{10.4064/dm467-0-1}.
\newblock URL \url{http://dx.doi.org/10.4064/dm467-0-1}.

\bibitem[{\v{C}}ap and Slov{\'a}k(2009)]{MR2532439}
Andreas {\v{C}}ap and Jan Slov{\'a}k.
\newblock \emph{Parabolic geometries. {I}}, volume 154 of \emph{Mathematical
  Surveys and Monographs}.
\newblock American Mathematical Society, Providence, RI, 2009.
\newblock ISBN 978-0-8218-2681-2.
\newblock \doi{10.1090/surv/154}.
\newblock URL \url{http://dx.doi.org/10.1090/surv/154}.
\newblock Background and general theory.

\bibitem[de~Graaf(2000)]{MR1743970}
Willem~A. de~Graaf.
\newblock \emph{Lie algebras: theory and algorithms}, volume~56 of
  \emph{North-Holland Mathematical Library}.
\newblock North-Holland Publishing Co., Amsterdam, 2000.
\newblock ISBN 0-444-50116-9.
\newblock \doi{10.1016/S0924-6509(00)80040-9}.
\newblock URL \url{http://dx.doi.org/10.1016/S0924-6509(00)80040-9}.

\bibitem[Fulton and Harris(1991)]{MR1153249}
William Fulton and Joe Harris.
\newblock \emph{Representation theory}, volume 129 of \emph{Graduate Texts in
  Mathematics}.
\newblock Springer-Verlag, New York, 1991.
\newblock ISBN 0-387-97527-6; 0-387-97495-4.
\newblock \doi{10.1007/978-1-4612-0979-9}.
\newblock URL \url{http://dx.doi.org/10.1007/978-1-4612-0979-9}.
\newblock A first course, Readings in Mathematics.

\bibitem[Goodman and Wallach(2009)]{MR2522486}
Roe Goodman and Nolan~R. Wallach.
\newblock \emph{Symmetry, representations, and invariants}, volume 255 of
  \emph{Graduate Texts in Mathematics}.
\newblock Springer, Dordrecht, 2009.
\newblock ISBN 978-0-387-79851-6.
\newblock \doi{10.1007/978-0-387-79852-3}.
\newblock URL \url{http://dx.doi.org/10.1007/978-0-387-79852-3}.

\bibitem[Gutt(2016{\natexlab{a}})]{jan-quads}
Jan Gutt.
\newblock \url{http://www.cft.edu.pl/p_view/_jgutt/files/Quads.hs},
  2016{\natexlab{a}}.

\bibitem[Gutt(2016{\natexlab{b}})]{jan-sympluck}
Jan Gutt.
\newblock \url{http://www.cft.edu.pl/p_view/_jgutt/files/sympluck.lie},
  2016{\natexlab{b}}.

\bibitem[Helenius(2012)]{MR2905245}
Fred~W. Helenius.
\newblock Freudenthal triple systems by root system methods.
\newblock \emph{J. Algebra}, 357:\penalty0 116--137, 2012.
\newblock ISSN 0021-8693.
\newblock \doi{10.1016/j.jalgebra.2012.01.025}.
\newblock URL \url{http://dx.doi.org/10.1016/j.jalgebra.2012.01.025}.

\bibitem[Hiller(1982)]{MR672845}
Howard Hiller.
\newblock Combinatorics and intersections of {S}chubert varieties.
\newblock \emph{Comment. Math. Helv.}, 57\penalty0 (1):\penalty0 41--59, 1982.
\newblock ISSN 0010-2571.
\newblock \doi{10.1007/BF02565846}.
\newblock URL \url{http://dx.doi.org/10.1007/BF02565846}.

\bibitem[(http://mathoverflow.net/users/22606/giovanni moreno)(2015)]{209058}
Giovanni~Moreno (http://mathoverflow.net/users/22606/giovanni moreno).
\newblock Why there is a relation among the second-order minors of a symmetric
  $4\times 4$ matrix?
\newblock MathOverflow, 2015.
\newblock URL \url{http://mathoverflow.net/q/209058}.
\newblock URL:http://mathoverflow.net/q/209058 (version: 2015-06-12).

\bibitem[Koike and Terada(1987)]{MR885807}
Kazuhiko Koike and Itaru Terada.
\newblock Young-diagrammatic methods for the representation theory of the
  classical groups of type {$B_n,\;C_n,\;D_n$}.
\newblock \emph{J. Algebra}, 107\penalty0 (2):\penalty0 466--511, 1987.
\newblock ISSN 0021-8693.
\newblock \doi{10.1016/0021-8693(87)90099-8}.
\newblock URL \url{http://dx.doi.org/10.1016/0021-8693(87)90099-8}.

\bibitem[Kostant(1961)]{MR0142696}
Bertram Kostant.
\newblock Lie algebra cohomology and the generalized {B}orel-{W}eil theorem.
\newblock \emph{Ann. of Math. (2)}, 74:\penalty0 329--387, 1961.
\newblock ISSN 0003-486X.

\bibitem[Kruglikov(2011)]{MR2765511}
Boris Kruglikov.
\newblock Finite-dimensionality in {T}anaka theory.
\newblock \emph{Ann. Inst. H. Poincar\'e Anal. Non Lin\'eaire}, 28\penalty0
  (1):\penalty0 75--90, 2011.
\newblock ISSN 0294-1449.
\newblock \doi{10.1016/j.anihpc.2010.10.001}.
\newblock URL \url{http://dx.doi.org/10.1016/j.anihpc.2010.10.001}.

\bibitem[Olver(1999)]{olver1999classical}
P.J. Olver.
\newblock \emph{Classical Invariant Theory}.
\newblock London Mathematical Society Student Texts. Cambridge University
  Press, 1999.
\newblock ISBN 9780521558211.
\newblock URL \url{https://books.google.pl/books?id=1GlHYhNRAqEC}.

\bibitem[Sagerschnig(2006)]{MR2322419}
Katja Sagerschnig.
\newblock Split octonions and generic rank two distributions in dimension five.
\newblock \emph{Arch. Math. (Brno)}, 42\penalty0 (suppl.):\penalty0 329--339,
  2006.
\newblock ISSN 0044-8753.

\bibitem[Tanaka(1970)]{MR0266258}
Noboru Tanaka.
\newblock On differential systems, graded {L}ie algebras and pseudogroups.
\newblock \emph{J. Math. Kyoto Univ.}, 10:\penalty0 1--82, 1970.
\newblock ISSN 0023-608X.

\bibitem[The(2016)]{2016arXiv160308251T}
D.~The.
\newblock Exceptionally simple pde.
\newblock \emph{ArXiv e-prints}, 2016.
\newblock URL \url{http://arxiv.org/abs/1603.08251}.

\bibitem[The(2012)]{MR2876965}
Dennis The.
\newblock Conformal geometry of surfaces in the {L}agrangian {G}rassmannian and
  second-order {PDE}.
\newblock \emph{Proc. Lond. Math. Soc. (3)}, 104\penalty0 (1):\penalty0
  79--122, 2012.
\newblock ISSN 0024-6115.
\newblock \doi{10.1112/plms/pdr023}.
\newblock URL \url{http://dx.doi.org/10.1112/plms/pdr023}.

\bibitem[Totaro(2003)]{MR1937794}
Burt Totaro.
\newblock Towards a {S}chubert calculus for complex reflection groups.
\newblock \emph{Math. Proc. Cambridge Philos. Soc.}, 134\penalty0 (1):\penalty0
  83--93, 2003.
\newblock ISSN 0305-0041.
\newblock \doi{10.1017/S0305004102006205}.
\newblock URL \url{http://dx.doi.org/10.1017/S0305004102006205}.

\bibitem[Yamaguchi(1993)]{MR1274961}
Keizo Yamaguchi.
\newblock Differential systems associated with simple graded {L}ie algebras.
\newblock In \emph{Progress in differential geometry}, volume~22 of \emph{Adv.
  Stud. Pure Math.}, pages 413--494. Math. Soc. Japan, Tokyo, 1993.

\end{thebibliography}
\bigskip

\end{document}